\numberwithin{equation}{section} %For Eq. Numb. (Sec.Numb)
\theoremstyle{definition}
\newtheorem{definition}{Definition}[section]
\theoremstyle{plain}
\newtheorem{proposition}[definition]{Proposition}
\newtheorem{lemma}[definition]{Lemma}
\newtheorem{theorem}[definition]{Theorem}
\newtheorem{corollary}[definition]{Corollary}
\newtheorem{conjecture}{Conjecture}
\newtheorem*{conjecture*}{Conjecture}
\newtheorem{question}[conjecture]{Question}
\theoremstyle{definition}
\newtheorem{remark}[definition]{Remark}
\DeclareMathOperator{\Lip}{Lip}
\newcommand{\Rm}{\mathbb{R}^m}
\newcommand{\diver}{\mathrm{div}}
\newcommand{\rig}{\rightarrow}
\newcommand{\la}{\left\langle}
\newcommand{\ra}{\right\rangle}
\newcommand{\cM}{\mathcal{M}}
\newcommand{\cX}{\mathcal{Y}}
\newcommand{\cI}{\mathcal{I}}
\newcommand{\cJ}{\mathcal{J}}
\newcommand{\scrF}{\mathscr{F}}
\def\ov#1{\overline{#1}}
\newcommand{\e}{\varepsilon}
\newcommand{\RN}{\mathbb{R}^m}
\DeclareMathOperator{\sh}{\mathrm{sh}}
\DeclareMathOperator{\ch}{\mathrm{ch}}
\def\wt#1{\widetilde{#1}}
\newcommand{\eps}{\varepsilon}
\newcommand{\di}{\mathrm{d}}
\newcommand{\rd}{\mathrm{d}}
\newcommand{\R}{\mathbb R}
\newcommand{\N}{\mathbb N}
\newcommand{\metric}{\langle \, , \, \rangle}
\newcommand{\lip}{\mathrm{Lip}}
\newcommand{\loc}{\mathrm{loc}}
\newcommand{\disp}{\displaystyle}
\newcommand{\LL}{\mathbb{L}}
\newcommand{\BI}{\mathrm{BI}}
\newcommand{\M}{\mathrm{MS}}
\newcommand{\FF}{\mathscr{F}}
\newcommand{\supp}{\operatorname{supp}}
\newcommand{\SF}{\operatorname{II}}
\newcommand{\diam}{\mathrm{diam}}
\newcommand{\haus}{\mathscr{H}}
\newcommand{\ol}{\overline}
\newcommand{\Spa}{\mathcal{S}}
\newcommand{\measrest}{%
  \,\raisebox{-.127ex}{\reflectbox{\rotatebox[origin=br]{-90}{$\lnot$}}}\,%
}
\newcommand{\PP}{\mathscr{P}}
\begin{document}

\author{Jaeyoung Byeon \and Norihisa Ikoma \and Andrea Malchiodi \and Luciano Mari}
%\title[Solvability and regularity for the Born--Infeld equation]
\title{\textbf{
Existence and regularity for prescribed Lorentzian mean curvature hypersurfaces, 
and the Born--Infeld model}}
\date{\today}
\maketitle

\scriptsize \begin{center} Department of Mathematical Sciences, KAIST, \\ 
291 Daehak-ro, Yuseong-gu, Daejeon 305-701, Republic of Korea\\
E-mail: byeon@kaist.ac.kr
\end{center}

\scriptsize \begin{center} 
Department of Mathematics, Faculty of Science and Technology, Keio University,\\ 
Yagami Campus: 3-14-1 Hiyoshi, Kohoku-ku, Yokohama, Kanagawa 2238522, Japan\\
E-mail: ikoma@math.keio.ac.jp
\end{center}

\scriptsize \begin{center}
Scuola Normale Superiore,\\
Piazza dei Cavalieri, 7, 56126 Pisa, Italy\\
E-mail: andrea.malchiodi@sns.it
\end{center}

\scriptsize \begin{center} Dipartimento di Matematica, Universit\`a degli Studi di Torino,\\
Via Carlo Alberto 10, I-10123 Torino, Italy\\
E-mail: luciano.mari@unito.it
\end{center}

\normalsize

\begin{abstract}
Given a measure $\rho$ on a domain $\Omega \subset \R^m$, we study spacelike graphs over $\Omega$ in Minkowski space 
with Lorentzian mean curvature $\rho$ and Dirichlet boundary condition on $\partial \Omega$, 
which solve
	\begin{equation}\label{eq:abst-BI}
		-\diver \left( \frac{Du}{\sqrt{1-|Du|^2}} \right) = \rho  \qquad \text{ in } \, \Omega \subset \R^m. 
		\tag{$\mathcal{BI}$}
	\end{equation}
The graph function also represents the electric potential generated by a charge $\rho$ in electrostatic Born-Infeld's theory. Even though there exists a unique minimizer $u_\rho$ of the associated action
	\[
	I_\rho(\psi) \doteq \int_{\Omega} \Big( 1 - \sqrt{1-|D\psi|^2} \Big) \di x - \langle \rho, \psi \rangle
	\]
among functions $\psi$ satisfying $|D\psi| \le 1$, 
by the lack of smoothness of the Lagrangian density for $|D\psi| = 1$ one cannot guarantee that $u_\rho$ satisfies the Euler-Lagrange equation \eqref{eq:abst-BI}.  
A chief difficulty comes from the possible presence of light segments in the graph of $u_\rho$. 
In this paper, we investigate the existence of a solution for general $\rho$. 
In particular, we give sufficient conditions to guarantee that $u_\rho$ solves \eqref{eq:abst-BI} and 
enjoys $\log$-improved energy and $W^{2,2}_\loc$ estimate. 
Furthermore, we construct examples 
which suggest a sharp threshold for the regularity of $\rho$ to ensure the solvability of \eqref{eq:abst-BI}. 

\end{abstract}
%
%\bigskip
%
%In electrostatic Born-Infeld theory, 
%the electric potential $u_\rho$ generated by a charge distribution $\rho$ in $\R^m$ (typically, a Radon measure) minimizes the action
%	\[
%	\int_{\R^m} \Big( 1 - \sqrt{1-|D\psi|^2} \Big) \di x - \langle \rho, \psi \rangle
%	\]
%among functions which decay at infinity and satisfy $|D\psi| \le 1$. Formally, its Euler-Lagrange equation \eqref{borninfeld} prescribes $\rho$ as being the Lorentzian mean curvature of the graph of $u_\rho$ in Minkowski spacetime $\mathbb{L}^{m+1}$. However, because of the lack of regularity of the functional when $|D\psi| = 1$, whether or not $u_\rho$ solves \eqref{borninfeld} and how regular is $u_\rho$ are subtle issues that were investigated only for few classes of $\rho$. In this paper, we study both problems for general sources $\rho$, in a bounded domain with a Dirichlet boundary condition and in the entire $\R^m$. In particular, we give sufficient conditions to guarantee that $u_\rho$ solves \eqref{borninfeld} and enjoys  improved $W^{2,2}_\loc$ estimates, and we construct examples helping to identify sharp thresholds for the regularity of $\rho$ to ensure the validity of \eqref{borninfeld}. One of the main  difficulties is the possible presence of light segments in the graph of $u_\rho$, which will be discussed in detail. 	
%\end{abstract}

\noindent
\textbf{MSC2020}: Primary 35B65, 35B38, 35R06, 53B30; Secondary 35J62, 49J40.

\noindent
\textbf{Keywords:}
Prescribed Lorentzian mean curvature, Born--Infeld model, Euler--Lagrange equation, Regularity of solutions, Measure data.

\tableofcontents

%%%%%%%%%%%%%%%%%%%%%%%%%%%%%%%%%%%%%%%%%%%%%%%%%%%%%%%%%%%%%%%%%%
%%%%%%%%%%%%%%%%%%%%%%%%%%%%%%%%%%%%%%%%%%%%%%%%%%%%%%%%%%%%%%%%%%
%%%%%%%%%%%%%%%%%%%%%%%%%%%%%%%%%%%%%%%%%%%%%%%%%%%%%%%%%%%%%%%%%%
\section{Introduction}

Spacelike maximal and constant mean curvature hypersurfaces in Lorentzian manifolds, possibly with singularities, and more generally spacelike hypersurfaces $M$ with prescribed Lo\-ren\-tzian mean curvature $\rho$, play a prominent role in General Relativity. For instance, their use is substantial in connection to positive energy theorems and to the initial value problem for solutions to the Einstein field equation (see \cite{mt} and the references therein). Therefore, guaranteeing their existence and grasping their qualitative properties, either in entire space or in a bounded domain with a given boundary condition, helps to understand the mathematics behind Einstein's theory. Once $\rho$ is prescribed, one of the core issues making the existence and regularity problem challenging is the possibility that the constructed hypersurface $M$ ceases to be spacelike somewhere. For example, $M$ may contain a light segment (``go null" in the terminology of \cite{mt}), a feature that, as we shall see, can actually occur even for reasonably well-behaved $\rho$. The appearance of a light segment $\overline{xy}$ forces to properly justify the sentence ``$M$ has mean curvature $\rho$ in a neighborhood of $\overline{xy}$", and also suggests to allow for possibly nonsmooth mean curvature functions. Indeed, the behavior of $M$ near a light segment is a problem which is, to the best of our knowledge, mostly open.

%Therefore, studying their existence and qualitative properties, either in entire space or in a bounded %domain with a given boundary condition, 
%leads to a better understanding of the mathematics behind Einstein's theory. The analysis is particularly %subtle since the constructed hypersurfaces may not be spacelike somewhere, typically, they may contain light %segments (``go null" in the terminology of \cite{mt}). 

The purpose of the present paper is to investigate the prescribed Lorentzian mean curvature problem for general $\rho$. 
For simplicity, we shall restrict to hypersurfaces $M$ in the Lorentz-Minkowski ambient space
	$$
	\LL^{m+1} \doteq \R \times \RN\qquad \text{with Lorentzian metric} \quad - \di x^0 \otimes \di x^0 + \sum_{i=1}^m \di x^i \otimes \di x^i,
	$$
although most of our results likely allow for extension to more general Lorentzian manifolds. 
The spacelike condition ensures that $M$ is the multigraph, 
over some open subset $\Omega$ of the totally geodesic slice $\RN \doteq \{x^0 = 0\}$, 
of a function $u$ with $|Du| < 1$. Hereafter, we shall only consider single valued graphs. 
We treat the problem in a bounded domain $\Omega$ as well as in the entire $\RN$. 
Following the convention in the literature, we say that the graph of $u \in W^{1,\infty}(\Omega)$ is
\begin{itemize}
\item[-] \emph{weakly spacelike} if $|Du| \le 1$ on $\Omega$;
\item[-] \emph{spacelike} if $|u(x)-u(y)| < |x-y|$ whenever $x,y \in \Omega$, $x \neq y$ and the line segment $\overline{xy}$ is 
contained in $\Omega$;
\item[-] \emph{strictly spacelike} if $u \in C^1(\Omega)$ and $|Du|< 1$ in $\Omega$.
\end{itemize}

Given $\phi \in C(\partial \Omega)$, a spacelike hypersurface with Lorentzian mean curvature $\rho$ and boundary (the graph of) $\phi$ is the graph of a solution $u : \overline{\Omega} \to \R$ to 
\begin{equation}\label{borninfeld}\tag{$\mathcal{BI}$}
\left\{ \begin{aligned}
-\diver \left( \frac{Du}{\sqrt{1-|Du|^2}} \right) &= \rho  & &\text{on } \, \Omega \subset \R^m, \\[0.3cm]
u&= \phi  & &\text{on } \, \partial \Omega,
\end{aligned}\right.
\end{equation}
%The purpose of the present paper is to investigate above problem for general $\rho$. For simplicity, we shall restrict to the Lorentz-Minkowski ambient space
%	$$
%	\LL^{m+1} \doteq \R \times \RN\qquad \text{with Lorentzian metric} \quad - \di x^0 \otimes \di x^0 + \sum_{i=1}^m \di x^i \otimes \di x^i,
%	$$
%although most of our results  likely allow for extension to more general Lorentzian manifolds. 
%We consider both the problem in a bounded domain $\Omega$, 
%and the problem in the entire $\RN$. 
%Following the convention in the literature, we say that the graph $M$ of $u \in W^{1,\infty}(\Omega)$ is
%\begin{itemize}
%\item[-] \emph{weakly spacelike} if $|Du| \le 1$ on $\Omega$;
%\item[-] \emph{spacelike} if $|u(x)-u(y)| < |x-y|$ whenever $x,y \in \Omega$, $x \neq y$ and the line segment $\overline{xy}$ is 
%contained in $\Omega$;
%\item[-] \emph{strictly spacelike} if $u \in C^1(\Omega)$ and $|Du|< 1$ in $\Omega$.
%\end{itemize}
%Given $\phi \in C(\partial \Omega)$, a spacelike hypersurface with Lorentzian mean curvature $\rho$ and  boundary value $\phi$ is the graph of a solution $u : \overline{\Omega} \to \R$ to 
%\begin{equation}\label{borninfeld}\tag{$\mathcal{BI}$}
%\left\{ \begin{aligned}
%-\diver \left( \frac{Du}{\sqrt{1-|Du|^2}} \right) &= \rho  & &\text{on } \, \Omega \subset \R^m, \\[0.3cm]
%u&= \phi  & &\text{on } \, \partial \Omega,
%\end{aligned}\right.
%\end{equation}
where $D$ and $|\cdot|$ are the connection (gradient) and norm in $\R^m$. If $\Omega = \R^m$, the boundary assumption in \eqref{borninfeld} is replaced by $u(x) \to 0$ as $|x| \to \infty$, 
%a pointwise vanishing condition at infinity, 
accomplished by a proper choice of the function space for $u$. 
%codified by our choice of the function space for $u$ which implies $u(x) \to 0$ as $|x| \to \infty$.
%and implying, in particular, that

  The name \eqref{borninfeld} is an acronym for ``Born-Infeld". 
In fact, our second main motivation for studying \eqref{borninfeld} comes from Born-Infeld's theory 
for Electromagnetism, proposed by Born and Infeld in \cite{borninfeld_1,borninfeld_2} 
to overcome the failure of the principle of finite energy occurring in Maxwell's model. 
The reader is referred to the Appendix, where we describe the problem in more detail. 
In the electrostatic case, according to Born-Infeld's theory 
the solution $u_\rho$ to \eqref{borninfeld} represents the electric potential generated 
by a charge $\rho$, and by setting 
	\[
	w_\rho \doteq \frac{1}{\sqrt{1-|Du_\rho|^2}},
	\]
the quantity $w_\rho - 1 + \rho u_\rho$ is the energy density of the field generated by $u_\rho$ 
(with the obvious meaning for $\rho u_\rho$ if $\rho \not \in L^1$). 
The role of $w_\rho$ will be essential in our investigation 
and, hereafter, slightly abusing notation we name $w_\rho$ the \emph{energy density} of $u_\rho$.

By the above description, \eqref{borninfeld} is a nonlinear replacement for Poisson's equation 
	\begin{equation}\label{eq-Po}
		-\Delta u = \rho 
	\end{equation}
occurring in Maxwell's model. For $\rho \in L^1 (\R^3) \cap L^{6/5} (\R^3)$, 
solutions to \eqref{eq-Po} (which decay at infinity) do not violate the principle of finite energy 
since $|Du| \in L^2(\R^3)$, see \cite[Remark 1]{fop}. 
However, this is not the case for general sources including point charges, which are natural to describe electrons 
and motivate the introduction of Born-Infeld's theory. 
Thus, it is meaningful to consider \eqref{borninfeld} for $\rho$ a measure.

%it is natural to consider the problem with general source terms $\rho$ 
%coming from measures. 
%
%It is therefore natural to consider source terms $\rho$ which are merely Radon measures. 
%	It is therefore natural to consider 
%The above suggests us to consider 
%In view of the above motivations, we study 
%source terms $\rho$ which are merely Radon measures or more general distributions.
It was observed in \cite{bartniksimon,ba_1,bpd,mt} that a variational approach to \eqref{borninfeld} by minimizing the action 
	\begin{equation}\label{def_Irho_intro}
	I_\rho(v) \doteq \int_\Omega \left( 1 - \sqrt{1- |Dv|^2} \right) \di x - \la \rho, v\ra
	\end{equation}
($\la \cdot, \cdot \ra$ stands for the duality pairing) may not lead to a solution to \eqref{borninfeld} because of the lack of smoothness of the Lagrangian density when $|Dv| =1$. In particular, the possible presence of light segments in the graph of the (unique) minimizer $u_\rho$ is a serious obstacle. Hereafter, with a slight abuse of notation, we call $\overline{xy} \subset \overline{\Omega}$ a \emph{light segment for $u_\rho$} if 
$x \neq y$ and 
\[
|u_\rho(y)- u_\rho(x)| = |x-y|.
\]
Since $|Du_\rho| \le 1$, in such a case $u_\rho$ is linear with slope $1$ on $\overline{xy}$. To the present, the existence and regularity problem for solutions to \eqref{borninfeld} was considered only for very restricted classes of measures $\rho$. In fact, $L^\infty$ sources and juxtaposition of point charges were treated in bounded domains, while for $\Omega = \R^m$ the case $\rho \in L^q(\R^m)$, $q>m$, was recently included.
Notice that solvability of \eqref{eq-Po} is known at least for
	\[
		\rho = \sum_{i=1}^k a_i \delta_{x_i} + f \quad \text{with} \quad f \in L^q, \ q > 1,
	\]
where $\delta_x$ is the Dirac delta at $x$ and $a_i \in \R$. In this paper, we shall see whether similar properties for \eqref{borninfeld} hold or not. 
Among other results in this paper, we here stress the following:
\begin{itemize}
\item[$(i)$] If $m \ge 3$ and $\rho \in L^2_\loc(\R^m) \cap L^p(\R^m), p \in [1,2m/(m+2)]$, then $u_\rho$ solves \eqref{borninfeld} on $\R^m$ (Theorem \ref{teo_BI_global});
\item[$(ii)$] If $m=2$, $\Omega$ is bounded and 
	\[
		\rho = \sum_{i=1}^k a_i \delta_{x_i} + f \quad \text{with} \quad f \in L^2_{\rm loc}(\Omega) \cap L^1(\Omega), \ \ x_i \in \Omega,
	\]
then  $u_\rho$ solves \eqref{borninfeld} and the graph of $u_\rho$ does not contain light segments. 
Moreover, we may replace point charges by any singular measure $\rho_{\rm S}$ supported in $\Omega$  
with $\haus^1 (\supp \rho_{\rm S}) = 0$ (Theorem \ref{teo_BI_local_s2});
\item[$(iii)$] If $m \ge 3$ and $q < m-1$, there exist $f \in L^q(\R^m)$ and $x, y \in \R^m$ such that the minimizer $u_\rho$ for $\rho = \delta_y - \delta_x + f$ does not solve \eqref{borninfeld}. 
Here the graph of $u_\rho$ has a light segment over $\ol{xy}$ 
(Theorem \ref{teo_nolight_intro} and Corollary \ref{prop_nosolve}). 
\end{itemize}
Towards these goals, we need to develop new tools. Indeed, in the aforementioned cases studied in the literature, the solvability of \eqref{borninfeld} depends on proving that $u_\rho$ is \emph{strictly spacelike} in the domain of consideration. When $\rho \in L^2_\loc$ not even spacelikeness is guaranteed, and in fact compactly supported solutions to \eqref{borninfeld} on $\R^m$, with a light segment and source $\rho \in L^2(\R^m)$, do exist in dimension $m \ge 4$ (cf. Theorem \ref{ex-light-seg_intro} below). Therefore, a core point of our investigation regards the structure of the set of light segments of $u_\rho$ and its interplay with the solvability of \eqref{borninfeld}. The existence of a solution will be obtained through new integral bounds (\lq\lq{}log\rq\rq{} improvement) for the energy density $w_\rho$ and for the second fundamental form of the graph of $u_\rho$. Since an efficient way to localize estimates reveals to be one of the major problems to overcome, the case of bounded $\Omega$ is particularly challenging.

%Among them, we underline the first results which are instrumental to describe the set of light segments of %$u_\rho$ when $\rho \not \in L^\infty$, and to relate it to the solvability of \eqref{borninfeld}.

\vspace{0.5cm}

\noindent \textbf{Notation and agreements}.\\
Hereafter, we write $\omega_{m-1}$ for the volume of the unit sphere $\mathbb{S}^{m-1}$, 
and indicate with $\mathbb{1}_A$ the characteristic function of a set $A$. 
The subscript $\delta$ will denote quantities referred to the Euclidean metric on $\R^m$: 
$\di_\delta$ will be the Euclidean distance, $\diam_\delta(E)$ the diameter of a set $E \subset \R^m$ and 
$|\cdot|_\delta, \haus^k_\delta$ the volume and $k$-dimensional Hausdorff measure in $\di_\delta$. 
Given $x,y \in \R^m$ with $x \neq y$, we let $\overline{xy}$ be the closed segment joining $x$ and $y$. 
If $\Omega \subset \R^m$ is an open set, we denote by $\cM(\Omega)$ the set of all finite (signed) Borel measures on $\Omega$ 
equipped with the total variation norm $\| \cdot \|_{\cM (\Omega)}$. 
The set $\lip_c(\Omega)$ will denote the set of Lipschitz functions with compact support in $\Omega$, 
and we write $\Omega' \Subset \Omega$ when $\Omega'$ has compact closure in $\Omega$.

\subsection{Known results for bounded domains}

After works for maximal hypersurfaces $(\rho = 0)$ due to Flaherty \cite{flaherty} and Audounet and Bancel \cite{ab}, 
solutions to \eqref{borninfeld} in bounded domains $\Omega$ and for sources $\rho \in L^\infty(\Omega)$ were studied 
in depth in the influential work by Bartnik and Simon \cite{bartniksimon}. To describe the main result therein, for $\phi \in C(\partial \Omega)$, we define 
	\begin{equation}\label{def_XphiOmega}
	\cX_\phi(\Omega) \doteq \Big\{ \text{$u \in W^{1,\infty}(\Omega)$ : $u$ weakly spacelike, $u = \phi$ on $\partial \Omega$}\Big\}.
	\end{equation}
\begin{remark}\label{rem_defboundary}
We assumed no regularity of $\partial \Omega$, so the boundary condition has to be intended as in \cite{bartniksimon}: $u = \phi$ on $\partial \Omega$ iff, for each $x \in \partial \Omega$ and any straight line $\gamma : (0,1) \to \Omega$ with $\gamma(0^+) = x$, it holds $u(\gamma(t)) \to \phi(x)$ as $t \to 0^+$. In Proposition \ref{teo_compaembe} below, we will prove that this definition suffices to guarantee that functions $u \in \cX_\phi(\Omega)$ can be extended continuously on $\partial \Omega$ with value $\phi$.     
\end{remark}
The class of boundary data for which $\cX_\phi(\Omega) \neq \emptyset$ was characterized in \cite[p. 149]{bartniksimon} in terms of the function 
	\begin{equation}\label{def_domega}
	\di_{\overline{\Omega}}(x,y) \doteq \inf\left\{ \haus^1_\delta(\gamma) \ : \ \gamma \in \Gamma_{x,y} \right\} \le + \infty \qquad \forall \, x,y \in \overline{\Omega},
 	\end{equation}
where 
	\[
	\Gamma_{x,y} = \Big\{ \gamma \in C([0,1], \overline{\Omega}) \ : \ \gamma((0,1)) \subset \Omega, \ \text{ $\gamma$ piecewise affine and $\gamma(0)=x, \gamma(1) = y$} \Big\},
	\]
the infimum is defined to be $+\infty$ if $\Gamma_{x,y} = \emptyset$, and $\gamma$ is called piecewise affine if it consists of finitely many intervals where it is affine. In fact, it is shown in \cite[p. 149]{bartniksimon} that 
	\[
	\cX_\phi(\Omega) \neq \emptyset \qquad \Longleftrightarrow \qquad |\phi(x)-\phi(y)| \le \di_{\overline\Omega}(x,y) \ \ \ \forall \, x,y \in \partial \Omega.
	\]
Note that the restriction $\di_\Omega$ of $\di_{\overline\Omega}$ to $\Omega \times \Omega$ gives the intrinsic metric on $\Omega$.  Remarks on the relation between $\di_{\overline\Omega}(x,y)$ for $x,y \in \partial \Omega$ and the distance in the metric completion of $(\Omega,\di_\Omega)$ will be given in Subsection \ref{subsec_boundary}.

	Next, we introduce a class of weak solutions to \eqref{borninfeld} in bounded domains. 
	\begin{definition}\label{def-weak-sol}
		Let $\Omega$ be a bounded domain in $\R^m$. 
		For $\rho \in W^{1,\infty}(\Omega)^*$, a \emph{weak solution to \eqref{borninfeld}} is a function $u \in \cX_\phi(\Omega)$ such that
		\[
		\begin{array}{ll}
			{\rm (i)} & \disp w \doteq \frac{1}{\sqrt{1-|Du|^2}} \in L^1_\loc(\Omega) \qquad \text{and} \\[0.5cm]
			{\rm (ii)} & \disp \int_\Omega \frac{D u \cdot D\eta}{\sqrt{1-|Du|^2}} \di x = \langle \rho, \eta \rangle 
			\qquad \forall \,  \eta \in	 \lip_c(\Omega).
		\end{array}
		\] 
		Given a subdomain $\Omega' \subset \Omega$, we say that $u$ \emph{weakly solves} \eqref{borninfeld} on $\Omega'$  
		if $w \in L^1_{\rm loc} (\Omega')$ and {\rm (ii)} holds for $\eta \in \lip_c(\Omega')$.
	\end{definition} 

	Equation \eqref{borninfeld} is formally the Euler-Lagrange equation for the functional 
\begin{equation}\label{def_I}
I_\rho \ : \ \cX_\phi(\Omega) \to \R, \qquad I_\rho(v) \doteq \disp \int_\Omega \Big( 1-\sqrt{1-|Dv|^2}\Big) \di x - \langle \rho, v \rangle.
\end{equation}
Although, for $\rho$ lying in a large subset of $W^{1,\infty}(\Omega)^*$, the variational problem for $I_\rho$ admits a unique minimizer $u_\rho$ (cf. Subsection \ref{subsec_functional}), the example of a hyperplane with slope $1$ and $\rho = 0$ indicates that the requirement $\cX_\phi(\Omega) \neq \emptyset$ does not suffice to guarantee that $u_\rho$ solves \eqref{borninfeld} (see Ecker \cite{Ecker}). 
In this respect, note that any solution to \eqref{borninfeld} is easily seen to coincide with the minimizer $u_\rho$ (cf. Proposition \ref{prop_localglobal} below). In \cite[Theorem 4.1 and Corollaries 4.2, 4.3]{bartniksimon}, the authors obtained the following striking result:

\begin{theorem}[\cite{bartniksimon}]\label{teo_bartniksimon_2}
Let $\Omega \subset \R^m$ be a bounded domain, and let $\phi \in C(\partial \Omega)$. The following properties are equivalent:
%\footnote{The equivalence (ii) $\Leftrightarrow$ (iii) is \cite[Corollary 4.3]{bartniksimon}, 
%while (i) $\Leftrightarrow$ (iii) is \cite[Theorem 4.1]{bartniksimon}.}
	\begin{itemize}
	\item[{\rm (i)}] $\phi$ admits a spacelike extension on $\Omega$, that is, there exists $\bar \phi \in \cX_\phi(\Omega)$ which is spacelike on $\Omega$; 
	\item[{\rm (ii)}] $|\phi(x)-\phi(y)| < \di_{\overline\Omega}(x,y) \ $ for every $x,y \in \partial \Omega$, $x \neq y$;
	\item[{\rm (iii)}] for each $\rho \in L^\infty(\Omega)$, there exists $u \in C^1(\Omega) \cap W^{2,2}_{\rm loc}(\Omega)$, 
	which is strictly spacelike and weakly solves \eqref{borninfeld}. 
	\end{itemize}
	\end{theorem}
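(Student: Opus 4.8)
The plan is to run the cyclic chain $\mathrm{(iii)}\Rightarrow\mathrm{(i)}\Rightarrow\mathrm{(ii)}\Rightarrow\mathrm{(iii)}$, the last implication carrying essentially all of the difficulty. The step $\mathrm{(iii)}\Rightarrow\mathrm{(i)}$ is immediate: applying $\mathrm{(iii)}$ with $\rho=0$ produces a strictly spacelike $u\in C^1(\Omega)$ with $u=\phi$ on $\partial\Omega$, hence $u\in\cX_\phi(\Omega)$; and if $\overline{xy}\subset\Omega$ then, by compactness of $\overline{xy}$, $|Du|\le c<1$ on $\overline{xy}$, so $|u(x)-u(y)|\le c\,|x-y|<|x-y|$ and $u$ is spacelike, which is $\mathrm{(i)}$.

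For $\mathrm{(i)}\Rightarrow\mathrm{(ii)}$, let $\bar\phi\in\cX_\phi(\Omega)$ be spacelike and fix $x\neq y$ in $\partial\Omega$. Since $\cX_\phi(\Omega)\neq\emptyset$, the characterization recalled above gives $|\phi(x)-\phi(y)|\le\di_{\overline\Omega}(x,y)=:L$, and we must exclude equality. Assume $L<\infty$ and, for contradiction, $\phi(y)-\phi(x)=L$. Choose $\gamma_k\in\Gamma_{x,y}$ with $\haus^1_\delta(\gamma_k)\to L$, reparametrized with constant speed on $[0,1]$. Because $|D\bar\phi|\le1$ a.e., $t\mapsto\bar\phi(\gamma_k(t))$ is Lipschitz and, using Proposition~\ref{teo_compaembe} for the boundary values, it runs from $\phi(x)$ to $\phi(x)+L$ over a curve of Euclidean length tending to $L$; by the Arzel\`a--Ascoli theorem a subsequence converges uniformly to a constant-speed curve $\gamma$ of length $L$ in $\overline\Omega$ along which $\bar\phi\circ\gamma$ has speed exactly $1$. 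One then checks that $\gamma$ must contain a nondegenerate segment $\overline{pq}\subset\Omega$ with $|\bar\phi(p)-\bar\phi(q)|=|p-q|$ — otherwise the $\le$--Lipschitz bound for $\phi$ on $\partial\Omega$ together with $|D\bar\phi|\le1$ would preclude the equality — contradicting spacelikeness. Hence $|\phi(x)-\phi(y)|<\di_{\overline\Omega}(x,y)$.

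The heart of the theorem is $\mathrm{(ii)}\Rightarrow\mathrm{(iii)}$. Fix $\rho\in L^\infty(\Omega)$. The scheme is: exhaust $\Omega$ by smooth domains $\Omega_j\Subset\Omega_{j+1}\Subset\Omega$, mollify $\rho$, prescribe suitable (mollified, weakly spacelike — which exist by $\mathrm{(ii)}$) boundary data on $\partial\Omega_j$, solve the classical Dirichlet problem for \eqref{borninfeld} on $\Omega_j$ by the continuity method, and let $j\to\infty$. Solvability and uniform control rest on two gradient estimates. The \emph{boundary} estimate is exactly where hypothesis $\mathrm{(ii)}$ enters: from $|\phi(x)-\phi(y)|<\di_{\overline\Omega}(x,y)$ one constructs, at each boundary point, upper and lower barriers modelled on truncated cones and translates of $\di_{\overline\Omega}$, which force $|Du_j|\le1-\delta_0<1$ on $\partial\Omega_j$ with $\delta_0$ independent of $j$. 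The \emph{interior} estimate is the Bartnik--Simon bound for spacelike graphs of bounded prescribed mean curvature: on $\Omega'\Subset\Omega$ one controls $w=(1-|Du|^2)^{-1/2}$ in terms of $\|\rho\|_{L^\infty}$, $\dist(\Omega',\partial\Omega)$ and the spacelike character, via a maximum principle applied to $w$ along the hypersurface. With these estimates the continuity method and Schauder theory produce smooth, strictly spacelike $u_j$ on $\overline{\Omega_j}$; the uniform bound $|Du_j|\le1$ and a diagonal extraction give a weakly spacelike limit $u\in\cX_\phi(\Omega)$ weakly solving \eqref{borninfeld}, which by Proposition~\ref{prop_localglobal} equals the minimizer $u_\rho$.

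It remains — and this is the main obstacle of the whole proof — to show the limit $u$ is \emph{spacelike}; granting this, the interior estimate upgrades it to strictly spacelike, whence the equation is locally uniformly elliptic and De Giorgi--Nash--Moser together with $W^{2,2}_\loc$ difference-quotient estimates for the (now smooth-in-$Du$) coefficients and $L^\infty$ right-hand side yield $u\in C^1(\Omega)\cap W^{2,2}_\loc(\Omega)$. The only obstruction to spacelikeness is the appearance of a light segment $\overline{pq}\subset\Omega$ with $|u(p)-u(q)|=|p-q|$ in the graph of $u$; following Bartnik--Simon one excludes it by showing that such a segment propagates out to $\partial\Omega$ and transfers the slope-$1$ equality to a pair of boundary points, contradicting $\mathrm{(ii)}$ — and it is precisely here that the \emph{strict} inequality is used in full. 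Ecker's hyperplane of slope $1$ (with $\rho=0$, see \cite{Ecker}) shows the conclusion genuinely fails when $\mathrm{(ii)}$ is dropped, so no part of this step can be bypassed.
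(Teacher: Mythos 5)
You should first be aware that the paper offers no proof of Theorem \ref{teo_bartniksimon_2}: it is quoted from \cite{bartniksimon} (with the equivalence (i)$\Leftrightarrow$(ii) also treated, for rough domains, in \cite[Theorem 1]{KM93}), so your proposal has to stand on its own, and as written it is an outline of the Bartnik--Simon argument in which the decisive steps are named rather than proved. The step (iii)$\Rightarrow$(i) is fine. In (i)$\Rightarrow$(ii), however, there is a concrete hole: after extracting a limit of near-minimizing curves you assert that the limit curve ``must contain a nondegenerate segment $\overline{pq}\subset\Omega$'' on which $\bar\phi$ increases with unit speed. Nothing forces this. Since no regularity of $\partial\Omega$ is assumed, the near-minimizing curves (which only have their \emph{interiors} in $\Omega$) may degenerate onto $\partial\Omega$, and the limit curve can lie entirely in the boundary; there spacelikeness of the extension, which is a statement about segments \emph{inside} $\Omega$, yields no contradiction, and producing one is exactly the nontrivial content of the rough-boundary case handled in \cite{KM93}. (If the limit curve does contain an arc in $\Omega$, your conclusion is correct: unit speed along the arc combined with the Euclidean $1$-Lipschitz bound on small chords contained in $\Omega$ forces the arc to be a straight light segment. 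It is the boundary-degenerate case you have not addressed.)

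In (ii)$\Rightarrow$(iii) the scheme ``smooth exhaustion $+$ mollified data $+$ continuity method'' is reasonable, but the pivotal claim that barriers give $|Du_j|\le 1-\delta_0$ on $\partial\Omega_j$ with $\delta_0$ \emph{independent of $j$} does not follow from (ii) and is false in general: condition (ii) is a pointwise strict inequality, not a uniform one, so the intrinsic Lipschitz constants of the approximating boundary data may tend to $1$ as $\Omega_j\uparrow\Omega$, and indeed conclusion (iii) is only an interior statement ($u\in C^1(\Omega)$, $|Du|<1$ pointwise), not a uniform gradient bound up to $\partial\Omega$. The mechanism in \cite{bartniksimon} is different: one works with the variational minimizer, uses the interior gradient estimate in terms of Lorentzian balls (\cite[Lemma 2.1]{bartniksimon}, the ``monotonicity formula'' quoted elsewhere in this paper) to obtain strict spacelikeness away from the set of light segments, and then excludes light segments by the anti-peeling theorem (\cite[Theorem 3.2]{bartniksimon}): a light segment must reach $\partial\Omega$ at both ends, and since the segment itself is an admissible curve for \eqref{def_domega} this gives $|\phi(p)-\phi(q)|=|p-q|\ge \di_{\overline\Omega}(p,q)$, contradicting (ii). You do invoke the interior estimate and the propagation of light segments in your final paragraph, but only by reference; since these two ingredients carry essentially all of the weight, the proposal is an accurate roadmap of the cited proof rather than a proof of the theorem.
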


We therefore define the set
 \begin{equation*}
 %\label{eq:SpaOmega}
 	\Spa(\partial \Omega) \doteq \Big\{ \phi \in C(\partial \Omega) \ : \ \text{ any among (i), (ii), (iii) in Theorem \ref{teo_bartniksimon_2} holds} \Big\}.
 \end{equation*}
	
\begin{remark}
No regularity of $\Omega$ is assumed in Theorem \ref{teo_bartniksimon_2}.
%, so the boundary condition has thus to be intended in the sense of \cite{bartniksimon}, see Remark %\ref{rem_defboundary}. 
This is quite a contrast with the linear problem $-\Delta u = \rho$ in $\Omega$, $u = \phi$ on $\partial \Omega$, for which we need certain regularity properties of $\partial \Omega$, and comes from the strong restriction $u \in W^{1,\infty}(\Omega)$ for \eqref{borninfeld}.
\end{remark}

\begin{remark}
In a broader setting, the equivalence (i) $\Leftrightarrow$ (ii) was  studied in \cite[Theorem 1]{KM93}. 
\end{remark}	

Theorem \ref{teo_bartniksimon_2} does not contain the full generality of the statements in \cite{bartniksimon}. Indeed, under the only assumption $\cX_\phi(\Omega) \neq \emptyset$ the authors showed that the minimizer $u_\rho$ is strictly spacelike on the complement of the set of light segments
\begin{equation}\label{def_lightseg}
K^\rho_\phi \doteq \overline{\bigcup \Big\{ \overline{xy} \ : \ x,y \in \Omega, \ x \neq y, \ \overline{xy} \subset \Omega, \ |u_\rho(x)-u_\rho(y)| = |x-y| \Big\}}, 
\end{equation}
hence it solves \eqref{borninfeld} on $\Omega \setminus K^\rho_{\phi}$. 
%Note that the condition $|Du_\rho| \le 1$ forces $u_\rho$ to be affine with slope $1$ on any $\overline{xy} %\subset K^\rho_\phi \cap \Omega$, 
%so the graph of $u_\rho$ has a light segment over $\overline{xy}$. With a slight abuse of notation, in such %case we call $\overline{xy}$ a \emph{light segment}, and $K^\rho_\phi$ the set of light segments of $u_\rho%$. 
A key fact proved in \cite[Theorem 3.2]{bartniksimon} is that when $\rho \in L^\infty(\Omega)$, 
every light segment has to extend up to $\partial \Omega$, a property called there the \emph{anti-peeling theorem}.  	
%Assumption $\rho \in L^\infty(\Omega)$ is essential there to compare $u_\rho$ to the radially symmetric, %singular solutions recalled in 
%Example \ref{ex_radial} below. 
The proof depends on a comparison argument that is not applicable to more general sources $\rho$, a case for which the relation between singularities of $\rho$ and properties of light segments, including their existence, is currently unknown.
%As we shall see below, its understanding is one of the core issues to obtain sharp regularity results.

For the study of hypersurfaces with $\rho \in L^\infty$ in more general ambient Lorentzian manifolds, we suggest to consult the works of Gerhardt \cite{G83} and Bartnik \cite{ba_1}. As yet, to the best of our knowledge, more singular $\rho$ on bounded domains were only addressed in a series of works by Klyachin and Miklyukov \cite{KM93,KM95,klyachin_desc}, where the authors treated in depth the case of juxtaposition of point charges. Particularly relevant for us is \cite[Theorem 2]{KM95}, that we rephrase as follows:

%more singular $\rho \in \cM(\Omega)$, juxtaposition of point charges were treated in depth in a series of %works by V. Miklyukov and V.A. Klyachin \cite{KM93,KM95,klyachin_desc}. We quote in particular %\cite[Theorem 2]{KM95}, that we rephrase as follows:

\begin{theorem}[\cite{KM95}]\label{thm_KM}
Let $\Omega \subset \R^m$ be a domain such that $(\Omega, \di_\Omega)$ has compact completion, and let $\phi \in \Spa(\partial \Omega)$. Fix a $k$-tuple of points $\mathscr{P} = (x_1,\ldots, x_k) \in \Omega \times \ldots \times \Omega$. Then, there exists a constant $M_m(\phi,\mathscr{P})$ such that, for each $a \doteq (a_1,\ldots, a_k) \in \R^k$ satisfying $|a| < M_m(\phi, \mathscr{P})$, the minimizer $u_\rho$ with source 
	\[
	\rho = \sum_{j=1}^k a_j \delta_{x_j}
	\] 
solves \eqref{borninfeld} and it is strictly spacelike (hence, smooth) on $\Omega \backslash \mathscr{P}$. Furthermore, $M_2(\phi,\mathscr{P}) = +\infty$. 
\end{theorem}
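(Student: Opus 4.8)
\emph{Proof proposal.} The plan is to show that when $|a|$ is small the minimizer $u_\rho$ of $I_\rho$ on $\cX_\phi(\Omega)$ has \emph{no light segments at all}, i.e.\ the set $K^\rho_\phi$ of \eqref{def_lightseg} is empty; the conclusions of the theorem then follow. Indeed, on $\Omega\setminus\mathscr{P}$ the source vanishes, so by the Bartnik--Simon theory recalled above $u_\rho$ is strictly spacelike and of class $C^1\cap W^{2,2}_\loc$ there and weakly solves \eqref{borninfeld} on $\Omega\setminus\mathscr{P}$; interior elliptic regularity (the equation being locally uniformly elliptic once $|Du_\rho|<1$) upgrades this to $u_\rho\in C^\infty(\Omega\setminus\mathscr{P})$. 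Near each $x_j$ the graph is trapped between radial Born--Infeld barriers of charge $\pm a_j$, whose energy density is $O(|x-x_j|^{-(m-1)})\in L^1_\loc$ and whose flux across small spheres is $\pm a_j$; hence $w_\rho\in L^1_\loc(\Omega)$ and the weak formulation passes across $\mathscr{P}$, so $u_\rho$ weakly solves \eqref{borninfeld} on all of $\Omega$. Finally, since the restriction of the global minimizer to a subdomain is again a minimizer there (Proposition~\ref{prop_localglobal}), $u_\rho|_{\Omega\setminus\mathscr{P}}$ is a minimizer for the source $0$ with the induced boundary data, so the interior ``anti-peeling'' argument of \cite[Theorem~3.2]{bartniksimon} applies on $\Omega\setminus\mathscr{P}$: every maximal light segment of $u_\rho$ is a nondegenerate closed segment $L=\overline{\xi\eta}$ with $\xi,\eta\in\partial\Omega\cup\mathscr{P}$, along which $u_\rho$ is affine of slope $1$.

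\emph{Step 1: no light segment with $\xi,\eta\in\partial\Omega$.} This uses only $\phi\in\Spa(\partial\Omega)$ and not the smallness of $|a|$. Since $(\xi,\eta)\subset\Omega$, the segment is an admissible path, so $\di_{\overline\Omega}(\xi,\eta)\le|\xi-\eta|$; moreover $u_\rho$ extends continuously to $\partial\Omega$ with value $\phi$ (Proposition~\ref{teo_compaembe}) and, being weakly spacelike, is $1$-Lipschitz for the length metric $\di_{\overline\Omega}$. As $u_\rho$ is affine of slope $1$ on $L$,
\[
|\phi(\xi)-\phi(\eta)|=|u_\rho(\xi)-u_\rho(\eta)|=|\xi-\eta|\ \ge\ \di_{\overline\Omega}(\xi,\eta)\ \ge\ |u_\rho(\xi)-u_\rho(\eta)|=|\phi(\xi)-\phi(\eta)|,
\]
forcing $|\phi(\xi)-\phi(\eta)|=\di_{\overline\Omega}(\xi,\eta)$ with $\xi\ne\eta$, which contradicts condition~(ii) of Theorem~\ref{teo_bartniksimon_2}.

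\emph{Step 2: no light segment with an endpoint $x_j\in\mathscr{P}$, for $|a|$ small.} Let $u_0$ be the minimizer for $\rho=0$, which by Theorem~\ref{teo_bartniksimon_2} is smooth and strictly spacelike; fix $\delta_0>0$ with $|u_0(p)-u_0(q)|\le(1-2\delta_0)|p-q|$ for $p,q$ in a fixed compact neighbourhood of $\mathscr{P}$. First, $u_\rho\to u_0$ uniformly on $\overline\Omega$ as $|a|\to0$: one has $|I_\rho(v)-I_0(v)|\le\|\rho\|_{\cM(\Omega)}\sup_\Omega|v|\le C_0\sqrt k\,|a|$ on the relevant competitors (recall $\sup_\Omega|u_\rho|\le\|\phi\|_{L^\infty}+\diam_{\di_\Omega}(\Omega)=:C_0$ by weak spacelikeness and compact completion), so $u_\rho$ is an $O(|a|)$-almost minimizer of the strictly convex functional $I_0$; a convexity inequality then bounds a degenerate energy norm of $D(u_\rho-u_0)$ by $O(|a|^{1/2})$, and equicontinuity of weakly spacelike functions turns this into $\eta:=\|u_\rho-u_0\|_{L^\infty(\overline\Omega)}\to0$. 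Consequently $|u_\rho(x)-u_\rho(x_j)|\le|u_0(x)-u_0(x_j)|+2\eta\le(1-2\delta_0)|x-x_j|+2\eta<|x-x_j|$ whenever $|x-x_j|>\eta/\delta_0$, so a light segment from $x_j$ cannot be longer than $\eta/\delta_0$. A shorter one would have to persist through the blow-up of $u_\rho$ at the Born--Infeld length scale $(|a_j|/\omega_{m-1})^{1/(m-1)}$ attached to the charge $a_j$; but this rescaling converges (locally uniformly, by compactness for \eqref{borninfeld}) to an entire solution with a single point charge, which is strictly spacelike off the charge --- a contradiction. Defining $M_m(\phi,\mathscr{P})$ as the supremum of those $|a|$ for which this bookkeeping closes gives $M_m(\phi,\mathscr{P})>0$. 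When $m=2$ the radial model satisfies $(|a|/\omega_1)\sinh^{-1}(\omega_1 r/|a|)<r$ for \emph{every} $a$, and the logarithmically slow growth of the planar potential prevents a light segment from ever being anchored at a point charge, whatever its strength; making this quantitative (as in the potential-theoretic estimates of \cite{KM95}) shows that the threshold may be taken to be $+\infty$, i.e.\ $M_2(\phi,\mathscr{P})=+\infty$.

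The main obstacle is the local analysis in Step~2: quantifying, in terms of $|a_j|$, how far $u_\rho$ can ``overshoot'' a strictly spacelike uncharged graph near a point charge, and ruling out arbitrarily short light segments emanating from it. Both points rest on a careful interplay of the comparison principle for \eqref{borninfeld} with measure data and a scale-invariant Bartnik--Simon--type gradient/chord estimate, stable under an approximation of $\delta_{x_j}$ by smooth densities, and it is precisely here that the dimensional dichotomy ($m=2$ versus $m\ge3$) surfaces. By contrast, Step~1 is soft.
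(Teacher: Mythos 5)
The paper does not prove this statement: Theorem \ref{thm_KM} is quoted from \cite{KM95}, and the paper's own machinery only recovers it (for $m=2$, and for arbitrary charges) as a special case of Theorem \ref{teo_BI_local_s2}, by a route the authors stress is ``completely different'' from \cite{KM95}: approximation of $\rho$ by smooth sources, the second fundamental form estimate of Corollary \ref{cor_secondfund}, the $m=2$ higher integrability $w\log w$ via a trace inequality (Theorem \ref{teo_higher_m2}), exclusion of light segments through the $\delta_y-\delta_x$ trick of Theorem \ref{teo_nolight}, and the removable singularity Theorem \ref{teo_removable}. So your proposal has to stand on its own, and as written it has two genuine gaps.

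First, the passage of the Euler--Lagrange equation across the charges is asserted, not proved. You claim that radial barriers give $w_\rho\in L^1_\loc(\Omega)$ and that therefore ``the weak formulation passes across $\mathscr{P}$''. But $w_\rho\in L^1_\loc$ holds for \emph{every} minimizer (Proposition \ref{lem_basicL2}) and is far from sufficient: knowing that $u_\rho$ minimizes $I_\rho$ and solves \eqref{borninfeld} classically on $\Omega\setminus\mathscr{P}$ with $w_\rho Du_\rho\in L^1_\loc$, one only gets that $\diver(w_\rho Du_\rho)+\rho$ is a combination $\sum_j c_j\delta_{x_j}$ with unknown masses $c_j$; the variational inequality \eqref{eq_ineq_intro} is one-sided and does not force $c_j=a_j$. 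This identification is exactly the historically delicate point (Kiessling's erratum, \cite[Remark 4.4]{bpd}), and Theorem \ref{teo_nolight} together with Corollary \ref{prop_nosolve} shows that minimizers for delta-containing sources can genuinely fail to solve \eqref{borninfeld}, so some quantitative input near $x_j$ is indispensable — in the paper it is the growth estimate of Lemma \ref{lem_growthinte} plus uniform integrability of the approximating energies $\{w_j\}$ fed into Theorem \ref{teo_removable}; in \cite{KM95} it is a comparison argument specific to point charges. A pointwise trapping of $u_\rho$ between radial solutions (itself unproved here: you would need a comparison principle for measure sources) controls $u_\rho$, not the flux of $w_\rho Du_\rho$ across small spheres.

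Second, the exclusion of short light segments by ``blow-up at the Born--Infeld scale'' does not close: a hypothetical light segment emanating from $x_j$ has no a priori lower bound on its length relative to $(|a_j|/\omega_{m-1})^{1/(m-1)}$, so nothing need persist in the limit, and the asserted classification of blow-up limits (entire minimizers with one point charge are strictly spacelike off the charge) is not available — it is essentially the statement you are trying to prove. Note that if you apply the Bartnik--Simon anti-peeling theorem on $\Omega\setminus\mathscr{P}$ correctly, maximal light segments must join two points of $\partial\Omega\cup\mathscr{P}$, which lie at a fixed positive mutual distance, so your uniform convergence $u_\rho\to u_0$ already rules them out for $|a|$ small and the blow-up is superfluous; but this perturbative mechanism can only produce some unquantified threshold $M_m>0$ and can never yield the last assertion $M_2(\phi,\mathscr{P})=+\infty$, for which your remark on logarithmic growth is not an argument. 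The $m=2$ statement for arbitrary charges requires a non-perturbative input, such as the $w\log w$ estimate and removability scheme of Theorem \ref{teo_BI_local_s2} or the original potential-theoretic proof of \cite{KM95}.
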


The above result also contains a lower bound for $M_m(\phi,\mathscr{P})$ when $m \ge 3$, which depends on the solution to \eqref{borninfeld} with $\rho = 0$, on $\{x_1,\ldots,x_k\}$ and on the geometry of $\Omega$.

The case $m=2$ is rather special and, indeed, maximal surfaces with singularities in $\LL^3$ 
were also studied from a different point of view by using complex-analytic tools. 
An Enneper--Weierstrass type representation for maximal surfaces was used by Pryce \cite{Pryce} 
to provide explicit examples in some special cases, 
including a maximal surface with two point singularities and, 
notably, one with a light segment $\ol{xy}$ \cite[Example XI]{Pryce}. 
%a solution with two point singularities and, 
%notably, one with a light segment $\ol{xy}$ \cite[Example XI]{Pryce}. 
Regarding the latter, it would be very interesting to determine the singular charge $\rho$ with $\supp \rho \subset \ol{xy}$ 
generated by such a solution. 
In \cite{esturome,fls}, the geometric structure of maximal surfaces with singularities was investigated, 
while \cite{koba,uy,fsuy} described in detail classes of maximal surfaces whose singular set is suitably controlled. 
Before stating further known results, 
it should be pointed out that, in the works cited below, 
the authors consider the equation 
\[
(1-|Du|^2)^{3/2} \textrm{div} \left( \frac{Du}{\sqrt{1-|Du|^2}} \right) 
= (1-|Du|^2)^{3/2} \rho 
\]
for which the role of light segments may be different. Examples of maximal surfaces in $\LL^3$ whose singular set contains an entire light line were constructed 
in \cite{various,uy_light,auy}, while an investigation of points at which $Du_\rho$ is light-like can be found in \cite{klyachin_mixed,uy_light_0,uy_light}. The behavior near isolated singularities of surfaces with nonconstant, smooth $\rho$ was characterized in \cite{gjm}. 
To the best of our knowledge, whether or not the singular sets described in the above mentioned references induce a singular measure 
in the mean curvature $\rho$, and which kind of measure, is a problem that has not been considered yet.

%	The case $m=2$ is rather special and, indeed, maximal surfaces with singularities in $\LL^3$ 
%were also studied from a different point of view by using complex-analytic tools 
%(cf. \cite{esturome,fls}). 
%Exploiting Weierstrass data, \cite{koba,uy,fsuy} described in detail classes of maximal surfaces whose singular set is suitably controlled. It should be pointed out that, in the works cited below, 
%the authors consider the equation 
%	\[
%		(1-|Du|^2)^{3/2} \textrm{div} \left( \frac{Du}{\sqrt{1-|Du|^2}} \right) 
%		= (1-|Du|^2)^{3/2}H, \quad 
%		H \in \mathbb{R},
%	\]
%for which the role of light segments may be different. Examples of maximal surfaces in $\LL^3$ whose singular set contains an entire light line were constructed 
%in \cite{various,uy_light,auy}, while an investigation of points at which $Du_\rho$ is light-like can be found in \cite{klyachin_mixed,uy_light_0,uy_light}. The behavior near isolated singularities of surfaces with nonconstant, smooth $\rho$ was characterized in \cite{gjm}. 
%To the best of our knowledge, whether or not the singular sets described in the above mentioned references induce a singular measure 
%in the mean curvature $\rho$, and which kind of measure, is a problem that is not considered yet. 

\subsection{Our contributions for bounded domains}

From a variational point of view, 
even though the minimizer $u_\rho$ for $I_\rho$ in \eqref{def_I} may not solve \eqref{borninfeld} weakly, 
if $\phi \in \Spa(\partial \Omega)$ then $u_\rho$ enjoys nice properties for each reasonably well-behaved source $\rho$, 
including signed Radon measures. 
Inspired by \cite{bpd}, we prove in Proposition \ref{lem_basicL2} that the energy density of $u_\rho$ is locally integrable, namely 
	\begin{equation}\label{eq_energylocal}
	w_\rho = \frac{1}{\sqrt{1-|Du_\rho|^2}} \in L^1_\loc(\Omega),
	\end{equation}
and in particular $|Du_\rho| < 1$ a.e. on $\Omega$; moreover, 
	\begin{equation}\label{eq_ineq_intro}
	\disp \int_{ \Omega } \frac{ Du_\rho \cdot ( Du_\rho - D \psi ) }{\sqrt{1-|Du_\rho|^2}} \rd x
		\leq \la \rho, u_\rho - \psi \ra \qquad \forall \, \psi \in \cX_\phi(\Omega),
	\end{equation}
where the integrand in the LHS is shown to belong to $L^1(\Omega)$. 
As we shall see in Proposition \ref{prop_localglobal}, $u_\rho$ weakly solves \eqref{borninfeld} if and only if equality holds in \eqref{eq_ineq_intro}, a fact that is not obvious in view of the lack of regularity of $\partial \Omega$ and of $\phi$.

Next, we investigate the relation between the integrability of $\rho$ and the possible existence of a light segment in the graph of $u_\rho$. In Section \ref{sec_counterexamples} (Theorem \ref{ex-light-subsp}), we construct the following example.

\begin{theorem}\label{ex-light-seg_intro}
For each $m \ge 3$ and $\ell \in \{1,\ldots, m-2\}$, there exists a function $u \in C^\infty_c(\R^m)$ with the following properties:	
\begin{itemize}
\item[(i)] the set $K$ of light segments of $u$ is a closed cylinder $\overline{B}^{\ell-1} \times [a,b]$ in a totally geodesic $\ell$-plane of $\R^m$ (in particular, if $\ell = 1$ it is a single light segment), and $|Du|< 1$ on $\R^m \backslash K$; 
\item[(ii)] $u$ satisfies  
			\[
				\int_{\R^m} \frac{Du \cdot D \eta}{\sqrt{ 1 - |Du|^2 }} \, \rd x 
				= \int_{\R^m} \rho_u \eta \, \rd x
				\qquad \forall \, \eta \in \lip_c(\R^m),
			\]
where $\rho_u \in L^q(\R^m)$ for each $q < m-\ell$. In particular, if $\Omega \Subset \Rm$ is a bounded domain containing the support of $u$, then the restriction of $u$ to $\Omega$ weakly solves \eqref{borninfeld} with $\phi \equiv 0$ and $\rho = \rho_u$;
\item[(iii)] for each $q < m-\ell$, it holds
	\[
	w, \ \ \ w |D^2 u|, \ \ \ w^2 |D^2u \left( Du, \cdot  \right)|, \ \ \ w^3 D^2 u \left( Du, Du \right) \quad \in L^q(\R^m),
	\]
where $w = (1-|Du|^2)^{-1/2}$ is the energy density of $u$. 	
	\end{itemize}
\end{theorem}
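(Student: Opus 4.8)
The plan is to build the function $u$ by hand, prescribing it explicitly near a light cylinder and interpolating to zero outside. I would work in coordinates $\R^m = \R^{\ell-1}_{y} \times \R_t \times \R^{m-\ell}_{z}$ and look for $u$ of the form $u(y,t,z) = \varphi(y,t) + h(|z|)$ on a neighborhood of the cylinder $K = \overline{B}^{\ell-1}\times[a,b]\times\{0\}$, chosen so that $|Du| \equiv 1$ exactly on $K$ and $|Du| < 1$ elsewhere. The simplest ansatz making the graph ``go null'' along $K$ is to take $u$ to be affine of slope $1$ in the $t$-direction on $K$, i.e. $\partial_t u = 1$ and $D_y u = 0$, $D_z u = 0$ on $K$; then $w = (1-|Du|^2)^{-1/2}$ blows up precisely on $K$, and the whole point is to estimate the rate of blow-up. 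Off the cylinder I would let $u$ decay: smoothly cut $\varphi$ off in $y$ and $t$ near $\partial(\overline B^{\ell-1}\times[a,b])$, and choose $h(|z|)$ a smooth, compactly supported radial profile in the $(m-\ell)$ transversal directions with $h'(0)=0$, so that moving away from $z=0$ makes $|Du|$ strictly less than $1$ at a controlled (quadratic-in-$z$) rate. Concretely one wants $1 - |Du|^2 \asymp |z|^2 + (\text{distance to } [a,b]\text{ in } t) + |y\text{-boundary terms}|$ near $K$, so that $w \asymp \operatorname{dist}((y,t,z),K)^{-1}$; the key geometric input is that $K$ has codimension $m-\ell+ (\text{contributions from } y,t)$, but the dominant singular direction is the $(m-\ell)$-dimensional $z$-slab, giving $w \in L^q$ near $K$ iff $\int_{|z|\le 1} |z|^{-q}\,dz < \infty$, i.e. $q < m-\ell$. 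This is exactly the claimed threshold, and it simultaneously forces the threshold for $\rho_u$ and for the weighted Hessian quantities, since each extra power of $w$ is compensated by an extra factor of $|Du\cdot(\text{derivative})|$ which vanishes linearly on $K$ in the singular direction.

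After fixing the ansatz, the verification proceeds in four steps. First, (i): check that with the chosen profiles $|Du| = 1$ holds on all of $K$ and $|Du| < 1$ on $\R^m\setminus K$, and that the light segments of $u$ — the maximal segments on which $u$ is affine of slope $1$ with $|Du|=1$ — are exactly the $t$-segments $\{y\}\times[a,b]\times\{0\}$, whose union is $K$; this is a direct computation once the cutoffs are taken monotone and the $t$-profile is genuinely affine on $[a,b]$ and strictly convex/concave just outside. Second, (iii): the local $L^q$ bounds. Away from $K$ everything is smooth with compact support, so it suffices to estimate in a tubular neighborhood of $K$. Using the ansatz one computes $1 - |Du|^2$, $D^2 u$, $D^2u(Du,\cdot)$, $D^2u(Du,Du)$ explicitly; the crucial cancellations are that $D^2 u(Du,Du) = \tfrac12 \partial_{Du}(|Du|^2) = -\tfrac12\,\partial_{Du}(w^{-2})$, so $w^3 D^2u(Du,Du)$ is (up to constants) $\partial_{Du}(w)$, which by the choice of profile is bounded by $w^2 \cdot(\text{linear vanishing of the transverse gradient})\sim w$, hence in $L^q$ for $q<m-\ell$; similarly $w^2 D^2u(Du,\cdot)$ and $w|D^2u|$ reduce to $w$ or $w\cdot(\text{bounded})$ near $K$ after the same algebra. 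Then $\int_{U} w^q < \infty$ for $q < m-\ell$ by the codimension computation above, and for $q \ge m-\ell$ it diverges, so the threshold is sharp within this construction. Third, (ii): set $\rho_u := -\diver(Du/\sqrt{1-|Du|^2}) = -\diver(w\,Du)$, understood distributionally. On $\R^m\setminus K$ this is the classical divergence, a smooth compactly supported function; one must show (a) it extends to an $L^q$ function across $K$ for $q < m-\ell$ — which follows because $\diver(w\,Du) = w\,\Delta u + Dw\cdot Du = w\Delta u + w^3 D^2u(Du,Du)$, and both terms were just bounded in $L^q$ near $K$ — and (b) that no singular part is concentrated on $K$, i.e. the identity $\int w\,Du\cdot D\eta = \int \rho_u\,\eta$ holds for all $\eta \in \lip_c(\R^m)$, including those not vanishing on $K$. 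For (b) I would excise an $\varepsilon$-tube $T_\varepsilon$ around $K$, integrate by parts on $\R^m\setminus T_\varepsilon$, and show the boundary flux $\int_{\partial T_\varepsilon} w\, (Du\cdot\nu)\,\eta$ tends to $0$ as $\varepsilon\to 0$: since $|w\,Du| \le w \lesssim \operatorname{dist}(\cdot,K)^{-1}$ and $\haus^{m-1}(\partial T_\varepsilon) \lesssim \varepsilon^{m-\ell-1}$ (the transverse sphere of radius $\varepsilon$ in $\R^{m-\ell}$ times the bounded base), the flux is $O(\varepsilon^{-1}\cdot\varepsilon^{m-\ell-1}) = O(\varepsilon^{m-\ell-2})$, which $\to 0$ precisely when $m-\ell \ge 2$, i.e. $\ell \le m-2$ — exactly the hypothesis. (Here one should be slightly more careful and use that $Du\cdot\nu$ itself is $O(\varepsilon)$ on $\partial T_\varepsilon$ in the transverse direction, improving the estimate and killing the borderline case as well.) This gives the weak equation on $\R^m$, and restricting $\eta$ to have support in a smooth $\Omega \supset \supp u$ yields that $u$ weakly solves \eqref{borninfeld} with $\phi\equiv 0$.

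The main obstacle I anticipate is step three, part (b): making the flux estimate across $K$ genuinely work, which amounts to controlling the normal component $Du\cdot\nu$ on the shrinking tube carefully enough to beat the blow-up of $w$, and in particular handling the ``ends'' of the cylinder near $t = a$ and $t = b$ and near $\partial \overline B^{\ell-1}$, where $K$ is not a smooth submanifold and the tube $\partial T_\varepsilon$ has corners. The clean way around this is to choose the cutoffs so that near the ends $u$ already coincides with a smooth strictly-spacelike function (so there is no new singularity contributed by the ends), localizing the entire analysis to the ``interior'' of the cylinder where $K \cong B^{\ell-1}\times(a',b')\times\{0\}$ is a genuine smooth submanifold of codimension $m-\ell$ and the tubular-neighborhood/coarea computation is textbook. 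A secondary technical point is verifying that the light segments are \emph{exactly} $K$ and not a larger set — one must rule out accidental slope-$1$ segments in oblique directions, which is ensured by taking the $z$-profile strictly concave near $0$ and the $t$-profile affine \emph{only} on $[a,b]$, so that $|Du|=1$ genuinely forces $z=0$, $t\in[a,b]$, and the direction to be the $t$-axis. With these choices the remaining computations are routine, and the exponent $m-\ell$ emerges uniformly in all three parts from the single fact that $w \sim \operatorname{dist}(\cdot, K)^{-1}$ with $K$ of transverse dimension $m-\ell$.
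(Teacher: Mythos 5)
Your overall route is the same as the paper's (Theorem \ref{ex-light-subsp}): an explicit ansatz which is affine of slope $1$ along the cylinder and perturbed quadratically in the $m-\ell$ transverse directions, the heuristic $w \sim (\text{transverse distance})^{-1}$ giving the threshold $q<m-\ell$, and the excision-plus-flux argument for the weak equation, where your refined observation that the transverse normal derivative is $O(\varepsilon)$ on the tube is exactly what makes the boundary term $O(\varepsilon^{m-\ell-1})\to 0$ for $\ell\le m-2$ (compare \eqref{eq:34.2}--\eqref{eq:34.3}). However, there is a genuine gap in the way you propose to handle the ends of the cylinder. You cannot ``choose the cutoffs so that near the ends $u$ already coincides with a smooth strictly-spacelike function'': since $u\in C^\infty$ and $|Du|=1$ on the \emph{closed} cylinder $K$, including the end caps $t\in\{a,b\}$ (and, for $\ell\ge 2$, the lateral boundary of $\overline B^{\ell-1}$), the energy density $w$ is unbounded in every neighborhood of these ends, so the transition region cannot be localized away and must be estimated.

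This is precisely where the work lies, and where your ``each power of $w$ is compensated by linear vanishing'' heuristic breaks down. In the transition zone the blow-up is $w\sim\bigl(1-a(t)+|z|^2\bigr)^{-1/2}$, where $a$ is the profile interpolating between slope $1$ on $[a,b]$ and slope $<1$ outside, and the Hessian acquires terms proportional to $a'(t)$; hence $\rho_u$ and $w^3 D^2u(Du,Du)$ contain a contribution of size $w^3|a'(t)|\sim\bigl(1-a+|z|^2\bigr)^{-3/2}|a'|$, whose $L^q$-integrability depends delicately on the flatness of $a$ at the endpoint. A quick computation shows that if $1-a$ vanishes only to finite polynomial order $k$, then after integrating in $z$ one only gets membership in $L^q$ for $q$ up to a $k$-dependent value strictly below $m-\ell$; to reach \emph{every} $q<m-\ell$ the profile must be flat to infinite order, which is exactly why the paper takes the exponential cutoffs in \eqref{eq:34.5} and \eqref{prop-eta} and then verifies integrability of $|a_\e'|\,(1-a_\e)^{-3q/2+(m-\ell)/(2\kappa)}$ explicitly (the computation culminating in \eqref{we-ae}, and the analogous analysis on the region $\Omega_4$ for $\ell\ge2$). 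Your proposal needs this ingredient --- a specific infinitely flat transition profile together with the corresponding end-cap estimates --- to close; with it added, the remaining steps you outline do match the paper's proof.
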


\begin{remark}\label{rem:smooth-mc-ls}
Notice that $u$ is \emph{smooth}, so the presence of a (maximally extended) light segment entirely contained in $\Omega$ may not prevent the minimizer to be regular. As pointed out by one of the referees, this also occurs when the light segment extends up to $\partial \Omega$: consider the function $u(y,x) = (1-|y|^4)x$ in a small ball $\Omega \subset \R^2$ centered at the origin. Then, $u \in C^\infty(\overline\Omega)$ even though there is a light segment $\gamma$ joining antipodal points of $\Omega$, and by direct computation, if $\Omega$ is small enough, the mean curvature $\rho$ of $u$ on $\Omega \backslash \gamma$ extends smoothly to the entire $\overline{\Omega}$. Furthermore, we may check that $u$ minimizes $I_\rho$ with respect to its own boundary values\footnote{The minimizer $u_\rho$ must coincide with $u$ on $\partial \Omega \cup \gamma$ and be strictly spacelike away from $\gamma$ by \cite[Corollary 4.2]{bartniksimon}, hence $u=u_\rho$ follows by standard comparison on each half-ball determined by $\gamma$.}. However, the energy density $w_\rho$ violates \eqref{eq_energylocal}, so the condition $\phi \in \Spa(\partial \Omega)$ is necessary to get \eqref{eq_energylocal} even when $|Du_\rho|<1$ almost everywhere.
\end{remark}

	Our idea for the construction is as follows. 
Assuming for simplicity $\ell = 1$ and using coordinates $( y , x_m) \in \R^{m -1} \times \R$, 
we first perturb the function $(y,x_m) \mapsto x_m$, whose slope is equal to $1$, 
in the $y$-direction in the following form: 
	\[
		U(y,x_m) = \left( 1 - |y|^2 \right) x_m. 
	\]
The core part of the argument is then to devise a suitable cut-off so that $U$ becomes strictly spacelike outside of a small segment of 
type $\{0\} \times [-t,t]$, it weakly solves \eqref{borninfeld} and the mean curvature of the resulting graph has the desired regularity. 
This is quite subtle and, because of technicalities, the proof is deferred to Section \ref{sec_counterexamples}. 
We think that the above regularity bounds for $\rho$ are sharp, see Subsection \ref{subsec_open} for a list of open problems.

	The existence of a weak solution with a light segment allows us to exhibit what, to our knowledge, 
is the first example of minimizer with spacelike boundary condition that \emph{does not solve \eqref{borninfeld}}, 
even though the corresponding source is rather mild. This is a consequence of the following result, Theorem \ref{teo_nolight} below.

\begin{theorem}\label{teo_nolight_intro}
Let $\Omega \subset \R^m$ be either a bounded domain or $\Omega = \R^m$. In the first case, let $\phi \in \Spa(\partial \Omega)$. Let $u_\rho$ be a minimizer for $I_\rho$ and assume that $u_\rho$ has a light segment $\overline{xy} \subset\Omega$ with $u_\rho(y) - u_\rho(x) = |y-x|$. Then, for each $\alpha > 0$, $u_\rho$ also minimizes the functional $I_{\rho_\alpha}$ with 
	\[
	\rho_\alpha = \rho + \alpha( \delta_y - \delta_x)
	\]
but it does not solve \eqref{borninfeld} weakly for $\rho_\alpha$.
\end{theorem}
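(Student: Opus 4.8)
\noindent\emph{Approach.} The statement splits into two independent claims, and I would prove each separately; neither requires any information on the fine structure of the set of light segments, which is why the argument is short.

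\smallskip
\noindent\emph{Step 1: $u_\rho$ minimizes $I_{\rho_\alpha}$.} The plan is to exploit that $\overline{xy}$ is a light segment of $u_\rho$. Any admissible competitor $\psi$ — an element of $\cX_\phi(\Omega)$, or of the relevant weakly spacelike class when $\Omega=\R^m$ — satisfies $|D\psi|\le 1$ a.e., so restricting $\psi$ to the segment $\overline{xy}\subset\Omega$ gives $\psi(y)-\psi(x)\le |y-x|=u_\rho(y)-u_\rho(x)$. Hence, recalling that $\rho_\alpha=\rho+\alpha(\delta_y-\delta_x)$ with $\alpha>0$,
\[
I_{\rho_\alpha}(\psi)=I_\rho(\psi)-\alpha\bigl(\psi(y)-\psi(x)\bigr)\ \ge\ I_\rho(u_\rho)-\alpha\bigl(u_\rho(y)-u_\rho(x)\bigr)=I_{\rho_\alpha}(u_\rho),
\]
using minimality of $u_\rho$ for $I_\rho$ together with the bound above. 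In other words, the presence of the light segment is exactly what forces the linear functional $\psi\mapsto\psi(y)-\psi(x)$ to attain its maximum over admissible $\psi$ at $u_\rho$, so subtracting $\alpha$ times it leaves $u_\rho$ a minimizer.

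\smallskip
\noindent\emph{Step 2: $u_\rho$ does not weakly solve \eqref{borninfeld} for $\rho_\alpha$.} I would argue by contradiction, assuming it does. By Step 1, $u_\rho$ minimizes $I_{\rho_\alpha}$, and since its energy density $w_\rho=(1-|Du_\rho|^2)^{-1/2}\in L^1_\loc$ is unchanged, Proposition \ref{lem_basicL2} applies with source $\rho_\alpha$ and yields the variational inequality \eqref{eq_ineq_intro} with $\rho$ replaced by $\rho_\alpha$; moreover, Proposition \ref{prop_localglobal}, applied to $\rho_\alpha$, turns the assumption ``$u_\rho$ weakly solves \eqref{borninfeld} for $\rho_\alpha$'' into the \emph{equality}
\[
\int_\Omega\frac{Du_\rho\cdot(Du_\rho-D\psi)}{\sqrt{1-|Du_\rho|^2}}\,\rd x=\langle\rho_\alpha,u_\rho-\psi\rangle\qquad\forall\,\psi\in\cX_\phi(\Omega).
\]
On the other hand, \eqref{eq_ineq_intro} for the original source reads $\int_\Omega\frac{Du_\rho\cdot(Du_\rho-D\psi)}{\sqrt{1-|Du_\rho|^2}}\,\rd x\le \langle\rho,u_\rho-\psi\rangle$ for all such $\psi$. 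The two left-hand sides coincide, so subtracting gives $\langle\rho_\alpha-\rho,u_\rho-\psi\rangle\le 0$, i.e. $\alpha\bigl[(u_\rho(y)-\psi(y))-(u_\rho(x)-\psi(x))\bigr]\le 0$, i.e.
\[
\psi(y)-\psi(x)\ \ge\ u_\rho(y)-u_\rho(x)=|y-x|\qquad\forall\,\psi\in\cX_\phi(\Omega).
\]
Since $\phi\in\Spa(\partial\Omega)$, Theorem \ref{teo_bartniksimon_2}(i) provides a spacelike $\bar\phi\in\cX_\phi(\Omega)$, and spacelikeness together with $\overline{xy}\subset\Omega$ forces $|\bar\phi(y)-\bar\phi(x)|<|y-x|$ — contradicting the displayed inequality with $\psi=\bar\phi$. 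When $\Omega=\R^m$ one reaches the same contradiction by testing with $\psi\equiv 0$ (or any compactly supported strictly spacelike function), using the $\R^m$-analogues of \eqref{eq_ineq_intro} and of Propositions \ref{lem_basicL2}, \ref{prop_localglobal}.

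\smallskip
\noindent\emph{Main obstacle.} The logical content is light; the only point needing care is the bookkeeping around Propositions \ref{lem_basicL2} and \ref{prop_localglobal} for the perturbed source: one must check that $\rho_\alpha$ still lies in the class of sources for which these results hold (it does, being the sum of the admissible $\rho$ and the finite measure $\alpha(\delta_y-\delta_x)$, and with $u_\rho$ already shown to be its minimizer and $w_\rho\in L^1_\loc$), and that the characterization ``weakly solves $\Leftrightarrow$ equality in the variational inequality'' is available for $\rho_\alpha$ in both the bounded and the $\R^m$ settings. Everything else — the restriction-to-a-segment estimate in Step 1 and the comparison of the two (in)equalities in Step 2 — is elementary.
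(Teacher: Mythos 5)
Your proof is correct and takes essentially the same route as the paper: Step 1 is the paper's minimality/uniqueness argument recast in direct form (both rest on the fact that $\psi \mapsto \psi(y)-\psi(x)$ is maximized at $u_\rho$ over weakly spacelike competitors because of the light segment), and Step 2 uses exactly the paper's ingredients — the variational inequality of Proposition \ref{lem_basicL2}, the equivalence in Proposition \ref{prop_localglobal}, and a spacelike extension $\bar\phi$ from Theorem \ref{teo_bartniksimon_2} (or $\psi\equiv 0$ on $\R^m$) — merely organized as a contradiction instead of a direct strict inequality. I see no gaps.
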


Indeed, applying Theorem \ref{teo_nolight_intro} to the example in Theorem \ref{ex-light-seg_intro} with $\ell =1$, we infer

\begin{corollary}\label{prop_nosolve}
Let $m \geq 3$. Then, there exist a smooth domain $\Omega \Subset \R^m$, a function $u \in C^\infty_c(\Omega)\cap \cX_0(\Omega)$, points $x,y \in \Omega$ with $x \neq y$ and a function $\rho_{\mathrm{AC}} \in L^q(\Omega)$ for any $q < m-1$, such that the following properties hold:  
\begin{itemize}
\item[(i)] $\overline{xy}$ is a light segment for $u$, and $|Du|<1$ on $\Omega \backslash \overline{xy}$;
\item[(ii)] $u$ minimizes $I_\rho$ with source 
	\[
	\rho = \alpha(\delta_y - \delta_x) + \rho_{\mathrm{AC}}, \qquad \text{for each fixed } \, \alpha \in \R^+,
	\]
but it does not solve \eqref{borninfeld} weakly. 	 
\end{itemize}	
\end{corollary}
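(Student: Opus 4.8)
The plan is to obtain Corollary \ref{prop_nosolve} as a direct combination of two ingredients already in hand: the explicit example of Theorem \ref{ex-light-seg_intro} (with $\ell=1$) and the non-solvability mechanism of Theorem \ref{teo_nolight_intro}. First I would invoke Theorem \ref{ex-light-seg_intro} with $\ell = 1$: it furnishes a function $u \in C^\infty_c(\R^m)$ whose set of light segments is a single segment $K = \overline{xy}$, with $|Du|<1$ on $\R^m \setminus \overline{xy}$, which weakly solves \eqref{borninfeld} on $\R^m$ with source $\rho_u \in L^q(\R^m)$ for every $q < m-1$. Choosing then any smooth bounded domain $\Omega \Subset \R^m$ containing $\supp u$, the last sentence of Theorem \ref{ex-light-seg_intro}(ii) guarantees that $u \in \cX_0(\Omega)$ weakly solves \eqref{borninfeld} on $\Omega$ with $\phi \equiv 0$ and $\rho = \rho_u =: \rho_{\mathrm{AC}}$. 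Since $\phi \equiv 0 \in \Spa(\partial\Omega)$ for any bounded domain (the zero extension is spacelike, or equivalently $0 < \di_{\overline\Omega}(x,y)$ for distinct boundary points), the hypotheses of Theorem \ref{teo_nolight_intro} in the bounded case are met, and this also sets up item (i) of the corollary.

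Next I would verify that $u$ is in fact the minimizer $u_\rho$ of $I_\rho$: by Proposition \ref{prop_localglobal} (cited in the excerpt), any weak solution to \eqref{borninfeld} coincides with the minimizer, so $u = u_\rho$. Now orient the segment so that $u_\rho(y) - u_\rho(x) = |y-x|$ (possibly swapping the roles of $x$ and $y$), which is legitimate since $\overline{xy}$ is a light segment of $u_\rho$ and $u_\rho$ is affine of slope $1$ along it. Applying Theorem \ref{teo_nolight_intro} with this $\rho$ and any fixed $\alpha \in \R^+$, we conclude that $u_\rho$ also minimizes $I_{\rho_\alpha}$ for $\rho_\alpha = \rho_{\mathrm{AC}} + \alpha(\delta_y - \delta_x)$, yet $u_\rho$ does not solve \eqref{borninfeld} weakly for the source $\rho_\alpha$. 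Renaming $\rho := \rho_\alpha$ gives exactly statement (ii) of the corollary, and since the minimizer for $I_{\rho_\alpha}$ is unique and equals $u = u_\rho$, the function in (i) and (ii) is the same object, so (i) holds as well.

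There is essentially no new difficulty here: the corollary is purely a bookkeeping assembly of the two main theorems, so the only things to check carefully are (a) that $\phi \equiv 0 \in \Spa(\partial\Omega)$ — immediate from Theorem \ref{teo_bartniksimon_2}(ii), as $\di_{\overline\Omega}(x,y) > 0 = |\phi(x)-\phi(y)|$ for $x \neq y$ on $\partial\Omega$; (b) that the domain $\Omega$ can be taken smooth and containing $\supp u$, which is trivially arrangeable since $\supp u$ is compact; and (c) the orientation of the light segment so that the sign convention $u_\rho(y)-u_\rho(x) = |y-x|$ in Theorem \ref{teo_nolight_intro} is satisfied, which costs at most a relabeling of the two endpoints. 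If anything counts as ``the main point,'' it is simply recognizing that Theorem \ref{teo_nolight_intro} applies to the \emph{already absolutely continuous} source $\rho_{\mathrm{AC}}$ produced by the $\ell=1$ example, so that adding the atoms $\alpha(\delta_y - \delta_x)$ keeps the same minimizer while destroying weak solvability — thereby exhibiting a source of the announced form $\alpha(\delta_y - \delta_x) + \rho_{\mathrm{AC}}$ with $\rho_{\mathrm{AC}} \in L^q(\Omega)$ for all $q < m-1$ whose minimizer fails to solve \eqref{borninfeld}.
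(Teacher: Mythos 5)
Your proposal is correct and follows exactly the paper's route: the corollary is obtained by applying Theorem \ref{teo_nolight_intro} to the $\ell=1$ example of Theorem \ref{ex-light-seg_intro}, using Proposition \ref{prop_localglobal} to identify $u$ with the minimizer $u_\rho$ for $\rho=\rho_{\mathrm{AC}}$ in a smooth domain $\Omega \Supset \supp u$ with $\phi\equiv 0 \in \Spa(\partial\Omega)$. The bookkeeping checks you list (smoothness of $\Omega$, orientation of the light segment, $q<m-1$ integrability of $\rho_{\mathrm{AC}}$) are exactly the points the paper leaves implicit, so nothing is missing.
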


Observe that Corollary \ref{prop_nosolve} makes it impossible to extend Theorem \ref{thm_KM} (i.e. \cite[Theorem 2]{KM95}) for dimension $m \ge 3$ to more general sources of the type
	\[
	\rho = \sum_{j=1}^k a_j \delta_{x_j} + \rho_{\mathrm{AC}} \qquad \text{with } \, \rho_{\mathrm{AC}} \in L^q(\Omega), \ q < m-1.   
	\]
	
We next move to results that guarantee the solvability of \eqref{borninfeld}. 
To get elliptic estimates, our boundary datum shall be restricted to compact subsets $\FF\subset \Spa(\partial \Omega)$ with respect to uniform convergence. 
Examples of $\FF$ include a singleton $\{\phi\}$ and sets of uniformly bounded, $c$-Lipschitz functions on $\partial \Omega$ with respect to $\rd_\delta$, with $c<1$. 
Under the assumption that the metric space $(\Omega,\di_\Omega)$ has compact completion, a more general example will be studied in Subsection \ref{subsec_boundary}.

%\begin{remark}
%Given $a > 0$ and $\Psi : [0,a) \to [0,\infty)$ satisfying $\Psi(0) = 0$, we recall that the generalized (spherical) Hausdoff measure $\haus_\Psi (E)$ is defined by 
%			\[
%				\haus_\Psi(E) \doteq \lim_{\delta \to 0^+} \left[ \inf \left\{ \sum_{j=1}^\infty \Psi(r_j) \ : \ E \subset \bigcup_{j=1}^\infty B_{r_j}(x_j), \ r_j \le \delta \right\} \right],
%			\]
%see \cite[Chapter 7]{Ye14}. In the case $\Psi(t) = t |\log t|$, observe that for each $s \in (0,1)$ it holds $C_1 \haus^1 \leq \haus_\Psi \leq C_s \haus^s$ for some constants $C_1, C_s > 0$.
%\end{remark}
 
We first consider the $2$-dimensional case.

\begin{theorem}\label{teo_BI_local_s2}
Assume that $\Omega \subset \R^2$ is a bounded domain, and 
let $\Sigma \Subset \Omega$ be a compact subset satisfying $\haus_\delta^1(\Sigma) = 0$. 
Suppose that $\rho \in \cM(\Omega)$ decomposes as 
	\begin{equation*}
	%\label{assu_rho_mainL2}
	\rho = \rho_{\mathrm{S}} + \rho_{\mathrm{AC}}, \qquad \text{with } \, \left\{ \begin{array}{l}
	{\rm supp } \, \rho_{\mathrm{S}} \subset \Sigma \\[0.2cm]	
	\rho_{\mathrm{AC}} \in L^1(\Omega) \cap L^2_\loc(\Omega \backslash \Sigma).
	\end{array}\right.
	\end{equation*}
Then, 
\begin{itemize}
\item[{\rm (i)}] for each $\phi \in \Spa(\partial \Omega)$, 
the minimizer $u_\rho \in \cX_\phi(\Omega)$ weakly solves \eqref{borninfeld} in $\Omega$ and does not have light segments;
\item[{\rm (ii)}] for any given compact set $\FF \subset \Spa(\partial \Omega)$, 
$\mathcal{I}_1, \mathcal{I}_2, \e >0$, $q_0 \geq 0$, and any given open set $\Omega' \Subset \Omega \backslash \Sigma$ satisfying 
	\[
	\|\rho\|_{\cM(\Omega)} \le \mathcal{I}_1, \qquad \|\rho\|_{L^2(\Omega')} \le \mathcal{I}_2, 
	\]
there exists a constant
$ \mathcal{C} = \mathcal{C}\big(\Omega,\FF, q_0, \diam_\delta(\Omega), \mathcal{I}_1, \mathcal{I}_2, \e, \di_\delta(\Omega',\partial \Omega),\Omega'\big)$
such that, for each $\phi \in \FF$, it holds
	\begin{equation*}
	%\label{eq_higherintesecond}
	\begin{aligned}
	&\int_{\Omega'_{\e}} (1+ \log w_\rho)^{q_0} \biggl\{ w_\rho |D^2u_\rho|^2 + w_\rho^3 \left| D^2 u_\rho  \left( D u_\rho, \cdot \right) \right|^2 
	\\
	&\qquad\qquad 
	+ w_\rho^5 \left[D^2u_\rho(Du_\rho,Du_\rho)\right]^2\biggr\}\di x 
	+ \int_{\Omega'_\e} w_\rho (1+ \log w_\rho)^{q_0+1}\di x \le \mathcal{C},
	\end{aligned}
	\end{equation*}
where $\Omega'_{\e} \doteq \{x \in \Omega' : \di_\delta(x, \partial \Omega') > \e\}$;
\item[{\rm (iii)}] 
if $\Omega' \Subset \Omega \setminus \Sigma$ and $\rho \in L^\infty(\Omega')$, then $u_\rho$ is strictly spacelike and $u_\rho \in C^{1,\alpha}_\loc(\Omega')$, for some $\alpha>0$. 
In particular, if $\rho \in C^\infty(\Omega')$ so is $u_\rho$.
	\end{itemize}
\end{theorem}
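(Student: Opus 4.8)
The plan is to treat the three assertions in sequence, building each on the previous one, and to reduce everything to a careful analysis of the set $K^\rho_\phi$ of light segments together with the log-improved a priori estimates announced in the introduction. The starting point is the basic variational inequality \eqref{eq_ineq_intro} (Proposition \ref{lem_basicL2}) and the fact, from Proposition \ref{prop_localglobal}, that $u_\rho$ weakly solves \eqref{borninfeld} on a subdomain $\Omega'$ as soon as equality holds there. Since Bartnik--Simon's theory gives that $u_\rho$ is automatically strictly spacelike and smooth off $K^\rho_\phi$ for nice enough $\rho$, the entire issue is to rule out light segments. In dimension $m=2$ this is where the hypothesis $\haus^1_\delta(\Sigma)=0$ is crucial: a light segment $\overline{xy}$ of $u_\rho$, being an affine segment of slope $1$, must — if it survived — either stay inside $\Omega\setminus\Sigma$ or meet $\Sigma$; I would first argue that near $\Sigma$ the thinness $\haus^1_\delta(\Sigma)=0$ prevents $\rho_{\mathrm S}$ from ``feeding'' a segment, and then show that in $\Omega\setminus\Sigma$, where $\rho_{\mathrm{AC}}\in L^2_\loc$, a segment is impossible by the following mechanism: one slides a supporting null hyperplane, uses that $u_\rho$ is affine on the segment, and obtains a contradiction with the integrability of $w_\rho$ (Proposition \ref{lem_basicL2}) by a blow-up / capacity-type computation — this is the 2-dimensional analogue of the anti-peeling phenomenon, but now with the measure-data source. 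Equivalently, I would show that a light segment would force $w_\rho\notin L^1_\loc$ near it in dimension $2$, contradicting $w_\rho\in L^1_\loc(\Omega)$; morally, the ``energy'' concentrated on a segment of null directions costs a non-integrable amount only when $m=2$, which is exactly why the statement is special to the plane. This proves (i): no light segments, hence $K^\rho_\phi=\emptyset$, hence equality in \eqref{eq_ineq_intro} and weak solvability on all of $\Omega$.

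For part (ii), once (i) gives $u_\rho\in C^1\cap W^{2,2}_\loc(\Omega\setminus\Sigma)$ and $|Du_\rho|<1$ there, I would derive the log-improved integral bound by testing the equation with carefully chosen cut-offs and exploiting the structure of the linearized operator. The key computation is the Bochner-type / differentiated form of \eqref{borninfeld}: differentiating the equation and contracting appropriately produces, for the energy density $w_\rho$, a differential inequality of the form $\mathrm{div}(A\, Dw_\rho)\ge c\,w_\rho\big(w_\rho|D^2u_\rho|^2+\ldots\big)$ (schematically, with $A$ the coefficient matrix $(1-|Du_\rho|^2)^{-1/2}(\mathrm{Id}+\ldots)$), where the left side is controlled by $\rho$ on the support of the test function. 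Testing this inequality against $\eta^2(1+\log w_\rho)^{q_0}$ with $\eta$ a cut-off supported in $\Omega'$ and equal to $1$ on $\Omega'_\e$, absorbing the gradient-of-$\log w_\rho$ terms using the coercivity of $A$ on the spacelike region, and iterating / using the hypotheses $\|\rho\|_{\cM}\le\mathcal I_1$, $\|\rho\|_{L^2(\Omega')}\le\mathcal I_2$, yields precisely the claimed bound with the stated dependence of $\mathcal C$. The compactness of $\FF\subset\Spa(\partial\Omega)$ enters to make the constant uniform in $\phi\in\FF$: it guarantees a uniform modulus controlling how close $u_\rho$ can come to a light segment near $\partial\Omega$, hence a uniform lower bound on $\di_\delta(K^\rho_\phi,\Omega')$-type quantities, so the localized estimates do not degenerate as $\phi$ varies. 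Part (iii) is then essentially classical: on $\Omega'\Subset\Omega\setminus\Sigma$ with $\rho\in L^\infty(\Omega')$, the $W^{2,2}_\loc$ bound plus De Giorgi--Nash--Moser applied to the (now uniformly elliptic on compact subsets, thanks to $|Du_\rho|<1$) equation gives $u_\rho\in C^{1,\alpha}_\loc(\Omega')$, whence uniform ellipticity on compacts, whence Schauder bootstrapping upgrades $C^\infty$ regularity of $\rho$ to $C^\infty$ regularity of $u_\rho$.

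I expect the main obstacle to be part (i), specifically the exclusion of light segments through $\Sigma$ and the quantitative control near $\partial\Omega$. Inside $\Omega\setminus\Sigma$ the $L^2_\loc$ hypothesis is the natural threshold (consistent with Theorem \ref{ex-light-seg_intro}, where light segments coexist with $L^2$ sources only when $m\ge4$, and with the $m=2$ case being rigid), but turning this heuristic into a proof requires a delicate local argument: one must show that the variational inequality \eqref{eq_ineq_intro}, combined with $\haus^1_\delta(\Sigma)=0$ and $w_\rho\in L^1_\loc$, is incompatible with the existence of an affine slope-$1$ piece. The cleanest route is probably by contradiction via a comparison competitor: given a putative light segment, perturb $u_\rho$ downward (or along the null direction) in a thin neighborhood in a way that strictly decreases $I_\rho$ — the thinness $\haus^1_\delta(\Sigma)=0$ is what makes the contribution of $\rho_{\mathrm S}$ to this perturbation negligible, while the $L^2_\loc$ control of $\rho_{\mathrm{AC}}$ away from $\Sigma$ controls the contribution there — contradicting minimality. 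Making this perturbation and its energy accounting precise, uniformly enough to also yield the $\FF$-uniform constants in (ii), is the technical heart of the argument; everything else is a combination of standard elliptic regularity and the Bochner computation adapted to the Lorentzian mean curvature operator.
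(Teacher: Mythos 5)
There is a genuine gap, and it sits exactly where you locate the ``technical heart'': the exclusion of light segments in part (i). Neither mechanism you propose works. The claim that a light segment would force $w_\rho\notin L^1_\loc$ near it in dimension $2$ cannot be the route: Proposition \ref{lem_basicL2} gives $w_\rho\in L^1_\loc(\Omega)$ \emph{unconditionally} for $\phi\in\Spa(\partial\Omega)$, so local integrability of $w_\rho$ can never be contradicted by a light segment (indeed Pryce's maximal surface in $\LL^3$ and the example of Theorem \ref{ex-light-seg_intro} show light segments coexisting with very integrable energy densities). The alternative ``perturb $u_\rho$ downward near the segment and strictly decrease $I_\rho$'' is precisely what one cannot do naively: for measure data there is no comparison/anti-peeling argument available (the paper stresses that Bartnik--Simon's comparison breaks down beyond $L^\infty$ sources), and you give no energy accounting that makes it work. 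The paper's actual argument is of a completely different nature and is genuinely two-step: first a dimension-$2$ higher integrability $\int w_j\log(1+w_j)\le\mathcal C$ for the \emph{mollified} minimizers (Theorem \ref{teo_higher_m2}), obtained from Maz'ya's trace inequality combined with the second fundamental form bound on $\int|D\log w_j|$ (Corollary \ref{cor_secondfund}) and the linear growth $\int_{B_r}w_j\le\mathcal C r$ (Lemma \ref{lem_growthinte}); then, assuming a light segment $\overline{xy}\subset\Omega'$, Theorem \ref{teo_nolight} shows $u_\rho$ also minimizes $I_{\rho+\delta_y-\delta_x}$, and the removable singularity Theorem \ref{teo_removable} with $E=\{x,y\}$ (legitimate because of the uniform integrability just obtained) forces $u_\rho$ to weakly solve the equation for both sources on $\Omega'$, which is absurd when tested against $\eta$ with $\eta(x)\neq\eta(y)$. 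Nothing in your proposal supplies a substitute for this chain.

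Two further steps in your plan would also fail as written. First, ``no light segments $\Rightarrow$ equality in \eqref{eq_ineq_intro} on all of $\Omega$'' is not automatic: near $\Sigma$ the source is only a measure, so one cannot invoke strict spacelikeness or local gradient estimates there; the paper passes to the limit in the approximated Euler--Lagrange equations using local uniform integrability of $\{w_j\}$ up to $\Sigma$, which is exactly what Theorem \ref{teo_removable} provides via the growth estimate $\int_{B_r}w_j\le \mathcal C r$ and a covering of $\Sigma$ exploiting $\haus^1_\delta(\Sigma)=0$ quantitatively, not merely as a qualitative ``thinness'' preventing segments. Second, for part (ii) your plan to test a Bochner-type inequality against $\eta^2(1+\log w_\rho)^{q_0}$ with Euclidean cut-offs does not close: with such cut-offs the term $\Delta_M$ of the cut-off produces an integral of type $\int(1+\beta)^{q}\beta\ch^2\beta$ that cannot be absorbed (this is Remark \ref{rem_point}); the paper's proof of Theorem \ref{teo_higherint} requires cut-offs built from the extrinsic Lorentzian distance together with the translation $\bar u=u-\|u\|_\infty$, and their admissibility ($L_R^\rho(\Omega'')\Subset\Omega'$, via Lemma \ref{lem_simplecompact}) is available only \emph{after} the no-light-segment Step, and must be run on the smooth approximations $u_j$ before passing to the limit. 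Part (iii) of your outline is essentially the paper's argument.
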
		
\begin{remark}
If $\rho_{\mathrm{S}}$ is a sum of Dirac deltas and $\rho_{\mathrm{AC}} = 0$, 
we recover the result by Klyachin-Miklyukov (see Theorem \ref{thm_KM}). 
However, we stress that our proof is completely different. Indeed, the clever proof in \cite{KM95} is quite specific to Dirac delta singularities, and it seems difficult to extend to sources whose absolutely continuous part is not in $L^\infty$.
\end{remark}
\begin{remark}
Regarding the second order regularity of $u$, for general $\rho$ one cannot expect $u_\rho \in W^{2,q}_\loc$ for $q \ge 1$, 
see the beginning of Subsection \ref{subsec_locSF}. 
\end{remark}

We briefly overview the strategy of the proof, that relies on several steps. 
We refer to $\Omega,\FF,\mathrm{diam}_\delta(\Omega),\mathcal{I}_1, \mathcal{I}_2,\di_\delta(\Omega',\partial\Omega)$ in (ii) as being the \emph{data} of our problem, and fix $\e>0$. 
Hereafter, a constant $\mathcal{C}$ will be assumed to depend on the data. 
We proceed by approximating $\rho$ via convolution to get $\rho_j \rightharpoonup \rho$ 
weakly in $\cM(\Omega)$, let $u_j \in \cX_\phi(\Omega)$ minimize $I_{\rho_j}$ and denote by $w_j \doteq (1-|Du_j|^2)^{-1/2}$ its energy density. First, we show the following two properties:

\begin{itemize}
\item[$(\PP 0_1)$] Proposition \ref{prop_inte_estimate} and Corollary \ref{cor_secondfund} (\textbf{local second fundamental form estimate}): the squared norm of the second fundamental form $\SF_j$ for the graph of $u_j$ over $\Omega$ satisfies
	\[
	\int_{\Omega'_{\e/2}} \|\SF_j\|^2 w_j^{-1} \di x \le \mathcal{C};
	\]
\item[$(\PP 0_2)$] Lemma \ref{lem_growthinte} (\textbf{energy estimate}): 
on Euclidean balls $B_r$ contained in $\Omega_{\e/2}$,
	\[
	\int_{B_r} w_j \di x \le \mathcal{C}r.
	\]
\end{itemize}	
Properties $(\PP 0_1)$ and $(\PP 0_2)$ hold in any dimension $m \ge 2$. 
We stress that, writing $\SF_j$ in terms of $u_j$ as in \eqref{norm_second} in the following, 
$(\PP 0_1)$ implies bounds on the derivative of the energy density $w_j$. For the surface case $m=2$, $(\PP 0_1)$ and $(\PP0_2)$ imply 
\begin{itemize}
\item[$(\PP 1)$] Theorem \ref{teo_higher_m2} (\textbf{higher integrability for $m=2$}): 
	\[
	\int_{\Omega'_\e} w_j \log w_j  \rd x\le \mathcal{C}.
	\]
\end{itemize}
The uniform integrability of $\{w_j\}$ granted by $(\PP 1)$ enables us to show 
\begin{itemize}
\item[$(\PP 2)$] Step 2 in Proof of Theorem \ref{teo_BI_local_s2}
(\textbf{no-light-segment}): $u_\rho$ has no light segments in $\Omega'$ (the statement is quantitative in terms of the data).
\end{itemize}
With the aid of $(\PP 2)$, we can then refine the integral estimates as follows:
\begin{itemize}
\item[$(\PP 3)$] Theorem \ref{teo_higherint} (\textbf{higher integrability and second fundamental form estimates}): for each $q_0\ge 0$,  
	\begin{equation}\label{eq_P3_intro}
	\int_{\Omega'_{\e}} \Big\{ w_j \log w_j + \|\SF_j\|^2 w_j^{-1}\Big\} \log^{q_0} w_j  \di x \le \mathcal{C},
	\end{equation}
where $\mathcal{C}$ also depends on $q_0$ (and on $\Omega'$ in a subtler way). Item (ii) in Theorem \ref{teo_BI_local_s2} follows from \eqref{eq_P3_intro}, which is technically one of the core parts of the paper. It is important to notice that $(\PP 3)$ holds in a given dimension $m$ provided that so does $(\PP 2)$, and in particular, the higher integrability of $w_j$ does not depend on $(\PP 1)$. To the present, we are able to prove $(\PP 2)$ only in dimension $m=2$, and the example in Theorem \ref{ex-light-seg_intro} shows the possible failure of $(\PP 2)$ in dimension $m \ge 4$ when $\rho \in L^2(\Omega')$. 
\end{itemize}
Also, item (iii) in Theorem \ref{teo_BI_local_s2} follows from $(\PP 2)$ by applying arguments in \cite{bartniksimon}. To prove Item (i) we need one last piece of information. Clearly, $(\PP 2)$ and the fact that $\haus^1_\delta(\Sigma) = 0$ guarantee that $u_\rho$ does not have light segments on the entire $\Omega$. However, the  local uniform integrability of $\{w_j\}$ on each $\Omega'\Subset \Omega \backslash \Sigma$ implies
	\[
	\int_\Omega w_\rho Du_\rho \cdot D\eta = \la \rho, \eta \ra \qquad \forall \, \eta \in \lip_c(\Omega \backslash \Sigma). 
	\]
To extend the above identity to test functions $\eta \in \lip_c(\Omega)$, we shall prove the following removable singularity property, which holds in any dimension.
\begin{itemize} 
\item[$(\PP 4)$] Theorem \ref{teo_removable} (\textbf{removable singularity}): if $\{w_j\}$ is locally uniformly integrable on $\Omega \backslash \Sigma$ and $\haus^1_\delta(\Sigma) = 0$, then $u_\rho$ solves weakly \eqref{borninfeld}.
\end{itemize}
%
%
%It is worth observing that $(\PP 2)$ is equivalent to the following localization property, as shown in Lemma \ref{lem_locun}. 
%
%\begin{itemize}
%\item[$(\PP 5)$] (\textbf{localization}): for each Lipschitz subset $\Omega'' \Subset \Omega'$, there exist $\bar b>0$, $\bar \zeta : \R^+ \to [0,1)$ depending on $\Omega''$ and on the data given in (ii) of Theorem \ref{teo_BI_local_s2}, such that $u_\rho \in \Spa_{\bar b, \bar \zeta}(\partial \Omega'')$.
%\end{itemize}
%
As we shall see in Remark \ref{rem_sharp_remov}, condition $\haus^1_\delta(\Sigma) = 0$ cannot be weakened to $\haus_\delta^1(\Sigma) < \infty$.

In higher dimensions, the possible failure of $(\PP 2)$ makes it necessary to investigate the set of light segments $K^\rho_\phi$ of $u_\rho$. However, with the aid of the higher integrability Theorem \ref{teo_higherint}, we can still get nontrivial information. Namely, for $\rho,\Sigma$ as in Theorem \ref{teo_BI_local_s2} but in dimension $m \ge 3$, the \emph{only} possible obstruction to the solvability of \eqref{borninfeld} is when $K^\rho_\phi$ is non-empty and touches $\Sigma$ or $\partial \Omega$, as the next Theorem shows.

\begin{theorem}\label{teo_BI_local_sm}
Let $m \geq 3$ and $\Omega \subset \R^m$ be a domain, $\Sigma \Subset \Omega$ be compact and $\rho \in \cM(\Omega$) satisfy  
$\haus_\delta^1(\Sigma) = 0$ and 
	\begin{equation*}
	%\label{assu_rho_mainLm}
	\rho = \rho_{\mathrm{S}} + \rho_{\mathrm{AC}}, \qquad \text{with } \, \left\{ \begin{array}{l}
	{\rm supp } \, \rho_{\mathrm{S}} \subset \Sigma, \\[0.2cm]	
	\rho_{\mathrm{AC}} \in L^1(\Omega) \cap L^2_\loc(\Omega \backslash \Sigma).
	\end{array}\right.
	\end{equation*}
Given $\phi \in \Spa(\partial \Omega)$, consider the set of light segments $K^\rho_\phi$ of the minimizer $u_\rho \in \cX_\phi(\Omega)$ defined in \eqref{def_lightseg}. Then, 
%\[
%K^\rho_\phi = \ov{ \bigcup \Big\{ \overline{xy} \ : \ x,y \in \Omega, \ x \neq y, \ \overline{xy} \subset \Omega, \ |u_\rho(x)-u_\rho(y)| = |x-y| \Big\} }.
%\]	
\begin{itemize}
\item[{\rm (i)}] $u_\rho$ weakly solves \eqref{borninfeld} on $\Omega \backslash K_\phi^\rho$.\\
Moreover, if $K_\phi^\rho \cap (\partial \Omega \cup \Sigma) = \emptyset$, then $u_\rho$ weakly solves \eqref{borninfeld} on the entire $\Omega$.
\item[{\rm (ii)}] For each $\Omega' \Subset \Omega \backslash (\Sigma \cup K^\rho_\phi)$ and $q_0 \ge 0$,
	\[
	\begin{array}{lcl}
	\disp \int_{\Omega'} (1+ \log w_\rho)^{q_0} \left\{ w_\rho |D^2u_\rho|^2 + w_\rho^3 \left| D^2 u_\rho  \left( D u_\rho, \cdot \right) \right|^2 
	+ w_\rho^5 \left[D^2u_\rho(Du_\rho,Du_\rho)\right]^2\right\}\di x \\[0.5cm]
	\disp + \int_{\Omega'} w_\rho (1+ \log w_\rho)^{q_0+1}\di x < \infty.
	\end{array}
	\]
\item[{\rm (iii)}] If $\Omega' \Subset \Omega \setminus (\Sigma \cup K_\phi^\rho)$ and $\rho \in L^\infty(\Omega')$, 
then $u_\rho$ is strictly spacelike and $u_\rho \in C^{1,\alpha}_\loc(\Omega')$, for some $\alpha>0$. 
In particular, if $\rho \in C^\infty(\Omega')$ so is $u_\rho$.
\end{itemize}
\end{theorem}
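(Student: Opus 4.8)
The plan is to run, for $m\ge 3$, the approximation scheme behind Theorem~\ref{teo_BI_local_s2}, with the no-light-segment step $(\PP 2)$ — out of reach in high dimension — replaced by the tautology that, on every $\Omega'\Subset\Omega\setminus(\Sigma\cup K^\rho_\phi)$, the minimizer $u_\rho$ has \emph{no} light segment, by the very definition \eqref{def_lightseg} of $K^\rho_\phi$. First I would mollify the source: pick $\rho_j\in C^\infty(\Omega)$ with $\rho_j\rightharpoonup\rho$ weakly-$*$ in $\cM(\Omega)$, $\|\rho_j\|_{\cM(\Omega)}\le\|\rho\|_{\cM(\Omega)}+o(1)$ and $\rho_j\to\rho_{\mathrm{AC}}$ in $L^2_\loc(\Omega\setminus\Sigma)$, regularizing $\rho_{\mathrm S}$ and $\rho_{\mathrm{AC}}$ separately so that the mollified singular part stays supported in a shrinking neighbourhood of $\Sigma$. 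Let $u_j\in\cX_\phi(\Omega)$ minimize $I_{\rho_j}$; since $\rho_j\in L^\infty$, Theorem~\ref{teo_bartniksimon_2} gives $u_j\in C^1\cap W^{2,2}_\loc$, strictly spacelike, weakly solving \eqref{borninfeld} with source $\rho_j$. Setting $w_j=(1-|Du_j|^2)^{-1/2}$, by stability of minimizers under weak-$*$ convergence of the source we may assume $u_j\to u_\rho$ uniformly on $\overline\Omega$ and $Du_j\to Du_\rho$ a.e.\ on $\Omega$.

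Fix $\Omega'\Subset\Omega\setminus(\Sigma\cup K^\rho_\phi)$. On a neighbourhood of $\overline{\Omega'}$ the minimizer has no light segments, so the hypothesis of the higher-integrability Theorem~\ref{teo_higherint} $(\PP 3)$ holds in dimension $m$; combined with the dimension-free bounds $(\PP 0_1)$ (Proposition~\ref{prop_inte_estimate}, Corollary~\ref{cor_secondfund}) and $(\PP 0_2)$ (Lemma~\ref{lem_growthinte}), it yields \eqref{eq_P3_intro} for the $u_j$ on $\Omega'$, uniformly in $j$. Letting $j\to\infty$ — using $Du_j\to Du_\rho$ a.e., the weak $W^{2,2}_\loc$ convergence of $D^2u_j$ granted by these bounds, and Fatou/lower semicontinuity — gives Item~(ii). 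The same bounds make $\{w_j\}$ locally uniformly integrable on $\Omega\setminus(\Sigma\cup K^\rho_\phi)$, so passing to the limit in the equations for $u_j$ gives $w_\rho\in L^1_\loc$ and $\int w_\rho\,Du_\rho\cdot D\eta=\langle\rho,\eta\rangle$ there, for $\eta\in\lip_c(\Omega\setminus(\Sigma\cup K^\rho_\phi))$; since $\haus^1_\delta(\Sigma\cap(\Omega\setminus K^\rho_\phi))=0$, the removable-singularity Theorem~\ref{teo_removable} $(\PP 4)$, applied on the open set $\Omega\setminus K^\rho_\phi$, upgrades this to a weak solution on all of $\Omega\setminus K^\rho_\phi$ — the first statement of Item~(i). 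Item~(iii) then follows on any such $\Omega'$ with $\rho\in L^\infty(\Omega')$: there $u_\rho$ weakly solves \eqref{borninfeld} with bounded source and is spacelike, so the interior-regularity arguments of \cite{bartniksimon} give strict spacelikeness and $u_\rho\in C^{1,\alpha}_\loc(\Omega')$, bootstrapped to $C^\infty$ when $\rho\in C^\infty(\Omega')$.

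Finally, for the second statement of Item~(i): if $K^\rho_\phi\cap(\partial\Omega\cup\Sigma)=\emptyset$, then $K^\rho_\phi$ is a compact subset of $\Omega\setminus\Sigma$, near which $\rho=\rho_{\mathrm{AC}}$ carries no singular mass, and one must push the weak formulation across $K^\rho_\phi$. From $(\PP 0_2)$ and Fatou, $\int_{B_r}w_\rho\le\mathcal{C}r$ on balls $B_r\Subset\Omega\setminus\Sigma$, hence $w_\rho\in L^1_\loc(\Omega)$ and $|K^\rho_\phi|=0$; testing the equation on $\Omega\setminus K^\rho_\phi$ against $\eta(1-\chi_\e)$, with $\chi_\e$ a cutoff supported on an $\e$-neighbourhood $N_\e(K^\rho_\phi)$, all contributions converge to the desired identity $\int w_\rho\,Du_\rho\cdot D\eta=\langle\rho,\eta\rangle$ except the flux term $\int\eta\,w_\rho\,Du_\rho\cdot D\chi_\e$. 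To kill it I would choose $\chi_\e$ built from $\di_\delta(\cdot,K^\rho_\phi)$ and exploit that $K^\rho_\phi$ is a union of slope-one light segments of locally constant direction, so that $Du_\rho$ is nearly tangent to $K^\rho_\phi$ near it and the transverse component $Du_\rho\cdot\nabla\di_\delta(\cdot,K^\rho_\phi)$ degenerates; making this quantitative, i.e.\ proving $\e^{-1}\int_{\{\di_\delta(\cdot,K^\rho_\phi)\sim\e\}}w_\rho\,|Du_\rho\cdot\nabla\di_\delta(\cdot,K^\rho_\phi)|\to 0$, is exactly where the log-improved second fundamental form estimate of Theorem~\ref{teo_higherint} is used — with explicit dependence on $\di_\delta(\Omega',K^\rho_\phi)$ — to integrate $Dw_\rho$ from a collar $\Omega'\Subset\Omega\setminus(\Sigma\cup K^\rho_\phi)$ inward. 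This flux estimate is the main obstacle; the rest is assembly of the cited results.
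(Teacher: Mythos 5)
Your treatment of item (ii), item (iii) and the first statement of item (i) is essentially the paper's argument: mollify $\rho$, use the definition of $K^\rho_\phi$ together with Lemma \ref{lem_simplecompact} (for the compact family $\{u_j\}\cup\{u_\rho\}$) to get a uniform Lorentzian-ball containment on sets $\Omega'\Subset\Omega\setminus(\Sigma\cup K^\rho_\phi)$, apply Theorem \ref{teo_higherint} to the approximations, pass to the limit as in Corollary \ref{cor_secondfund}, and invoke Theorem \ref{teo_removable} with $E=\Sigma$ on the open set $\Omega\setminus K^\rho_\phi$; the regularity claim follows as in Step 5 of Theorem \ref{teo_BI_local_s2}. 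Up to this point the proposal is fine.

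The genuine gap is your treatment of the second statement of (i). You try to cross $K^\rho_\phi$ by a cutoff/flux argument for the limit $u_\rho$ alone, and you yourself flag the vanishing of $\e^{-1}\int_{\{\di_\delta(\cdot,K^\rho_\phi)\sim\e\}}w_\rho\,|Du_\rho\cdot D\,\di_\delta(\cdot,K^\rho_\phi)|$ as "the main obstacle"; the sketch you give does not close it. The heuristic "$Du_\rho$ is nearly tangent to $K^\rho_\phi$" has no justification: $Du_\rho$ is merely $L^\infty$, $K^\rho_\phi$ need not consist of segments of locally constant direction (it can be an $\ell$-dimensional cylinder, cf. Theorem \ref{ex-light-seg_intro}), and since $w_\rho\to\infty$ near $K^\rho_\phi$ a small transverse component of $Du_\rho$ does not make $w_\rho\,Du_\rho\cdot D\chi_\e$ small. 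Nor can Theorem \ref{teo_higherint} supply the needed quantitative decay on shrinking collars: its constant degenerates as the admissible Lorentzian radius $R\to 0$ (the induction uses $\bar R=R/(q_0+1)$ and a covering number $\sim R^{-m}$), so there is no "explicit dependence on $\di_\delta(\Omega',K^\rho_\phi)$" that tends to zero. Remark \ref{rem_sharp_remov} shows that information purely outside a set of positive $\haus^1_\delta$-measure cannot remove it, so any successful argument must use more than the equation off $K^\rho_\phi$. The paper's route is different and avoids the flux entirely: since $K^\rho_\phi\cap(\partial\Omega\cup\Sigma)=\emptyset$, choose $K^\rho_\phi\subset\Omega''\Subset\Omega'\Subset\Omega\setminus\Sigma$; by the definition of $K^\rho_\phi$ no light segment of $u_\rho$ joins $\partial\Omega''$ to $\partial\Omega'$, so Lemma \ref{lem_simplecompact} gives a uniform $R$ with $L^{\rho_j}_R(\Omega'')\Subset\Omega'$, and Theorem \ref{teo_higherint} — whose hypotheses involve only this containment and the $L^2$ bound of $\rho_j$ on $\Omega'$, not the absence of light segments inside $\Omega''$ — yields uniform integrability of $\{w_j\}$ on $\Omega''$, i.e. \emph{across} the light-segment set. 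Hence $\{w_j\}$ is locally uniformly integrable on all of $\Omega\setminus\Sigma$, Vitali gives the limit identity there, and Theorem \ref{teo_removable} with $E=\Sigma$ concludes. You should replace your flux argument by this observation; as written, that step would not go through.
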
		

\begin{remark}\label{rem_nosolve}
Corollary \ref{prop_nosolve} shows that, in dimension $m \ge 4$, there exists $\rho_{\mathrm{AC}} \in L^2(\Omega)$ and $\rho_{\mathrm{S}} = \delta_y - \delta_x$ such that $u_\rho \in \cX_0(\Omega)$ does not solve \eqref{borninfeld} weakly on the entire $\Omega$. Notice that the support $\Sigma = \{x,y\}$ of $\rho_{\mathrm{S}}$ satisfies $\Sigma \subset K_\phi^\rho$, and therefore  condition $K_\phi^\rho \cap \Sigma = \emptyset$ in (i) of Theorem \ref{teo_BI_local_sm} cannot be removed. 
\end{remark}

\subsection{Known results for $\Omega = \R^m$}

The picture for constant $\rho$ on the entire $\R^m$ is by now well understood. 
Thanks to Calabi \cite{calabi}, Cheng and Yau \cite{chengyau} and Bartnik (Ecker \cite[Theorem F]{Ecker}), 
we know that if $u : \R^m \to \R$ minimizes $I_0$ (i.e. $\rho = 0$) on each open subset $\Omega \Subset \R^m$ 
with respect to compactly supported variations in $\Omega$, then $u$ is a hyperplane, possibly with slope $1$. 
Note that no growth conditions on $u$ are imposed a-priori. 
On the contrary, many examples of smooth spacelike graphs with constant $\rho \neq 0$ were constructed in \cite{stumbles,treibergs}.

	In view of applications to Born-Infeld's theory, we study $I_\rho$ in $\R^m$ with $m \ge 3$ and for functions decaying at infinity to zero, 
taking advantage of the different functional settings described by Kiessling in \cite{Kiessling} 
and Bonheure, d'Avenia and Pomponio in \cite{bpd}. 
For our purposes, we mildly modify their frameworks and define in Subsection \ref{subsec_functional} a Banach space $\cX(\R^m)$ 
in such a way that $I_\rho$ is well defined on 		
	\begin{equation}
	\label{def_cX0}
	\cX_0(\R^m) \doteq \Big\{ v \in \cX(\R^m) \ : \ \|Dv\|_\infty \le 1 \Big\}, 	
	\end{equation}
and so that the latter is closed (and convex) in $\cX(\R^m)$. 
Our choice does not affect the functional properties of $I_\rho$ shown in \cite{bpd}: in particular, following \cite[Lemma 2.2]{bpd}, $I_\rho$ has a unique minimizer $u_\rho \in \cX_0(\R^m)$ which, by \cite[Proposition 2.7]{bpd} (cf. also Proposition \ref{lem_basicL2} herein), satisfies 
	\begin{equation}\label{eq_ham_Rm}
	w_\rho \in L^1_\loc(\R^m), \qquad 0 \le w_\rho -1 \le \frac{|Du_\rho|^2}{\sqrt{1-|Du_\rho|^2}} \in L^1(\R^m) 
	\end{equation}
and the variational inequality 
	\begin{equation}\label{eq_ineq_intro_Rm}
	\disp \int_{\R^m} \frac{ Du_\rho \cdot ( Du_\rho - D \psi ) }{\sqrt{1-|Du_\rho|^2}} \rd x
		\leq \la \rho, u_\rho - \psi \ra \qquad \forall \, \psi \in \cX_0(\R^m).
	\end{equation}
We then say that \emph{$u_\rho$ weakly solves \eqref{borninfeld}} if  
	\[
	\int_{\R^m} \frac{ Du_\rho \cdot D\eta}{\sqrt{1-|Du_\rho|^2}} \rd x
		= \la \rho, \eta \ra \qquad \forall \, \eta \in \lip_c(\R^m).	
	\]
In \cite{bpd}, $u_\rho$ was shown to solve \eqref{borninfeld} weakly whenever $\rho \in \cX(\R^m)^*$ satisfies any of the following assumptions:
%
%Even though the literature on the regularity theory for $u_\rho$ in the entire $\R^m$ is more extensive than the one in bounded domains, only a few classes of $\rho$ were investigated in detail. Among them, $u_\rho$ was shown to solve \eqref{borninfeld} weakly whenever $\rho \in \cX(\R^m)^*$ satisfies any of the following assumptions:
	\begin{itemize}
	\item[(i)] $\rho$ is radial (\cite[Theorem 1.4]{bpd});
	\item[(ii)] $\rho \in L^\infty_\loc(\RN)$ (\cite[Theorem 1.5]{bpd}). In this case, $u_\rho$ is locally strictly spacelike and thus $u_\rho \in C^{1,\alpha}_\loc(\RN)$ for some $\alpha>0$, by the regularity theory for quasilinear equations. 
%	\item[(iii)] $\rho \in L^q(\Rm) \cap L^p(\R^m)$ for $q>m$ and $p \in [1,2_*]$ (\cite[Theorem 1.3]{haarala} and \cite[Theorem 1.4 and Corollary 1.5]{bi_new}), see below. 
	\end{itemize}	
A constructive proof of classical solutions for $\rho \in C^\alpha_0(\RN) \cap L^1(\RN)$, under a smallness condition on $\rho$ in $C^\alpha_0(\RN)$, can be found in \cite{CaKi15} by Carley and Kiessling. 
We next discuss the two further cases considered so far. \\[0.2cm]
\noindent \textbf{The case of point charges.}\\
The problem for
    \begin{equation}\label{pointcharges_intro}
    \rho = \sum_{i=1}^k a_i \delta_{x_i}
    \end{equation}
was treated in \cite{bcf, bpd}: in particular, see \cite[Theorem 1.2]{bcf}, $u_\rho$ was shown to be locally strictly spacelike (hence, smooth) away from the charges $\{x_i\}$ provided that the points $x_i$ are sufficiently far away depending on the sizes $a_i$, in the quantitative way
recalled in Remark \ref{rem_bcf} below.
In this case, $u_\rho$ weakly (indeed, classically) solves \eqref{borninfeld} on $\R^m \setminus \{x_1,x_2,\ldots, x_k\}$. However, in \cite{bcf, bpd} the authors did not prove equality in \eqref{eq_ineq_intro_Rm} for test functions which do not vanish at $x_i$, see \cite[Remark 4.4]{bpd} for more detailed comments.

	In \cite{Kiessling}, Kiessling claimed that for $\rho$ as in \eqref{pointcharges_intro} $u_\rho$ satisfies \eqref{borninfeld} without any restriction on the charges $a_i$, 
but in \cite{bpd} it was pointed out that his argument has a subtle flaw. 
Kiessling subsequently published the erratum \cite{Kiessling}, where he supplied a proof of his claim using a dual approach to circumvent the obstacle in the direct method pointed out in \cite{bpd}. 
From a mathematical point of view it is still desirable to have a proof with a direct use of the functional $I_{\rho}$, though.
%
%
%	In \cite{Kiessling} Kiessling claimed that for $\rho$ as in \eqref{pointcharges_intro} $u_\rho$ satisfies \eqref{borninfeld} without any restriction on the charges $a_i$, but his subtle argument was shown to have a flaw in \cite{bpd}. Kiessling later published the erratum \cite{Kiessling}. Kiessling's method uses a dual approach, and it would be desirable to have a proof with a direct use
%of the functional $I_\rho$.
\\[0.2cm]
\noindent \textbf{The case $\rho \in L^q$ for large $q$.}\\
It is natural to seek a sharp condition on $\rho$ that guarantees both the strict spacelikeness of $u_\rho$ and its local $C^{1,\alpha}$ regularity, for some $\alpha \in (0,1)$. 
The evidence coming from the radial case in \cite[Section 3]{bpd}, further motivated by the detailed discussion in the Introduction of \cite{bi_ARMA}, led Bonheure and Iacopetti to formulate the following
	
\begin{conjecture*}[{\cite[Conjecture 1.4]{bi_ARMA}}]
If $m \ge 3$ and $\rho \in \cX^* \cap L^q_\loc(\R^m)$ with $q>m$, then $u_\rho$ is strictly spacelike on $\R^m$ and $u_\rho \in C^{1,\alpha}_\loc(\R^m)$ for some $\alpha \in (0,1)$.
\end{conjecture*}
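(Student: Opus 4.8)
To attack this conjecture, the strategy I would follow is to reduce it to a single interior gradient estimate and then pass to the limit along regularizations of $\rho$. Fix a ball $B_{2R} \Subset \R^m$ and mollify, $\rho_j \doteq \rho * \theta_{1/j}$, so that $\rho_j \to \rho$ in $L^q_\loc(\R^m)$ and $\|\rho_j\|_{L^q(B_{2R})} \le \|\rho\|_{L^q(B_{3R})} \doteq \Lambda$ for $j$ large. By the $L^\infty_\loc$ case of \cite{bpd} the minimizer $u_j \in \cX_0(\R^m)$ of $I_{\rho_j}$ is strictly spacelike and smooth, so the whole content reduces to the \emph{uniform} interior estimate
\[
\sup_{B_R} w_j \le C(m,q,R,\Lambda), \qquad w_j \doteq \frac{1}{\sqrt{1-|Du_j|^2}} .
\]
Granting it, the stability of minimizers (uniqueness of $u_\rho$ in $\cX_0(\R^m)$ together with $I_{\rho_j} \to I_\rho$ and compactness of $\{u_j\}$, as in \cite{bpd}) gives $u_j \to u_\rho$ with $|Du_\rho| \le 1 - c(R) < 1$ on each $B_R$; hence $u_\rho$ is strictly spacelike on $\R^m$. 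With $|Du_\rho|$ locally bounded away from $1$, \eqref{borninfeld} becomes uniformly elliptic and non-degenerate on compact sets, so $W^{2,q}$ (Calder\'on--Zygmund) estimates give $u_\rho \in W^{2,q}_\loc(\R^m)$, and the Sobolev embedding $W^{2,q} \hookrightarrow C^{1,\alpha}$ — valid precisely because $q > m$, with $\alpha = 1 - m/q$ — yields $u_\rho \in C^{1,\alpha}_\loc(\R^m)$; a standard bootstrap upgrades the regularity when $\rho$ is smoother.

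For the uniform estimate I would work on the graph $M_j \doteq \mathrm{graph}(u_j)$ with its induced Riemannian metric $g_j$, on which \eqref{borninfeld} is uniformly elliptic with ellipticity constants deteriorating only through $w_j$. The energy density $w_j = -\langle N_j, \partial_{x^0}\rangle$ obeys a Bochner--Simons-type identity of the schematic form
\[
\Delta_{g_j} w_j = \|\SF_j\|^2\, w_j + (\text{first order in } \rho_j),
\]
which, since $\rho_j$ is controlled only in $L^q$, must be used in divergence (weak) form — testing against powers of $w_j$ times cutoffs, so as to avoid $D\rho_j$. This is exactly the mechanism producing the logarithmic integral gains $\int w_\rho(1+\log w_\rho)^{q_0+1}$ and $\int \|\SF\|^2 w_\rho^{-1}(1+\log w_\rho)^{q_0}$ of Theorems \ref{teo_higher_m2} and \ref{teo_higherint}. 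The plan is to promote those logarithmic gains to an $L^\infty$ bound by a De Giorgi--Nash--Moser iteration on $M_j$: the intrinsic Sobolev inequality on the spacelike graph (built from the $L^q$-controlled mean curvature) together with $q>m$ should close the iteration, $m$ being precisely the borderline exponent at which an $L^q$ right-hand side forces boundedness of the solution of a uniformly elliptic equation on an $m$-dimensional manifold.

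Equivalently, the obstruction is geometric: one must rule out light segments in $\mathrm{graph}(u_\rho)$, i.e. establish the analog of property $(\PP 2)$ for $q>m$ in every dimension (the paper proves $(\PP 2)$ only for $m=2$). I would argue by contradiction: if $\overline{xy}$ is a light segment, a blow-up/flux analysis in a thin tube $T_r$ of radius $r$ around $\overline{xy}$ — testing the weak equation against a cutoff equal to $1$ near $\overline{xy}$ — balances the transverse blow-up rate of $w_\rho$ against $\|\rho\|_{L^q}\,|T_r|^{1-1/q}$; matching the powers of $r$ (transverse dimension $m-1$) pins that blow-up rate into a range forbidden when $q>m$. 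This is dual to the construction of Theorem \ref{ex-light-seg_intro}, where a light $\ell$-plane forces $\rho_u \in L^q$ only for $q < m-\ell \le m-1$; the two scalings meet at $q=m-1$, so the genuinely open range is $m-1 \le q \le m$ and the conjectural threshold $q>m$ leaves a safety margin.

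The heart of the matter, and the expected main obstacle, is the uniform gradient estimate itself — closing the De Giorgi iteration (or the flux analysis) without any smallness or symmetry assumption on $\rho$ (the radial case being already handled by ODE methods in \cite[Section 3]{bpd}). Three difficulties stand out. First, the geometry of $M_j$ enters the Sobolev/isoperimetric constants of the iteration only through the quantity $w_j$ one is trying to bound, so the estimate is a priori circular and must be decoupled using the $L^q$-controlled mean curvature together with a covering argument. Second, \eqref{borninfeld} is genuinely non-uniformly elliptic in the ambient coordinates, with ellipticity ratio $w_\rho^2$, which forces the iteration to be run on $M_j$ rather than in the base $\R^m$ and complicates localization. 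Third, the estimate must be made local and independent of the far-away behaviour of $u_\rho$ — precisely the source of the logarithmic losses in Theorems \ref{teo_higher_m2} and \ref{teo_higherint} — and removing those losses for $q>m$, while the examples of Theorem \ref{ex-light-seg_intro} show they are unavoidable for $q<m-1$, is exactly what separates the present results from the conjecture.
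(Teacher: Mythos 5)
There is a fundamental mismatch here: the statement you are addressing is an \emph{open conjecture} (Bonheure--Iacopetti), and the paper does not prove it — it only records the partial answer of Haarala and Bonheure--Iacopetti, which requires the \emph{global} integrability $\rho \in L^q(\R^m)\cap L^p(\R^m)$, $p\in[1,2_*]$, $q>m$, and explicitly states that the purely local case $\rho\in L^q_\loc(\R^m)$ remains open. So there is no proof in the paper to compare yours against, and your text is not a proof either: it is a research program in which the two pivotal steps are asserted rather than established. The entire argument is conditioned on the uniform interior bound $\sup_{B_R} w_j \le C(m,q,R,\Lambda)$, which is precisely the content of the conjecture; you acknowledge this ("the heart of the matter") but never close it. The Moser-iteration route you outline runs into exactly the circularity you name — the intrinsic Sobolev constant of the graph $M_j$ is controlled only through the quantity $w_j$ you are trying to bound — and the literature you would be improving upon (Haarala, Bonheure--Iacopetti) needed the global $L^p\cap L^q$ hypothesis precisely to break that circle; no localization of their scheme is offered here.

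The second unproved step is the exclusion of light segments via the "thin tube'' flux argument. As sketched, it is a scaling heuristic: you test the weak equation in a tube $T_r$ around a putative light segment and "match powers of $r$'', but you never specify, let alone prove, a lower bound on the transverse blow-up rate of $w_\rho$ near a light segment (the behavior of minimizers near light segments is described in the paper as essentially open), so there is nothing to match against $\|\rho\|_{L^q}|T_r|^{1-1/q}$. Note also that if this argument worked for $q>m$ it would very likely work down to $q>m-1$, i.e. it would settle the paper's own separate conjecture on the nonexistence of light segments — a sign that the missing estimate is genuinely deep rather than routine. Finally, even granting no light segments, deducing $w_\rho\in L^\infty_\loc$ is not automatic for $\rho\in L^q$: the local gradient estimate of Bartnik--Simon used throughout the paper (e.g. in the proof of Theorem \ref{teo_BI_local_s2}, Step 5) requires an $L^\infty$ bound on the mean curvature, so an additional argument — again, the open one — is needed there. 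The parts of your outline that are sound (mollification, convergence of minimizers as in Proposition \ref{1001}, the Jacobi identity \eqref{jacobi}, the Calder\'on--Zygmund and Sobolev step once strict spacelikeness with a quantitative bound is known, and the correct identification of $q=m-1$ and $q=m$ as the relevant thresholds in view of Theorem \ref{ex-light-seg_intro}) faithfully reproduce the framework already in the paper, but they do not advance the statement beyond its status as a conjecture.
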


Here, $\cX^*$ is the dual of a functional space $\cX$ where $\cX_0(\R^m)$ embeds as a closed, convex set, a typical case being  
	\[
	\rho \in L^p(\R^m) \qquad \text{for } \, p \in [1, 2_*],
	\]
where, in what follows, 
	\[
	2_* \doteq \frac{2m}{m+2}
	\]
is the conjugate exponent of the Sobolev one $2^*$. In fact, in the stated assumptions on $\rho$, $C^{1,\alpha}_\loc$ regularity easily follows from strict spacelikeness by standard theory of quasilinear equations. 
 
To the present, a complete answer to the conjecture is still unknown. After a first partial result in \cite{bi_ARMA}, which is in itself remarkable, an almost exhaustive positive answer was given by the combined efforts of Haarala \cite{haarala} and Bonheure--Iacopetti \cite{bi_new}: 
\begin{theorem}[{\cite[Theorem 1.3]{haarala} and \cite[Theorems 1.4 and 1.5]{bi_new}}]
Assume $m \ge 3$ and $\rho \in L^q(\R^m) \cap L^p(\R^m)$ with $p \in [1,2_*]$ and $q>m$. Then, $u_\rho$ is strictly spacelike and 
	\[
	u_\rho \in C^{1,1-\frac{m}{q}}_\loc(\R^m) \cap W^{2,q}_\loc(\R^m).
	\]
Furthermore, $u_\rho$ weakly solves \eqref{borninfeld}.
\end{theorem}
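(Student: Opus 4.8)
The strategy is to realize $u_\rho$ as the limit of smooth solutions with mollified sources, the decisive point being a \emph{uniform local gradient estimate}; once it is known that $u_\rho$ is strictly spacelike, the Euler--Lagrange equation and the higher regularity are obtained, respectively, from the variational inequality \eqref{eq_ineq_intro_Rm} and from standard quasilinear elliptic theory. Thus I would first set $\rho_j \doteq \rho * \varphi_{1/j}$ for a standard mollifier, so that $\rho_j \to \rho$ in $L^q(\RN)\cap L^p(\RN)$ and, for $j$ large, $\|\rho_j\|_{L^q(B_{2R})} \le \|\rho\|_{L^q(B_{2R+1})}$ with $j$-independent bounds. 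Since $\rho_j \in L^\infty_\loc(\RN)$, \cite[Theorem 1.5]{bpd} gives that the minimizer $u_j \doteq u_{\rho_j}$ of $I_{\rho_j}$ is locally strictly spacelike, hence $u_j \in C^{1,\alpha}_\loc(\RN)$ and then $u_j \in C^\infty(\RN)$ by bootstrap. Testing \eqref{eq_ineq_intro_Rm} (for $\rho_j$) against $\psi \equiv 0$ together with \eqref{eq_ham_Rm} and the functional estimates in $\cX(\RN)$ yields $j$-uniform control of $\|Du_j\|_{L^2}$ and of $\|w_j\|_{L^1_\loc}$; and, by uniqueness of the minimizer and a stability argument, $u_j \to u_\rho$ locally uniformly with $Du_j \rightharpoonup Du_\rho$.

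The heart of the proof is the bound
\[
\|w_j\|_{L^\infty(B_R)} \le \mathcal{C}\big(m,q,R,\|\rho\|_{L^q(B_{2R})},\|w_j\|_{L^1(B_{2R})}\big) \qquad \text{for every } R>0,
\]
with $\mathcal{C}$ independent of $j$, and it is here that the assumption $q>m$ is crucially used. One way to argue is geometric and intrinsic: on the graph $M_j$ of $u_j$ with the induced Riemannian metric, $w_j$ is a subsolution of a second-order elliptic equation whose inhomogeneous term is controlled pointwise by $\|\SF_j\|\,|\rho_j|$ plus lower-order terms, and since $M_j$ has Lorentzian mean curvature $\rho_j$ the Michael--Simon Sobolev inequality holds on $M_j$ with uniform constant; a De~Giorgi--Nash--Moser iteration started from the $L^1$ bound and absorbing the inhomogeneity --- which lies in $L^q$ with $q>m$, the subcritical range in which each iteration step gains a definite power --- then promotes $w_j$ to $L^\infty$ on interior balls. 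An alternative, more variational route excludes light segments of $u_\rho$ directly: were $\overline{xy}$ a light segment, a competitor obtained by pushing $u_\rho$ slightly downward inside a thin tube around $\overline{xy}$ would decrease the geometric part of $I_\rho$ by an amount dominating the change of $\langle \rho,\cdot\rangle$, the latter estimated by H\"older against $\rho\in L^q$ with conjugate exponent $q'<m/(m-1)$; the balance of the powers of the tube radius again forces $q>m$, and the quantitative form of this comparison yields the gradient bound. Either way, letting $j\to\infty$ gives $|Du_\rho|\le 1-c_R<1$ on each $B_R$, so $u_\rho$ is strictly spacelike (and $C^{1}$ by the equation below).

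With strict spacelikeness the constraint $\|Dv\|_\infty\le 1$ defining $\cX_0(\RN)$ is inactive near $u_\rho$, so $u_\rho\pm\lambda\eta\in\cX_0(\RN)$ for every $\eta\in\lip_c(\RN)$ and all small $\lambda>0$. Inserting $\psi=u_\rho\mp\lambda\eta$ in \eqref{eq_ineq_intro_Rm}, dividing by $\lambda$ and letting $\lambda\to0^+$ --- legitimate since $|Du_\rho|$ is bounded away from $1$ on $\supp\eta$, so the Lagrangian is smooth along the deformation --- gives the two opposite inequalities and hence
\[
\int_{\RN}\frac{Du_\rho\cdot D\eta}{\sqrt{1-|Du_\rho|^2}}\,\di x=\langle\rho,\eta\rangle\qquad\forall\,\eta\in\lip_c(\RN),
\]
i.e.\ $u_\rho$ weakly solves \eqref{borninfeld}. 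Finally, on each $B_R$ the function $u_\rho$ is a bounded weak solution of $\diver\big(A(Du_\rho)\big)=-\rho$ with $A(p)\doteq p/\sqrt{1-|p|^2}$; as $Du_\rho$ ranges in a compact subset of the unit ball, $DA(Du_\rho)$ is bounded and uniformly positive definite there, so the equation is uniformly elliptic with smooth structure, and since $\rho\in L^q$ with $q>m$ the De~Giorgi--Nash--Moser / Ladyzhenskaya--Uraltseva theory gives $u_\rho\in C^{1,1-m/q}_\loc(\RN)$; then $a^{ij}(Du_\rho)\doteq\partial_{p_j}A^i(Du_\rho)$ are locally H\"older continuous, and the Calder\'on--Zygmund estimates for the non-divergence equation $a^{ij}(Du_\rho)D_{ij}u_\rho=-\rho\in L^q_\loc$ yield $u_\rho\in W^{2,q}_\loc(\RN)$.

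The main obstacle is the uniform gradient bound. The approximation and the elliptic regularity in the last paragraph are routine, but establishing a gradient estimate --- equivalently, ruling out light segments --- for sources with the sole integrability $\rho\in L^q$, $q>m$, and recognizing that this exponent is sharp (cf.\ Theorem \ref{ex-light-seg_intro}), is the genuinely delicate part, as the comparison and Moser arguments available for $\rho\in L^\infty$ do not carry over verbatim.
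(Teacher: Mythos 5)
First, a point of reference: the paper does not prove this statement at all. It is quoted verbatim from \cite{haarala} and \cite{bi_new}, with the explicit remark that the proof is ``deep'' and relies on the global $L^q$ integrability of $\rho$ at several stages; so there is no in-paper argument to compare yours against, and your proposal must stand on its own. It does not: the step you yourself flag as the heart of the matter, the $j$-uniform bound $\|w_j\|_{L^\infty(B_R)}\le \mathcal{C}$ from $\rho\in L^q$, $q>m$, is exactly the content of \cite{haarala,bi_new}, and neither of your two sketches for it is sound. The ``intrinsic'' route invokes a Michael--Simon Sobolev inequality on the graph $M_j$ ``with uniform constant'' because $M_j$ has Lorentzian mean curvature $\rho_j$; but Michael--Simon is a statement about submanifolds of Euclidean space controlled by the \emph{Euclidean} mean curvature, and it does not transfer to spacelike graphs in $\mathbb{L}^{m+1}$ with the induced metric. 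Indeed the induced metric $g_{ij}=\delta_{ij}-u_iu_j$ degenerates precisely as $w_j\to\infty$ (note $\di x_g=w_j^{-1}\di x$ while $\|\nabla f\|_g\ge |Df|$, so neither side of a Sobolev inequality is comparable to its Euclidean counterpart uniformly in $w_j$), and obtaining a usable Sobolev/iteration scheme in this degenerate setting is the genuinely hard part of the cited proofs, not something one may assume.

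The ``variational'' alternative has a second, independent defect: ruling out light segments is \emph{not} equivalent to a local gradient bound. Theorem \ref{teo_bocofo} (point charges) already exhibits minimizers with no light segments that are nevertheless not strictly spacelike, being asymptotic to light cones at the charges; so even a successful tube-comparison excluding light segments would not yield $|Du_\rho|\le 1-c_R$, and the Bartnik--Simon machinery that converts ``positive Lorentzian distance'' into a gradient bound requires $\rho\in L^\infty_\loc$, which is unavailable here. Moreover your exponent bookkeeping is off: the paper's example (Theorem \ref{ex-light-seg_intro}) produces light segments only for $\rho\in L^q$ with $q<m-1$, and the conjectured threshold for \emph{absence of light segments} is $q=m-1$, not $q=m$; the exponent $m$ is the conjectured threshold for \emph{strict spacelikeness}, a strictly stronger property. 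So a tube argument cannot ``force $q>m$'' in the way you describe. The remaining ingredients of your proposal (mollification of $\rho$, convergence of minimizers, passage from the variational inequality \eqref{eq_ineq_intro_Rm} to the weak Euler--Lagrange equation once $|Du_\rho|\le 1-c_R$ on $\supp\eta$, and the $C^{1,1-m/q}_\loc\cap W^{2,q}_\loc$ regularity by uniformly elliptic quasilinear theory) are fine, but they are the routine outer shell; the proposal as written leaves the central estimate unproved.
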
	
%
%Note that the restriction $p \in [1,2_*]$ is to guarantee that $\rho$ defines a continuous functional. 
The proof of the theorem is deep, and combines different ingredients that are of independent interest. 
We emphasize that the global $L^q$ integrability of $\rho$ is fundamental at various stages of the proofs in \cite{haarala,bi_new}, 
and hence, the case $\rho \in L^q_\loc(\R^m)$ remains an open problem.

\subsection{Our contributions for $\Omega = \R^m$}
We first address the problem with a superposition of point charges. 
With the aid of Theorem \ref{teo_removable} (removable singularity) and Theorem \ref{teo_higherint} (higher integrability), we can refine the results in \cite{bcf,bpd,Kiessling} and prove
\begin{theorem}\label{teo_bocofo}
Let $\rho$ be as in \eqref{pointcharges_intro}, and let $u_\rho \in \cX(\R^m)$ minimize $I_\rho$. Then, the following hold. 
\begin{itemize}
\item[$(i)$] Light segments of $u_\rho$ cannot cross each other, that is, no point can lie in the interior of two distinct light segments.
\item[$(ii)$] Any maximally extended light segment of $u_\rho$ must be of the type $\overline{x_ix_j}$ for some $i,j$ such that $a_i a_j < 0$. In particular, if $a_ia_j > 0$ for each $i,j$, then $u_\rho$ does not have light segments.
\item[$(iii)$] If $u_\rho$ does not have any light segment, then it weakly solves \eqref{borninfeld} and is smooth, strictly spacelike outside of $\{x_1,\ldots, x_k\}$. Furthermore, around $x_i$, $u_\rho$ is asymptotic to a light cone in the sense of \cite{Ecker}, where the cone is future (respectively, past) pointing provided that $a_i <0$ (respectively, $a_i>0$). 
\end{itemize}
\end{theorem}

	\begin{remark}\label{rem_bcf}
According to \cite[Proof of Theorem 1.2]{bcf}, 
$u_\rho$ has no light segments whenever  
	\begin{equation}\label{eq_quanticharges}
	\left( \frac{m}{\omega_{m-1}} \right)^{\frac{1}{m-1}} \frac{m-1}{m-2} \left[ \left( \sum_{i \in I_-} |a_i| \right)^{\frac{1}{m-1}} + \left(\sum_{i \in I_+} |a_i| \right)^{\frac{1}{m-1}} \right] < \min_{i\neq j} |x_i - x_j|,
	\end{equation}
where $I_+$ ($I_-$) is the set of indices for which $a_i>0$ ($a_i<0$). 
	\end{remark}

Item $(i)$ in Theorem \ref{teo_bocofo} is indeed a general fact that holds for every $u$ with $\|D u\|_\infty \le 1$, see Proposition \ref{prop_nocross}. Item $(ii)$ complements \cite[Corollary 3.2 and Proposition 3.1]{Kiessling}, where $u_\rho$ was assumed a priori to satisfy \eqref{borninfeld}. Notice that our proof is different and 
essentially suggested by one of the referees. The last part of $(iii)$ in Theorem \ref{teo_bocofo} needs some comments, too. In \cite{Ecker}, Ecker defined an \emph{isolated singularity} for 
	\[
	\diver \left( \frac{Du}{\sqrt{1-|Du|^2}} \right) = 0 \qquad \text{on an open set } \, B
	\]
as being a point $x_0 \in B$ such that $u$ minimimizes $I_0$ on any $\Omega' \Subset B \backslash \{x_0\}$ (that is, among functions in $\cX_{u_\rho}(\Omega')$), but not on the entire $B$. 
He then proves in \cite[Theorem 1.5]{Ecker} that an isolated singularity is asymptotic to a future or past pointing light cone centered at $x_0$. As a direct application of Ecker's result, in \cite[Theorem 3.5]{bcf} 
(see also \cite[Theorem 1.5]{bpd}) the authors claim that, for $\rho$ as in \eqref{pointcharges_intro} and $\{x_i\},\{a_i\}$ matching \eqref{eq_quanticharges}, near $x_i$, $u_\rho$ is asymptotic to a light cone which is upward or downward pointing according to whether $a_i<0$ or $a_i>0$. 
However, without knowing the validity of the Euler-Lagrange equation around $x_i$, it is not clear to us how to exclude the possibility that $u_\rho$ also minimizes $I_0$ in a neighborhood of $x_i$. Our statement that $u_\rho$ solves \eqref{borninfeld} in $(iii)$ suffices to guarantee that this does not happen, and therefore to fully justify the conclusions in \cite{bpd,bcf}.\\[0.2cm]
Next, we consider the behavior of $u_\rho$ for sources $\rho \in L^2_\loc(\R^m)$, and obtain the next

\begin{theorem}\label{teo_BI_global}
Let $m \ge 3$ and 
	\[
	\rho \in \big( L^1(\R^m) + L^p(\R^m)\big) \cap L^2_\loc(\R^m), \qquad \text{for some } \, p \in (1,2_*].
	\]
Then, the minimizer $u_\rho$ weakly solves \eqref{borninfeld}. Moreover, for a given $\mathcal{I} \in \R^+$, there exists a positive constant $\mathcal{I}_0 = \mathcal{I}_0(m, p, \mathcal{I})$ 
with the following property: if
	\[
	\|\rho\|_{L^1(\R^m) + L^p(\R^m)} \le \mathcal{I},
	\]
then for any pair of open sets $\Omega'' \Subset \Omega' \Subset \Rm$ 
with $\di_\delta(\Omega'', \partial \Omega') \ge \mathcal{I}_0$, any $\mathcal{I}_2 > 0$ with 
	\[
	\|\rho\|_{L^2(\Omega')} \le \mathcal{I}_2, 
	\]
and any $q_0 \geq 0$, there exists a constant 
$\mathcal{C} = \mathcal{C}(q_0, m, p, \mathcal{I}, \mathcal{I}_0, \mathcal{I}_2, |\Omega'|_\delta)$ such that
	\begin{equation}\label{eq_bonito_intro}
	\begin{array}{lcl}
	\disp \int_{\Omega''} (1+ \log w)^{q_0} \left\{ w_\rho |D^2u_\rho|^2 + w_\rho^3 \left| D^2 u_\rho  \left( D u_\rho, \cdot \right) \right|^2 
	+ w_\rho^5 \left[D^2u_\rho(Du_\rho,Du_\rho)\right]^2\right\}\di x \\[0.5cm]
	\disp + \int_{\Omega''} w_\rho (1+ \log w_\rho)^{q_0+1}\di x \le \mathcal{C}.
	\end{array}
	\end{equation}
\end{theorem}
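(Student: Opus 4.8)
The plan is to reduce the global problem on $\R^m$ to the local machinery already developed for bounded domains, namely the higher integrability estimate $(\PP 3)$ (Theorem \ref{teo_higherint}) and the removable singularity result $(\PP 4)$ (Theorem \ref{teo_removable}). The first task is to verify the hypotheses needed to run these tools. Since $\rho \in L^1(\R^m) + L^p(\R^m)$ with $p \in (1,2_*]$, we have $\rho \in \cX(\R^m)^*$, so the minimizer $u_\rho \in \cX_0(\R^m)$ exists, is unique, and satisfies \eqref{eq_ham_Rm} and \eqref{eq_ineq_intro_Rm}. The key additional input is the \emph{absence of light segments}: I would argue that, because $\rho \in L^2_\loc(\R^m)$ is merely absolutely continuous (no singular part), we are in the setting $\Sigma = \emptyset$. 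For $m = 2$ this is $(\PP 2)$ directly; but here $m \ge 3$, so the no-light-segment property is \emph{not} automatic. Hence the genuine first step must be to \emph{rule out light segments using the global structure} — this is where the decay at infinity and the global integrability $|Du_\rho|^2 (1-|Du_\rho|^2)^{-1/2} \in L^1(\R^m)$ enter. Concretely, if $u_\rho$ had a light segment $\overline{xy}$, by the anti-peeling phenomenon such a segment cannot terminate in the interior; combined with the fact that $u_\rho$ decays to $0$ at infinity and $|Du_\rho| \le 1$, one derives a contradiction (a light segment extending to infinity forces $|u_\rho(z)| \to \infty$ along it). So: no light segments, i.e. $K^\rho_\phi = \emptyset$ in the global sense, hence $u_\rho$ is strictly spacelike away from any bad set, and $(\PP 2)$-type information holds on every $\Omega' \Subset \R^m$.

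Once light segments are excluded, I would invoke the approximation scheme exactly as in the sketch preceding Theorem \ref{teo_BI_local_s2}: mollify $\rho$ to get $\rho_j \to \rho$ with $\rho_j$ smooth, $\rho_j$ bounded in $L^1 + L^p$ and in $L^2(\Omega')$ on the relevant set, let $u_j$ minimize $I_{\rho_j}$ over $\cX_0(\R^m)$, and set $w_j = (1-|Du_j|^2)^{-1/2}$. One must check the two a priori bounds that are dimension-free: the local second fundamental form estimate $(\PP 0_1)$ and the linear energy growth $(\PP 0_2)$, $\int_{B_r} w_j \le \mathcal{C} r$ on balls $B_r$ well inside $\R^m$. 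The constants here depend only on the data $m, p, \|\rho\|_{L^1+L^p}$ and the distance to infinity (which replaces $\di_\delta(\Omega', \partial\Omega)$); this is exactly what the hypothesis $\di_\delta(\Omega'', \partial\Omega') \ge \mathcal{I}_0$ is for. Then, \emph{crucially}, because $(\PP 2)$ holds in our situation (no light segments), Theorem \ref{teo_higherint} — which, as emphasised in the excerpt, holds in \emph{any} dimension provided $(\PP 2)$ does — yields the full log-improved estimate \eqref{eq_P3_intro} for the approximants $u_j$ on $\Omega''$, with constant $\mathcal{C}(q_0, m, p, \mathcal{I}, \mathcal{I}_0, \mathcal{I}_2, |\Omega'|_\delta)$. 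Passing to the limit $j \to \infty$ (using lower semicontinuity of the integral functionals and the strong/weak convergence $u_j \to u_\rho$, $Du_j \to Du_\rho$ a.e., which follows from the uniform integrability of $w_j$) gives estimate \eqref{eq_bonito_intro}.

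It remains to prove that $u_\rho$ weakly solves \eqref{borninfeld} on the whole $\R^m$, i.e. equality in \eqref{eq_ineq_intro_Rm} against all $\eta \in \lip_c(\R^m)$. The log-improved bound gives in particular local uniform integrability of $\{w_j\}$ on every $\Omega' \Subset \R^m$, hence $\int w_\rho Du_\rho \cdot D\eta = \langle \rho, \eta\rangle$ for $\eta$ supported in any such $\Omega'$; but since $\Sigma = \emptyset$ there is no singular set to remove and, by exhaustion of $\R^m$ by bounded domains together with the global integrability \eqref{eq_ham_Rm}, the identity extends to all compactly supported Lipschitz $\eta$. (If one prefers, apply Theorem \ref{teo_removable} with $\Sigma = \emptyset$ directly.) The \textbf{main obstacle} I anticipate is the very first step: cleanly excluding light segments on $\R^m$ for $m \ge 3$ when $\rho$ is only $L^2_\loc$. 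The proof must combine the decay condition $u_\rho \to 0$ at infinity, the global bound $\int_{\R^m} |Du_\rho|^2(1-|Du_\rho|^2)^{-1/2} < \infty$, and a careful analysis of how a putative light segment can escape to infinity — the local no-light-segment technology $(\PP 2)$ is unavailable in high dimensions, so one genuinely needs the global functional framework to force $K^\rho_\phi = \emptyset$. A secondary technical point is making the constant $\mathcal{I}_0$ and the final constant $\mathcal{C}$ depend only on the listed data, which requires tracking the dependence through the mollification and the covering arguments in Theorem \ref{teo_higherint}.
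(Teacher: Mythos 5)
Your first step --- excluding light segments on $\R^m$ --- is both unproven and, in general, false, and this is the crux of the gap. For $m \ge 4$ the paper's own example (Theorem \ref{ex-light-seg_intro} with $\ell=1$, i.e.\ Corollary \ref{prop_nosolve}) exhibits a compactly supported weak solution with source in $L^2(\R^m)\cap L^1(\R^m)$ whose graph contains a light segment; by Proposition \ref{prop_localglobal} that solution \emph{is} the minimizer, so in the hypotheses of Theorem \ref{teo_BI_global} the set $K^\rho$ can be non-empty. Your suggested mechanism for the exclusion also does not work: the anti-peeling theorem of Bartnik--Simon is only available for $\rho \in L^\infty$ (the paper stresses that its comparison argument does not extend to rougher sources), so a light segment of a minimizer with $\rho \in L^2_\loc$ need not reach infinity --- in the example it is a compact segment --- and no contradiction with the decay of $u_\rho$ arises. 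Relatedly, you slightly misstate the input of Theorem \ref{teo_higherint}: it does not require the no-light-segment property $(\PP 2)$, but only the containment $L^\rho_R(\Omega'')\Subset\Omega'$ of Lorentzian balls; $(\PP 2)$ is merely the way this containment is obtained in bounded domains via Lemma \ref{lem_simplecompact}.

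The actual proof bypasses light segments entirely. From $I_\rho(u_\rho)\le I_\rho(0)=0$ and coercivity \eqref{coercive} one gets a uniform bound $\|u_\rho\|_\infty\le \mathcal{J}(m,\mathcal{I},p)$ (Proposition \ref{prop_extrinsicglobal}), whence
\[
\left(\ell^\rho_o(x)\right)^2 \;=\; r_o^2(x)-\left|u_\rho(x)-u_\rho(o)\right|^2 \;\ge\; r_o^2(x)-4\mathcal{J}^2 ,
\]
so $L^\rho_\e(\Omega'')\Subset\Omega'$ as soon as $\di_\delta(\Omega'',\partial\Omega')\ge 2\mathcal{J}+\e$. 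This is exactly the role of $\mathcal{I}_0$ in the statement (not a surrogate for $\di_\delta(\Omega',\partial\Omega)$ in the energy-growth estimate, as you suggest). With this containment, valid uniformly for the mollified approximants $u_j$, Theorem \ref{teo_higherint} gives \eqref{eq_bonito_intro} for $u_j$ with the stated dependence of the constant, the limit passage is as in Corollary \ref{cor_secondfund}, and the resulting local uniform integrability of $\{w_j\}$ together with Theorem \ref{teo_removable} (or directly the Vitali argument of Subsection \ref{subsec_strategy}, since $\Sigma=\emptyset$) yields that $u_\rho$ weakly solves \eqref{borninfeld}. The remainder of your plan (approximation, uniform log-improved estimates, limit, Vitali) matches the paper once the erroneous first step is replaced by this $L^\infty$/Lorentzian-ball argument.
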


Some comments are in order. First, the example in Theorem \ref{ex-light-seg_intro} shows that the set of light segments
\begin{equation}\label{def_lightseg_Rm}
K^\rho \doteq \ov{\bigcup \Big\{ \overline{xy} \ : \ x,y \in \R^m, \ x \neq y,  \ |u_\rho(x)-u_\rho(y)| = |x-y| \Big\}}
\end{equation}
may be non-empty, at least if $m \ge 4$. As in the case of bounded domains, the existence of light segments for solutions in $\R^3$ is unknown. Second, the enhanced second fundamental form estimate \eqref{eq_bonito_intro} holds provided that the  inequality
	\begin{equation}\label{eq_weaker}
	\int_{\Omega'} \rho^2 \frac{(1+\log w_\rho)^{q_0+2}}{w_\rho} \di x \le \mathcal{I}_1
	\end{equation}
is satisfied, which is trivially implied by $\rho \in L^2(\Omega')$. Whether \eqref{eq_weaker} may be satisfied by less regular sources $\rho$ is an open problem.

We conclude by studying the case when $\rho \in L^2_\loc$ away from a small compact set, where the singular part of $\rho$ is supported. Then, Theorem \ref{teo_higherint} guarantees that the sole obstruction to the solvability of \eqref{borninfeld} on the entire $\R^m$ may occur when $K^\rho$ intersects the support of the singular measure. 
As we see in Corollary \ref{prop_nosolve} and Remark \ref{rem_nosolve}, 
the case can  happen for certain $\rho = \rho_{AC} + \rho_S$ where $\rho_{AC} \in  L^2_{\loc}$ and 
$\rho_S = \delta_y - \delta_x$ with $m \ge 4,$ 
the corresponding minimizer does not solve \eqref{borninfeld}.

\begin{theorem}\label{teo_BI_global_consing}
Let $m \ge 3$ and let $\Sigma \Subset \Rm$ be a compact set satisfying $\haus_\delta^1(\Sigma) = 0$. Assume  that $\rho$ decomposes as    
	\begin{equation*}
	%\label{assu_rho_mainglobal}
	\rho = \rho_{\mathrm{S}} + \rho_2, \qquad \text{with } \, \left\{ \begin{array}{l}
	\rho_{{\rm S}} \in \cM(\R^m), \  {\rm supp } \,  \rho_{\mathrm{S}} \subset \Sigma, \\[0.2cm]	
	\rho_2 \in \big(L^1(\R^m) + L^p(\R^m)\big) \cap L^2_\loc(\R^m \backslash \Sigma), \ \ p \in (1,2_*],
	\end{array}\right.
	\end{equation*}
Then, the following hold.
\begin{itemize}
\item[{\rm (i)}] The minimizer $u_\rho$ weakly solves \eqref{borninfeld} on $\R^m \backslash K^\rho$, with $K^\rho$ as in \eqref{def_lightseg_Rm}.\\
Moreover, if $K^\rho \cap \Sigma = \emptyset$, then $u_\rho$ weakly solves \eqref{borninfeld} on $\R^m$.
\item[{\rm (ii)}] For each $\Omega' \Subset \R^m \backslash (\Sigma \cup K^\rho)$ and $q_0 \ge 0$, 
	\[
	\begin{array}{lcl}
	\disp \int_{\Omega'} (1+ \log w_\rho)^{q_0} \left\{ w_\rho |D^2u_\rho|^2 + w_\rho^3 \left| D^2 u_\rho  \left( D u_\rho, \cdot \right) \right|^2 
	+ w_\rho^5 \left[D^2u_\rho(Du_\rho,Du_\rho)\right]^2\right\}\di x \\[0.5cm]
	\disp + \int_{\Omega'} w_\rho (1+ \log w_\rho)^{q_0+1}\di x < \infty.
	\end{array}
	\]
\item[{\rm (iii)}] 
If $\Omega' \Subset \Omega \setminus (\Sigma \cup K^\rho)$ and $\rho \in L^\infty(\Omega')$, 
then $u_\rho$ is strictly spacelike and $u_\rho \in C^{1,\alpha}_\loc(\Omega')$, for some $\alpha>0$. 
In particular, if $\rho \in C^\infty(\Omega')$ so is $u_\rho$.
\end{itemize}
\end{theorem}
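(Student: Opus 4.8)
The statement is the $\Omega=\Rm$ counterpart of Theorem~\ref{teo_BI_local_sm}, and the plan is to prove it by the same scheme, replacing the bounded--domain ingredients with their entire--space analogues from the proof of Theorem~\ref{teo_BI_global}. Since $\Sigma$ is compact, $\rho$ has compactly supported singular part; fix mollifiers $\psi_j$ with $\supp\psi_j\subset B_{\e_j}$, $\e_j\to0$, and set $\rho_j\doteq\rho*\psi_j\in C^\infty(\Rm)$. Then $\rho_j\to\rho$ weakly--$*$ in $\cM(\Rm)$ and in $\cX(\Rm)^*$, the norms $\|\rho_j\|_{L^1(\Rm)+L^p(\Rm)}$ stay bounded (by $\|\rho_{\mathrm S}\|_{\cM(\Rm)}+\|\rho_2\|_{L^1(\Rm)+L^p(\Rm)}$), and the local $L^2$ bounds persist away from $\Sigma$: for $\Omega'\Subset\Rm\setminus\Sigma$ one has $\|\rho_j\|_{L^2(\Omega')}\le\|\rho_2\|_{L^2(\Omega'+B_{\e_j})}+o(1)$ with $\Omega'+B_{\e_j}\Subset\Rm\setminus\Sigma$ for $j$ large. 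Let $u_j\in\cX_0(\Rm)$ minimize $I_{\rho_j}$; as $\rho_j$ is smooth, Theorem~\ref{teo_BI_global} (or \cite[Theorem~1.5]{bpd}) gives that $u_j$ is strictly spacelike and solves \eqref{borninfeld} classically with source $\rho_j$, while the strict convexity and lower semicontinuity arguments of \cite{bpd} (together with Proposition~\ref{lem_basicL2}) yield $u_j\to u_\rho$ in $\cX(\Rm)$ and, along a subsequence, $Du_j\to Du_\rho$ a.e.

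Next I would transfer the estimates. Fix $\Omega'\Subset\Rm\setminus(\Sigma\cup K^\rho)$. By \eqref{def_lightseg_Rm} and the structure of minimizers (as in \cite{bartniksimon}), $u_\rho$ is strictly spacelike on a neighbourhood of $\overline{\Omega'}$, in particular has no light segment there, so the no--light--segment property $(\PP 2)$ is in force on $\Omega'$. Hence the all--dimensional local estimates $(\PP 0_1)$, $(\PP 0_2)$ and the higher--integrability and second fundamental form estimate of Theorem~\ref{teo_higherint} --- which holds in any dimension on a subdomain where $(\PP 2)$ is satisfied --- apply to the $u_j$ on $\Omega'$, so the bounds of item~(ii) hold for $u_j$ on $\Omega''\Subset\Omega'$ uniformly in $j$. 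Since $w_j\ge1$, this bounds $D^2u_j$ in $L^2_\loc(\Omega')$, whence $D^2u_j\rightharpoonup D^2u_\rho$ there; item~(ii) then follows by Fatou's lemma and weak lower semicontinuity of the convex integrands.

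For item~(i), the bound $\int_{\Omega''}w_j\log w_j\le\mathcal{C}$ forces $\{w_j\}$ to be locally uniformly integrable on $\Rm\setminus(\Sigma\cup K^\rho)$ (de la Vall\'ee--Poussin); hence for $\eta\in\lip_c(\Rm\setminus(\Sigma\cup K^\rho))$ one may pass to the limit in $\int_{\Rm}w_j\,Du_j\cdot D\eta\,\di x=\langle\rho_j,\eta\rangle$ by Vitali's theorem, so $u_\rho$ weakly solves \eqref{borninfeld} on $\Rm\setminus(\Sigma\cup K^\rho)$ (recall $w_\rho\in L^1_\loc(\Rm)$ by \eqref{eq_ham_Rm}). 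As $\haus^1_\delta(\Sigma)=0$ and $\di_\delta(\supp\eta,K^\rho)>0$ for $\eta\in\lip_c(\Rm\setminus K^\rho)$, a localized application of the removable--singularity Theorem~\ref{teo_removable} (covering $\Sigma\cap\supp\eta$ by finitely many balls $\Subset\Rm\setminus K^\rho$) upgrades this to weak solvability on $\Rm\setminus K^\rho$, which is the first claim of (i). If moreover $K^\rho\cap\Sigma=\emptyset$, then on a neighbourhood of $K^\rho$ the source is $\rho=\rho_2$, absolutely continuous, so the distribution $-\diver(w_\rho Du_\rho)-\rho$ (well defined, as $w_\rho Du_\rho\in L^1_\loc(\Rm)$) is supported on $K^\rho$; its only possible atoms --- by Theorem~\ref{teo_nolight_intro}, which identifies Dirac masses at endpoints of light segments as the sole obstruction --- would lie at endpoints of the segments of $K^\rho$, where $\rho$ has none, and a flux computation on tubes of shrinking radius about these segments (whose total flux tends to $0$ along a suitable sequence of radii because $w_\rho\in L^1_\loc$) disposes of the rest, giving weak solvability on all of $\Rm$. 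Finally, item~(iii): for $\Omega'\Subset\Rm\setminus(\Sigma\cup K^\rho)$, $u_\rho$ is strictly spacelike on $\Omega'$ and, by (i), weakly solves \eqref{borninfeld} there; if $\rho\in L^\infty(\Omega')$, then $u_\rho\in C^{1,\alpha}_\loc(\Omega')$ by De~Giorgi--Nash--Moser theory for the resulting uniformly elliptic equation, and $u_\rho\in C^\infty(\Omega')$ for $\rho\in C^\infty(\Omega')$ by Schauder bootstrap, exactly as in Theorem~\ref{teo_BI_local_sm}(iii) and \cite{bartniksimon}.

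The main obstacle is the passage of the equation across $K^\rho$ in the second half of (i): unlike $\Sigma$, the set $K^\rho$ need not be $\haus^1_\delta$--negligible (it may be a cylinder of positive dimension, as in Theorem~\ref{ex-light-seg_intro}), so no classical removable--singularity statement applies, and one must exploit the affine, unit--slope structure of the graph of $u_\rho$ along the light segments together with the absence of atoms of $\rho$ near $K^\rho$ to annihilate the residual distribution. A secondary difficulty, inherited from Theorem~\ref{teo_higherint}, is the careful localization of the estimates in Step~2, in particular controlling how the constant $\mathcal{C}$ degenerates as $\Omega'$ approaches $\Sigma\cup K^\rho$.
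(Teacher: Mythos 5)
Most of your scheme does coincide with the paper's (the paper literally proves this theorem by repeating the proof of Theorem \ref{teo_BI_local_sm}, with Proposition \ref{prop_extrinsicglobal} supplying the uniform $L^\infty$ bound that in the bounded case came from the boundary datum): mollify $\rho$, apply Theorem \ref{teo_higherint} to the approximations $u_j$ on pairs $\Omega''\Subset\Omega'\Subset\R^m\setminus(\Sigma\cup K^\rho)$ — the uniform Lorentzian-ball radius coming from Lemma \ref{lem_simplecompact} applied to $\{u_j\}\cup\{u_\rho\}$, and the hypothesis \eqref{assu_int_Linfty} from Proposition \ref{prop_extrinsicglobal} — then pass to the limit as in Corollary \ref{cor_secondfund} for (ii), use the Bartnik--Simon gradient estimate and bootstrap for (iii), and apply Theorem \ref{teo_removable} with $E=\Sigma$ on the open set $\R^m\setminus K^\rho$ for the first claim of (i). One imprecision: you assert that $u_\rho$ is \emph{strictly spacelike} on a neighbourhood of $\overline{\Omega'}$ "by the structure of minimizers"; this is not available (it is exactly what (iii) proves, and only under $\rho\in L^\infty(\Omega')$). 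What the argument needs, and what the definition of $K^\rho$ gives for free, is only the absence of light segments there.

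The genuine gap is your treatment of the second claim of (i), the case $K^\rho\cap\Sigma=\emptyset$. The paper never "removes" $K^\rho$: it chooses $\Omega''$ \emph{containing} the relevant compact portion of $K^\rho$ with $\Omega''\Subset\Omega'\Subset\R^m\setminus\Sigma$; since every light segment of $u_\rho$ lies inside $K^\rho\subset\Omega''$, none joins $\partial\Omega''$ to $\partial\Omega'$, so the first part of Lemma \ref{lem_simplecompact} yields a uniform $R$ with $L^{\rho_j}_R(\Omega'')\Subset\Omega'$, and Theorem \ref{teo_higherint} (which only needs this Lorentzian-ball containment, not spacelikeness of the limit inside $\Omega''$) gives uniform integrability of $\{w_j\}$ \emph{across} $K^\rho$; Vitali then passes the equation to the limit near $K^\rho$, and Theorem \ref{teo_removable} handles $\Sigma$. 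Your substitute — localizing the defect distribution to $K^\rho$, invoking Theorem \ref{teo_nolight} to claim its only possible atoms sit at endpoints of light segments, and annihilating the remainder by a shrinking-tube flux argument using only $w_\rho\in L^1_\loc$ — does not hold up. Theorem \ref{teo_nolight} gives no description of the defect functional $\mathscr{T}$ (its characterization is explicitly posed as an open problem in Subsection \ref{subsec_open}), so nothing restricts the defect to atoms at endpoints: it could be a diffuse measure along $K^\rho$, which may even be a positive-dimensional cylinder as in Theorem \ref{ex-light-seg_intro}. And local integrability of $w_\rho$ alone cannot kill a defect carried by a set of positive $\haus^1_\delta$ measure: the tube argument would need $\int_{T_r}w_\rho=o(r)$, which you do not have, and Remark \ref{rem_sharp_remov} together with Corollary \ref{prop_nosolve} exhibits precisely a configuration where $w$ is locally uniformly integrable off a single light segment, $w_\rho\in L^1_\loc$, and the defect is nonetheless nonzero. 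So the step you yourself flag as the main obstacle is indeed unproved; the fix is the paper's mechanism of obtaining uniform integrability of the approximating energies over $K^\rho$ via Theorem \ref{teo_higherint}, rather than a removability argument for $K^\rho$ after the limit.
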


\subsection{Open problems and outline of the paper}\label{subsec_open}

We first address the existence problem for light segments. We think that the regularity of $\rho_u$ in  Theorem \ref{ex-light-seg_intro} might be sharp, and we are tempted to propose the following 

\begin{conjecture}
If $\phi \in \Spa(\partial \Omega)$ and $\rho \in L^q_\loc(\Omega)$ with $q > m-1$, then the minimizer $u_\rho$ does not have light segments.
\end{conjecture}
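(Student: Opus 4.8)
Since this statement is open, I only outline a plausible route. The plan is to establish the no-light-segment property — the analogue of $(\PP 2)$ — in every dimension $m\ge 2$ under the stronger hypothesis $\rho\in L^q_\loc(\Omega)$ with $q>m-1$; granted this, Theorem \ref{teo_higherint} and Theorem \ref{teo_removable} would additionally give that $u_\rho$ weakly solves \eqref{borninfeld}, together with the $\log$-improved energy and $W^{2,2}_\loc$ bounds. Fix $\phi\in\Spa(\partial\Omega)$, mollify $\rho$ to obtain $\rho_j$ with $\rho_j\to\rho$ in $L^q_\loc(\Omega)$ and $\rho_j\rightharpoonup\rho$ in $\cM(\Omega)$, and let $u_j$ be the minimizers of $I_{\rho_j}$, which are strictly spacelike and smooth by Theorem \ref{teo_bartniksimon_2}; write $w_j=(1-|Du_j|^2)^{-1/2}$. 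The scheme needs two independent ingredients: an \emph{upper} integral bound for $w_j$ that does not presuppose the absence of light segments, and a \emph{lower} bound for the rate at which $w_\rho$ must blow up along any hypothetical light segment.

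For the upper bound I would run a Caccioppoli/Moser iteration on the equation satisfied by $w_j$, obtained by differentiating $-\diver(w_jDu_j)=\rho_j$ and contracting — the same Bochner/Simons-type identity underlying the local second fundamental form estimate $(\PP 0_1)$, in which $\|\SF_j\|^2w_j^{-1}$ is essentially a weighted Dirichlet energy of a power of $w_j$ (recall $Dw_j=w_j^3\,D^2u_j(Du_j,\cdot)$ and $|Du_j|^2=1-w_j^{-2}$). Feeding the source through the Sobolev inequality, the iteration should convert the hypothesis $\rho_j\in L^q_\loc$, $q>m-1$, into a uniform bound
\[
\int_{B_r(x_0)} w_j^{\,s}\,\di x \le \mathcal{C}(r)\qquad\text{for every }B_{2r}(x_0)\Subset\Omega,
\]
with an exponent $s>m-1$, using the energy estimate $(\PP 0_2)$ as the base step. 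Tracking the exponents so that the iteration closes \emph{exactly} at the threshold $q=m-1$ — consistently with the sharpness indicated by the cylinder examples of Theorem \ref{ex-light-seg_intro}, where $w\in L^q_\loc$ precisely for $q<m-1$ — is the first delicate point.

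For the lower bound I would show that, along any segment $\overline{xy}\Subset\Omega$ on which $u_\rho$ has slope $1$, one has $w_\rho(z)\gtrsim \dist(z,\overline{xy})^{-1}$ on a neighbourhood of the interior of $\overline{xy}$. Heuristically, near such a segment (taken along $e_m$, with transversal variable $y'\in\R^{m-1}$) the minimizer is a perturbation of the light hyperplane $(y',x_m)\mapsto x_m$ in the degenerate transversal directions, and the model perturbation $(y',x_m)\mapsto(1-|y'|^2)x_m$ used in Theorem \ref{ex-light-seg_intro} is the \emph{slowest} admissible detachment: any weakly spacelike function that is affine with slope $1$ on $\overline{xy}$ and solves \eqref{borninfeld} away from it must peel off at least this fast. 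I would try to make this rigorous by comparison with explicit sub/supersolutions of the Born--Infeld operator of the above type, swept over all $m-1$ transversal directions, in the spirit of — but quantitatively sharper than — the anti-peeling Theorem \cite[Theorem 3.2]{bartniksimon}. Combining the two ingredients, a light segment would force
\[
\int_{B_r(x_0)} w_\rho^{\,s}\,\di x \;\gtrsim\; \int_{\{|y'|<r\}} |y'|^{-s}\,\di y' \;=\;\infty
\]
for $s>m-1$, contradicting the upper bound; the conclusion then follows through $(\PP 3)$ and $(\PP 4)$.

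The step I expect to be genuinely hard — and the reason this is only a conjecture — is the lower blow-up estimate: it demands control of the minimizer \emph{near} a light segment, a problem emphasized in the Introduction as being essentially open, and the comparison functions must retain room in all transversal directions simultaneously, so naive one-dimensional barriers do not suffice. A less quantitative alternative would be to combine the higher-integrability upper bound with the removable-singularity Theorem \ref{teo_removable} and a localized Calabi--Cheng--Yau--Bartnik rigidity to conclude directly that the limiting minimizer cannot be affine with slope $1$ on a segment; but localizing such rigidity on a bounded domain is precisely the kind of difficulty this paper flags.
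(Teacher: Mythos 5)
This statement is one of the paper's stated \emph{open problems} (it appears as a Conjecture in Subsection \ref{subsec_open}); the paper offers no proof of it, so there is nothing to compare your argument against, and your proposal must be judged on its own merits as a strategy. As a heuristic it is sensible — the exponent bookkeeping is consistent with the cylinder examples of Theorem \ref{ex-light-seg_intro}, where $w\sim \dist^{-1}$ transversally to the light set and $w,\rho\in L^q$ exactly for $q<m-\ell$ — but both pillars of the plan are themselves open, and the first one clashes with what the paper's own machinery can deliver.

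Concretely, your upper bound asks for a uniform local estimate $\int_{B_r}w_j^{\,s}\,\di x\le \mathcal{C}$ with some $s>m-1$, obtained \emph{without} presupposing the absence of light segments. This is precisely the higher-integrability question the paper lists as open (even for $\rho\in L^q_\loc(\R^m)$, $q>m$), and the reason is structural: the paper's $\log$-improvement (Theorem \ref{teo_higherint}) is only proved under relative compactness of Lorentzian balls, i.e. under a quantitative no-light-segment hypothesis, exactly because Euclidean cut-offs in the Jacobi-type identity produce terms of size $w^2$ that cannot be absorbed (Remark \ref{rem_point}); using them circularly would assume what you want to prove. Moreover, the estimates available without that hypothesis, $(\PP 0_1)$ and $(\PP 0_2)$, control $\int w$ on balls (with linear growth, which is perfectly compatible with $w\sim\dist^{-1}$ along a segment) and $\int\|\SF_j\|^2w_j^{-1}$, which dominates only $\int |Dw_j|^2w_j^{-3}$, i.e. the Dirichlet energy of the \emph{bounded} function $w_j^{-1/2}$ — not of any positive power of $w_j$. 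So there is no weighted Caccioppoli inequality for $w_j^{s}$ to iterate, and the Sobolev inequality on the graph is not at your disposal either, since the graph geometry degenerates exactly where $w_j$ blows up. The two-dimensional gain $w\log w$ in Theorem \ref{teo_higher_m2} comes from a planar trace inequality matched to the linear growth $\mu(B_r)\le Cr$, and has no analogue feeding $L^q$ data into $L^s$ bounds with $s>m-1$ when $m\ge 3$. Your second pillar, the lower blow-up rate $w_\rho\gtrsim\dist(\cdot,\overline{xy})^{-1}$ near a light segment, is likewise unproved: the only comparison result of this flavour, the anti-peeling theorem of \cite{bartniksimon}, is specific to $L^\infty$ sources (as the paper stresses), and constructing $(m-1)$-dimensional families of barriers for merely $L^q$ data near a degenerate direction is exactly the "behavior near a light segment" problem the Introduction describes as essentially open. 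In short: the architecture (uniform upper bound in $L^s$, $s>m-1$, versus forced non-integrability at the same threshold) is a reasonable way to think about the conjecture, but both estimates are missing, and the upper one cannot be obtained by the Moser-type scheme you sketch with the tools currently available in the paper.
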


The case $q = m-1$, which includes $\rho \in L^2_\loc(\Omega)$ when $m=3$, is particularly subtle. 

\begin{question}\label{question_border}
If $\phi \in \Spa(\partial \Omega)$ and $\rho \in L^{m-1}_\loc(\Omega)$, could the minimizer have light segments?
\end{question}

In view of the techniques developed herein, a negative answer to the above question would be sufficient 
to extend Theorem \ref{teo_BI_local_s2} to dimension $m \ge 3$ and to $\rho_{\mathrm{AC}} \in L^{m-1}_\loc(\Omega \backslash \Sigma)$.

Related to the above problems, and in view of Corollary \ref{prop_nosolve}, we also formulate the following

\begin{question}
If $\phi \in \Spa(\partial \Omega)$ and 
	\[
	\rho = \sum_{i=1}^k a_i \delta_{x_i} + \rho_{\mathrm{AC}} \qquad \text{with } \ \  \rho_{\mathrm{AC}} \in L^q(\Omega), \ q > m-1,
	\]
does the minimizer $u_\rho$ solve \eqref{borninfeld} weakly?
\end{question}

An ambitious goal would be to relate the integrability of $\rho$ to the Hausdorff dimension of the set $K^\rho_\phi$ of light segments. In view of Theorem \ref{ex-light-seg_intro} and of its proof, we may expect that the following holds:
 
\begin{conjecture}
If $m \ge 3$, $\phi \in \Spa(\partial \Omega)$ and $\rho \in L^{q}(\Omega)$ for some $2 \le q \le m-1$, then the Hausdorff dimension of $K_\phi^\rho$ satisfies $\dim_{\haus_{\delta}}(K^\rho_\phi) \le m-q$. 
\end{conjecture}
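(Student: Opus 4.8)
Since the set in \eqref{def_lightseg} is, up to closure, a union of segments, it suffices to bound the Hausdorff dimension of $K^\rho_\phi\cap\Omega$, and I would obtain it from the stronger Minkowski-type estimate: fixing $\Omega'\Subset\Omega$ and writing $K_t\doteq\{x\in\Omega':\di_\delta(x,K^\rho_\phi)<t\}$,
\begin{equation}\label{eq_conj_mink}
|K_t|_\delta\le\mathcal{C}\,t^{\beta}\qquad\text{for every }\beta<q\text{ and all }0<t<\varepsilon_0 .
\end{equation}
Indeed \eqref{eq_conj_mink} forces the upper Minkowski dimension of $K^\rho_\phi$ to be at most $m-\beta$ for every $\beta<q$, hence $\dim_{\haus_\delta}(K^\rho_\phi)\le m-q$. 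The bound \eqref{eq_conj_mink} would in turn follow from the elementary chain
\begin{equation}\label{eq_conj_chain}
|K_t|_\delta\;\le\;t^{\beta}\int_{K_t}\di_\delta(x,K^\rho_\phi)^{-\beta}\,\di x\;\le\;c^{-\beta}\,t^{\beta}\int_{K_{\varepsilon_0}}w_\rho^{\,\beta}\,\di x ,
\end{equation}
valid once one disposes of two ingredients: \textbf{(A)} $w_\rho\in L^{\beta}_\loc(\Omega)$ for every $\beta<q$; and \textbf{(B)} a lower bound $w_\rho(x)\ge c\,\di_\delta(x,K^\rho_\phi)^{-1}$ on the tube $K_{\varepsilon_0}$.

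Ingredient \textbf{(B)} is a geometric statement about the rate at which the graph of $u_\rho$ degenerates along $K^\rho_\phi$. On each light segment $\overline{xy}\subset K^\rho_\phi$ one has $\partial_e u_\rho\equiv1$ with $e=(y-x)/|y-x|$, hence $|Du_\rho|=1$ and $D^2u_\rho$ vanishes in the direction $e$ there, while $u_\rho\in C^{1,\alpha}_\loc$ away from $K^\rho_\phi$ by Theorem \ref{teo_BI_local_sm}. The model to keep in mind is the function $U(y,x_m)=(1-|y|^2)x_m$ underlying Theorem \ref{ex-light-seg_intro}, for which $1-|DU|^2\asymp\di_\delta(\cdot,K)^2$ near the light set $K$, so that $w_U\asymp\di_\delta(\cdot,K)^{-1}$. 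I would try to establish \textbf{(B)} by a barrier/comparison argument at the scale $d=\di_\delta(x,K^\rho_\phi)$ modeled on this profile, squeezing $u_\rho$ near a light segment between two rescaled copies of the model so that $Du_\rho(x)$ is forced $\di_\delta$-close to $e$; pinning down rigorously that $\di_\delta^{-1}$ is indeed the slowest admissible blow-up rate of $w_\rho$ at $K^\rho_\phi$ is itself a delicate point, and should a slower rate occur one would have to strengthen \textbf{(A)} accordingly in \eqref{eq_conj_chain}.

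Ingredient \textbf{(A)} is the analytic heart, and where I expect the real obstacle to lie. Away from $K^\rho_\phi$ one already has $w_\rho\in L^p_\loc$ for \emph{every} $p$ by Theorem \ref{teo_higherint}, so \textbf{(A)} is entirely about the rate of blow-up of $w_\rho$ towards $K^\rho_\phi$, and its proof must genuinely turn the hypothesis $\rho\in L^q(\Omega)$ into local $L^{\beta}$ integrability of $w_\rho$ for all $\beta<q$. I would attempt this on the smooth approximants $u_j$ minimizing $I_{\rho_j}$ (smooth, so that all manipulations are legitimate up to the light set of $u_j$), by revisiting the Stampacchia/De Giorgi--type iteration underlying $(\PP 3)$ in the proof of Theorem \ref{teo_higherint}, but now testing against functions built from powers of a regularized distance to the light set of $u_j$, so that the source enters precisely through $\|\rho_j\|_{L^q}\le\|\rho\|_{L^q}+o(1)$; a Fatou passage to the limit $j\to\infty$ would then yield $\int_{\Omega'}w_\rho^{\beta}<\infty$ for every $\beta<q$. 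The difficulty is that the $(\PP 2)$--$(\PP 3)$ machinery as it stands requires the \emph{absence} of light segments in order to be started: carrying the iteration out \emph{uniformly up to} $K^\rho_\phi$ is exactly the barrier that keeps $(\PP 2)$ open for $m\ge3$, and one must moreover track the exponents so that the weight power produced is precisely $q$, losing at most logarithmic factors — consistent with the non-strict bound $\dim_{\haus_\delta}(K^\rho_\phi)\le m-q$, and explaining why a complete proof would in particular yield new no-light-segment results.
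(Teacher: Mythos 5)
The statement you are addressing is not a result proved in the paper: it is stated verbatim as a conjecture among the open problems of Subsection \ref{subsec_open}, motivated only by the example of Theorem \ref{ex-light-seg_intro}, so there is no proof in the paper to compare yours with, and your submission is explicitly a strategy rather than a proof. The reduction you propose is sound as far as it goes: granting your ingredients \textbf{(A)} and \textbf{(B)}, the chain of inequalities does give $|K_t|_\delta\lesssim t^\beta$ for every $\beta<q$, hence an upper Minkowski bound and, by countable exhaustion, $\dim_{\haus_\delta}(K^\rho_\phi\cap\Omega)\le m-q$ (the only loose end being that the closure in \eqref{def_lightseg} may add points of $\partial\Omega$, which a purely local tube estimate does not control). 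The heuristics also match the evidence in the paper: for the explicit solution of Theorem \ref{ex-light-seg_intro} one indeed has $w\asymp \di_\delta(\cdot,K)^{-1}$ and $w,\rho\in L^q$ exactly for $q<m-\ell$.

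The genuine gap is that both pillars are themselves open problems, of essentially the same depth as the conjecture. Ingredient \textbf{(A)} --- $w_\rho\in L^\beta_\loc$ \emph{up to} $K^\rho_\phi$ for all $\beta<q$ --- is precisely the local higher integrability question the paper lists as open; the strongest available estimates (Theorem \ref{teo_higherint}, i.e.\ $(\PP3)$) only give $w(1+\log w)^{q_0+1}$ locally integrable, and, crucially, only on sets $\Omega''$ whose Lorentzian neighborhoods $L_R(\Omega'')$ stay compactly inside the good region: the cut-off construction there is built on the relative compactness of Lorentzian balls, which fails across a light segment (cf.\ Remark \ref{rem_point} and Lemma \ref{lem_simplecompact}), so the estimate degenerates exactly where you need it, and no replacement test function ``built from a regularized distance to the light set of $u_j$'' is supplied or obviously available. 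Ingredient \textbf{(B)}, the lower bound $w_\rho\ge c\,\di_\delta(\cdot,K^\rho_\phi)^{-1}$, is extrapolated from the model; nothing in the paper excludes a slower degeneration rate for a general minimizer, and the only comparison tool at hand (\cite[Lemma 2.1]{bartniksimon}) requires $L^\infty$ curvature, so the barrier argument you sketch has no foundation for merely $L^q$ sources. As you yourself observe, proving \textbf{(A)} uniformly up to $K^\rho_\phi$ would already yield new no-light-segment results, i.e.\ it would settle (a quantitative form of) $(\PP2)$ in dimension $m\ge3$, which the paper states is open. In short, the proposal reduces the conjecture to two other open problems rather than proving it, and should be read as a plausible programme consistent with the paper's own motivation, not as a solution.
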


It might be possible that 
%We may also wonder whether 
 $\dim_{\haus_{\delta}}(K^\rho_\phi) \le m-q$ could be strengthened to $\haus_{\delta}^{m-q}(K^\rho_\phi) = 0$. If this were true, notice that it would also imply a negative answer to Question \ref{question_border}. If $\rho$ is more singular, we propose the next

\begin{question}
For $\rho \in \cM(\Omega)$, is it true that $\haus_\delta^{m-1}(K_\phi^\rho)=0$ ?  %  $\dim_\haus(K_\phi^\rho) < m-1$.
\end{question}

Still about the set of light segments, it would be important to understand the weak limit 
	\[
	w_j \di x \rightharpoonup \vartheta  \qquad \text{in } \, \cM(\Omega'), \ \ \  \Omega' \Subset \Omega:
	\]
can one characterize the singular part of $\vartheta$, and relate its support to the set $K^\rho_\phi$? Can one characterize the non-negative functional 
	\[
	\la \mathscr{T}, \eta \ra \doteq  \la \rho, \eta \ra - \int_{\Omega} \frac{Du_\rho\cdot D\eta}{\sqrt{1-|Du_\rho|^2}} \qquad \eta \in C^\infty_c(\Omega),
	\]	
describing the loss in \eqref{eq_ineq_intro_Rm}?

Regarding the energy density, we first observe that the integrability of $w_\rho$ in Theorem \ref{ex-light-seg_intro} is much higher than the one that we can prove in Theorem \ref{teo_higherint}. However, the latter is uniform on a sequence of approximated solutions $\{u_{\rho_j}\}$. We can ask the following

\begin{question}
Can one prove a local higher integrability $w_\rho \in L^p_\loc(\Omega)$, for suitable $p>1$, under a local higher integrability of $\rho$, for instance for $\rho \in L^q_\loc(\Omega)$ and $q > m-1$?
\end{question}

Even the case $\rho \in L^q_\loc(\R^m)$ and $q > m$ is currently unknown, cf. \cite{haarala,bi_new}. 

\begin{question}
What about the regularity of $u_\rho$ and $w_\rho$ when $\rho \in L^q$ and $q \in (1,2)$? 
\end{question}

About the higher order regularity for $u_\rho$, $W^{2,q}$ estimates are unknown apart from the case $q=2$, considered in the present paper, and $q>m$ treated in \cite{haarala,bi_new} for $\Omega = \R^m$. We think that there might be an interpolation result, and therefore propose the following

\begin{question}
Can one prove that, for $p \in [2,m]$ and $\rho \in L^p_\loc$, the minimizer $u_\rho$ satisfies $u_\rho \in W^{2,p}_\loc$?
\end{question}

The paper is organized as follows.
Section \ref{sec_prelimgeo} contains some background material from Lorentzian Geometry. 
Section \ref{sec_basic} introduces the functional setting, then moves to discuss the basic properties of $u_\rho$ (convergence under approximation of $\rho$, integrability), together with various equivalent conditions for the solvability of \eqref{borninfeld}. In particular, we mention Propositions \ref{lem_basicL2} and \ref{prop_localglobal}, which may have an independent interest. Though preparatory, most of the material in this section did not appear elsewhere in the literature. In Section \ref{sec-main-tools}, we develop our main new tools: a removable singularity result, 
Theorem \ref{teo_nolight_intro}, a second fundamental form estimate and a higher integrability result. 
These are the bulk of the paper, the techniques therein differ from those in the literature and 
we believe they are applicable beyond the purposes of the present work. Section \ref{prf-thms} then contains the proof of our main existence results. In the concluding Section \ref{sec_counterexamples}, we construct a solution to \eqref{borninfeld} with an $\ell$-dimensional set of light segments and zero boundary condition.

To a certain extent, each of Sections \ref{sec_prelimgeo} to \ref{sec_counterexamples} can be read independently. In particular, the reader acquainted with Lorentzian Geometry and not focusing on the functional analytic setting may directly skip to Section \ref{sec-main-tools}. \\[0.2cm]

\noindent \textbf{A note on constants in elliptic estimates}\\
When constants in our theorems are stated to depend on $\diam_\delta(\Omega)$, $|\Omega'|_\delta$, $\di_\delta(\Omega', \partial \Omega)$, in fact they can be bounded uniformly in terms of, respectively, uniform upper bounds for $\diam_\delta(\Omega)$ and $|\Omega'|_\delta$, and lower bounds for $\di_\delta(\Omega', \partial \Omega)$. Regarding the dependence of $\mathcal{C}$ in Theorem \ref{teo_BI_local_s2} from the domain $\Omega'$ and from $\di_\delta(\Omega',\partial \Omega)$, if $\di_\delta(\Omega', \partial(\Omega\backslash \Sigma))\ge \tau$ and 
	\[
	\|\rho\|_{L^2(U_\tau)} \le \mathcal{I}_2 \qquad \text{where } \, U_\tau = \Big\{ x \in \Omega\backslash \Sigma \ : \ \di_\delta\big(x, \partial (\Omega\backslash \Sigma)\big) \ge \tau \Big\},
	\]
then $\mathcal{C}$ merely depends on $\tau$. On the other hand, anywhere we write $\mathcal{C} = \mathcal{C}(\Omega, \ldots)$ we mean that we did not investigate the stability of the bounds for sequences of open sets $\{\Omega_j\}$ for which the other data are kept uniformly controlled.

%%%%%%%%%%%%%%%%%%%%%%%%%%%%%%%%%%%%%%%%%%%%%%%%%%%%%%%%%%%%%%%%%%
%%%%%%%%%%%%%%%%%%%%%%%%%%%%%%%%%%%%%%%%%%%%%%%%%%%%%%%%%%%%%%%%%%
%%%%%%%%%%%%%%%%%%%%%%%%%%%%%%%%%%%%%%%%%%%%%%%%%%%%%%%%%%%%%%%%%%

%%%%%%%%%%%%%%%%%%%%%%%%%%%%%%%%%%%%%%%%%%%%%%%%%%%%%%%%%%%%%%%%%%
%%%%%%%%%%%%%%%%%%%%%%%%%%%%%%%%%%%%%%%%%%%%%%%%%%%%%%%%%%%%%%%%%%
%%%%%%%%%%%%%%%%%%%%%%%%%%%%%%%%%%%%%%%%%%%%%%%%%%%%%%%%%%%%%%%%%%
\section{Preliminaries from Lorentzian Geometry}\label{sec_prelimgeo}

In this section, we briefly recall some differential-geometric background that will be used henceforth. 
Let $\LL^{m+1}$ be the Lorentz space with coordinates $(x^0,x^1, \ldots, x^m)$ and metric 
	\[
	- \di x^0 \otimes \di x^0 + \sum_{i=1}^m \di x^i \otimes \di x^i, \qquad x \cdot y \doteq -x^0y^0 + \sum_{i=1}^m x^i y^i, \qquad 
	|x|_{\mathbb{L}} \doteq \sqrt{ \left| x \cdot x \right| }.
	\]
Given a smooth function $u : \Omega \subset \R^m \rig \R$, consider the graph map
	\[
	F \ : \ \Omega \rig \LL^{m+1}, \qquad  F(x) \doteq (u(x), x),
	\]
and define $M$ to be the manifold $F(\Omega)$ endowed with the metric induced from $\LL^{m+1}$, equivalently, $M$ is $\Omega$ endowed with the pull-back metric $g \doteq F^*(\cdot)$. 
When convenient, $g$ will also be denoted by $\metric$. Let $\| \cdot \|, \nabla, \Delta_M$ be, respectively, the norm, Levi-Civita connection and Laplace-Beltrami operator associated to $g$. The Hessian of a function $u$ in the metric $g$ will be denoted by $\nabla ^2 u$.  

We identity $\R^m$ with the slice $\{x^0=0\}$, so $\{x^i\}$ are Cartesian coordinates on $\R^m$ with associated vector fields $\{\partial_i\}$. Given an open set $\Omega \subset \R^m$ and $u \in C^\infty(\Omega)$, we let $u_i \doteq \partial_i u$ and $u_{ij} \doteq (D^2 u)_{ij} = \partial^2_{ij}u$. By defining 
$$
X_i \doteq F_* \partial_i = \partial_i + u_i \partial_0, 
$$
the components of $g$ are written as 
$$
g_{ij} \doteq X_i \cdot X_j = \delta_{ij} - u_iu_j.
$$
Hereafter we assume that $g$ is Riemannian (equivalently, $|Du|<1$). The inverse metric has components
$$
g^{ij} = \delta^{ij} + w^2u^iu^j, \qquad \text{with} \quad w \doteq \frac{1}{\sqrt{1-|Du|^2}} ,
$$
where $u^i = \delta^{ij}u_j$ are the components of the gradient $Du$. Then, the volume measure $\di x_g$ of $g$ relates to the measure $\di x$ on $\R^m$ as follows:
\[
\di x_g = w^{-1} \di x.
\]
	The future-pointing, unit normal vector to the graph $M$ is given by $\mathbf{n} \doteq w (\partial_0 + u^i \partial_i)$. Note that ${\bf n} \cdot {\bf n} = -1$ and $w = - {\bf n} \cdot \partial_0$. Let superscripts $\parallel$ and $\perp$ denote, respectively, the projection onto $TM$ and $TM^\perp$ with respect to the inner product $\cdot$ in $\LL^{m+1}$. From the chain of identities
	\[
\langle \partial_0^\parallel, \partial_j \rangle = \partial_0 \cdot F_* \partial_j = - u_j = - \langle \nabla u, \partial_j \rangle, 
	\]
we deduce that
	\begin{equation}\label{partial0parallel}
	\partial_0^\parallel = - \nabla u.
	\end{equation}

	Denoting by $\bar D$ the Levi-Civita connection of $\LL^{m+1}$, we define the second fundamental form of $M$ by   
	\[
	\begin{aligned}
		\disp \SF(\partial_i, \partial_j)  \doteq   \left( \bar D_{X_i} X_j \right)^{\perp} = h_{ij} \mathbf{n}, 
		\qquad 
		\text{thus} \qquad 
		\disp h_{ij} =  - \bar D_{X_i} X_j \cdot {\bf n}  =  \bar D_{X_i} {\bf n} \cdot X_j.
	\end{aligned}
	\]
From the definition of $X_i$ we obtain $h_{ij} = w \, u_{ij}.$ The (unnormalized) scalar mean curvature $H \doteq g^{ij}h_{ij}$ in direction ${\bf n}$ is therefore
$$
H = w\Delta u + w^3 D^2u(Du,Du) 
= \diver \left( \frac{Du}{\sqrt{1-|Du|^2}}\right),
$$
where $\Delta$ is the Laplacian on $\R^m$. Next, since the Christoffel symbols of $g$ are given by $\Gamma^k_{ij} = - w^2 \, u^ku_{ij}$, 
we compute the Hessian and Laplacian of a smooth function $\phi : \Omega \rig \R$ in the graph metric $g$:
	\begin{equation}\label{eq_hessianLapla}
		\begin{aligned}
			& \nabla^2_{ij} \phi = \phi_{ij} + w^2 \, \phi_k u^k u_{ij}; \\
		 	& \Delta_M \phi = g^{ij} \nabla^2_{ij} \phi = \Delta \phi + w^2 D^2\phi(Du,Du) + H w D\phi \cdot Du.
	\end{aligned}
	\end{equation}
In addition, the norm of the second fundamental form $\SF$ of the graph $u$ is given by 
	\begin{equation}\label{norm_second}
		\begin{aligned}
		\|\SF\|^2 & =  \disp g^{ij}g^{kl} h_{ik}h_{jl} = w^2\big( \delta^{ij} + w^2 u^i u^j\big)u_{ik}\big( \delta^{kl} + w^2 u^k u^l\big)u_{jl} \\[0.3cm]
		& =  \disp w^2 |D^2u|^2 + 2 w^4 \left| D^2 u \left( Du, \cdot \right) \right|^2 
		%(D^2u)^2(Du,Du) 
		+ w^6 \big[D^2u(Du,Du)\big]^2
		.
		\end{aligned}
	\end{equation}
In particular,
	\begin{equation}\label{eq:1}
		\begin{aligned}
			\nabla_{ij}^2 u = w^2 \, u_{ij} = w \, h_{ij}, \quad 
			\|\nabla^2 u\|^2= w^2\|\SF\|^2, \quad 
			\Delta_Mu = H w \quad {\rm on} \ M.
		\end{aligned}
	\end{equation}

Given $o \in \R^m$, we denote by $r_o : \Omega \to \R$ and $\ell_o : \Omega \to \R$, respectively, the Euclidean distance from $o$ and the Lorentzian distance from $(u(o),o)$ restricted to the graph of $u$, that is, we set 
	\begin{equation}\label{def_ello}
		\begin{aligned}
			r_o(x) & \doteq |x-o|, \\
		l_o(s,x) &\doteq \left| (s,x) - ( u(o),o ) \right|_{\mathbb{L}}
			= \sqrt{ - \left( s - u(o) \right)^2 + \left| x-o \right|^2 }, \\
			\ell_o(x) & \doteq l_o \left( u(x) , x \right). 
		\end{aligned}
	\end{equation}
We also denote the extrinsic Lorentzian ball centered at $o$, and more generally the one centered at a subset $A \subset \R^m$, by 
	\begin{equation}\label{def_LRo}
	L_R(o) \doteq \left\{ x \in \Omega \ : \ \ell_o(x) < R\right\}, \qquad L_R(A) \doteq \bigcup_{o \in A} L_R(o).
	\end{equation}
When it is necessary, we will write $\ell_o^\rho, L_R^\rho$ to emphasize their dependence on the minimizer $u = u_\rho$ of $I_\rho$. By \eqref{eq_hessianLapla}, we get
	\begin{equation}\label{eq_elle2}
	\begin{aligned}
		\bar{D} l^2_o (u(x),x) &= 2 \left( x^j - o^j \right) \partial_j + 2 \left( u(x) - u(o) \right) \partial_0 ;
		\\
		\left\| \nabla \ell_o(x) \right\|^2 &= 
		\left| \bar{D} l_o (u(x),x) \right|^2_{\mathbb{L}} + \left( \bar{D} l_o (u(x),x) \cdot \mathbf{n} \right)^2 \\
		&= 1 + \frac{w^2}{\ell_o^2} \left| Du \cdot (x-o) - \left( u(x) - u(o) \right) \right|^2;
		\\
		\Delta_M \ell^2_o(x)	&= 2m + 2 w H \left[ (x-o) \cdot Du -\left( u(x) - u(o) \right) \right] \\
		& = 2m +  H \left( \bar{D} l_o^2 (u(x),x) \cdot \mathbf{n} \right). 
	\end{aligned}
	\end{equation}

As we shall see in the proof of Theorem \ref{teo_higherint}, 
the construction of cut-off functions based on the Lorentzian distance, instead of those based on the Euclidean one, will be the key to obtain the higher integrability of $u_\rho$ in dimension $m \ge 3$.

\section{Basic properties of $u_\rho$} \label{sec_basic}

In this section, we obtain basic properties of the minimizer $u_\rho$ of $I_\rho$, 
both for $\Omega \subset \Rm$ a bounded domain ($m\geq 2$) and for $\Omega =\R^m$ ($m \geq 3$).

\subsection{Functional setting} \label{subsec_functional}

We first choose our functional spaces. If $\Omega = \R^m$, our treatment mildly departs from those in \cite{Kiessling,bpd}, and is basically designed to get an explicit description of the sources $\rho$ covered by the method. On the other hand, for bounded $\Omega$, subtleties related to a possibly rough boundary $\partial \Omega$ require extra care in the choice of the functional space, which significantly differs from that in \cite{bartniksimon}.
 
	\begin{definition}\label{def-calX}
		Given $m \ge 2$, we fix $p_1 \in (m,\infty)$ and assume also $p_1 \ge 2^*$ for $m = 3$. 
		\begin{enumerate}
		\item[{\rm (i)}] 
		When $m \geq 2$ and $\Omega \subset \Rm$ is a bounded domain, 
		we set 
		\[
		\cX (\Omega) \doteq W^{1,p_1} (\Omega) \cap C(\overline\Omega), \qquad \| v\|_{\cX} \doteq  \max\Big\{ \| v\|_{W^{1,p_1}(\Omega)}, \|v\|_{C(\overline\Omega)}\Big\};
		\]
		\item[{\rm (ii)}]
		When $\Omega = \Rm$ and $m \geq 3$, we set 
			\[
			\cX(\Rm) \doteq \overline{C^\infty_c(\R^m)}^{\|\cdot\|_\cX}, \qquad 
				\| v \|_{\cX} \doteq \max\Big\{ \| Dv \|_2, \| Dv \|_{p_1}\Big\}. 
			\]
		\end{enumerate}
	\end{definition}

Note that, if $\Omega$ is bounded and sufficiently regular (Lipschitz is enough), by Morrey's Embedding Theorem $\cX(\Omega)=W^{1,p_1}(\Omega)$ with the equivalent norm $\|\cdot\|_{W^{1,p_1}(\Omega)}$. 
 
%where the authors minimize $I_\rho$ on the set of weakly spacelike functions lying, respectively, in 
%	\[
%	\bigcap_{1 \le p \le \infty} \mathcal{D}^{1,2p}(\R^m) \quad \text{for \cite{Kiessling}}, 
%	\qquad \mathcal{D}^{1,2}(\R^m) \quad \text{for \cite{bpd},}
%	\]
%$\mathcal{D}^{1,2p} (\R^m )$ being is the closure of $C^\infty_c(\R^m)$ with respect to the norm $\| v\|_{\mathcal{D}^{1,2p}} \doteq \|Du\|_{2p}$. 

	\begin{remark}\label{rem-entire-m2}
		The case $\Omega = \R^2$ will not be considered in the present paper. 
		We observe that the radially symmetric solution in \cite{borninfeld_2} with a Dirac delta source (cf. \eqref{e:rad-example})
		has a logarithmic behavior at infinity when $m=2$, 
		which calls for a different functional setting. 
		For $\rho$ a superposition of point charges, complete classification theorems for entire solutions in $\R^2$ were obtained by Klyachin \cite{klyachin_desc}, and Fern\'andez, L\'opez and Souam \cite{fls}.
	\end{remark}

The following result can be proved in a similar way as \cite[Lemma 2.1]{bpd}, but we give full details for the sake of completeness. 

	\begin{proposition}\label{prop-cX-emb}
		Assume $m \geq 3$ and $\Omega = \Rm$. Then $(\cX(\R^m), \|\cdot \|_\cX)$ is a reflexive Banach space. Moreover, 
	\begin{equation}\label{eq_embe}
	\cX(\Rm) \hookrightarrow W^{1,q} (\Rm) \qquad \forall \, q \in [ 2^\ast , p_1 ]. 
	\end{equation}
In particular, $\| \cdot \|_{\cX}$ is equivalent to $\| D \cdot \|_2 + \| \cdot \|_{W^{1,p_1}}$, and $\cX(\Rm) \hookrightarrow C_0(\Rm) \doteq \{ u \in C(\Rm)  : \lim_{|x| \to \infty} u(x) = 0 \}  $ holds. 
	\end{proposition}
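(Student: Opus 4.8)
The plan is to reduce everything to estimates on the dense subspace $C^\infty_c(\R^m)$ and then pass to the closure. First I would record that $\|v\|_\cX = \max\{\|Dv\|_2,\|Dv\|_{p_1}\}$ is genuinely a norm on $C^\infty_c(\R^m)$: the only nonobvious point, definiteness, holds because $\|Dv\|_2 = 0$ forces $v$ to be locally constant, hence $v\equiv 0$ by compact support. So $\cX(\R^m)$ is by construction a Banach space, and the content of the statement is the chain of continuous embeddings together with reflexivity. Each embedding is an inequality controlled by $\|v\|_\cX$, so it suffices to prove it on $C^\infty_c(\R^m)$ and then transfer it to $\cX(\R^m)$ by density.

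For $v\in C^\infty_c(\R^m)$, $m\ge 3$: (a) the Sobolev inequality gives $\|v\|_{2^*}\le C_m\|Dv\|_2\le C_m\|v\|_\cX$; (b) since $p_1>m$, the Gagliardo--Nirenberg inequality $\|v\|_\infty\le C\|Dv\|_{p_1}^{a}\|v\|_{2^*}^{1-a}$ holds with the scaling-determined exponent $a=\frac{(m-2)p_1}{m(p_1-2)}\in(0,1)$, the condition $a<1$ being exactly $p_1>m$, whence $\|v\|_\infty\le C\|v\|_\cX$ by (a); (c) by log-convexity of Lebesgue norms, $\|Dv\|_q\le\|Dv\|_2^{\theta}\|Dv\|_{p_1}^{1-\theta}\le\|v\|_\cX$ for every $q\in[2,p_1]$; (d) interpolating $\|v\|_q\le\|v\|_{2^*}^{2^*/q}\|v\|_\infty^{1-2^*/q}$ for $q\ge 2^*$, combined with (a)--(b), gives $\|v\|_q\le C\|v\|_\cX$. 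Since $2^*\le p_1$ under our hypotheses (assumed when $m=3$, and automatic as $2^*=\tfrac{2m}{m-2}\le 4\le m<p_1$ when $m\ge 4$), items (c)+(d) yield $\|v\|_{W^{1,q}}\le C\|v\|_\cX$ for $q\in[2^*,p_1]$. Passing to $\cX(\R^m)$ by density gives $\cX(\R^m)\hookrightarrow W^{1,q}(\R^m)$ for $q\in[2^*,p_1]$ and $\cX(\R^m)\hookrightarrow C_b(\R^m)$; the $C_0$ refinement follows because every $v\in\cX(\R^m)$ is a uniform limit of functions in $C^\infty_c(\R^m)\subset C_0(\R^m)$ and $C_0(\R^m)$ is closed in $(C_b(\R^m),\|\cdot\|_\infty)$. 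The norm equivalence is then immediate: $\|v\|_\cX\le\|Dv\|_2+\|v\|_{W^{1,p_1}}$ trivially, and conversely $\|Dv\|_2+\|v\|_{W^{1,p_1}}=\|Dv\|_2+\|v\|_{p_1}+\|Dv\|_{p_1}\le C\|v\|_\cX$ using $\|Dv\|_2,\|Dv\|_{p_1}\le\|v\|_\cX$ and $\|v\|_{p_1}\le C\|v\|_\cX$ from (d).

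For reflexivity I would argue that the gradient map $v\mapsto Dv$ is, by definition of $\|\cdot\|_\cX$, an isometric injection of $(C^\infty_c(\R^m),\|\cdot\|_\cX)$ into $Y\doteq L^2(\R^m;\R^m)\cap L^{p_1}(\R^m;\R^m)$ equipped with the norm $\max\{\|\cdot\|_2,\|\cdot\|_{p_1}\}$, hence extends to an isometric embedding of $\cX(\R^m)$ onto a complete --- therefore closed --- subspace of $Y$. In turn, $Y$ is reflexive: the diagonal $\xi\mapsto(\xi,\xi)$ identifies it isometrically with a closed subspace of $L^2(\R^m;\R^m)\times L^{p_1}(\R^m;\R^m)$, which is reflexive being a finite product of reflexive spaces (here $1<p_1<\infty$), and closed subspaces of reflexive spaces are reflexive. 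Consequently $\cX(\R^m)$, being isometric to a closed subspace of the reflexive space $Y$, is itself reflexive.

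I expect the main obstacle to be the $L^\infty$/$C_0$ embedding in step (b): on all of $\R^m$ there is no boundary and no a priori decay, so one must genuinely use both ingredients of the norm --- the $L^2$ bound on $Dv$, which via Sobolev controls a true Lebesgue norm of $v$, and the subcritical bound $\|Dv\|_{p_1}$ with $p_1>m$ --- and combine them through Gagliardo--Nirenberg interpolation (equivalently, a Morrey-type estimate on cubes with the cube size optimized against $\|v\|_{2^*}$ and $\|Dv\|_{p_1}$). Once this estimate is in hand, the $W^{1,q}$ embeddings, the norm equivalence, and the reflexivity argument via the diagonal embedding are all routine.
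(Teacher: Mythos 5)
Your proof is correct, but it takes a partly different route from the paper's in two places. For the embeddings, the paper never invokes a Gagliardo--Nirenberg inequality with an $L^\infty$ norm: it picks $\hat q\in[2,m)$ with $\hat q^\ast=p_1$ (this is where $p_1\ge 2^\ast$ for $m=3$ enters), so that Sobolev gives $\|u\|_{p_1}\le C\|Du\|_{\hat q}\le C\|u\|_{\cX}$ directly, then interpolates between $W^{1,2^\ast}$ and $W^{1,p_1}$, and finally gets $\cX(\Rm)\hookrightarrow C_0(\Rm)$ from Morrey's embedding ($u\in C^{0,\alpha}$ since $p_1>m$) combined with $u\in L^{2^\ast}$ forcing decay at infinity. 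Your route instead establishes the uniform bound $\|v\|_\infty\le C\|Dv\|_{p_1}^{a}\|v\|_{2^\ast}^{1-a}$ (your exponent $a=\frac{(m-2)p_1}{m(p_1-2)}$ and the condition $a<1\Leftrightarrow p_1>m$ are right, and the exceptional case of Gagliardo--Nirenberg does not occur here), then obtains $\|v\|_q$ for $q\ge 2^\ast$ by interpolating $L^{2^\ast}$ against $L^\infty$, and gets $C_0$ from density plus closedness of $C_0$ in the sup norm; this buys you a quantitative sup-norm estimate, at the cost of quoting a slightly heavier interpolation inequality. For reflexivity the difference is more substantial: the paper passes to the equivalent norm $\bigl(\|Du\|_2^2+\|Du\|_{p_1}^2\bigr)^{1/2}$, proves uniform convexity via the criterion in Brezis, and concludes by Milman--Pettis, whereas you embed $\cX(\Rm)$ isometrically via $v\mapsto Dv$ onto a closed subspace of $L^2\cap L^{p_1}$ and then into a product of reflexive Lebesgue spaces through the diagonal; your argument is more elementary and avoids uniform convexity altogether, while the paper's yields the stronger geometric property of uniform convexity as a by-product. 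All the supporting steps you use (isometric image of a complete space is closed, closed subspaces and finite products of reflexive spaces are reflexive, $2^\ast\le p_1$ under the standing hypotheses) check out, so I see no gap.
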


	\begin{proof}
First, $\|\cdot \|_{\cX}$ is equivalent to the norm $|u|_{\cX} \doteq \sqrt{ \|Du\|_2^2 + \|Du\|_{p_1}^2}$. Hence, to prove the reflexivity of $(\cX(\R^m),\|\cdot \|_\cX)$ it suffices to show that $(\cX(\R^m), |\cdot |_{\cX})$ is uniformly convex. 
This easily follows by using the criterion in \cite[Exercise 3.29]{Br} and the uniform convexity of the norms $\|Du\|_2$ and $\|Du\|_{p_1}$.

	To obtain \eqref{eq_embe}, let $u \in \cX(\R^m)$. From the choice of $p_1$ and H\"older's inequality, the next interpolation inequality holds: 
	\begin{equation}\label{ineq-Du}
		\| Du \|_q \leq \| u \|_{\cX} \quad \text{for all $q \in [2, p_1]$}.
	\end{equation}
Since $m \in [2,p_1)$ and $q^\ast \to \infty$ as $q \to m^{-}$, there exists $\hat{q} \in [2,m)$ so that
$\hat{q}^\ast = p_1$. Thus, Sobolev's inequality and \eqref{ineq-Du} yield 
$\| u \|_{p_1} \leq C \| Du \|_{\hat{q}^\ast} \leq C \| u \|_{\cX}$. 
Hence, $\cX (\Rm) \hookrightarrow W^{1, p_1} (\Rm)$ holds. 
In addition, from $\| u \|_{2^\ast} \leq C \| Du \|_{2} \leq \| u \|_{\cX}$, $2 < 2^\ast \leq p_1$ and \eqref{ineq-Du}, 
we see $\cX(\Rm) \hookrightarrow W^{1,2^\ast} (\Rm)$. 
Therefore, by the interpolation, \eqref{eq_embe} holds.

	The equivalence between $\|\cdot\|_\cX$ and $\|D \cdot\|_2 + \|\cdot\|_{W^{1,p_1}}$ is 
an immediate consequence of \eqref{eq_embe}, 
while $\cX(\Rm) \hookrightarrow C_0(\Rm)$ follows from Morrey's embedding Theorem 
once we observe that $u \in L^{2^*}(\R^m) \cap C^{0,\alpha}(\R^m)$ implies that $u$ vanishes at infinity.
\end{proof}	

\begin{remark}[\textbf{Dual spaces}]\label{rem_duals}
If $q \in (1,\infty)$ and $\Omega \subset \R^m$ is any domain, then it is well-known
that elements in the dual space $W^{1,q}(\Omega)^* = W^{-1,q'}(\Omega)$ can be represented 
as pairs $(v,V) \in L^{q'} (\Omega) \times [L^{q'} (\Omega)]^m  $ where $q' \doteq q/(q-1)$, with the action 
	\[
	\langle \rho, \psi \rangle \doteq \int_\Omega \psi v \di x + \int_\Omega D\psi \cdot V \di x \qquad \forall \, \psi \in W^{1,q} (\Omega),
	\]
see for instance \cite[Theorem 3.9]{AF03}. Furthermore, recall that if $X_1,X_2$ are Banach spaces with $X_1 \cap X_2$ dense in $X_1$ and $X_2$, then $(X_1 \cap X_2)^* = X_1^* + Y_2^*$ with the natural norm
	\[
	\|\rho\|_{X_1^* + X_2^*} = \inf \Big\{ \|\rho_1\|_{X_1^*} + \|\rho_2\|_{X_2^*} : \rho_j \in X_j^*, \ \rho = \rho_1 + \rho_2\Big\},
	\] 
see \cite[Theorem 2.7.1]{BL76}. Indeed, inspecting the proof in \cite{BL76}, one deduces that every functional $\rho \in (X_1\cap X_2)^*$ can be represented as 
	\[
	\rho = \rho_1 + \rho_2 \in X_1^* + X_2^*, \qquad \text{with} \qquad \|\rho_1\|_{X_1^*} + \|\rho_2\|_{X_2*} \le \|\rho\|_{(X_1 \cap X_2)^*}, 
	\]
the representation being unique (with equality between norms) when $X_1 \cap X_2$ is dense in both $X_1$ and $X_2$. Taking the above observations into account,   
%
%or \cite[Exercise 3.6]{BS}. 
% 
\begin{itemize}
\item[(i)] if $\Omega$ is a bounded domain, every $\rho \in \cX(\Omega)^*$ can be represented as $\rho = \rho_1 + \rho_2 \in W^{-1,p_1'}(\Omega) + \cM(\Omega)$, for some $\rho_1,\rho_2$ satisfying 
	\[
	\|\rho_1\|_{W^{-1,p_1'}} + \|\rho_2\|_{\cM} \le \|\rho\|_{\cX^*}.
	\]
The representation is unique when $C(\overline\Omega) \cap W^{1,p_1}(\Omega)$ is dense in $W^{1,p_1}(\Omega)$, a fact which entails some mild requirement on $\partial \Omega$ such as the segment condition (cf. \cite[Theorem 3.22]{AF03}). However, uniqueness of the representation will not be used in the present work. Notice the continuous inclusion $\cM(\Omega) \hookrightarrow \cX(\Omega)^*$.

\item[(ii)] if $\Omega = \R^m$ and $m \ge 3$, then $\cX(\Rm)^\ast = \mathcal{D}^{1,2} (\Rm)^\ast + W^{-1,p_1'} (\Rm)$, with $\mathcal{D}^{1,2} (\R^m )$ being the closure of $C^\infty_c(\R^m)$ with respect to the norm $\| v\|_{\mathcal{D}^{1,2}} \doteq \|Dv\|_{2}$. In particular, because of Proposition \ref{prop-cX-emb} and Morrey's embedding, $\cM(\Rm) \hookrightarrow \cX(\Rm)^\ast$ and $W^{-1,q'}(\R^m) \hookrightarrow \cX(\R^m)^*$ for each $q \in [2^*,p_1]$. Hence,  
	\begin{equation*}
	%\label{eq_sumofduals}
	\cM(\R^m) + L^{q'}(\R^m) \hookrightarrow \cX(\R^m)^* \qquad \forall \, q \in [2^*,p_1],
	\end{equation*}
where $L^{q'}(\R^m)$ consists of the pairs $(v,0)$. 
\end{itemize}
\end{remark}

Clearly, $\cX_0(\R^m)$ in \eqref{def_cX0} is a closed convex subset of $\cX(\R^m)$. The situation is more subtle for $\cX_\phi(\Omega)$ defined in \eqref{def_XphiOmega}, because of the lack of regularity of $\partial \Omega$. However, as the next result shows, the mild sense in which the boundary condition is considered, see Remark \ref{rem_defboundary}, suffices to guarantee that $\cX_\phi(\Omega) \subset \cX(\Omega)$.

\begin{proposition}\label{teo_compaembe}
Let $\Omega \subset \R^m$ be a bounded domain, let $\mathscr{F} \subset C(\partial \Omega)$ be a relatively compact (resp. compact) subset with respect to uniform convergence, and consider 
	\[
	\cX_{\mathscr{F}}(\Omega) \doteq 
	\left\{ v  \ : \  v \in \cX_\phi (\Omega) \  \text{for some $\phi \in \mathscr{F}$} \right\}.
	\]
Then $\cX_{\mathscr{F}}(\Omega) \subset C(\overline\Omega)$ as a relatively compact (resp. compact) subset, where 
we extend each $v \in \cX_{\mathscr{F}}(\Omega)$ onto $\ov{\Omega}$ by setting $v(x) \doteq \phi(x)$ for $x \in \partial \Omega$.
\end{proposition}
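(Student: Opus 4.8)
The plan is to verify that $\cX_{\mathscr{F}}(\Omega)$, after the stated extension to $\partial\Omega$, is a uniformly bounded and uniformly equicontinuous family of functions on the Euclidean-compact set $\ov\Omega$, and then to conclude by the Arzel\`a--Ascoli theorem; closedness in the compact case will follow from a short stability argument at the end.

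First I would isolate the two elementary facts about an arbitrary $v\in\cX_\phi(\Omega)$ with $\phi\in\mathscr{F}$ on which everything rests. (a) Since $v$ is weakly spacelike, $|Dv|\le1$ a.e., so its locally Lipschitz representative is $1$-Lipschitz on every convex open subset of $\Omega$; in particular $|v(x)-v(y)|\le|x-y|$ whenever $\ov{xy}\subset\Omega$. (b) For $x\in\Omega$ pick a nearest boundary point $x_0\in\partial\Omega$, so that $|x-x_0|=\di_\delta(x,\partial\Omega)$; the open ball $B\doteq B(x,\di_\delta(x,\partial\Omega))$ is connected, contains $x$ and avoids $\partial\Omega$, hence $B\subset\Omega$, and therefore the open segment from $x_0$ to $x$ lies in $\Omega$. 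Regarding this segment as a straight line with endpoint $x_0$, the sense of the boundary condition recalled in Remark \ref{rem_defboundary} gives $v\to\phi(x_0)$ along it, and combining with (a) yields $|v(x)-\phi(x_0)|\le|x-x_0|=\di_\delta(x,\partial\Omega)$ (for $x\in\partial\Omega$ we set $x_0=x$, and this reads $v(x)=\phi(x)$). Since $\mathscr{F}$ is relatively compact in $C(\partial\Omega)$, it is bounded, say $\sup_{\psi\in\mathscr{F}}\|\psi\|_{C(\partial\Omega)}\le M_{\mathscr{F}}$, and equicontinuous on the compact set $\partial\Omega$, i.e.\ there is a nondecreasing modulus $\omega_{\mathscr{F}}$ with $\omega_{\mathscr{F}}(0^+)=0$ and $|\psi(a)-\psi(b)|\le\omega_{\mathscr{F}}(|a-b|)$ for all $\psi\in\mathscr{F}$ and $a,b\in\partial\Omega$. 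From (b) I then read off at once the uniform bound $\|v\|_{C(\ov\Omega)}\le M_{\mathscr{F}}+\diam_\delta(\Omega)$ for every $v\in\cX_{\mathscr{F}}(\Omega)$ (note also that, $\Omega$ being bounded, these functions lie in $\cX(\Omega)$).

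The core step is a modulus of continuity on $\ov\Omega$ independent of $v$. Given $x,y\in\ov\Omega$ with $d\doteq|x-y|$, I would split into two cases. If the open segment $\ov{xy}$ minus its endpoints lies in $\Omega$, then fact (a), applied on that segment together with the boundary condition at any endpoint lying on $\partial\Omega$, gives $|v(x)-v(y)|\le d$; if moreover both endpoints are on $\partial\Omega$ this reads $|\phi(x)-\phi(y)|\le\omega_{\mathscr{F}}(d)$. Otherwise $\ov{xy}$ meets the complement of $\Omega$, and tracing the segment from $x$ until it first leaves $\Omega$ shows $\di_\delta(x,\partial\Omega)\le d$, and likewise $\di_\delta(y,\partial\Omega)\le d$; hence nearest boundary points $x_0,y_0$ satisfy $|x-x_0|\le d$, $|y-y_0|\le d$, $|x_0-y_0|\le3d$, and using (b) and $v|_{\partial\Omega}=\phi$,
\[
|v(x)-v(y)|\le|v(x)-\phi(x_0)|+|\phi(x_0)-\phi(y_0)|+|\phi(y_0)-v(y)|\le 2d+\omega_{\mathscr{F}}(3d).
\]
So $|v(x)-v(y)|\le 2d+\omega_{\mathscr{F}}(3d)$ for all $x,y\in\ov\Omega$ and all $v\in\cX_{\mathscr{F}}(\Omega)$; in particular each $v$ extends to a genuinely continuous function on $\ov\Omega$ (the first assertion), and the family is equicontinuous, so Arzel\`a--Ascoli yields relative compactness of $\cX_{\mathscr{F}}(\Omega)$ in $C(\ov\Omega)$. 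For the compact case, if $v_n\in\cX_{\phi_n}(\Omega)$ with $\phi_n\in\mathscr{F}$ and $v_n\to v$ uniformly on $\ov\Omega$, then $\phi_n=v_n|_{\partial\Omega}\to v|_{\partial\Omega}$ uniformly, so $\phi\doteq v|_{\partial\Omega}\in\mathscr{F}$; moreover $v$ inherits from the $v_n$ the property of being $1$-Lipschitz on convex subsets of $\Omega$ and is bounded, whence $v|_\Omega\in W^{1,\infty}(\Omega)$ is weakly spacelike, and since $v\in C(\ov\Omega)$ with $v|_{\partial\Omega}=\phi$ the boundary condition of Remark \ref{rem_defboundary} holds; thus $v|_\Omega\in\cX_\phi(\Omega)$ and $v\in\cX_{\mathscr{F}}(\Omega)$, so the set is closed, hence compact.

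The step I expect to demand the most care is the equicontinuity near $\partial\Omega$ in the case $\ov{xy}\not\subset\Omega$: since nothing is assumed on $\partial\Omega$, the intrinsic distance $\di_{\ov\Omega}$ can be wildly larger than the Euclidean one (slits, spirals, cusps), so one cannot join $x$ and $y$ by a short curve inside $\Omega$ and must not argue through $\di_{\ov\Omega}$. The device that makes the estimate quantitative is to replace paths between $x$ and $y$ by the two short straight segments realising the nearest-point projections of $x$ and $y$ onto $\partial\Omega$ — these lie in $\Omega$ automatically, so the weak form of the boundary condition applies to them — together with the elementary observation $\di_\delta(x,\partial\Omega)\le|x-y|$ whenever the segment $\ov{xy}$ leaves $\Omega$.
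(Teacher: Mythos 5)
Your proof is correct and follows essentially the same route as the paper: the key inequality $|v(x)-\phi(x_0)|\le \di_\delta(x,\partial\Omega)$ obtained by testing the weak boundary condition along the segment to a nearest boundary point, uniform boundedness, a two-case equicontinuity estimate combining the $1$-Lipschitz bound with the equicontinuity of $\mathscr{F}$, Arzel\`a--Ascoli, and closedness of the limit in the compact case. The only cosmetic difference is that you split cases according to whether the segment $\overline{xy}$ stays in $\Omega$, while the paper splits according to whether a small ball around one of the points lies in $\Omega$; the resulting moduli of continuity are the same.
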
	

\begin{proof}
First, observe that if $x \in \Omega$ and $\wt{x} \in \partial \Omega$ is a nearest point to $x$ in the metric $\di_\delta$, the boundary condition in Remark \ref{rem_defboundary} tested on the segment $t x + (1-t) \wt{x} \in \Omega$ for any $t \in (0,1]$ gives, for each $v \in \cX_{\FF}(\Omega)$,  
	\begin{equation}\label{eq_segme}
		\left| v(x) - \phi ( \wt{x}) \right| = \left| v(x) - \lim_{t \to 0^+} v( tx + (1-t) \wt{x} ) \right| 
		\leq \left| x - \wt{x} \right|.
	\end{equation}
The inequality trivially holds also if $x \in \partial \Omega$, by the way $v$ is extended. Whence, 
	\begin{equation}\label{eq_Linftybound}
	\|v\|_{L^\infty(\Omega)} \le \|\phi\|_{C(\partial \Omega)} + \diam_\delta(\Omega) 
	\leq \sup_{ \phi \in \scrF } \| \phi \|_{C(\partial \Omega)} + \diam_{\delta} (\Omega) < \infty,
	\end{equation}
where the last inequality follows since $\scrF$ is relatively compact in $C(\partial \Omega)$. This proves the uniform boundedness of $\cX_{\mathscr{F}}(\Omega)$. 

Next, we shall show $v \in C(\ov{\Omega})$ for each $v \in \cX_{\mathscr{F}} (\Omega)$, and that $\cX_{\mathscr{F}}(\Omega)$ is uniformly equicontinuous. Let $\e > 0$ be arbitrary. 
Since $\mathscr{F}$ is relatively compact in $C(\partial \Omega)$, $\mathscr{F}$ is uniformly equicontinuous on $\partial \Omega$, hence, 
there exists $\wt{\delta}_\e > 0$ such that 
	\[
		\phi \in \scrF, \ x_1,x_2 \in \partial \Omega, \ |x_1-x_2| < \wt{\delta}_\e \quad \Rightarrow \quad 
		\left| \phi(x_1) - \phi (x_2) \right| < \frac{\e}{4}.
	\]
Set 
	\[
		\delta_\e \doteq \frac{1}{4} \min \left\{ \e , \wt{\delta}_\e \right\} > 0,
	\]
and pick $x_1,x_2 \in \overline\Omega$ with $|x_1-x_2|< \delta_\e$. If one among $B_{\delta_\e}(x_1)$ and $B_{\delta_\e}(x_2)$ is contained in $\Omega$, property $v \in\cX_{\phi} (\Omega)$ implies that $v$ is $1$-Lipschitz there, whence  
	\[
		|x_1-x_2| < \delta_\e \quad \Rightarrow \quad \left| v(x_1) - v(x_2) \right| \leq \left| x_1 - x_2 \right| < \delta_\e < \e. 
	\]
We therefore assume that $B_{\delta_\e}(x_j) \cap \partial \Omega \neq \emptyset$ for $j = 1,2$, and choose $\wt{x}_j \in B_{\delta_\e}(x_j) \cap \partial \Omega$ satisfying $|x_j-\wt{x}_j| = \di_\delta (x_j, \partial \Omega )$. From $|x_1-x_2| < \delta_{\e}$ and $|x_j-\wt{x}_j| < \delta_\e$ for each $j$, the triangle inequality implies $|\wt{x}_1-\wt{x}_2| < 3 \delta_\e < \wt{\delta}_\e$ and therefore, by using \eqref{eq_segme},  
	\[
		\begin{aligned}
			\left| v (x_1) - v(x_2) \right| 
			&\leq \left| v(x_1) - \phi (\wt{x}_1) \right| + \left| \phi(\wt{x}_1) - \phi (\wt{x}_2) \right| +  \left| \phi(\wt{x}_2) - v(x_2) \right| 
			\\
			&\leq \left| x_1 - \wt{x}_1 \right| + \frac{\e}{4} + \left| \wt{x}_2 - x_2 \right| < 2\delta_\e + \frac{\e}{4} \leq \e.
		\end{aligned}
	\]
Hence, $v \in C(\ov{\Omega})$ and $\cX_{\mathscr{F}} (\Omega)$ is uniformly equicontinuous on $\ov{\Omega}$. The relative compactness of $\cX_{\mathscr{F}}(\Omega)$ in $C(\overline\Omega)$ follows by the Arzel\'a--Ascoli theorem. If $\mathscr{F}$ is compact, then any limit point of a sequence $\{v_j\} \subset \cX_{\mathscr{F}}(\Omega)$ lies in $\cX_{\mathscr{F}}(\Omega)$ since the limit function is $1$-Lipschitz, 
thus $\cX_{\mathscr{F}}(\Omega)$ is compact in $C(\overline\Omega)$. 
\end{proof}

\begin{corollary}\label{cor_closedconvex}
For each bounded domain $\Omega \subset \R^m$ and each $\phi \in C(\partial \Omega)$, $\cX_\phi(\Omega) \subset \cX(\Omega)$ and it is bounded, closed, convex and sequentially weakly compact in $\cX(\Omega)$.
\end{corollary}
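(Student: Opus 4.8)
The plan is to reduce the statement to Proposition~\ref{teo_compaembe} applied to the compact (indeed, singleton) family $\mathscr{F}=\{\phi\}$, together with elementary properties of the intersection space $\cX(\Omega)=W^{1,p_1}(\Omega)\cap C(\overline{\Omega})$.

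For the inclusion and boundedness: if $u\in\cX_\phi(\Omega)$, then since $\Omega$ is bounded and $|Du|\le 1$ a.e. we have $u\in W^{1,\infty}(\Omega)\subset W^{1,p_1}(\Omega)$ with $\|Du\|_{L^{p_1}(\Omega)}\le|\Omega|_\delta^{1/p_1}$, while Proposition~\ref{teo_compaembe} (with $\mathscr{F}=\{\phi\}$) yields $u\in C(\overline{\Omega})$ and the bound $\|u\|_{L^\infty(\Omega)}\le\|\phi\|_{C(\partial\Omega)}+\diam_\delta(\Omega)$ from \eqref{eq_Linftybound}. Hence $\|u\|_{\cX}$ is controlled in terms of $\Omega$ and $\|\phi\|_{C(\partial\Omega)}$ only, which gives $\cX_\phi(\Omega)\subset\cX(\Omega)$ and its boundedness. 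Convexity is immediate, since $|D(tu+(1-t)v)|\le t|Du|+(1-t)|Dv|\le 1$ a.e. and the boundary condition of Remark~\ref{rem_defboundary} (limits along line segments) is linear in the function. Closedness in $\cX(\Omega)$ is also a direct consequence of Proposition~\ref{teo_compaembe}: if $u_j\to u$ in $\cX(\Omega)$ then in particular $u_j\to u$ uniformly on $\overline{\Omega}$, and $\cX_\phi(\Omega)$, being compact in $C(\overline{\Omega})$ when $\mathscr{F}=\{\phi\}$, contains the uniform limit of any convergent sequence of its elements.

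For sequential weak compactness I would argue as follows. The diagonal map $v\mapsto(v,v)$ embeds $\cX(\Omega)$ isometrically as a closed subspace of $W^{1,p_1}(\Omega)\times C(\overline{\Omega})$ — closedness because $p_1>m$ forces a $W^{1,p_1}$-limit and a uniform limit of one and the same sequence to coincide — so a sequence converges weakly in $\cX(\Omega)$ exactly when it converges weakly both in $W^{1,p_1}(\Omega)$ and in $C(\overline{\Omega})$. Now given $\{u_j\}\subset\cX_\phi(\Omega)$: it is bounded in the reflexive space $W^{1,p_1}(\Omega)$, so along a subsequence $u_j\rightharpoonup u$ weakly there; by Proposition~\ref{teo_compaembe} the set $\cX_\phi(\Omega)$ is compact in $C(\overline{\Omega})$, so along a further subsequence $u_j\to v$ uniformly with $v\in\cX_\phi(\Omega)$. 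Both are $L^{p_1}(\Omega)$-limits of the same sequence, hence $u=v\in\cX_\phi(\Omega)$, and the displayed equivalence then gives $u_j\rightharpoonup u$ weakly in $\cX(\Omega)$.

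The only step needing genuine attention is the identification of the weak topology on the intersection space $\cX(\Omega)$; once $\cX(\Omega)$ is viewed as a closed subspace of $W^{1,p_1}(\Omega)\times C(\overline{\Omega})$ (equivalently, once one uses the description $\cX(\Omega)^*=W^{-1,p_1'}(\Omega)+\cM(\Omega)$ from Remark~\ref{rem_duals}(i)), the remaining ingredients are the standard weak compactness in a reflexive Sobolev space and the Arzel\'a--Ascoli compactness already established in Proposition~\ref{teo_compaembe}.
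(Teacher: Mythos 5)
Your proof is correct and follows essentially the same route as the paper: Proposition \ref{teo_compaembe} (with $\FF=\{\phi\}$) for the $C(\overline\Omega)$ bounds and compactness, elementary use of $\|Du\|_\infty\le1$ for inclusion, boundedness, convexity and closedness, and sequential weak compactness by extracting a weakly-$W^{1,p_1}$ and uniformly convergent subsequence and testing against the decomposition $\cX(\Omega)^*=W^{-1,p_1'}(\Omega)+\cM(\Omega)$ of Remark \ref{rem_duals}. The only (harmless) variations are that you identify the limit as an element of $\cX_\phi(\Omega)$ directly from the $C(\overline\Omega)$-compactness rather than via weak closedness of a closed convex set, and that the coincidence of the two limits needs no appeal to $p_1>m$ — comparing them in $L^{p_1}(\Omega)$, as you in fact do, already suffices.
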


\begin{proof}
By Proposition \ref{teo_compaembe}, $\cX_\phi(\Omega) \subset C(\overline\Omega)$ is a compact subset. 
From $\| Du \|_\infty \leq 1$ for any $u \in \cX_\phi (\Omega)$, we deduce that $\cX_\phi (\Omega)  \subset \cX(\Omega)$ is closed, bounded and convex. 
%Since 
%clearly $\cX_\phi(\Omega)$ is contained in $W^{1,p_1}(\Omega)$ as a closed, bounded subset, we deduce that  $\cX_\phi(\Omega) \subset \cX(\Omega)$ is closed and bounded. the fact that $\cX_\phi(\Omega)$ is convex is obvious. 
To prove the sequential weak compactness, let $\{v_j\}$ be a sequence in $\cX_\phi(\Omega)$. Then, up to passing to a subsequence, $v_j \to v$ weakly in $W^{1,p_1}(\Omega)$ and strongly in $C(\overline\Omega)$, for some $v \in \cX(\Omega)$. By Remark \ref{rem_duals}, we can represent a given $\rho \in \cX(\Omega)^*$ as $\rho = \rho_1 + \rho_2$ with $\rho_1 \in W^{-1,p_1'}(\Omega)$ and $\rho_2 \in \cM(\Omega)$, whence
	\[
	\la \rho, v_j \ra = \la \rho_1,v_j \ra + \la \rho_2, v_j \ra  \to \la \rho_1,v \ra + \la \rho_2, v \ra = \la \rho,v \ra \qquad \text{as } \, j \to \infty,  
	\]
thus $\{v_j\}$ is weakly convergent. Since $\cX_\phi(\Omega)$ is weakly closed due to the convexity and closedness, 
we see $v \in \cX_\phi (\Omega)$ and complete the proof. 
\end{proof}

Regarding the minimization problem, for the readers' convenience we reproduce the argument in \cite{bpd} to show the existence and uniqueness of the minimizer $u_\rho$ in our functional setting. For $\rho \in \cX(\Omega)^\ast$, we recall that $I_\rho: \cX_{\phi}(\Omega) \to \R$ is defined by
	\[
		I_\rho (v) 
		\doteq 
		\int_{ \Omega } \left( 1 - \sqrt{1 - \left| Dv \right|^2} \right) \rd x - \la \rho , v \ra 
		\quad \text{for $v \in \cX_{\phi} (\Omega)$}.
	\]
The above discussion guarantees that $\cX_{\phi} (\Omega)$ is a closed convex subset of $\cX(\Omega)$ (when $\Omega$ is bounded, we  suppose that $\phi \in C(\partial \Omega)$ is chosen such that $\cX_\phi(\Omega) \neq \emptyset$), and $I_\rho$ is strictly convex 
since $\ov{B_1(0)} \ni p \mapsto 1 - \sqrt{1-|p|^2} \in [0,1]$ is strictly convex. 
Furthermore, from the inequality $1 - \sqrt{1-|p|^2} \leq |p|^2$ for $|p| \le 1$ and 
using Lebesgue's dominated convergence theorem, $I_\rho$ is continuous on $\cX_\phi(\Omega)$. Combining convexity and continuity, we deduce that $I_\rho$ is weakly lower-semicontinuous. 
If $\Omega$ is a bounded domain, by Corollary \ref{cor_closedconvex} the set $\cX_\phi(\Omega)$ is bounded and sequentially weakly compact in $\cX(\Omega)$, so the existence of a minimizer is then obvious by the direct method. On the other hand, if $\Omega = \R^m$, then 
$\| D v \|_q^q \leq \| D v \|_2^2$ holds for every $v \in \cX_0(\Rm)$ and $q \in [2,\infty)$ 
thanks to $\| D v \|_\infty \leq 1$. 
Thus, in view of the identity
 \begin{equation}\label{1}
 1 - \sqrt{1-t} = \sum_{j=1}^\infty b_j t^j \qquad \text{with } \  b_j \doteq \frac{(2j-2)!}{j!(j-1)!2^{2j-1}}, \ \ t \in [0,1],
 \end{equation}
it follows from $3 \leq m < p_1$ that for $v \in \cX_0(\R^m)$, 
  	\begin{equation}\label{coercive}
		\begin{aligned}
		\|v\|_{\cX}^2 
		& \le 
		\left( \|Dv\|_2^2 + \|Dv\|_{p_1}^2 \right) 
		\le 
		\left( \|Dv\|_2^2 + \|Dv\|_2^{4/p_1} \right) 
		\\
		& \le 2 \left( \|Dv\|_2^2 + 1 \right) \le 
		2 \left[ 1 + b_1^{-1} \left( I_\rho(v) + \|\rho\|_{\cX^*} \|v\|_{\cX} \right) \right].
  		\end{aligned}
	\end{equation}
Hence, $I_\rho$ is coercive. Since $\cX(\R^m)$ is reflexive, the existence and uniqueness of $u_\rho$ is then a consequence, for instance, of \cite[Corollary 3.23]{Br}.

%\vspace{0.3cm}

\subsection{Large compact subsets of $\Spa(\partial \Omega)$}\label{subsec_boundary}
%.: the class $\Spa_{b,\zeta}(\partial \Omega)$\label{subsec_boundary}
%
%
Assume that $(\Omega,\di_\Omega)$ has compact metric completion, that following \cite{KM95} we denote by $\Omega_\di$. We set $\partial \Omega_{\di} = \Omega_{\di} \backslash \Omega$. To stress the difference with $\rd_{\ov{\Omega}}$ in \eqref{def_domega}, we write $\di$ instead of $\di_\Omega$ for the metric on $\Omega_\di$. We first recall some topological facts. The identity $i : (\Omega, \di_\Omega) \to (\Omega, \di_\delta)$ extends by density to a distance non-increasing map $\widetilde i : (\Omega_\di,\di) \to (\overline\Omega, \di_\delta)$. Since $\Omega_\di$ is compact and $(\overline\Omega, \di_\delta)$ is Hausdorff, 
$\widetilde{i}$ is a closed map. From $\widetilde{i}(\Omega_\di) \supset \Omega$, 
we deduce that $\widetilde{i}$ is also surjective, hence, $\wt{i}$ is a quotient map. 
Given $\phi \in C(\partial \Omega)$, let $\widetilde{\phi} = \phi \circ \widetilde{i} \in C(\partial \Omega_\di)$ be its lift. 

Let $b \in \R^+$ and $\zeta : \R^+ \to [0,1)$, and define the following subset of $\Spa(\partial \Omega)$:
		 	\begin{equation}\label{def_Spabzeta}
		 	\mathcal{S}_{b,\zeta}(\partial \Omega) \doteq \Big\{ \phi \in \mathcal{S}(\partial \Omega) \ : \ \|\phi\|_\infty \le b, \ \ \sup_{ \footnotesize \begin{array}{c}
		 	x,y \in \partial \Omega_\di, \\
		 	{ \di(x,y)} = t \end{array} } \frac{|\wt\phi(x)- \wt\phi(y)|}{\di(x,y)} \le \zeta(t) \ \ \forall \, t \in \R^+ \Big\}, 
		 	\end{equation}
		where the supremum is defined to be zero if 
		$t > \diam_{\rd_\Omega}(\Omega)$. 
We prove that $\Spa_{b,\zeta}(\partial \Omega)$ is compact in $C(\partial \Omega)$, 
so let $\{\phi_j\} \subset \Spa_{b,\zeta}(\partial\Omega)$. 
By the Arzel\'a--Ascoli Theorem, $\{\widetilde{\phi}_j\}$ is relatively compact in $C(\partial \Omega_\di)$ 
and thus, up to subsequences, $\widetilde{\phi}_j \to  \widetilde{\phi}$ 
for some $ \widetilde{\phi} \in C(\partial \Omega_\di)$ 
which is constant on the fibers of $\wt{i}$, and therefore factorizes as $ \widetilde{\phi} = \phi \circ \wt{i}$.  
Since $\wt{i}$ is a quotient map, $\phi \in C(\partial \Omega)$ 
(see, for instance, \cite[Theorem 22.2]{Mu00}). 
From $\wt{\phi}_j \to  \widetilde{\phi}$ on $\partial \Omega_d$, 
we deduce that $\phi_j \to \phi$ on $\partial \Omega$ and 
$\phi$ satisfies the last two conditions in \eqref{def_Spabzeta}. 
To show that $\Spa_{b,\zeta}(\partial \Omega)$ is compact in $C(\partial \Omega$), 
it suffices to prove that $\phi \in \Spa(\partial \Omega)$. 
Suppose by contradiction that $\phi \not \in \Spa(\partial \Omega)$, 
and take $x,y \in \partial \Omega$, $x \neq y$ such that $|\phi(x) - \phi(y)| \ge \di_{\overline\Omega}(x,y)$. 
Then, being the left-hand side finite, $\Gamma_{xy} \neq \emptyset$ and 
we can lift the interior of any path $\gamma \in \Gamma_{xy}$ to a path $\wt \gamma : (0,1) \to \Omega_\di$ of the same length of $\gamma$, with $\wt \gamma((0,1)) \subset \Omega$. 
Choose paths $\gamma_\eps \in \Gamma_{x,y}$ 
with $\haus^1_\delta (\gamma_\eps) \downarrow \di_{\overline\Omega}(x,y)$ as $\eps \downarrow 0$. 
It is easy to check that $\wt \gamma_\eps(0^+) \doteq \wt x_\eps \in \wt{i}^{-1}(x)$ and 
$\wt \gamma_\eps(1^-) \doteq \wt y_\eps \in \wt{i}^{-1}(y)$. 
Since the fibers $\wt{i}^{-1}(x)$ and $\wt{i}^{-1}(y)$ are compact, up to subsequences 
$\wt{x}_{\e_k} \to \wt{x} \in \wt{i}^{-1} (x)$ and $\wt{y}_{\e_k} \to \wt{y} \in \wt{i}^{-1} (y)$. 
By $x \neq y$, we have 
$ 0< \di(\wt{x},\wt{y}) = \lim_{k \to \infty} \di (\wt{x}_{\e_k} , \wt{y}_{\e_k}  ) \le \di_{\overline\Omega}(x,y)$. 
However, from the last property in \eqref{def_Spabzeta} for $\wt{\phi}_j$, we get the following contradiction:
	\[
		\begin{aligned}
			\di (\wt{x}, \wt{y}) 
			\leq \di_{\overline{\Omega}} (x,y) \le \left| \phi(x) - \phi(y) \right| 
			= \left| \wt{\phi} (\wt{x}) - \wt{\phi} (\wt{y}) \right| 
			&= \lim_{j \to \infty} \left| \wt{\phi}_j(\wt{x}) - \wt{\phi}_j(\wt{y}) \right| 
			\\
			&\leq \zeta \left( \di (\wt{x}, \wt{y}) \right) \di (\wt{x}, \wt{y})
			< \di (\wt{x}, \wt{y}).
		\end{aligned}
	\]

\subsection{Three useful lemmas}

We shall introduce three lemmas that will be repeatedly used. First, we search for an efficient way to approximate $\psi \in \cX(\Omega)$ by a sequence in $\cX(\Omega)$ having suitably prescribed boundary values.

\begin{lemma}[Approximation]\label{prop_approx}
Let $\Omega \subset \R^m$ be a domain, let $u, \psi : \Omega \to \R$ and for $\e>0$ define
		\[
		\psi^u_{\eps} \doteq \max\{u, \psi - \eps\} + \min\{u, \psi + \eps\} - u = \left\{ \begin{array}{ll}
		u & \text{if } \, |\psi - u| < \eps, \\[0.2cm]
		\psi + \eps & \text{if } \, u \ge \psi + \eps, \\[0.2cm]
		\psi -\eps & \text{if } \, u \le \psi -\eps.
		\end{array}\right.
		\]
Consider a sequence $\{\eps_j\} \subset \R^+$, $\eps_j \to 0$ and functions $u_j : \Omega \to \R$, and define $\psi_j \doteq \psi^{u_j}_{\eps_j}$.		
		
\begin{enumerate}
	\item[{\rm (i)}]
	Assume that $m \geq 2$, $\Omega$ is bounded, $\phi_j,\phi \in C(\partial \Omega)$, $u_j \in \cX_{\phi_j}(\Omega)$ and $u,\psi \in \cX_\phi(\Omega)$:
		\begin{itemize}
		\item[{\rm (a)}] 
		if $\|\phi_j-\phi\|_{C (\partial \Omega)} < \eps_j$, 
		then $\psi_j \in  \cX_{\phi_j}(\Omega)$ and $\psi_j \equiv u_j$ on $\Omega \setminus \Omega_j$ for some set $\Omega_j \Subset \Omega$. 
		Moreover, if for some $\Omega'\Subset \Omega$ it holds $\psi \equiv u$ and $|u_j - u|< \eps_j$ on $\Omega \setminus \Omega'$, 	then $\psi_j \equiv u_j$ on $\Omega \setminus \Omega'$;
		\item[{\rm (b)}] if $u_j \to u$ in $\cX(\Omega)$ as $j\to \infty$, then $\psi_j \to \psi$ in $W^{1,q}(\Omega) \cap C(\ov{\Omega})$ for each $q \in [1,\infty)$.
		\end{itemize}
	\item[{\rm (ii)}] 
	If $m \geq 3$, $\Omega = \Rm$ and $u,u_j,\psi \in \cX_0(\R^m)$ satisfy $u_j \to u$ in $\cX(\R^m)$, 
	then $\{\psi_j\} \subset \cX_0(\R^m)$ and {\rm (a)} holds. Furthermore, {\rm (b)} holds with $q \in [2^\ast,\infty)$, 
	and $\| D\psi_j - D\psi \|_q \to 0$ for all $q \in [2,\infty)$. 
	\end{enumerate}

\end{lemma}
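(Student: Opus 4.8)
The plan is to reduce everything to elementary facts about medians of Sobolev functions. Since $\psi-\eps\le\psi+\eps$, inspecting the three cases in the statement shows that
\[
\psi^u_\eps=\min\bigl\{\max\{u,\psi-\eps\},\,\psi+\eps\bigr\}=\operatorname{med}\{u,\psi-\eps,\psi+\eps\},
\]
the pointwise median of the three functions. From this representation I would record four facts, valid for any continuous $u,\psi$ on $\Omega$ (or on $\R^m$) with $\|Du\|_\infty,\|D\psi\|_\infty\le1$: $(1)$ $\psi^u_\eps$ is continuous with $\|D\psi^u_\eps\|_\infty\le1$, a median of $1$-Lipschitz functions being $1$-Lipschitz; $(2)$ $\psi^u_\eps(x)\in[\psi(x)-\eps,\psi(x)+\eps]$ for every $x$, hence $\|\psi^u_\eps-\psi\|_\infty\le\eps$; $(3)$ $\{\psi^u_\eps=u\}=\{|u-\psi|\le\eps\}$, an open complement; $(4)$ since the gradients of two Sobolev functions agree a.e.\ on the set where the functions agree, $D\psi^u_\eps=Du\,\mathbb{1}_{\{|u-\psi|\le\eps\}}+D\psi\,\mathbb{1}_{\{|u-\psi|>\eps\}}$ a.e., so $D\psi^u_\eps-D\psi=(Du-D\psi)\,\mathbb{1}_{\{|u-\psi|\le\eps\}}$ a.e.

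Next I would localize the transition set $\Omega_j\doteq\{|u_j-\psi|>\eps_j\}$. In case (i), $u_j\in\cX_{\phi_j}(\Omega)$ and $\psi\in\cX_\phi(\Omega)$ extend continuously to $\overline\Omega$ by Proposition~\ref{teo_compaembe}, and on the compact set $\partial\Omega$ one has $|u_j-\psi|=|\phi_j-\phi|<\eps_j$; by continuity $\Omega_j$ is then disjoint from a neighbourhood of $\partial\Omega$, so $\Omega_j\Subset\Omega$, and $(3)$ gives $\psi_j\equiv u_j$ on $\Omega\setminus\Omega_j$. The second assertion of (a) is immediate from $(3)$, since $|u_j-\psi|=|u_j-u|<\eps_j$ on $\Omega\setminus\Omega'$. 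That $\psi_j\in\cX_{\phi_j}(\Omega)$ follows from $(1)$ together with the boundary notion of Remark~\ref{rem_defboundary}: along any piecewise affine $\gamma$ with $\gamma(0^+)=x\in\partial\Omega$, the strict inequalities $\phi(x)-\eps_j<\phi_j(x)<\phi(x)+\eps_j$ force $\psi_j\equiv u_j$ on a terminal portion of $\gamma$, whence $\psi_j(\gamma(t))\to\phi_j(x)$. In case (ii), $u_j,\psi\in\cX_0(\R^m)\subset C_0(\R^m)$ decay at infinity, so $\Omega_j$ is bounded, hence $\Omega_j\Subset\R^m$; then $\psi_j-u_j$ is a compactly supported Lipschitz function and therefore lies in $\cX(\R^m)$, so $\psi_j=u_j+(\psi_j-u_j)\in\cX(\R^m)$, while $(1)$ and decay give $\|D\psi_j\|_\infty\le1$ and $\psi_j\in C_0(\R^m)$, i.e.\ $\psi_j\in\cX_0(\R^m)$; the analogue of (a) is proved exactly as above, with decay at infinity replacing the boundary condition.

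For the convergences in (b) I would use $(2)$ and $(4)$. Fact $(2)$ gives $\|\psi_j-\psi\|_\infty\le\eps_j\to0$, hence $L^q$-convergence on bounded $\Omega$ for all $q\in[1,\infty)$, and on $\R^m$ for $q\in[2^*,\infty)$ once the gradient convergence is known, via the Sobolev inequality for $\psi_j-\psi\in\mathcal{D}^{1,2}(\R^m)$ interpolated with the $L^\infty$ bound. For the gradients, $(4)$ gives
\[
\|D\psi_j-D\psi\|_q^q=\int_{\{|u_j-\psi|\le\eps_j\}}|Du_j-D\psi|^q\,\di x\le 2^{q-1}\Bigl(\|Du_j-Du\|_q^q+\int_{\{|u_j-\psi|\le\eps_j\}}|Du-D\psi|^q\,\di x\Bigr).
\]
The first term tends to $0$: from $u_j\to u$ in $\cX$ one has $Du_j\to Du$ in $L^{p_1}$ (and in $L^2$ on $\R^m$), and interpolating with $\|Du_j-Du\|_\infty\le2$ yields $L^q$-convergence for every $q\in[1,\infty)$ in the bounded case, resp.\ every $q\in[2,\infty)$ on $\R^m$. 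For the second term, uniform convergence $u_j\to u$ and $\eps_j\to0$ give $\{|u_j-\psi|\le\eps_j\}\subset\{|u-\psi|\le\delta\}$ for every fixed $\delta>0$ and all large $j$; since $|Du-D\psi|^q$ is integrable (trivially on bounded $\Omega$, and on $\R^m$ because $Du,D\psi\in L^2\cap L^\infty$ with $q\ge2$) and $Du=D\psi$ a.e.\ on $\{u=\psi\}$, dominated convergence gives $\int_{\{|u-\psi|\le\delta\}}|Du-D\psi|^q\,\di x\to0$ as $\delta\to0$, so the second term also tends to $0$.

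I expect the main obstacle to be the bookkeeping in case (ii): verifying that $\psi_j$ genuinely belongs to the completion space $\cX(\R^m)$ rather than merely to $W^{1,\infty}_{\loc}\cap C_0(\R^m)$ — this is precisely where the compact containment $\Omega_j\Subset\R^m$ and the inclusion of compactly supported Lipschitz functions into $\cX(\R^m)$ enter — together with keeping track of the admissible exponent ranges ($q\ge2^*$ for the functions against $q\ge2$ for the gradients). Everything else is routine once the median reformulation and facts $(1)$--$(4)$ are in place.
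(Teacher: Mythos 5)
Your proposal is correct and follows essentially the same route as the paper's proof: the key identity $D\psi_j - D\psi = (Du_j - D\psi)\,\mathbb{1}_{\{|u_j-\psi|\le\eps_j\}}$, the uniform bound $\|\psi_j-\psi\|_\infty\le\eps_j$, compact containment of the transition set via the boundary values (resp.\ decay at infinity), and Stampacchia plus dominated convergence for the gradients — your triangle-inequality splitting of the gradient term is exactly the paper's argument in the $\R^m$ case and a subsequence-free variant of it in the bounded case. Your explicit verification that $\psi_j\in\cX(\R^m)$ (via $\psi_j-u_j\in\lip_c(\R^m)\subset\cX(\R^m)$) is more detailed than the paper, which simply asserts this step.
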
	
\begin{proof}
(i) 
By Proposition \ref{teo_compaembe}, $u,u_j,\psi \in C(\ov{\Omega})$. By construction, 
	\begin{equation}\label{conv-psij}
		\psi_j \in C(\ov{\Omega}), \qquad \|\psi_j - \psi\|_\infty \le \eps_j \to 0,  \qquad 
		\Omega_j \doteq \{|u_j-\psi| \ge \eps_j\} \Subset \Omega.
	\end{equation}
%Note also that $\psi_j \equiv u_j$ on $\Omega \backslash \Omega_j$. 
%Furthermore, if $\psi \equiv u$ and $|u_j - u|< \eps_j$ on $\Omega \setminus \Omega'$ 
%for some $\Omega' \Subset \Omega$, 
%then the identity $|u_j - \psi| = |u_j - u| < \eps_j$ holds on $\Omega \setminus \Omega'$ and 
%the definition of $\psi_j$ guarantees that $\psi_j \equiv u_j$ on $\Omega \backslash \Omega'$. 
%Therefore, (a) holds. 
	Next, the identity
	\begin{equation}\label{Dpsij}
		D\psi_j = \begin{dcases}
		D \psi & \quad \text{a.e. on } \, |\psi - u_j| \ge \eps_j, \\[0.2cm]
		Du_j & \quad \text{a.e. on } \, |\psi - u_j| < \eps_j  
		\end{dcases}
	\end{equation}
implies that $|D \psi_j| \le 1$ a.e. on $\Omega$. 
Since $\psi_j = u_j$ on $\Omega \setminus \Omega_j$ and $u_j \in \cX_{\phi_j}(\Omega)$, 
we infer that $\psi_j \in \cX_{\phi_j}(\Omega)$. 
Thus, (a) holds. 
As for (b), if $u_j \to u$ in $\cX(\Omega)$, then we get $ u_j \to u$ in $C(\ov{\Omega})$. Thus, fix $\{\delta_j\}$ such that $\delta_j \to 0$ and $\|u_j - u\|_\infty < \delta_j$. 
Taking a subsequence $\{j_k\}$, we have $Du_{j_k} (x) \to Du(x)$ a.e. $\Omega$. 
Then as $k \to \infty$, a.e. $\Omega$,
	\begin{equation}\label{gradconverg}
	\begin{aligned}
		|D\psi_{j_k} - D\psi| 
		=
		|Du_{j_k}-D\psi| \cdot \mathbb{1}_{\{|\psi-u_{j_k}|<\eps_{j_k}\}} 
		&\le 
		|Du_{j_k}-D\psi| \cdot \mathbb{1}_{\{|\psi-u|< \eps_{j_k} + \delta_{j_k}\}} \\
		& \to |Du-D\psi| \cdot \mathbb{1}_{\{|\psi-u| = 0\}} = 0,
	\end{aligned}
	\end{equation}
where we used Stampacchia's theorem (see \cite[Theorem 4.4]{evansgariepy}). Since the limit is unique, 
$D\psi_j \to D\psi$ a.e. on $\Omega$. 
Thus, the dominated convergence theorem with $\| D\psi_j \|_\infty \leq 1$ yields 
$\| D\psi_j - D\psi \|_q \to 0$ for each $q \in [1,\infty)$. 
From \eqref{conv-psij}, (b) also holds.

	(ii) 
	When $\Omega = \Rm$, from \eqref{Dpsij} it is easily seen that $\psi_j \in \cX_0(\Rm)$. 
In addition, by Proposition \ref{prop-cX-emb}, $\| u_j - u \|_{\infty} \to 0$ and 
$\cX_0(\Rm) \hookrightarrow C_0(\Rm)$. 
Hence, we may apply the same argument as above to prove (a) in this case. 
As for (b), setting $f_k \doteq |Du_{j_k} - D\psi|$, $g_k \doteq \mathbb{1}_{\{|\psi-u|< \eps_{j_k} + \delta_{j_k}\}}$ and $f = |Du-D\psi|$, we deduce from \eqref{gradconverg} that
	\[
	\|D\psi_{j_k} - D\psi\|_2 \le \|f_kg_k\|_2 \le \|(f_k-f)g_k\|_2 + \|f g_k\|_2 \le \|(f_k-f)\|_2 + \|f g_k\|_2 \to 0
	\]
as $k \to \infty$, where we used $u_{j_k} \to u$ in $\cX(\R^m)$, $fg_k \to 0$ a.e. $\Rm$ and 
the dominated convergence theorem. 
The bound $\|D \psi_{j_k}- D\psi\|_\infty \le 2$ then implies $\| D\psi_{j_k} - D \psi \|_q \to 0$ for all $q \in [2,\infty)$.	
Since the limit is unique, $\| D\psi_j - D \psi \|_q \to 0$ for all $q \in [2,\infty)$. 
From $\| \psi_j - \psi \|_\infty \to 0$ and Sobolev's inequality, it follows that 
$\| \psi_j - \psi \|_{W^{1,q}} \to 0$ for all $q \in [2^\ast, \infty)$ and (b) also holds. 
\end{proof}

In the above lemma, assume that $\Omega$ is bounded. If $\phi_j \to \phi$ in $C(\partial \Omega)$ and $u_j \rightharpoonup u$ weakly in $\cX(\Omega)$, then the strong convergence $\psi_j \to \psi$ in $\cX(\Omega)$ may fail. 
However, for future use, we need to guarantee the existence of $\psi_j \in \cX_{\phi_j}(\Omega)$ and $\psi \in \cX_\phi(\Omega)$ such that $\psi_j \to \psi$ strongly in $\cX(\Omega)$. We propose a construction that depends on the following compactness result for Lorentzian balls.

	\begin{lemma}\label{lem_simplecompact}
		Let $\Omega \subset \R^m$ be a domain, let $\mathscr{G} \subset W^{1,\infty}(\Omega)$ be compact in $C(K)$ 
		for each compact set $K \subset \Omega $, and assume that $\|Du\|_\infty \le 1$ on $\Omega$ for each $u \in \mathscr{G}$. 
		Fix an open subset $\Omega' \Subset \Omega$ and $\wt{\e}>0$. Then, the following are equivalent:
		\begin{itemize}
			\item[{\rm (a)}] 
			For each $\Omega''\Subset \Omega'$ with $\di_\delta(\Omega'',\partial \Omega') \ge \wt{\e}$, 
			every $u \in \mathscr{G}$ does not have a light segment $\overline{xy} \subset \overline{\Omega'} \backslash \Omega''$ 
			with $x \in \partial \Omega''$, $y \in \partial \Omega'$.
			\item[{\rm (b)}] 
			There exists $R= R(\mathscr{G},\wt{\e},\Omega') >0$ such that $L_R^u(\Omega'') \Subset \Omega'$ for each $u \in \mathscr{G}$ 
			and each subset $\Omega'' \Subset \Omega'$ satisfying $\di_\delta(\Omega'', \partial \Omega') \ge \wt{\e}$, 
			where $L_R^u$ is the Lorentzian ball of radius $R$ associated to the graph of $u$.
		\end{itemize}
		Furthermore, the following are equivalent:
		\begin{itemize}
			\item[{\rm (a')}] 
			Every $u \in \mathscr{G}$ does not have light segments in $\Omega'$.
			\item[{\rm (b')}] 
			For each $\e >0$, there exists $R= R(\mathscr{G},\e,\Omega') >0$ such that for each pair of open subsets $\Omega_1 \Subset \Omega_2 \subset \Omega'$ with $\di_\delta(\Omega_1,\partial \Omega_2) \ge \e$, it holds $L_R^u(\Omega_1) \Subset \Omega_2$ for each $u \in \mathscr{G}$.
		\end{itemize}
	\end{lemma}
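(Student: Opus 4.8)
\emph{Proof sketch.}
The argument rests on three facts. First, the dictionary between light segments and the Lorentzian distance: for $u\in\mathscr{G}$ and $\overline{pq}\subset\Omega$ one has $|u(p)-u(q)|\le|p-q|$ because $\|Du\|_\infty\le1$, and $\overline{pq}$ is a light segment of $u$ iff $u$ is affine of slope $1$ along it, iff $\ell^u_p\equiv0$ on $\overline{pq}$. Second, the map $(u,o,z)\mapsto\ell^u_o(z)=\sqrt{|z-o|^2-(u(z)-u(o))^2}$ is continuous where defined, jointly in $(o,z)$ and in $u$ for locally uniform convergence. Third, since the members of $\mathscr{G}$ are $1$-Lipschitz on convex subsets and $\mathscr{G}$ is compact in $C(K)$ for each compact $K\subset\Omega$, every sequence in $\mathscr{G}$ has a subsequence converging in $C_{\mathrm{loc}}(\Omega)$ to an element of $\mathscr{G}$. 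We treat $(\mathrm a)\Leftrightarrow(\mathrm b)$; the proof of $(\mathrm a')\Leftrightarrow(\mathrm b')$ is identical after replacing the pair $(\Omega'',\Omega')$ by $(\Omega_1,\Omega_2)$ and dropping the requirement that light segments reach $\partial\Omega''$.

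\emph{$(\mathrm b)\Rightarrow(\mathrm a)$, by contraposition.} If some $u\in\mathscr{G}$ has a light segment $\overline{xy}$ with $x\in\partial\Omega''$, $y\in\partial\Omega'$ and $\overline{xy}\subset\overline{\Omega'}\setminus\Omega''$, then $\ell^u_x\equiv0$ on the compact set $\overline{xy}$, so by continuity, for every $R>0$ there is $o\in\Omega''$ with $\ell^u_o<R$ throughout $\overline{xy}$; hence $\overline{xy}\subset L^u_R(o)\subset L^u_R(\Omega'')$, and since $y\in\overline{xy}\cap\partial\Omega'$ this gives $L^u_R(\Omega'')\not\Subset\Omega'$ for every $R>0$, i.e. $(\mathrm b)$ fails. (For $(\mathrm a')$: given a light segment $\overline{xy}\subset\Omega'$, take $\Omega_2$ a ball about $x$ of radius $r<\tfrac12|x-y|$ with $\overline{\Omega_2}\subset\Omega'$ and $\Omega_1$ the concentric ball of radius $r/2$; then $L^u_R(\Omega_1)\supset L^u_R(x)\supset\overline{xy}\not\subset\Omega_2$ for every $R>0$, so $(\mathrm b')$ fails with $\e=r/2$.)

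\emph{$(\mathrm a)\Rightarrow(\mathrm b)$, by contraposition.} Suppose no $R>0$ works: for each $k$ there are $u_k\in\mathscr{G}$ and $\Omega_k''\Subset\Omega'$ with $\di_\delta(\Omega_k'',\partial\Omega')\ge\wt\e$ such that $L^{u_k}_{1/k}(\Omega_k'')\not\Subset\Omega'$. Since $L^{u_k}_{1/k}(\Omega_k'')$ is open, contains points of $\Omega_k''$ (hence of the fixed compact set $K_0\doteq\{p\in\overline{\Omega'}:\di_\delta(p,\partial\Omega')\ge\wt\e\}\Subset\Omega'$), and is not relatively compact in $\Omega'$, a connected-component analysis of the sublevel sets $\{\ell^{u_k}_o<1/k\}$ yields $o_k\in\Omega_k''$ and $\zeta_k\in\partial\Omega'$ with $\ell^{u_k}_{o_k}(\zeta_k)\le1/k$ \emph{and with the chord $\overline{o_k\zeta_k}$ lying in $\Omega$}; in particular $\{o_k\}\subset K_0$ and $\{\zeta_k\}\subset\partial\Omega'$ stay in fixed compact subsets of $\Omega$, because $\overline{\Omega'}\Subset\Omega$. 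Extracting a subsequence, $u_k\to u_\infty\in\mathscr{G}$ in $C_{\mathrm{loc}}(\Omega)$, $o_k\to o_\infty\in K_0\subset\Omega'$ and $\zeta_k\to\zeta_\infty\in\partial\Omega'$, with $o_\infty\ne\zeta_\infty$ since $\di_\delta(o_\infty,\partial\Omega')\ge\wt\e$. Joint continuity gives $\ell^{u_\infty}_{o_\infty}(\zeta_\infty)=0$, i.e. $|u_\infty(\zeta_\infty)-u_\infty(o_\infty)|=|\zeta_\infty-o_\infty|$; as $u_\infty$ is $1$-Lipschitz along $\overline{o_\infty\zeta_\infty}\subset\Omega$, it is affine of slope $1$ there, so $\overline{o_\infty\zeta_\infty}$ is a light segment of $u_\infty$. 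Finally, along this segment $p\mapsto\di_\delta(p,\partial\Omega')$ runs continuously from a value $\ge\wt\e$ at $o_\infty$ to $0$ at $\zeta_\infty$; letting $x_*$ be the last point at which it equals $\wt\e$ and putting $\Omega_0''\doteq\{p\in\Omega':\di_\delta(p,\partial\Omega')>\wt\e\}$ (so $\Omega_0''\Subset\Omega'$ and $\di_\delta(\Omega_0'',\partial\Omega')\ge\wt\e$), the subsegment $\overline{x_*\zeta_\infty}$ is a nondegenerate light segment of $u_\infty$ contained in $\overline{\Omega'}\setminus\Omega_0''$ with $x_*\in\partial\Omega_0''$ and $\zeta_\infty\in\partial\Omega'$, contradicting $(\mathrm a)$ for $\Omega''=\Omega_0''$.

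\emph{Main obstacle.} The delicate point is the italicised step: producing the escaping points $\zeta_k$ on $\partial\Omega'$ (so that they, and their limit, stay in a compact subset of $\Omega$) and with the chords $\overline{o_k\zeta_k}$ inside $\Omega$, since it is precisely this that makes the passage from ``the limiting Minkowski chord is null'' to ``$u_\infty$ has a genuine light segment'' legitimate; this is where the openness of $L^{u_k}_{1/k}(\Omega_k'')$, the relative compactness $\overline{\Omega'}\Subset\Omega$, and a component-by-component inspection of the sublevel sets of $\ell^{u_k}_o$ come in. Everything else — the Arzel\'a--Ascoli compactness of $\mathscr{G}$, the continuity of $\ell_o$, and the elementary ``almost-null chord $\Rightarrow$ light segment'' upgrade — is soft.
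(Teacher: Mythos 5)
Your overall skeleton is the same as the paper's: compactness of $\mathscr{G}$ in $C_{\mathrm{loc}}$, extraction of limits, passing to the limit in an almost-null relation, the upgrade ``null chord contained in $\Omega$ $\Rightarrow$ affine of slope one $\Rightarrow$ light segment'', and a final relocation of the segment; your easy direction (b)$\Rightarrow$(a) is fine. The genuine gap is exactly the step you italicise and defer to ``a connected-component analysis of the sublevel sets'': from the mere failure of $L^{u_k}_{1/k}(\Omega_k'')\Subset\Omega'$ you claim points $o_k\in\Omega_k''$ and $\zeta_k\in\partial\Omega'$ with $\ell^{u_k}_{o_k}(\zeta_k)\le 1/k$ \emph{and} $\overline{o_k\zeta_k}\subset\Omega$. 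No argument is given, and none of this kind can work: $\ell^u_o(q)<R$ is a relation between the two values $u(o),u(q)$ only and carries no information about the chord $\overline{oq}$, and when $\Omega$ is not convex the escape of $L^u_R(\Omega'')$ from $\Omega'$ need not be witnessed by any nearly-null chord inside $\Omega$ at all. For instance, on a C-shaped planar domain let $u$ grow along the arm with $|Du|\le 0.9$: for $o$ near one tip and $q$ far along the arm one has $|u(q)-u(o)|\ge|q-o|$, so by continuity there are points $q^*$, which can be placed in $\Omega\setminus\Omega'$, with $\ell^u_o(q^*)=0$; hence $L^u_R(\Omega'')\not\Subset\Omega'$ for every $R>0$, although $u$ is strictly spacelike and any chord from $\Omega''$ to $\partial\Omega'$ lying in $\Omega$ has slack at least $\wt{\e}/10$, so no pair of the kind you need exists. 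Restricting to the component of the sublevel set containing $o$ does not help: for small $R$ that component is a small neighbourhood of $o$, relatively compact in $\Omega'$, while the full sublevel set escapes. (A smaller wrinkle: even granting your claim, chords merely contained in $\Omega$ can degenerate to a limit chord touching $\partial\Omega$; one should first truncate $\overline{o_k\zeta_k}$ at its first exit from $\Omega'$, using that the slack of a subchord of a chord in $\Omega$ does not exceed the total slack, so that everything stays in the fixed compact set $\overline{\Omega'}\subset\Omega$.)

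For comparison, the paper does not derive the segment data from the literal negation of (b$'$): its contradiction hypothesis is already stated in segment form, namely $z_j\in\partial\Omega_1^{(j)}$, $p_j\in\partial\Omega_2^{(j)}$ with $\overline{z_jp_j}\subset\overline{\Omega_2^{(j)}}$, length $\ge\e$ and slack $\le 1/j$; that is, it reads ``the Lorentzian ball escapes'' through chords contained in $\overline{\Omega_2^{(j)}}$, and then runs the same compactness-and-limit argument you run, locating the limiting light segment inside $\Omega'$ via $B_\e(z)\subset\Omega'$ rather than by your truncation. So you have correctly isolated the only nontrivial point, but your route demands strictly more than the paper's, and the missing implication is not obtainable from the stated hypotheses: to close it you must either adopt the chord-witnessed reading of the failure of $L_R^u(\Omega'')\Subset\Omega'$ (which is what the paper's proof implicitly uses and what its applications supply), or add a geometric hypothesis (for example convexity of $\Omega'$, or that points escaping $\Omega'$ can be joined to $\Omega''$ by segments in $\Omega$) under which an escaping point yields a nearly-null chord in $\Omega$ whose slack does not increase under truncation.
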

	
	\begin{proof}
		(b) $\Rightarrow$ (a) and (b') $\Rightarrow$ (a') are obvious. 
		The proofs of (a) $\Rightarrow$ (b) and (a') $\Rightarrow$ (b') are analogous, so we only prove (a') $\Rightarrow$ (b'). 
		Assume by contradiction the existence of $\e>0$, $\Omega_1^{(j)} \Subset \Omega_2^{(j)}$ 
		with $\di_{\delta} (\Omega_1^{(j)},\partial \Omega_2^{(j)}) \geq \e$, $u_j \in \mathscr{G}$, 
		points $z_j \in \partial \Omega_1^{(j)}$ and $p_j \in \partial \Omega_2^{(j)}$ such that 
		\begin{equation}\label{eq_bonita_noli}
			\ov{z_jp_j} \subset \ov{\Omega^{(j)}_2} \subset \ov{\Omega'}, \quad 
			\haus_\delta^1 \left( \ov{z_jp_j} \right) \geq \e, \quad 
			\left| z_j - p_j \right| - \left| u_j(z_j) - u_j(p_j) \right| \le \frac{1}{j}. 
		\end{equation}
		%Note that the segments $\overline{z_jp_j}$ have length $\ge \e$. 
		Since $\mathscr{G}$ is compact in $C(\ov{\Omega'})$, up to subsequences, 
		$u_j \to u \in \mathscr{G}$ in $C(\ov{\Omega'})$, $z_j \to z \in \ov{\Omega'} $ and $p_j \to p \in \ov{\Omega'}$. 
		Passing to the limit in \eqref{eq_bonita_noli}, $u$ has a light segment $\overline{zp}$ of length $\ge \e$.
		Noticing that $B_\e(z_j) \subset \Omega'$ for each $j$, we get $B_\e(z) \subset \Omega'$ and 
		thus part of $\ov{zp}$ lies in $\Omega'$, a contradiction. 
		%We contradict (a) by choosing $\Omega'' \doteq \Omega'_\e$ and points $x,y \in \overline{zp}$, $x \in \partial \Omega'_\e$, $y \in \partial \Omega'$ in such a way that the interior of $\overline{xy}$ lies in $%\Omega' \setminus \Omega'_\e$.
	\end{proof}

\begin{lemma}\label{lem_strongconv}
Let $\Omega \subset \R^m$ be a bounded domain, $\phi_j, \phi \in \Spa(\partial \Omega)$ satisfy $\phi_j \to \phi$ in $C(\partial \Omega)$ as $j \to \infty$. Then, there exist $\psi_j \in \cX_{\phi_j}(\Omega)$ and $\psi \in \cX_\phi(\Omega)$ such that $\psi_j \to \psi$ strongly in $\cX(\Omega)$. 
\end{lemma}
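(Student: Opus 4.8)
The plan is to fix, once and for all, a \emph{strictly spacelike} extension $\psi$ of $\phi$ and to obtain each $\psi_j$ by altering $\psi$ only inside a thin neighbourhood of $\partial\Omega$, so that $\psi_j$ carries the boundary datum $\phi_j$ yet still coincides with $\psi$ outside a set $U_j\subset\Omega$ with $|U_j|_\delta\to0$. As $\psi$ we take a strictly spacelike element of $\cX_\phi(\Omega)$, for instance the weak solution of \eqref{borninfeld} with $\rho=0$ given by Theorem~\ref{teo_bartniksimon_2}(iii), which lies in $C^1(\Omega)\cap W^{2,2}_\loc(\Omega)$ and, being strictly spacelike, has no light segments; by Proposition~\ref{teo_compaembe}, $\psi\in\cX(\Omega)$. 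Granting such a construction, the lemma follows at once: $\|\psi_j-\psi\|_{C(\overline\Omega)}\to0$ gives convergence in $C(\overline\Omega)$, and since $D\psi_j-D\psi$ is supported in $U_j$ with $|D\psi_j|,|D\psi|\le1$ a.e., one has $\|D\psi_j-D\psi\|_{L^{p_1}(\Omega)}\le 2\,|U_j|_\delta^{1/p_1}\to0$, whence $\psi_j\to\psi$ strongly in $\cX(\Omega)$.

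\textbf{Reduction.} Set $\delta_j\doteq\|\phi_j-\phi\|_{C(\partial\Omega)}\to0$ and $\e_j\doteq 2\delta_j+1/j$. As $\phi_j\in\Spa(\partial\Omega)$, $\cX_{\phi_j}(\Omega)\neq\emptyset$. Assume we have built $w_j\in\cX_{\phi_j}(\Omega)$ coinciding with $\psi$ on $\Omega\setminus U_j$, with $|U_j|_\delta\to0$, and put $\psi_j\doteq\psi^{w_j}_{\e_j}$ as in Lemma~\ref{prop_approx}. Since $\|\phi_j-\phi\|_{C(\partial\Omega)}<\e_j$, part~(i)(a) of that lemma yields $\psi_j\in\cX_{\phi_j}(\Omega)$; the explicit formula for $\psi^{w_j}_{\e_j}$ gives $\|\psi_j-\psi\|_\infty\le\e_j$ and $\psi_j\equiv w_j\,(\equiv\psi)$ on $\{\,|w_j-\psi|<\e_j\,\}\supset\Omega\setminus U_j$, so that $D\psi_j-D\psi$ is again supported in $U_j$. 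The problem is thus reduced to constructing $w_j$.

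\textbf{Construction of $w_j$ near $\partial\Omega$: the core step.} This is where the Lorentzian distance and the compactness Lemma~\ref{lem_simplecompact} enter. A reference function $W_j\in\cX_{\phi_j}(\Omega)$ is available (e.g.\ a McShane-type extension of $\phi_j$ with respect to $\di_{\overline\Omega}$, which is weakly spacelike). One glues $W_j$ to $\psi$ by
\[
	w_j\doteq\operatorname{med}\bigl(\psi-g_j,\ W_j,\ \psi+g_j\bigr),
\]
where $g_j\ge0$ is a gauge function vanishing outside a Euclidean collar $U_j\doteq\{x\in\Omega:\di_\delta(x,\partial\Omega)<\tau_j\}$ and equal, inside $U_j$, to the Lorentzian distance (in the graph metric of $\psi$) from $x$ to the complement of that collar; this choice makes $\psi\pm g_j$ weakly spacelike, hence so is $w_j$, and $w_j\equiv\psi$ on $\Omega\setminus U_j$. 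The boundary trace of $w_j$ equals $\phi_j$ as soon as $g_j\ge|W_j-\psi|=|\phi_j-\phi|\le\delta_j$ near $\partial\Omega$, i.e.\ as soon as the Lorentzian width of the collar exceeds $\delta_j$ there. Since $\psi$ is strictly spacelike, $\sup_{\{\di_\delta\ge\tau/2\}}|D\psi|<1$ for every $\tau>0$, which yields a positive lower bound for the Lorentzian width of the $\tau$-collar; one then chooses $\tau_j\to0$ slowly enough that this lower bound remains $\ge\delta_j$. Lemma~\ref{lem_simplecompact}, applied to the singleton $\mathscr{G}=\{\psi\}$ — which has no light segments — is what makes the comparison between Lorentzian and Euclidean collars uniform on compact subsets of $\Omega$ (to be combined with a direct argument on a fixed neighbourhood of $\partial\Omega$ and, when the completion of $(\Omega,\di_\Omega)$ is not compact, a diagonalization over an exhaustion $\Omega^{(k)}\nearrow\Omega$). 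Finally $|U_j|_\delta\to0$ because $\di_\delta(x,\partial\Omega)>0$ for every $x\in\Omega$, so the Euclidean $\tau_j$-collars decrease to $\emptyset$ inside $\Omega$ and have vanishing measure; and $\|w_j-\psi\|_\infty\le\delta_j+\operatorname{osc}_{U_j}\psi\to0$.

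\textbf{Main obstacle.} The delicate part is the construction just sketched: producing, for an arbitrary bounded domain with possibly irregular boundary and a $\psi$ that need not be \emph{uniformly} strictly spacelike up to $\partial\Omega$, a modification of $\psi$ that is simultaneously weakly spacelike, carries the exact trace $\phi_j$, and differs from $\psi$ only on a set of vanishing Lebesgue measure. The Lorentzian distance — which degenerates precisely in the directions where $|D\psi|$ approaches $1$ — and the uniform control extracted from Lemma~\ref{lem_simplecompact} via the absence of light segments of $\psi$ are exactly what overcome this; the two applications of Lemma~\ref{prop_approx} and the concluding $L^{p_1}$-estimate are routine.
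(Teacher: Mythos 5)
Your reduction step and the final $L^{p_1}$ estimate are fine, but the core construction of $w_j$ contains a genuine gap. Adding the Lorentzian distance to a spacelike function destroys spacelikeness: if $g$ is the Lorentzian distance (extrinsic, as in \eqref{def_ello}, or equivalently the intrinsic distance of the graph metric of $\psi$) from $x$ to the complement of the collar, then $\psi\pm g$ need not satisfy $|D(\psi\pm g)|\le 1$. Already in one variable, with $\psi(x)=ax$, $0<a<1$, and the inner set $\{x\le 0\}$, one gets $g(x)=x\sqrt{1-a^2}$ for $x>0$, so $(\psi+g)'=a+\sqrt{1-a^2}>1$. A gauge that preserves weak spacelikeness must be compatible with $D\psi$ pointwise (essentially $|D\psi+Dg|\le 1$); the natural candidate is the inf-/sup-convolution of $\psi$ with $\di_\Omega$, but then $g$ is precisely the ``spacelike margin'' of $\psi$ across the collar, and one hits the second problem below. (In a non-convex $\Omega$ the extrinsic $\ell_o$ need not even be real for points whose joining segment leaves $\Omega$, which further undermines the proposed gauge.)

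The quantitative input you invoke also controls the wrong region: strict spacelikeness of $\psi$ gives $\sup_{\{\di_\delta(\cdot,\partial\Omega)\ge\tau/2\}}|D\psi|<1$, i.e. a bound on the \emph{complement} of the collar, whereas the inequality $g_j\ge\delta_j$ near $\partial\Omega$ requires a lower bound on the spacelike margin \emph{inside} the collar $\{\di_\delta(\cdot,\partial\Omega)<\tau_j\}$, where $|D\psi|$ may approach $1$; the hypothesis $\phi\in\Spa(\partial\Omega)$ only gives the strict inequality $|\phi(x)-\phi(y)|<\di_{\ol\Omega}(x,y)$ with no modulus, so for a fixed $\delta_j$ there is no guarantee that any choice of $\tau_j$ makes the width of the $\tau_j$-collar exceed $\delta_j$ uniformly near $\partial\Omega$. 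Lemma \ref{lem_simplecompact} applied to $\mathscr{G}=\{\psi\}$ only yields uniformity on sets compactly contained in $\Omega$ and says nothing about the boundary strip where your surgery takes place. This is exactly the difficulty the paper's proof avoids: it does not modify a single extension, but takes $\psi_j,\psi$ to be the maximal ($\rho=0$) solutions with data $\phi_j,\phi$, uses the comparison principle to get $\psi_j\to\psi$ in $C(\ol\Omega)$, and then applies Lemma \ref{lem_simplecompact} to the whole family $\{\psi_j\}\cup\{\psi\}$ together with the Bartnik--Simon interior gradient estimate to obtain $\sup_j|D\psi_j|\le 1-\theta$ and $W^{2,2}_\loc$ bounds on compacta, whence $D\psi_j\to D\psi$ a.e. and strong convergence by dominated convergence. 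Without some family-wise compactness of this kind, or a genuinely quantitative boundary estimate, the construction of $w_j$ does not go through.
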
 

\begin{proof}
By Theorem \ref{teo_bartniksimon_2}, we can find $\psi_j$ (respectively, $\psi$) of class $C^\infty(\Omega)$ solving \eqref{borninfeld} with $\rho = 0$ and boundary value $\phi_j$ (resp. $\phi$). 
The comparison principle and $\phi_j \to \phi$ in $C(\partial \Omega)$ imply 
$\psi_j \to \psi$ in $C(\ov{\Omega})$. 
%Since the set $\mathscr{F} = \{\phi_j\} \cup \{\phi\}$ is compact in $\Spa (\partial \Omega)$, 
%by Proposition \ref{teo_compaembe} and for some subsequence, we get $\psi_{j_k} \to \bar \psi$ in $C(\overline\Omega)$ for some $\bar \psi \in C(\overline{\Omega})$. 
%In addition, since $\{ \psi_{j_k} \}_{k=1}^\infty$ is bounded in $\cX(\Omega)$ due to $\| D \psi_j \|_\infty \leq 1$ and 
%the set $\set{ v \in \cX(\Omega) | \| D v \|_\infty \leq 1 }$ is closed and convex in $\cX(\Omega)$, 
%we may suppose $\psi_{j_k} \rightharpoonup \bar{\psi}$ weakly in $\cX(\Omega)$. 
%
%
%\red{Should be corrected}
For each compact set $K \subset \Omega$, we apply Lemma \ref{lem_simplecompact} with $\mathcal{G} := \{ \psi_{j} \}_j \cup \{ \psi \}$ to obtain $R>0$ such that $L^{\psi_{j}}_R (K) \Subset \Omega$ for all $j \geq 1$. The monotonicity formula \cite[Lemma 2.1]{bartniksimon} 
and the argument in \cite[Proof of Theorem 4.1]{bartniksimon} imply that there exists $\theta >0$ such that 
	\[
		\sup_{j \geq 1} \sup_{K} |D \psi_{j} | \leq 1 - \theta
	\]
and $\{ \psi_{j} \}$ is bounded in $W^{2,2}_{\rm loc} (\Omega)$. Hence, up to a subsequence, $D \psi_{j_k} \to D \psi$ a.e. $\Omega$. 
The dominated convergence theorem then guarantees $\|D\psi_{j_k} -D\psi\|_p \to 0$ for each $p \in [1,\infty)$, whence $\psi_{j_k} \to \psi$ strongly in $\cX(\Omega)$. 
Since $\psi$ is independent of the choices of subsequences, we obtain $\psi_j \to \psi$ strongly in $\cX(\Omega)$. 
%
%In particular, we see that $\{\psi_j\}$ is bounded in $W^{2,2}_{\rm loc} (\Omega)$, hence $D\psi_j \to D \psi$ pointwise on $\Omega$. Lebesgue theorem then guarantees $\|D\psi_j -D\psi\|_p \to 0$ for each $p \in [1,\infty)$, whence $\psi_j \to \psi$ in $\cX(\Omega)$. 
\end{proof}

We conclude this subsection with a simple observation, which however implies an important property of light segments.
\begin{proposition}\label{prop_nocross}
	Let $m \geq 2$, $\Omega \subset \R^m$ be any nonempty domain, 
	and let $u \in W^{1,\infty}(\Omega)$ with $|D u| \le 1$ almost everywhere. Then, light segments in the graph of $u$ cannot cross each other, that is, no point of $\Omega$ can lie in the interior of two distinct light segments.
\end{proposition}
\begin{proof}
	The conclusion directly follows from \cite[Lemma 3.5]{wang_notes}, according to which, if $\overline{xy} \subset \Omega$ is a light segment for $u$, then $u$ is differentiable at any interior point of $\overline{xy}$ with gradient being the unit vector along $\overline{xy}$ 
	pointing towards the increasing direction of $u$.
\end{proof}

\subsection{Convergence of minimizers}\label{sec:conv-min}

Our proof of the solvability of \eqref{borninfeld} depends on an approximation procedure, 
smoothing $\rho$ by convolution. Thus, it entails a convergence result for minimizers.

\begin{proposition}\label{1001}
Let $\{\rho_j\} \subset \cX(\Omega)^*$, and consider the following assumptions:
	\begin{enumerate}
	\item[{\rm (i)}] 
	$\Omega \subset \Rm$ is a bounded domain with $m \geq 2$ and 
	$\{\phi_j\} \subset \Spa(\partial \Omega) $ satisfies $\phi_j \to \phi \in \Spa (\partial \Omega)$ strongly in $C(\partial \Omega)$. 
	Assume that $\rho_j = \mu_j + f_j$, where $\mu_j \in \cM(\Omega)$, $f_j \in \cX(\Omega)^\ast$, and that 
		\begin{equation}\label{conv-muk-fk}
		\mu_j \rightharpoonup \mu \quad \text{weakly in } \, \cM(\Omega), 
		\qquad f_j \to f \quad \text{strongly in } \, \cX(\Omega)^\ast.
		\end{equation}
	\item[{\rm (ii)}] 
	$\Omega = \Rm$ with $m \geq 3$, $\rho_j = \mu_j + f_j$ where $\mu_j$ and $f_j$ satisfy \eqref{conv-muk-fk}. Assume also that, for each $\e>0$, there exists $R_\e > 0$ such that
	\begin{equation}\label{tight}
			\left| \mu_j \right| \left( \R^m \backslash B_{R_\e} \right) < \e \quad \text{for each $j \geq 1$}.
	\end{equation}
	\end{enumerate}
Under either {\rm (i)} or {\rm (ii)}, by setting $\rho \doteq \mu + f$, up to a subsequence, 
$u_{\rho_j} \to u_\rho$ strongly in $W^{1,q}(\Omega) \cap C(\ov{\Omega})$, respectively, for every $q \in [1,\infty)$ if $\Omega$ is a bounded domain, and for every $q \in [2^\ast,\infty)$ if $\Omega = \R^m$. 
Furthermore, $\| Du_{\rho_j} - Du_\rho \|_q \to 0$ for every $q \in [2,\infty)$ when $\Omega = \Rm$. 
In particular,
	\[
	u_{\rho_j} \to u_\rho \quad \text{in } \, \cX(\Omega), \qquad	\la \rho_j, u_{\rho_j} \ra \to \la \rho, u_\rho \ra \qquad \text{as } \, j \to \infty. 
	\]
%	\begin{equation}%\label{eq_bonita}
%	\la \rho_k, u_{\rho_k} \ra \to \la \rho, u_\rho \ra \qquad \text{as } \, k \to \infty. 
%	\end{equation}
\end{proposition}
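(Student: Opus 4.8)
The plan is to run the direct method with varying data: first establish uniform bounds and extract a weak limit point; then identify it as the unique minimizer $u_\rho$ by the standard $\liminf$/recovery-sequence dichotomy; finally upgrade to strong convergence of gradients using the uniform convexity of the Lagrangian density $F(p)\doteq 1-\sqrt{1-|p|^2}$ on $\ov{B_1(0)}$. For the bounds, note that $\{\rho_j\}$, being convergent in $\cX(\Omega)^*$, is bounded there. When $\Omega$ is bounded each $u_{\rho_j}$ is $1$-Lipschitz and, by Proposition \ref{teo_compaembe} applied to the compact family $\{\phi_j\}\cup\{\phi\}$, uniformly bounded, so $\{u_{\rho_j}\}$ is bounded in $\cX(\Omega)$ and precompact in $C(\ov\Omega)$. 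When $\Omega=\R^m$, testing $I_{\rho_j}$ against $0\in\cX_0(\R^m)$ gives $I_{\rho_j}(u_{\rho_j})\le 0$, the coercivity inequality \eqref{coercive} then bounds $\|u_{\rho_j}\|_{\cX}$, and Proposition \ref{prop-cX-emb} makes the $u_{\rho_j}$ equi-Lipschitz and uniformly bounded in $C_0(\R^m)$. In either case, along a subsequence, $u_{\rho_j}\rightharpoonup u_*$ weakly in $\cX(\Omega)$ and $u_{\rho_j}\to u_*$ locally uniformly (uniformly on $\ov\Omega$ when $\Omega$ is bounded), with $u_*\in\cX_\phi(\Omega)$ (resp.\ $\cX_0(\R^m)$), since that set is convex and closed, hence weakly closed, and the traces pass to the limit via $u_{\rho_j}|_{\partial\Omega}=\phi_j\to\phi$.

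Next I would pass to the limit in the coupling term, $\langle\rho_j,u_{\rho_j}\rangle\to\langle\rho,u_*\rangle$. With $\rho_j=\mu_j+f_j$ as in \eqref{conv-muk-fk}, one has $\langle f_j,u_{\rho_j}\rangle-\langle f,u_*\rangle=\langle f_j-f,u_{\rho_j}\rangle+\langle f,u_{\rho_j}-u_*\rangle\to 0$, since $f_j\to f$ strongly in $\cX(\Omega)^*$ while $\{u_{\rho_j}\}$ is $\cX$-bounded and weakly convergent. For the measure part, $\langle\mu_j,u_{\rho_j}-u_*\rangle$ splits over a ball $B_R$ into a term $\le\|\mu_j\|_{\cM}\sup_{B_R}|u_{\rho_j}-u_*|\to 0$ (fixed $R$, local uniform convergence) plus a tail $\le 2\big(\sup_j\|u_{\rho_j}\|_\infty\big)|\mu_j|(\R^m\setminus B_R)$ made uniformly small by the tightness assumption \eqref{tight} (no tail is needed when $\Omega$ is bounded); together with $\langle\mu_j-\mu,u_*\rangle\to 0$ the claim follows. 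Since the area-type functional $v\mapsto\int_\Omega F(Dv)\,\di x$ on the constraint set is convex and strongly continuous, hence weakly lower semicontinuous, we obtain $I_\rho(u_*)\le\liminf_j I_{\rho_j}(u_{\rho_j})$.

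To see that $u_*$ minimizes $I_\rho$, fix an arbitrary $\psi\in\cX_\phi(\Omega)$ (resp.\ $\cX_0(\R^m)$). When $\Omega=\R^m$ take $\psi_j\equiv\psi$; when $\Omega$ is bounded, Lemma \ref{lem_strongconv} provides a seed $\tilde\psi_j\in\cX_{\phi_j}(\Omega)$ with $\tilde\psi_j\to\tilde\psi$ strongly in $\cX(\Omega)$ for some $\tilde\psi\in\cX_\phi(\Omega)$, and Lemma \ref{prop_approx}(i) applied to $\tilde\psi_j$ and $\psi$ (with $\eps_j\downarrow0$ and $\|\phi_j-\phi\|_{C(\partial\Omega)}<\eps_j$) produces $\psi_j\doteq\psi^{\tilde\psi_j}_{\eps_j}\in\cX_{\phi_j}(\Omega)$ with $\psi_j\to\psi$ in $W^{1,q}(\Omega)\cap C(\ov\Omega)$ for every $q\in[1,\infty)$. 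In all cases dominated convergence gives $\int_\Omega F(D\psi_j)\,\di x\to\int_\Omega F(D\psi)\,\di x$ and the argument of the previous paragraph gives $\langle\rho_j,\psi_j\rangle\to\langle\rho,\psi\rangle$, so $I_{\rho_j}(\psi_j)\to I_\rho(\psi)$; from $I_{\rho_j}(u_{\rho_j})\le I_{\rho_j}(\psi_j)$ we conclude $I_\rho(u_*)\le I_\rho(\psi)$. As $\psi$ was arbitrary, $u_*=u_\rho$ by uniqueness of the minimizer, and since the subsequence was arbitrary the full sequence satisfies $u_{\rho_j}\rightharpoonup u_\rho$ weakly in $\cX$, $u_{\rho_j}\to u_\rho$ locally uniformly (in $C(\ov\Omega)$ when $\Omega$ is bounded) and $\langle\rho_j,u_{\rho_j}\rangle\to\langle\rho,u_\rho\rangle$; moreover, taking $\psi=u_\rho$ in the recovery step forces $I_{\rho_j}(u_{\rho_j})\to I_\rho(u_\rho)$, whence $\int_\Omega F(Du_{\rho_j})\,\di x\to\int_\Omega F(Du_\rho)\,\di x$.

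Finally, for strong convergence of gradients I would exploit that $\nabla^2 F(p)\ge\mathrm{Id}$ for $|p|<1$, which yields $F\big(\tfrac{p+q}{2}\big)\le\tfrac12 F(p)+\tfrac12 F(q)-\tfrac18|p-q|^2$ for $|p|,|q|\le 1$. Integrating this with $p=Du_{\rho_j}$, $q=Du_\rho$, and using that $\tfrac{u_{\rho_j}+u_\rho}{2}\rightharpoonup u_\rho$ weakly in $\cX$ forces $\liminf_j\int_\Omega F\big(\tfrac{Du_{\rho_j}+Du_\rho}{2}\big)\di x\ge\int_\Omega F(Du_\rho)\,\di x$, the convergence $\int_\Omega F(Du_{\rho_j})\,\di x\to\int_\Omega F(Du_\rho)\,\di x$ gives $\int_\Omega|Du_{\rho_j}-Du_\rho|^2\di x\to0$. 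Interpolating with the bound $|Du_{\rho_j}-Du_\rho|\le 2$ promotes this to $\|Du_{\rho_j}-Du_\rho\|_q\to0$ for all $q\in[2,\infty)$ (and for $q\in[1,2]$ on a bounded domain), and combining with the already established $C(\ov\Omega)$-convergence (bounded case) or with Sobolev's inequality (case $\R^m$) yields $u_{\rho_j}\to u_\rho$ in $W^{1,q}(\Omega)\cap C(\ov\Omega)$ in the stated ranges of $q$, in particular $u_{\rho_j}\to u_\rho$ in $\cX(\Omega)$; the convergence $\langle\rho_j,u_{\rho_j}\rangle\to\langle\rho,u_\rho\rangle$ has already been shown. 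The main obstacle is the recovery step in the bounded-domain case — manufacturing admissible competitors $\psi_j\in\cX_{\phi_j}(\Omega)$ converging strongly to an arbitrary $\psi\in\cX_\phi(\Omega)$ despite the rough boundary, the moving Dirichlet data $\phi_j$ and the pointwise constraint $|D\psi_j|\le1$, which is exactly what Lemmas \ref{lem_strongconv} and \ref{prop_approx} are designed for; a secondary subtlety is controlling $\langle\mu_j,u_{\rho_j}\rangle$ on $\R^m$, where escape of mass at infinity must be ruled out via \eqref{tight}.
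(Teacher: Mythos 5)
Your argument is correct and follows the same skeleton as the paper's proof: same a priori bounds (Proposition \ref{teo_compaembe}, respectively \eqref{coercive}), same treatment of the coupling term with the tightness condition \eqref{tight}, and the same device — Lemma \ref{lem_strongconv} combined with Lemma \ref{prop_approx} — to manufacture admissible competitors with the moving boundary data $\phi_j$. You deviate in two places. First, you run a full recovery-sequence argument for an \emph{arbitrary} competitor $\psi\in\cX_\phi(\Omega)$, whereas the paper builds the recovery sequence only for the single competitor $u_\rho$ and concludes $u=u_\rho$ from $I_\rho(u_\rho)\le I_\rho(u)\le\liminf_j I_{\rho_j}(u_j)\le\limsup_j I_{\rho_j}(\bar u_j)=I_\rho(u_\rho)$; your version is slightly longer but equivalent, and both hinge on exactly the same lemmas. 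Second, and more substantially, your passage from energy convergence to strong $L^2$ convergence of gradients uses the $1$-strong convexity of $p\mapsto 1-\sqrt{1-|p|^2}$ (the midpoint inequality with the $\tfrac18|p-q|^2$ gain, extended to $\ov{B_1(0)}$ by continuity) together with weak lower semicontinuity applied to the midpoints $\tfrac12(u_{\rho_j}+u_\rho)$; the paper instead expands the density in the power series \eqref{1}, shows $\|Du_{\rho_j}\|_{2k}\to\|Du_\rho\|_{2k}$ for each $k$ by a contradiction argument, and invokes uniform convexity (Radon--Riesz) of each $L^{2k}(\Omega)$. Your mechanism is more direct and gives the quantitative $L^2$ rate in one stroke, while the paper's term-by-term route avoids having to justify the strong-convexity inequality up to the boundary of the ball; the subsequent interpolation with $\|Du_{\rho_j}-Du_\rho\|_\infty\le 2$ and the upgrade to $W^{1,q}\cap C$ convergence are the same in both. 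All the delicate points (identification of the boundary value of the weak limit in the Bartnik--Simon sense, convergence $\la\mu_j,\psi_j\ra\to\la\mu,\psi\ra$ via uniform convergence plus bounded total variation, and the tail estimate on $\R^m$) are handled correctly.
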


\begin{proof}
For the ease of notation, we write $u_j$ instead of $u_{\rho_j}$. 
We first suppose that $\Omega$ is bounded. Notice that by $\phi \in \Spa(\partial \Omega)$, 
$\cX_\phi \neq \emptyset$ and the minimizer $u_\rho$ exists. On the other hand, due to Proposition \ref{teo_compaembe} and $u_{j} \in \cX_{\phi_j}(\Omega)$, 
$\{u_j\}$ is relatively compact in $C(\overline\Omega)$ and hence it is bounded in $W^{1,q}(\Omega)$ for any $q \in [1,\infty]$. 
Up to a subsequence, $u_{j} \rightharpoonup u$ weakly in $W^{1,q}(\Omega)$ for each fixed $q \in (1,\infty)$, and strongly in $C(\ov{\Omega})$, 
which yields $u \in \cX_\phi (\Omega) $.

	From \eqref{1} we get 
	\begin{equation}\label{ineq:vu_k}
		\begin{aligned}
			\int_{\Omega} \left( 1 - \sqrt{1 - |Du|^2} \right) \di x
			= 
			\sum_{k=1}^\infty b_k\|D u \|_{2k}^{2k} 
			&\leq \sum_{k=1}^\infty b_k \liminf_{j \to\infty} 
			\| Du_{j} \|_{2k}^{2k} 
			\\
			&\leq \lim_{n \to \infty} \liminf_{j \to \infty} \sum_{k=1}^n b_k \| Du_{j} \|_{2k}^{2k}
			\\
			&\leq \liminf_{j\to\infty} \int_{ \Omega } 
			\left( 1 - \sqrt{1 - |Du_{j}|^2 } \right) \rd x.
		\end{aligned}
	\end{equation}
Since $u_{j} \to u$ weakly in $\cX(\Omega)$ and strongly in $C(\ov{\Omega})$, our assumptions on $\{\mu_j\}$ and $\{ f_j\}$ give 
	\begin{equation}\label{conv-rhok-uk}
		\lim_{j \to \infty} \la \rho_j , u_{j} \ra = \lim_{j \to \infty} \left( \la \mu_j, u_j \ra + \la f_j , u_j \ra \right) = \la \mu, u \ra + \la f , u \ra = \la \rho , u \ra.
	\end{equation}
Hence, by \eqref{ineq:vu_k} and the minimality of $u_\rho$ and $u_j$, we obtain 
	\begin{equation}\label{ineq-urho-up1}
		\begin{aligned}
			I_\rho (u_\rho)
			\leq I_\rho(u)
			\leq \liminf_{j \to\infty}
			I_{\rho_{j}}(u_j).
%			\le 
%			\liminf_{j \to\infty}
%			I_{\rho_{j}}(\psi_j) \qquad \forall \, \psi_j \in \cX_{\phi_j}(\Omega). 			
%		\leq 
%			\liminf_{k\to\infty} I_{\rho_{k}}(u_\rho) = I_\rho(u_\rho).
		\end{aligned}
	\end{equation}

By Lemma \ref{lem_strongconv}, choose $\psi_j \in \cX_{\phi_j}(\Omega)$ and $\psi \in \cX_\phi(\Omega)$ such that $\psi_j \to \psi$ in $\cX(\Omega)$ as $j \to \infty$. Having fixed a sequence $\eps_j \to 0$ with $\eps_j > |\phi_j-\phi|$ on $\partial \Omega$, we consider the approximation $\bar u_j \doteq ( u_\rho )^{ \psi_j }_{\e_j} \in \cX_{\phi_j}(\Omega)$ of $u_\rho$ given by Lemma \ref{prop_approx}, 
which therefore satisfies $\bar u_j \to u_\rho$ in $\cX(\Omega)$. 
Weak convergence $\rho_j \rightharpoonup \rho$ and continuity of $I_\rho$ imply that $I_{\rho_j}(\bar u_j) \to I_\rho(u_\rho)$. 
%
% 
%Due to $u_\rho = \phi$ on $\partial \Omega$, the choices of $\{\e_j\}_j$ and $\| \bar u_j - u_\rho \|_\infty \leq \e_j$,  
%$\bar u_j \in \cX_{\phi_j}(\Omega)$ and $\bar u_j \to u_\rho$ strongly in $C(\ov{\Omega})$. 
%From 
%	\[
%		D \bar u_j (x) = \begin{dcases}
%				Dv_j(x) & \text{a.e. on $| u_\rho - v_j | < \e_j$}, \\
%				D u_\rho (x) & \text{a.e. on $| u_\rho - v_j | \geq \e_j$},
%			\end{dcases}
%	\]
%it follows that $\bar u_j \in \cX_{\phi_j}$. Next, set 
%	\[
%		E \doteq \Set{ x \in \Omega | \, |u_\rho(x) - v(x) | > 0 }. 
%	\]
%From $\e_j \to 0$, it is easily seen that 
%$D \bar u_j (x) \to Du_\rho(x)$ a.e. $x \in E$. 
%While, by $Du_\rho = D v$ a.e. on $ \Omega \setminus E$ and $Dv_j (x) \to Dv(x)$ a.e. $x \in \Omega$, we have 
%$D \bar u_j (x) \to Du_\rho(x)$ a.e. on $\Omega \setminus E$, and hence 
%$D\bar u_j \to Du_\rho$ a.e. $\Omega$. 
%Since $\| D \bar u_j \|_\infty \leq 1$, the dominated convergence theorem yields 
%	\[
%		\int_{\Omega} \left( 1 - \sqrt{1-|D \bar u_{j}|^2} \right) \di x
%		\to \int_{\Omega} \left( 1 - \sqrt{1 - |Du_\rho|^2 } \right) \di x.
%	\]
Hence, by the minimality of $u_j$ we obtain 
	\begin{equation}\label{ineq-urho-up2}
		\limsup_{j \to \infty} I_{\rho_j} (u_j) \leq \limsup_{j \to \infty} I_{\rho_j} (\bar u_j) = I_{\rho} (u_\rho). 
	\end{equation}
From \eqref{ineq-urho-up1}, we infer that 
$I_\rho (u_\rho) = I_\rho(u) = \lim_{j \to \infty} I_{\rho_j} (u_j)$, so $u = u_\rho$ by the uniqueness of minimizer.

	Finally, we shall show $u_j \to u$ strongly in $W^{1,q} (\Omega)$ for any $q \in [1,\infty)$. 
To this end, notice that $I_{\rho_j} (u_j) \to I_\rho(u_\rho)$ and \eqref{conv-rhok-uk} with $u = u_\rho$ imply 
	\[
	\int_{\Omega} \left( 1 - \sqrt{1-|Du_{j}|^2} \right) \di x
	\to \int_{\Omega} \left( 1 - \sqrt{1 - |Du_\rho|^2 } \right) \di x.
	\]
If there exists $k_0>0$ such that
	\[
	\e_0 \doteq \liminf_{j \to\infty} \| Du_{j} \|_{2k_0}^{2k_0} -
	\| Du_\rho \|_{ 2k_0 }^{2k_0} > 0,
	\]
then by \eqref{1} we can choose $h_0>k_0$ so large that
	\[
	\int_{\Omega} \left( 1 - \sqrt{1-|Du_\rho|^2} \right) \di x
	- \sum_{k=1}^{h_0} b_k \| Du_\rho \|_{2k}^{2k}
	< \frac{b_{k_0}\e_0}{2},
	\]
and therefore deduce the following contradiction:
	\[
	\begin{aligned}
	\int_{\Omega} \left( 1 - \sqrt{1-|D u_\rho|^2} \right)\di x
	&< \frac{b_{k_0}\e_0}{2} + \sum_{k=1}^{h_0} b_k \|Du_\rho \|_{2k}^{2k}
	\\
	&\leq \liminf_{j\to\infty} \sum_{k=1}^{h_0} b_k
	\| Du_{j} \|_{2k}^{2k} - \frac{b_{k_0} \e_0 }{2}
	\\
	&\leq \liminf_{j\to\infty} \int_{\Omega} \left( 1 - \sqrt{1-|Du_{j}|^2}\right) \di x
	- \frac{b_{k_0} \e_0 }{2}
	\\
	&= \int_{\Omega} \left( 1 - \sqrt{1-|Du_\rho|^2} \right)\di x - \frac{b_{k_0} \e_0 }{2}.
	\end{aligned}
	\]
Thus, $\| Du_{j} \|_{2k} \to \| Du_\rho \|_{2k}$ for each $k \geq 1$. 
The uniform convexity of $L^{2k} (\Omega)$ and $\| u_{j} - u_\rho \|_{\infty} \to 0$ imply that 
$Du_{j} \to Du_\rho$ in $L^{2k}(\Omega)$, hence $u_{j} \to u_\rho$ in $W^{1,2k}(\Omega)$ for any $k \geq 1$. 
By H\"older's inequality, $u_{j} \to u_\rho$ strongly in $W^{1,q} (\Omega)$ for each $q \in [1,\infty)$ 
and we complete the proof for the case $\Omega$ is a bounded domain.

	When $\Omega = \Rm$ with $m \geq 3$, first observe that by our assumptions $\{\rho_j\}$ is uniformly bounded in $\cX(\Omega)^*$. 
Hence, from $I_{\rho_j}(u_{j}) \le I_{\rho_j}(0) = 0$ and the coercivity estimate \eqref{coercive} for $v = u_{j}$, 
we deduce that $\{u_{j}\}$ is uniformly bounded in $\cX(\R^m)$. 
By Proposition \ref{prop-cX-emb} and $\| Du_{j} \|_\infty \leq 1$, 
$\{u_{j}\}$ is bounded in $W^{1,q}(\R^m)$ for each $q \in [2^\ast,\infty)$, hence in $L^\infty(\R^m)$. 
Up to a subsequence, $u_{j} \rightharpoonup u$ weakly in $W^{1,q}(\Rm)$ for each $q \in [2^\ast, \infty)$, $u_{j} \to u$ in $C_\loc(\Rm)$, and $u_{j} \rightharpoonup u$ weakly in $\cX(\R^m)$ by the reflexivity of $\cX(\R^m)$. Since each $u_{j}$ is $1$-Lipschitz, so is $u$ and $u \in \cX_0(\Rm)$. Coupling condition \eqref{tight} for $\{\mu_j\}$ with the convergence $u_{j} \to u$ in $C_\loc(\Rm)$ and the uniform boundedness of $\{u_{j}\}$, we deduce that $\langle \mu_j , u_{j} \rangle \to \la \mu,u  \ra$, hence \eqref{conv-rhok-uk} holds.  
Then, arguing as above with $\bar u_j \doteq u_\rho$, 
we may verify $u = u_{\rho}$ and $Du_{j} \to Du_\rho$ strongly in 
$L^q(\Rm)$ for each $q \in [2,\infty)$. 
Hence, $u_{j} \to u_\rho$ strongly in $W^{1,q} (\Rm)$ for every $q \in [2^\ast, \infty)$, in particular $u_j \to u_\rho$ in $\cX(\R^m)$. Concluding, $u_j \to u_\rho$ in $C(\R^m)$ follows from Proposition \ref{prop-cX-emb}. 
\end{proof}

	\begin{remark}\label{rem-use-spacelike}
		In the proof of Proposition \ref{1001} (i), condition $\phi_j, \phi \in \Spa (\partial \Omega)$ is only used to construct a good competitor $\bar u_j$ to obtain \eqref{ineq-urho-up2}. 
		When $\phi_j \equiv \phi $ for each $j$, we may simply choose $u_j = u_\rho$ and weaken the condition $\phi \in \Spa (\partial \Omega)$ to $\cX_{\phi} (\Omega) \neq \emptyset$. 
	\end{remark}

\subsection{Local integrability of $w$ and the Euler-Lagrange inequality}

In this subsection, assuming $\phi \in \Spa(\partial \Omega)$ if $\Omega$ is bounded, 
we show that the minimizer $u_\rho$ is not too degenerate and solves an Euler-Lagrange inequality. 
The argument in the proof is inspired by \cite[Proposition 2.6]{bpd}, in particular, case (ii) in the following is essentially contained therein. 

\begin{proposition}\label{lem_basicL2}
Let $\Omega \subset \R^m$ be a domain.	
	\begin{enumerate}
		\item[{\rm (i)}]
		Assume that $m \geq 2$ and that $\Omega$ is bounded. For any given compact subset $\FF \subset \Spa(\partial \Omega)$, and any  $\e, \mathcal{I}_1 > 0$, there exists a constant 
		$\mathcal{C} = \mathcal{C}(\Omega,\FF,m,p_1,\mathcal{I}_1, \diam_{\delta}(\Omega),\e)$ 
		such that if 
		\[
			\phi \in \FF, 
			\qquad \rho \in \cX(\Omega)^* \ \ \text{with } \, \|\rho\|_{\cX^*} \le \mathcal{I}_1, 
		\]
		then for each open subset $\Omega' \Subset \Omega$ with $\di_\delta(\Omega',\partial \Omega) \ge \eps$ the minimizer $u_\rho$ satisfies
		\begin{equation}\label{basic_L2bound}
			\int_{\Omega'} \frac{\di x}{\sqrt{1- |Du_\rho|^2}}  \le \mathcal{C}. 
		\end{equation}
		In particular, $|Du_\rho|< 1$ a.e. on $\Omega$. Moreover, for each $\psi \in \cX_\phi(\Omega)$, 
		\begin{align}
			& \frac{ Du_\rho \cdot (Du_\rho - D\psi)}{\sqrt{1-|Du_\rho|^2}} \in L^1(\Omega) \label{eq_L1test}, \\
			& \sqrt{ 1 - |D\psi|^2 } - \sqrt{1 - |Du_\rho|^2 }\le \frac{ Du_\rho \cdot (Du_\rho - D\psi)}
				{\sqrt{1-|Du_\rho|^2}} \qquad \text{a.e. on } \, \Omega \label{eq_reversedCS}
		\end{align}
		and 		
		\begin{equation}\label{eq:2}
			\begin{aligned}
				\int_{\Omega}\Big( \sqrt{ 1 - |D\psi|^2 } - \sqrt{1 - |Du_\rho|^2 }\Big) \rd x 
				\leq \int_{\Omega} \frac{ Du_\rho \cdot (Du_\rho - D\psi)}
				{\sqrt{1-|Du_\rho|^2}} \rd x \leq \la \rho , u_\rho - \psi \ra.
			\end{aligned}
		\end{equation}
		
		\item[{\rm (ii)}]
		Assume that $m \geq 3$ and that $\Omega = \Rm$. For any given $\mathcal{I}_1>0$ and $\Omega' \Subset \Rm$, 
		there exists a constant $\mathcal{C}' = \mathcal{C}' ( m, p_1,\mathcal{I}_1, |\Omega'|_\delta) > 0$ such that 
		if $\| \rho \|_{\cX^\ast} \leq \mathcal{I}_1$, then \eqref{basic_L2bound} holds with $\mathcal{C}'$. 
		Furthermore, \eqref{eq_L1test}--\eqref{eq:2} hold for each $\psi \in \cX_0(\Rm)$. 
	\end{enumerate}
\end{proposition}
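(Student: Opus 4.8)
The plan is to exploit the minimality of $u_\rho$ against the convex combinations $u_t \doteq (1-t)u_\rho + t\psi$, $t\in(0,1]$, together with a one-sided monotonicity argument on the (non-smooth) Lagrangian density; this single step should deliver $|Du_\rho|<1$ a.e., the integrability \eqref{eq_L1test} and the inequality \eqref{eq:2} at once. The quantitative bound \eqref{basic_L2bound} is then extracted from \eqref{eq:2} by inserting a suitable strictly spacelike competitor.

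\textbf{Step 1: the variational inequality.} Write $h(p)\doteq 1-\sqrt{1-|p|^2}$, which is convex and continuous on $\overline{B_1(0)}$. Given an admissible competitor $\psi$ (i.e. $\psi\in\cX_\phi(\Omega)$ for bounded $\Omega$, $\psi\in\cX_0(\R^m)$ for $\Omega=\R^m$), convexity of the constraint set gives $u_t\in\cX_\phi(\Omega)$ (resp. $\cX_0(\R^m)$), so $I_\rho(u_t)\ge I_\rho(u_\rho)$; dividing by $t$,
\[
\int_\Omega Q_t\,\di x \ \ge\ \langle\rho,\psi-u_\rho\rangle, \qquad Q_t(x)\doteq\frac{h(Du_t(x))-h(Du_\rho(x))}{t}, \qquad t\in(0,1].
\]
For a.e.\ $x$, along the segment joining $Du_\rho(x)$ to $D\psi(x)$ inside $\overline{B_1(0)}$ the quantity $Q_t(x)$ is non-decreasing in $t$ and bounded above by $Q_1(x)=h(D\psi(x))-h(Du_\rho(x))$; as $t\downarrow0$ it decreases to the right directional derivative of $h$ at $Du_\rho(x)$, which equals $\frac{Du_\rho\cdot(D\psi-Du_\rho)}{\sqrt{1-|Du_\rho|^2}}$ if $|Du_\rho(x)|<1$, and equals $-\infty$ if $|Du_\rho(x)|=1$ and $D\psi(x)\ne Du_\rho(x)$ (the crucial point: $h$ has infinite inward slope on $\partial B_1(0)$). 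The dominating function $Q_1$ is integrable: $|Q_1|\le 1$ if $\Omega$ is bounded, while if $\Omega=\R^m$ then $|Q_1|\le h(D\psi)+h(Du_\rho)\le|D\psi|^2+|Du_\rho|^2\in L^1(\R^m)$ since $D\psi,Du_\rho\in L^2(\R^m)$. Applying monotone convergence to $Q_1-Q_t\ge 0$,
\[
\int_\Omega \frac{Du_\rho\cdot(D\psi-Du_\rho)}{\sqrt{1-|Du_\rho|^2}}\,\di x \ =\ \lim_{t\downarrow0}\int_\Omega Q_t\,\di x \ \ge\ \langle\rho,\psi-u_\rho\rangle \ >\ -\infty,
\]
the left-hand side being a priori in $[-\infty,\int_\Omega Q_1]$. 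Choosing $\psi$ strictly spacelike — $\bar\phi\in\cX_\phi(\Omega)$ from Theorem \ref{teo_bartniksimon_2}(iii) with $\rho=0$ when $\Omega$ is bounded, and $\psi\equiv0$ when $\Omega=\R^m$ — forces $\{|Du_\rho|=1\}$ to be null, so $|Du_\rho|<1$ a.e. Then for every admissible $\psi$ we have $Q_0^+\le Q_1^+\in L^1$ and $\int_\Omega Q_0>-\infty$, whence $Q_0\in L^1(\Omega)$; since the integrand in \eqref{eq_L1test} equals $-Q_0$, this gives \eqref{eq_L1test}, and the displayed chain is the second inequality in \eqref{eq:2}. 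Finally, \eqref{eq_reversedCS} is the subgradient inequality $h(D\psi)\ge h(Du_\rho)+\frac{Du_\rho}{\sqrt{1-|Du_\rho|^2}}\cdot(D\psi-Du_\rho)$ at the interior point $Du_\rho$, valid a.e.; integrating it (both sides lying in $L^1$, by the above and by the same domination used for $Q_1$) yields the first inequality in \eqref{eq:2}.

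\textbf{Step 2: the quantitative estimate.} For $\Omega=\R^m$ we take $\psi\equiv0$ in \eqref{eq:2}: then $\int_{\R^m}\frac{|Du_\rho|^2}{\sqrt{1-|Du_\rho|^2}}\,\di x\le\langle\rho,u_\rho\rangle\le\|\rho\|_{\cX^*}\|u_\rho\|_{\cX}$, and the coercivity estimate \eqref{coercive} together with $I_\rho(u_\rho)\le I_\rho(0)=0$ bounds $\|u_\rho\|_{\cX}$ in terms of $m,p_1,\mathcal{I}_1$; since $w_\rho=w_\rho|Du_\rho|^2+\sqrt{1-|Du_\rho|^2}\le w_\rho|Du_\rho|^2+1$, \eqref{basic_L2bound} follows on the $\Omega'$ of the statement with a constant $\mathcal{C}'(m,p_1,\mathcal{I}_1,|\Omega'|_\delta)$. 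For bounded $\Omega$ we take $\psi=\bar\phi$, the solution of \eqref{borninfeld} with $\rho=0$ and boundary value $\phi$: by compactness of $\FF$ and the Bartnik--Simon monotonicity formula (argued as in the proof of Lemma \ref{lem_strongconv}, via Lemma \ref{lem_simplecompact}) there is $c=c(\Omega,\FF,\e)<1$ with $\|D\bar\phi\|_{L^\infty(\Omega')}\le c$ for all $\phi\in\FF$. On $\Omega'$, splitting according to whether $|Du_\rho|^2\ge\frac{1+c}{2}$ or not, one gets $\frac{Du_\rho\cdot(Du_\rho-D\bar\phi)}{\sqrt{1-|Du_\rho|^2}}\ge c'w_\rho-c''$ with $c',c''$ depending only on $c$; on $\Omega\setminus\Omega'$ the same integrand is $\ge\sqrt{1-|D\bar\phi|^2}-\sqrt{1-|Du_\rho|^2}\ge-1$ by \eqref{eq_reversedCS}. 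Substituting into \eqref{eq:2} and bounding $|\langle\rho,u_\rho-\bar\phi\rangle|\le\|\rho\|_{\cX^*}(\|u_\rho\|_\cX+\|\bar\phi\|_\cX)$, with $\|u_\rho\|_\cX$ and $\|\bar\phi\|_\cX$ controlled via \eqref{eq_Linftybound} and $\|Dv\|_{p_1}\le|\Omega|^{1/p_1}$ for $v\in\cX_\phi(\Omega)$, we obtain $c'\int_{\Omega'}w_\rho\le\mathcal{C}$, i.e.\ \eqref{basic_L2bound}.

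\textbf{Main difficulty.} The heart of the matter is Step 1: passing to the limit in the difference quotients of the degenerate density and, in particular, using that the one-sided slope of $h$ is $-\infty$ on $\partial B_1(0)$. This is precisely what simultaneously rules out $|Du_\rho|=1$ on a set of positive measure and controls $\frac{Du_\rho\cdot(Du_\rho-D\psi)}{\sqrt{1-|Du_\rho|^2}}$ in $L^1$ near possible light segments, where $w_\rho$ may blow up. A secondary technical point, specific to bounded $\Omega$, is the uniform-in-$\FF$ strict gradient bound $c<1$ for the maximal extensions $\bar\phi$, which rests on the monotonicity formula and the compactness machinery already set up in Lemmas \ref{lem_simplecompact}--\ref{lem_strongconv}.
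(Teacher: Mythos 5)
Your proof is correct, and while it shares the paper's two main ingredients (first variation along the convex combinations $u_t=(1-t)u_\rho+t\psi$, and the uniformly strictly spacelike extension $\bar\phi$ obtained from Theorem \ref{teo_bartniksimon_2} together with compactness of $\FF$, Lemma \ref{lem_simplecompact} and the Bartnik--Simon monotonicity formula), the logical order and the mechanism in your Step 1 genuinely differ from the paper's. The paper first establishes the quantitative bound \eqref{basic_L2bound} -- and only thereby $|Du_\rho|<1$ a.e. -- by inserting $\psi=\bar\phi$, using the algebraic identity \eqref{eq_useful_pi}, splitting $\Omega'$ into $K=\{|Du_\rho|>1-\theta\}$ and its complement so that the numerator stays bounded below by $4\theta$ for $4t<\theta^2$, and then letting $t\to0$ with Fatou; the general inequalities \eqref{eq_reversedCS}--\eqref{eq:2} are derived afterwards, using $|Du_\rho|<1$ a.e. You instead exploit the monotonicity in $t$ of the difference quotients of the convex density $h(p)=1-\sqrt{1-|p|^2}$ and its infinite inward slope on $\partial B_1(0)$: one application of monotone convergence simultaneously yields $|Du_\rho|<1$ a.e. (testing with a strictly spacelike competitor), the integrability \eqref{eq_L1test} and the variational inequality \eqref{eq:2} for arbitrary $\psi$, after which \eqref{basic_L2bound} drops out by inserting $\bar\phi$ (your threshold splitting of $\Omega'$ at $|Du_\rho|^2\ge\frac{1+c}{2}$ plays the role of the paper's set $K$, but now with $t$ already sent to $0$). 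Your treatment of case (ii) is also lighter than the paper's: taking $\psi\equiv0$ directly in the already-proved \eqref{eq:2}, combined with \eqref{coercive} and $w_\rho\le w_\rho|Du_\rho|^2+1$, replaces the paper's repetition of the bounded-domain computation with $\bar\phi=0$, $\theta=1/8$. What the paper's route buys is that all estimates are performed at fixed $t$ with explicitly controlled error terms before any limit is taken; what yours buys is a cleaner, more structural derivation of the qualitative statements (the degenerate slope of the Lagrangian doing the work of excluding $\{|Du_\rho|=1\}$) and a shorter path to \eqref{eq_L1test}. One small point worth making explicit in a final write-up: the limit $\int_\Omega Q_t\,\di x\to\int_\Omega Q_0\,\di x$ should be phrased via $\int(Q_1-Q_t)\uparrow\int(Q_1-Q_0)\in[0,\infty]$, so that the lower bound $\langle\rho,\psi-u_\rho\rangle$ forces $Q_1-Q_0\in L^1$ and hence $Q_0\in L^1$; your sketch contains this, but it is the one place where the argument could look circular if stated loosely.
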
	

\begin{remark}
Notice that choosing $\Omega = \R^m$ and $\psi = 0$ in \eqref{eq_L1test} we infer the integrability condition in \eqref{eq_ham_Rm} mentioned in the Introduction.
\end{remark}

\begin{proof}[Proof of Proposition \ref{lem_basicL2}]
(i) 
We first prove \eqref{basic_L2bound}. Fix $\Omega' \Subset \Omega$ with $\rd_\delta (\Omega', \partial \Omega) \geq \e$. 
Given $\psi \in \cX_\phi(\Omega)$, observe that $u_t \doteq (1-t ) u_\rho + t  \psi \in \cX_\phi(\Omega)$ for every $t  \in (0,1]$. 
Thus, $I_\rho (u_\rho) \leq I_\rho (u_t)$, and  rearranging we get
	\begin{equation}\label{eq:4-0}
		\frac{1}{t} \int_{\Omega} 
		\Big( \sqrt{ 1 - |Du_t|^2} 
		- \sqrt{1 - |Du_\rho|^2 }\Big) \rd x 
		\leq \la \rho , u_\rho - \psi \ra
		\quad \forall \, t  \in (0,1].
	\end{equation}

	Next, the concavity of the map $p \mapsto \sqrt{1-|p|^2}$ on $\ov{B_1(0)}$ implies that
	\[
\sqrt{1 - |Du_t|^2} \geq (1-t ) \sqrt{1 - \left|Du_\rho\right|^2} + t \sqrt{1 - |D \psi|^2} \qquad 
		\text{a.e. on $\Omega$}, \ \forall \ t \in (0,1],
	\]
which yields 
	\begin{equation}\label{eq_concavity}
\disp \sqrt{1-|D\psi|^2} - \sqrt{1-|Du_\rho|^2} \le \disp \frac{1}{t} \Big\{ \sqrt{1-|Du_t|^2} - \sqrt{1-|Du_\rho|^2} \Big\} \qquad \text{a.e. on } \, \Omega. 
	\end{equation}
Let $\mathscr{G} \subset \cX(\Omega)$ be the set of minimizers of $I_0$ (i.e. with $\rho = 0$) whose boundary value lies in $\FF$. 
For given $\phi \in \FF$ we denote by $\bar \phi \in \mathscr{G}$ the corresponding minimizer. 
The compactness of $\FF$ and Propositions \ref{teo_compaembe} and \ref{1001} guarantee that $\mathscr{G}$ is compact in $C(\overline\Omega)$. 
By Theorem \ref{teo_bartniksimon_2}, every $ u \in \mathscr{G}$ does not have light segments in $\Omega$, 
thus applying the first part of Lemma \ref{lem_simplecompact}  for $\Omega_\e \Subset \Omega_{\e/2}$ 
we obtain $R = R(\Omega,\FF,\e) > 0$ such that 
$L^{u}_{R} ( \Omega_\e ) \Subset \Omega_{\e/2}$ for each $u \in \mathscr{G}$. 
From the monotonicity formula \cite[Lemma 2.1]{bartniksimon}, 
%\tcr{and the argument in \cite[Proof of Theorem 4.1]{bartniksimon}}, 
we infer the existence of $\theta=\theta(\Omega, \FF, \e)$ such that 
\begin{equation}\label{eq:3}
	\sup_{\Omega'}|D\bar \phi| \le 1- 4\theta.
	\end{equation}
We take $\psi = \bar \phi$, and note that on the set of full measure $V \subset \Omega'$ of points where $u_\rho$ is differentiable it holds $|Du_t|< 1$ for every $t \in (0,1]$. We set 
	\[
		K \doteq \Big\{ x \in \Omega  \ :\  1-\theta < |Du_\rho(x)| \Big\},
	\]
and split the domain of integration $\Omega$ in \eqref{eq:4-0} into the sets $\Omega \setminus \Omega'$, $V \cap K$ and $V \cap K^c$. 
We use \eqref{eq_concavity} on $\Omega \setminus \Omega'$ and the identity 
	\begin{equation}\label{eq_useful_pi}
	\begin{array}{l}
	\disp \frac{1}{t} \Big\{ \sqrt{1-|Du_t|^2} - \sqrt{1-|Du_\rho|^2} \Big\} \\[0.5cm]
	\qquad \qquad \disp = \frac{ 2 Du_\rho \cdot (Du_\rho -D \psi) - t |Du_\rho - D\psi|^2 }
			{\sqrt{1 - |Du_t|^2 }  + \sqrt{1 - |Du_\rho|^2} } \qquad \text{a.e. on } \, \Omega \cap \big\{ |D\psi| + |Du_\rho| < 2 \big\}
	\end{array}
	\end{equation}
to deduce that 
	\begin{equation}\label{eq:8}
		\begin{aligned}
			\la \rho , u_\rho - \bar\phi \ra 
			& \geq \int_{\Omega \setminus \Omega'} 
			\Big(\sqrt{ 1 - |D \bar\phi|^2 } - \sqrt{ 1 - | D u_\rho|^2 } \Big)  \rd x 
			\\
			& \qquad 
			+ \int_{V \cap K} 
			\frac{ 2Du_\rho \cdot (Du_\rho - D\bar \phi) - t |Du_\rho - D \bar\phi|^2}
			{ \sqrt{ 1 - |Du_t|^2} + \sqrt{ 1 - |Du_\rho|^2 } } \rd x
			\\
			& \qquad 
			+ \int_{V \cap K^c }  
			\frac{ 2Du_\rho \cdot (Du_\rho - D\bar \phi) - t |Du_\rho - D \bar\phi|^2}
			{ \sqrt{ 1 - |Du_t|^2} + \sqrt{ 1 - |Du_\rho|^2 } } \rd x.
		\end{aligned}
	\end{equation}

	Recalling \eqref{eq:3}, we restrict to $t$ small enough so that $4t < \theta^2$. 
By the definition of $K$, the next inequality holds on $\Omega' \cap K$:
	\begin{equation}\label{eq:9}
			2 Du_\rho \cdot (Du_\rho - D\bar \phi) - t|Du - D\bar \phi|^2 
			\ge 
			2 \left[ (1-\theta)^2 - (1-4\theta) \right] - 4t > 4\theta > 0.
	\end{equation}
Remark also that the last term in the right-hand side of \eqref{eq:8} is bounded uniformly with respect to $t  \in (0,1)$.  
Thus, letting $t \to 0$ in \eqref{eq:8} and using \eqref{eq:9}, Fatou's lemma and the dominated convergence theorem, we infer
	\begin{equation}\label{eq:10}
		\begin{aligned}
			\la \rho , u_\rho - \bar\phi \ra 
			&\geq  
			\int_{\Omega \setminus \Omega'} 
			\Big(\sqrt{ 1 - |D \bar\phi|^2 } - \sqrt{ 1 - | D u_\rho|^2 } \Big)  \rd x 
			\\
			&\qquad 
			+ \int_{V \cap K}  
			\frac{2\theta}
			{\sqrt{1-|Du_\rho|^2}} \rd x  + \int_{V \cap K^c } 
			\frac{ Du_\rho \cdot (Du_\rho - D\bar \phi) }
			{\sqrt{1-|Du_\rho|^2}} \rd x.
		\end{aligned}
	\end{equation}
From
	\begin{equation}\label{eq_fuoriomegap}
		\left| \int_{\Omega \setminus \Omega'} \sqrt{ 1 - |D \bar\phi|^2 } - \sqrt{ 1 - | D u_\rho|^2 } \  \rd x  \right| 
		\leq |\Omega \setminus \Omega'|_\delta
	\end{equation}
and the following straightforward estimate on $\Omega' \cap K^c$: 
	\begin{equation*}\label{eq:11}
		\int_{\Omega' \cap K^c} 
		\left|\frac{Du_\rho \cdot (Du_\rho - D\bar \phi)}{\sqrt{1-|Du_\rho|^2}} \right| \rd x
		\leq \int_{\Omega' \cap K^c } \frac{2 \di x}{\sqrt{ 2 \theta - \theta^2 }} 
		\le \frac{2|\Omega'|_\delta }{\sqrt{2\theta - \theta^2}},
 	\end{equation*}
it follows from \eqref{eq:10} and $|\Omega' \setminus V|=0$ that 
	\[
		\int_{\Omega' \cap K} \frac{2\theta}{\sqrt{1 - |Du_\rho|^2}} \rd x 
		\leq |\Omega \setminus \Omega'|_\delta + \la \rho , u_\rho - \bar\phi \ra + \frac{2  |\Omega'|_\delta }{\sqrt{2\theta - \theta^2}}. 
	\]
Therefore, 
	\begin{equation}\label{eq_final_ine}
		\begin{aligned}
			\int_{\Omega'} \frac{\rd x}{\sqrt{1-|Du_\rho|^2}} 
			&=
			\int_{\Omega' \cap K} \frac{ \rd x}{\sqrt{1-|Du_\rho|^2}}
			+ \int_{\Omega' \cap K^c} \frac{ \rd x}{\sqrt{1-|Du_\rho|^2}} 
			\\
			&\leq
			\frac{1}{2\theta} 
			\left( |\Omega \setminus \Omega'|_\delta + \|\rho\|_{\cX^*} \| u_\rho - \bar{\phi}\|_{\cX} + \frac{2 |\Omega'|_\delta}{\sqrt{2\theta -\theta^2}}
			\right) + \frac{|\Omega'|_\delta}{\sqrt{2\theta -\theta^2}}.
		\end{aligned}
	\end{equation}
For $\psi \in \cX_{\phi} (\Omega)$, \eqref{eq_Linftybound} and simple estimates for the $W^{1,p_1}$ norm give 
%	\[
%		\| D\psi \|_{p_1} \leq |\Omega|_\delta^{\frac{1}{p_1}}, \quad 
%		\| \psi \|_{p_1} \leq |\Omega|_\delta^{\frac{1}{p_1}} \| \psi \|_\infty 
%		\leq | \Omega |_\delta^{\frac{1}{p_1}} 
%		\left( \| \phi \|_{C(\partial \Omega)} + \diam_\delta(\Omega) \right),
%	\]
%we deduce from \eqref{basic_L2bound} the inequality
	\[
	\|u_\rho - \bar \phi\|_{\cX} 
	\le 4\left( \sup_{\phi \in \FF} \|\phi\|_{C(\partial \Omega)} + \diam_\delta(\Omega) + |\Omega|_\delta^{\frac{1}{p_1}} \right).
	\]
Hence, \eqref{basic_L2bound} holds by \eqref{eq_final_ine}. 
Notice that, by \eqref{basic_L2bound} and the arbitrariness of $\Omega'$, $|Du_\rho| < 1 $ a.e. on $\Omega$.

	Next, we shall prove \eqref{eq_L1test}--\eqref{eq:2}. 
Let $\psi \in \cX_\phi(\Omega)$ and consider as above $u_t \doteq (1-t) u_\rho + t \psi \in \cX_\phi(\Omega)$ for $t \in (0,1)$. 
By combining $|Du_\rho|<1$ a.e. $\Omega$, \eqref{eq_useful_pi} and \eqref{eq_concavity}, for each $t \in (0,1)$,
	\begin{equation}\label{eq:est-below}
		\disp \sqrt{1-|D\psi|^2} - \sqrt{1-|Du_\rho|^2} \le \frac{ 2 Du_\rho \cdot (Du_\rho -D \psi) - t |Du_\rho - D\psi|^2 }
		{\sqrt{1 - |Du_t|^2 }  + \sqrt{1 - |Du_\rho|^2} } \qquad \text{a.e. on } \, \Omega.
	\end{equation}
Thus letting $t \to 0$ on the set $\{|Du_\rho|< 1\}$, we deduce \eqref{eq_reversedCS}.

	On the other hand, from \eqref{eq:4-0} and \eqref{eq_useful_pi}, it follows that 
	\[
		\int_\Omega \frac{ 2 Du_\rho \cdot (Du_\rho -D \psi) - t |Du_\rho - D\psi|^2 }
		{\sqrt{1 - |Du_t|^2 }  + \sqrt{1 - |Du_\rho|^2} } \di x \leq \la \rho , u_\rho - \psi \ra.
	\]
Using a variant of Fatou's lemma as $t \to 0$ and \eqref{eq:est-below}, we therefore deduce 
	\[
\int_{\Omega} \Big(\sqrt{ 1 - |D\psi|^2 } - \sqrt{1 - |Du_\rho|^2 } \Big) \rd x 
			\leq \int_{\Omega} \frac{ Du_\rho \cdot (Du_\rho - D\psi)}
			{\sqrt{1-|Du_\rho|^2}} \rd x \leq \la \rho , u_\rho - \psi \ra,
	\]
which proves \eqref{eq:2}. Taking \eqref{eq_reversedCS} into account, both the negative and the positive part of 
	\[
	\frac{Du_\rho \cdot (Du_\rho- D\psi)}{\sqrt{1-|Du_\rho|^2}}
	\]
are integrable, and \eqref{eq_L1test} holds.

	(ii) We first observe that \eqref{coercive}, $I_\rho(u_\rho) \le I_\rho(0) = 0$ and $\|\rho\|_{\cX^*} \le \mathcal{I}_1$ imply that 
$\|u_\rho\|_\cX \le C_1(m, \mathcal{I}_1)$. 
One can therefore perform the same computations in \eqref{eq:4-0}--\eqref{eq:10} with 
$\Omega = \R^m$, $\bar \phi = 0$, $\theta = 1/8$ and replacing \eqref{eq_fuoriomegap} with
	\[
	0 \le \int_{\R^m\backslash \Omega'} \Big( 1- \sqrt{1-|Du_\rho|^2} \Big) \di x \le I_\rho(u_\rho) + \la \rho, u_\rho \ra \le \mathcal{I}_1 C_1.
	\] 
Inequality \eqref{eq_final_ine} becomes 
	\[
	\int_{\Omega'} \frac{\rd x}{\sqrt{1-|Du_\rho|^2}} \leq
			4\left( 2\mathcal{I}_1 C_1 + C_2|\Omega'|_\delta\right) + C_2|\Omega'|_\delta,
	\]
for some absolute constant $C_2$. The rest of the proof follows verbatim, taking into account that  
$1 - \sqrt{1-|p|^2} \leq |p|^2$ on $\ov{B_1(0)}$ and thus 
$\sqrt{1-|D\psi|^2} - \sqrt{1-|Du_\rho|^2} = (1-\sqrt{1-|Du_\rho|^2} ) - ( 1 - \sqrt{1-|D\psi|^2} ) \in L^1(\Rm)$. This completes the proof.
\end{proof}

\begin{remark}\label{rem_reversedCS}
Inequality \eqref{eq_reversedCS} has a nice geometric interpretation, holding more generally for pairs of Lipschitz functions $u,\psi$ with $|Du|< 1$, $|D\psi| \le 1$ a.e. on $\Omega$. Indeed, if we consider the normal vectors ${\bf n}_u' \doteq  Du + \partial_0$, ${\bf n}_\psi' = D\psi + \partial_0$ (respectively, timelike and causal a.e. on $\Omega$), the reversed Cauchy-Schwarz inequality $- {\bf n}_u' \cdot {\bf n}_\psi' \ge |{\bf n}_u'|_{\mathbb{L}}|{\bf n}_\psi'|_{\mathbb{L}}$ is equivalent to  
	\[
		\frac{{\bf n}_u'}{|{\bf n}_u'|_{\mathbb{L}}} \cdot ({\bf n}_u'-{\bf n}_\psi') \ge |{\bf n}_\psi'|_{\mathbb{L}}- |{\bf n}_u'|_{\mathbb{L}},
	\]
%	\begin{equation}\label{eq_enhancedCS}
%	\frac{{\bf n}_u'}{|{\bf n}_u'|_{\mathbb{L}}} \cdot ({\bf n}_u'-{\bf n}_\psi') \ge |{\bf n}_\psi'|_{\mathbb{L}}- |{\bf n}_u'|_{\mathbb{L}},
%	\end{equation}
that can be rewritten as \eqref{eq_reversedCS} with $u$ replacing $u_\rho$. 
\end{remark}

\subsection{Global minimizers VS solutions to $(\mathcal{BI})$}

In this subsection, we describe in detail the interplay between solutions of \eqref{borninfeld} and global minimizers of $I_\rho$, stating some useful equivalent characterizations of the solvability of \eqref{borninfeld} that, perhaps surprisingly, hold without assuming any regularity of $\partial \Omega$.

\begin{definition}
We say that $u \in \cX_\phi(\Omega)$ is a \emph{local minimizer for $I_\rho$} 
if $I_\rho(u) \le I_\rho(\psi)$ for every $\psi \in \cX_\phi(\Omega)$ with $\{u \neq \psi\} \Subset \Omega$. 
Similarly, for $\Omega = \R^m$, we say that $u \in \cX_0(\R^m)$ is a local minimizer for $I_\rho$ 
if $I_\rho(u) \le I_\rho(\psi)$ for every $\psi \in \cX_0(\R^m)$ with $\{u \neq \psi\} \Subset \R^m$.
\end{definition}	

%We are ready to state the following

\begin{proposition}[Minimizers VS solutions to \eqref{borninfeld}]\label{prop_localglobal}
Let $m \geq 2$, $\Omega$ be a bounded domain, $\phi \in \mathcal{S}(\partial \Omega)$ and $u$ a local minimizer.  
Then, $u = u_\rho$. Furthermore, the following are equivalent:
\begin{itemize}
	\item[{\rm (i)}] $u$ is a weak solution to \eqref{borninfeld}, that is, 
		\begin{equation}\label{equi_BI_1}
		\frac{1}{\sqrt{1-|Du|^2}} \in L^1_\loc(\Omega), \qquad \int_\Omega \frac{D u \cdot D\eta}{\sqrt{1-|Du|^2}} \di x =  \langle \rho, \eta \rangle \quad \forall \, \eta \in \lip_c(\Omega);
		\end{equation}		
	\item[{\rm (ii)}] $u = u_\rho$ and 
		\[
		\int_\Omega \frac{D u \cdot (Du - D \psi)}{\sqrt{1-|Du|^2}} \di x = \langle \rho, u- \psi \rangle \qquad \forall \, \psi \in \cX_\phi(\Omega) \ \text{ strictly spacelike};		
		\]
%		\begin{equation}\label{equi_BI_2}
%		\int_\Omega \frac{D u \cdot (Du - D \psi)}{\sqrt{1-|Du|^2}} \di x = \langle \rho, u- \psi \rangle \qquad \forall \, \psi \in \cX_\phi(\Omega) \ \text{ strictly spacelike};		
%		\end{equation}				
	\item[{\rm (iii)}] $u = u_\rho$ and 
		\begin{equation}\label{equi_BI_3}
		\int_\Omega \frac{D u \cdot (Du - D \psi)}{\sqrt{1-|Du|^2}} \di x = \langle \rho, u- \psi \rangle \qquad \forall \, \psi \in \cX_\phi(\Omega) \ \text{ with } \, \{\psi \neq u \} \Subset \Omega;		
		\end{equation}				
	\item[{\rm (iv)}] $u = u_\rho$ and
		\[
			\int_\Omega \frac{D u \cdot (Du - D \psi)}{\sqrt{1-|Du|^2}} \di x = \langle \rho, u- \psi \rangle \qquad \forall \, \psi \in \cX_\phi(\Omega).
		\]
%		\begin{equation}\label{equi_BI_4}
%		\int_\Omega \frac{D u \cdot (Du - D \psi)}{\sqrt{1-|Du|^2}} \di x = \langle \rho, u- \psi \rangle \qquad \forall \, \psi \in \cX_\phi(\Omega).
%		\end{equation}
\end{itemize}
In particular, if $u$ is a classical solution to \eqref{borninfeld}, then $u$ satisfies any of {\rm (i)}--{\rm (iv)}.

	The same assertions hold true for $m \geq 3$ and $\Omega = \Rm$. 
\end{proposition}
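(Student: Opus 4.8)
The plan is to establish the chain of implications (iv) $\Rightarrow$ (iii) $\Rightarrow$ (ii) $\Rightarrow$ (i) $\Rightarrow$ (iv), after first showing that any local minimizer coincides with the unique global minimizer $u_\rho$. For the latter, given a local minimizer $u$, I would use the convexity of $I_\rho$ on $\cX_\phi(\Omega)$: for $\psi=u_\rho$ and $t\in(0,1)$, the function $u_t\doteq(1-t)u+tu_\rho$ agrees with $u$ outside the set $\{u\ne u_\rho\}$, but this set need not be compactly contained in $\Omega$, so one truncates. Concretely, I would apply Lemma \ref{prop_approx} to build competitors $\bar u_t\in\cX_\phi(\Omega)$ with $\bar u_t=u$ off a compact subset of $\Omega$ and $\bar u_t\to u_t$ appropriately, use local minimality $I_\rho(u)\le I_\rho(\bar u_t)$, pass to the limit to get $I_\rho(u)\le I_\rho(u_t)\le(1-t)I_\rho(u)+tI_\rho(u_\rho)$, hence $I_\rho(u)\le I_\rho(u_\rho)$, and conclude $u=u_\rho$ by strict convexity. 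The implications (iv) $\Rightarrow$ (iii) $\Rightarrow$ (ii) are immediate by restricting the class of test functions $\psi$. For (ii) $\Rightarrow$ (i): first note $w=(1-|Du|^2)^{-1/2}\in L^1_{\loc}(\Omega)$ holds by Proposition \ref{lem_basicL2}; then for $\eta\in\lip_c(\Omega)$ and $s$ small, $\psi_s\doteq u-s\eta$ is strictly spacelike (its gradient stays bounded away from $1$ on $\supp\eta$ for $|s|$ small, and it equals $u$ elsewhere) — wait, $u$ itself need not be strictly spacelike, so instead I would combine $u=u_\rho$ with (ii) applied to strictly spacelike approximations. A cleaner route: use the variational inequality \eqref{eq:2} from Proposition \ref{lem_basicL2}, which already gives $\int_\Omega \frac{Du\cdot(Du-D\psi)}{\sqrt{1-|Du|^2}}\le\langle\rho,u-\psi\rangle$ for all $\psi\in\cX_\phi(\Omega)$; the content of (i)--(iv) is precisely the reverse inequality, so the equivalences reduce to showing that equality in the variational inequality for the stated subclass of $\psi$'s propagates to the full class.

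The core technical point, and the one I expect to be the main obstacle, is (i) $\Rightarrow$ (iv): upgrading the Euler--Lagrange identity from compactly supported Lipschitz test functions $\eta\in\lip_c(\Omega)$ to arbitrary $\psi\in\cX_\phi(\Omega)$ with the prescribed boundary value $\phi$. The difficulty is that $\partial\Omega$ is merely a topological boundary with no regularity, $\phi$ is only continuous, and a general $\psi\in\cX_\phi(\Omega)$ need not be approximable in $W^{1,1}$ by functions with the same boundary trace and compact support of the difference. The strategy would be: given $\psi\in\cX_\phi(\Omega)$, set $\psi-u=:\theta$, which satisfies $\theta(\gamma(t))\to 0$ along straight lines approaching $\partial\Omega$ (by the boundary condition, Remark \ref{rem_defboundary}), and construct a sequence $\theta_j\in\lip_c(\Omega)$ with $D\theta_j\to D\theta$ in $L^1(\Omega)$ and $w\,Du\cdot D\theta_j\to w\,Du\cdot D\theta$ in $L^1$ — the last being delicate precisely because $w$ may blow up near light segments. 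Here I would exploit the truncation of Lemma \ref{prop_approx}: the approximants $\psi^u_{\eps_j}$ equal $u$ exactly where $|\psi-u|<\eps_j$, in particular near $\partial\Omega$, so $\psi^u_{\eps_j}-u$ is compactly supported in $\Omega$, and moreover $D(\psi^u_{\eps_j})-Du = (D\psi-Du)\mathbb{1}_{\{|\psi-u|\ge\eps_j\}}$, which is bounded by $2$ and supported where $w\,Du\cdot(D\psi-Du)$ is already known (from \eqref{eq_L1test}) to lie in $L^1(\Omega)$; dominated convergence then gives $\int_\Omega w\,Du\cdot D(\psi^u_{\eps_j}-u)\to\int_\Omega w\,Du\cdot D(\psi-u)$, while the left side equals $\langle\rho,\psi^u_{\eps_j}-u\rangle$ by (i) applied to the Lipschitz compactly supported test function $u-\psi^u_{\eps_j}$, and $\langle\rho,\psi^u_{\eps_j}-u\rangle\to\langle\rho,\psi-u\rangle$ since $\psi^u_{\eps_j}\to\psi$ in $\cX(\Omega)$ (again by Lemma \ref{prop_approx}(i)(b)). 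This yields (iv).

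Finally, for $\Omega=\R^m$ the same scheme applies verbatim, replacing $\cX_\phi(\Omega)$ by $\cX_0(\R^m)$, using Proposition \ref{lem_basicL2}(ii) for the variational inequality and local integrability of $w$, using the $\R^m$-version of Lemma \ref{prop_approx}(ii) for the truncation (which gives convergence of $D\psi_j\to D\psi$ in every $L^q$, $q\ge 2$, hence in $L^1_{\loc}$ and, combined with \eqref{eq_L1test}, control of $\int w\,Du\cdot D(\psi_j-\psi)$ via dominated convergence), and noting that the ``boundary condition'' is encoded in the decay built into the space $\cX(\R^m)$, so no separate boundary argument is needed. The one extra care in the $\R^m$ case is that $\eta\in\lip_c(\R^m)$ test functions must be enough to characterize $\rho\in\cX(\R^m)^*$ acting on $\cX_0(\R^m)$, which follows from the density of $C^\infty_c(\R^m)$ in $\cX(\R^m)$ built into Definition \ref{def-calX}(ii). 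I would also remark that the equivalence with local minimality, rather than global, is what makes the statement useful downstream: it lets one verify solvability of \eqref{borninfeld} by checking a purely local variational condition away from the singular/light-segment set.
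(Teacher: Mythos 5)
Your reduction of local to global minimality and your truncation argument for (i) $\Rightarrow$ (iv) are sound and use essentially the paper's machinery: testing (i) with $u-\psi^u_{\eps_j}\in\lip_c(\Omega)$, using $D\psi^u_{\eps_j}-Du=(D\psi-Du)\,\mathbb{1}_{\{|\psi-u|\ge\eps_j\}}$, dominating via \eqref{eq_L1test} and passing to the limit with Lemma \ref{prop_approx} is exactly how the paper closes the loop (there in the form (iii) $\Rightarrow$ (iv), phrased as a contradiction, but with the same ingredients). However, your cycle (iv) $\Rightarrow$ (iii) $\Rightarrow$ (ii) $\Rightarrow$ (i) $\Rightarrow$ (iv) has two genuine gaps. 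First, (iii) $\Rightarrow$ (ii) is \emph{not} a restriction of test classes: a strictly spacelike $\psi\in\cX_\phi(\Omega)$ need not satisfy $\{\psi\neq u\}\Subset\Omega$ (in general it differs from $u$ arbitrarily close to $\partial\Omega$), so the competitors in (ii) are not a subclass of those in (iii), and as written item (iii) never feeds back into your cycle. This is repairable with your own technique --- apply the identity in (iii) to $\psi_j=\psi^u_{\eps_j}$, whose difference with $u$ is compactly contained in $\Omega$, and pass to the limit exactly as in your (i) $\Rightarrow$ (iv) step, which is the paper's route (iii) $\Rightarrow$ (iv) --- but you did not do this.

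Second, (ii) $\Rightarrow$ (i) is not actually proved. You correctly notice that one cannot perturb $u$ itself (it may fail to be strictly spacelike), but the fallback ``use the variational inequality \eqref{eq:2}; the equivalences reduce to propagating equality'' does not yield (i): the inequality gives only one sign, whereas \eqref{equi_BI_1} is a two-sided identity, linear in $\eta$, and equality over the \emph{constrained} class of spacelike competitors does not linearize by itself. The missing idea, which is how the paper argues, is to perturb around a strictly spacelike extension of the boundary datum: take $\bar\phi$ the solution of \eqref{borninfeld} with $\rho=0$ provided by Theorem \ref{teo_bartniksimon_2}; for $\eta\in C^1_c(\Omega)$ and $|t|$ small, $\psi=\bar\phi+t\eta\in\cX_\phi(\Omega)$ is strictly spacelike, so (ii) applies for both signs of $t$, both sides are affine in $t$, and differentiating at $t=0$ gives \eqref{equi_BI_1}, the local integrability of $w$ coming from Proposition \ref{lem_basicL2} once $u=u_\rho$. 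Without this (or an equivalent two-sided device) your chain does not establish that (ii) implies the other items.
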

\begin{proof}
Since the case $\Omega = \Rm$ may be proved similarly, we only deal with bounded domains. 
Let $\Omega$ be a bounded domain and $u$ a local minimizer. 
For $\psi \in \cX_\phi(\Omega)$ and $\eps_j \downarrow 0$, 
consider the approximation $\psi_j\doteq \psi_{\eps_j}^u$ constructed in Lemma \ref{prop_approx}, 
that satisfies $\{\psi_j \neq u\} \Subset \Omega$. We first notice $I_\rho(u) \le I_\rho(\psi_j)$. 
Since $I_\rho$ is continuous on $\cX_{\phi}(\Omega)$ as observed in Subsection \ref{subsec_functional}, Lemma \ref{prop_approx} implies $I_\rho(\psi_j) \to I_\rho(\psi)$ and thus 
$I_{\rho} (u) \leq I_\rho(\psi)$. Whence, $u = u_\rho$. 
Also, if $u$ is a classical solution to \eqref{borninfeld}, then an integration by parts shows that \eqref{equi_BI_1} holds.

	We next prove that (iv) $\Rightarrow$ (ii) $\Rightarrow$ (i) $\Rightarrow$ (iii) $\Rightarrow$ (iv).\\[0.2cm]
(iv) $\Rightarrow$ (ii) is obvious. \\[0.2cm]
(ii) $\Rightarrow$ (i).

	Since $u= u_\rho$, the integrability $(1-|Du|^2)^{-1/2} \in L^1_\loc(\Omega)$ follows by Proposition \ref{lem_basicL2}. 
By density and the dominated convergence theorem, it is enough to prove (i) for $\eta \in C^1_c(\Omega)$. 
Fix an open set $\Omega'$ satisfying $\{ \eta \neq 0\} \Subset \Omega' \Subset \Omega$, 
and choose a strictly spacelike extension $\bar \phi$ of $\phi$, for instance the solution to \eqref{borninfeld} for $\rho =0$ as in Theorem \ref{teo_bartniksimon_2}. 
Since $\sup_{\Omega'}|D \bar \phi| < 1$, for $|t|$ small enough, the function $\psi \doteq \bar \phi + t \eta \in \cX_{\phi}(\Omega)$ is strictly spacelike and thus  
	\[
	\int_\Omega \frac{D u \cdot (Du - D \bar \phi - tD\eta)}{\sqrt{1-|Du|^2}} \di x = \langle \rho, u- \bar \phi - t\eta \rangle.
	\]	
Differentiating at $t=0$ gives \eqref{equi_BI_1}.\\[0.2cm]		
(i) $\Rightarrow$ (iii).

	Identity \eqref{equi_BI_3} follows immediately from \eqref{equi_BI_1} since $u-\psi \in \lip_c(\Omega)$. 
To show that \eqref{equi_BI_3} implies $u = u_\rho$, first observe that $|Du|<1$ a.e on $\Omega$, in view of the first property in \eqref{equi_BI_1}. 
Let $\psi \in \cX_{\phi} (\Omega)$ with $\{ \psi \neq u \} \Subset \Omega$. 
Apply Remark \ref{rem_reversedCS} and \eqref{equi_BI_3} to deduce 
	\[
	\int_\Omega \left( \sqrt{1-|D\psi|^2} - \sqrt{1-|Du|^2}\right)\di x 
	\le 
	\int_\Omega \frac{Du \cdot (Du-D\psi)}{\sqrt{1-|Du|^2}}\di x = \langle \rho, u-\psi \rangle,
	\]
which can be rewritten as $I_\rho(u) \le I_\rho(\psi)$. Hence, $u$ is a local minimizer and thus it coincides with $u_\rho$.\\[0.2cm]
(iii) $\Rightarrow$ (iv).

	We recall \eqref{eq:2}, argue by contradiction and 
suppose that there exist $\psi \in \cX_\phi(\Omega)$ and $\delta >0$ such that
	\begin{equation}\label{contrad_BI}
	\int_\Omega \frac{Du \cdot (Du - D\psi)}{\sqrt{1-|Du|^2}}\di x \leq \langle \rho, u-\psi \rangle - \delta.	
	\end{equation}
Select $\Omega' \Subset \Omega$ satisfying 
	\begin{equation}\label{assu_omegaprimo}
	\int_{\Omega \backslash \Omega'} \left|\frac{Du \cdot (Du - D\psi)}{\sqrt{1-|Du|^2}} \right|\di x < \frac{\delta}{4},
	\end{equation}
which is possible by \eqref{eq_L1test}. Fix a sequence $\eps_j \downarrow 0$ and 
consider the approximation $\psi_j$ for $\psi$ constructed in Lemma \ref{prop_approx} 
by choosing $u_j = u$ for each $j$. 
By construction, $\psi_j \equiv u$ on $\Omega \setminus \Omega_j$ for some $\Omega_j \Subset \Omega$, 
and, without loss of generality, we can assume that $\Omega' \subset \Omega_j$ as well as 
$D\psi_j \to D\psi$ a.e. $\Omega$. 
From $\psi_j \to \psi$ strongly in $\cX(\Omega)$, we get
	\begin{equation}\label{conv-rho-u-psij}
		\langle \rho, u- \psi_j \rangle \to \langle \rho, u-\psi \rangle \qquad \text{as } \, j \to \infty.
	\end{equation}
Also, by \eqref{basic_L2bound} in Proposition \ref{lem_basicL2} and the dominated convergence theorem,
	\begin{equation}\label{eq_lebe}
	\int_{\Omega'} \frac{Du \cdot (Du - D\psi_j)}{\sqrt{1-|Du|^2}}\di x 
	\to 
	\int_{\Omega'} \frac{Du \cdot (Du - D\psi)}{\sqrt{1-|Du|^2}}\di x \qquad \text{as } \, j \to \infty.
	\end{equation}
By the definition of $\psi_j$, 
	\begin{equation}\label{def_psieps}
	Du-D\psi_j = (Du-D\psi) \cdot \mathbb{1}_{V_j}, \qquad \text{where} \qquad V_j \doteq \big\{ |u-\psi| \ge \eps_j \big\},
	\end{equation}
hence from \eqref{contrad_BI} and \eqref{conv-rho-u-psij}, we infer 
	\[
	\begin{aligned}
	\langle \rho, u-\psi_j \rangle - \delta & \geq  
	\disp \int_\Omega \frac{Du \cdot (Du - D\psi)}{\sqrt{1-|Du|^2}}\di x  - o_j(1) \\
	& =  \disp \int_{\Omega \backslash \Omega'} \frac{Du \cdot (Du - D\psi)}{\sqrt{1-|Du|^2}}\di x 
	+ \int_{\Omega'} \frac{Du \cdot (Du - D\psi_j)}{\sqrt{1-|Du|^2}}\di x - o_j(1) \qquad \text{by \eqref{eq_lebe}}\\
	& \ge  \disp - \frac{\delta}{4} + \int_{\Omega'} \frac{Du \cdot (Du - D\psi_j)}{\sqrt{1-|Du|^2}}\di x - o_j(1) 
	\qquad \text{by \eqref{assu_omegaprimo}}\\
	& = \disp - \frac{\delta}{4} + \int_{\Omega_j} \frac{Du \cdot (Du - D\psi_j)}{\sqrt{1-|Du|^2}}\di x 
	- \int_{\Omega_j \backslash \Omega'} \frac{Du \cdot (Du - D\psi_j)}{\sqrt{1-|Du|^2}}\di x - o_j(1) \\
	& =  \disp - \frac{\delta}{4} + \langle \rho, u-\psi_j \rangle 
	- \int_{(\Omega_j \backslash \Omega') \cap V_j} \frac{Du \cdot (Du - D\psi)}{\sqrt{1-|Du|^2}}\di x - o_j(1) \\
	&\quad   \disp \text{by \eqref{equi_BI_3} and \eqref{def_psieps}} \\
	& \ge  \disp - \frac{\delta}{2} + \langle \rho, u-\psi_j \rangle  - o_j(1) \qquad \text{by \eqref{assu_omegaprimo}}, \\
	\end{aligned}
	\]
a contradiction if $j$ is large enough. 
\end{proof}	

\begin{remark}
For $\Omega = \R^m$, a different proof of (iii) $\Rightarrow$ (iv) was given in \cite[Theorem 6.4]{bpd}.
\end{remark}
%%%%%%%%%%%%%%%%%%%%%%%%%%%%%%%%%%%%%%%%%%%%%%%%%%%%%%%%%%%%%%%%%%
%%%%%%%%%%%%%%%%%%%%%%%%%%%%%%%%%%%%%%%%%%%%%%%%%%%%%%%%%%%%%%%%%%
%%%%%%%%%%%%%%%%%%%%%%%%%%%%%%%%%%%%%%%%%%%%%%%%%%%%%%%%%%%%%%%%%%

%%%%%%%%%%%%%%%%%%%%%%%%%%%%%%%%%%%%%%%%%%%%%%%%%%%%%%%%%%%%%%%%%%
%%%%%%%%%%%%%%%%%%%%%%%%%%%%%%%%%%%%%%%%%%%%%%%%%%%%%%%%%%%%%%%%%%
%%%%%%%%%%%%%%%%%%%%%%%%%%%%%%%%%%%%%%%%%%%%%%%%%%%%%%%%%%%%%%%%%%
\section{Main tools}
\label{sec-main-tools}

In this section, we prove the tool results needed to obtain our existence/non-existence theorems detailed in the Introduction. The main achievements herein are Theorem \ref{teo_removable} on removable singularities, Theorem \ref{teo_nolight} about the nonsolvability of \eqref{borninfeld}, the $L^2$-estimates for the second fundamental form in Proposition \ref{prop_inte_estimate} and Corollary \ref{cor_secondfund}, and Theorem \ref{teo_higherint} guaranteeing the higher integrability of $w_\rho$. 
To reach our goals, we first need to regularize $\rho$ and $u_\rho$, a device which will also be necessary in Section \ref{prf-thms}.

\subsection{Setup for our strategy} \label{subsec_strategy}

According to Remark \ref{rem_duals}, defining $p = q'$ it holds 
	\[
	\cM(\Omega) + W^{-1,p}(\Omega) \subset \cX(\Omega)^* \qquad \text{for each } \left\{ \begin{array}{ll}
	p \in [p_1',\infty) & \text{if $\Omega$ is bounded,} \\[0.2cm]
	p \in [p_1',2_*] & \text{if $\Omega = \R^m$.}
	\end{array}\right. 
	\]
We shall hereafter restrict to 
	\begin{equation*}
	%\label{restri_rho}
	\rho \in \cM(\Omega) + L^{p}(\Omega) \qquad \text{for } \, p \in (1,2_*],  
	\end{equation*}
where $L^{p}(\Omega) \subset W^{-1,p}(\Omega)$ is the set of pairs $(v,0)$ as in Remark \ref{rem_duals}. 
\begin{quote}
\emph{Since $2_* = 1$ when $m=2$, hereafter the space $L^p(\Omega)$ is tacitly assumed to be empty when $p \in (1,2_*]$ and $m=2$.} 
\end{quote}
%	 
%Taking into account that $L^p(\Omega) \hookrightarrow \cM(\Omega)$ for $p=1$, notice that the space in \eqref{restri_rho} is included in $\cX(\Omega)^*$ provided that $p_1$ is sufficiently large.
Notice that $\cM(\Omega) + L^{p}(\Omega) \hookrightarrow \cX(\Omega)^*$ provided that $p_1$ is sufficiently large. For instance, we may (and henceforth do) choose 
	\begin{equation}\label{def_p_1}
	p_1 = 3 \quad \text{if } \, m=2, \qquad p_1 = \max\{2^*,m\} + p' \quad \text{if } \, m \ge 3. 
	\end{equation}
By a standard mollifying argument (see \cite[Chapter 2]{Ponce-mono}) and Young's inequality, 
for given 
	\[
	\rho = \mu + f \in \cM(\Omega) + L^{p}(\Omega)
	\]
we can find sequences of functions $g_j,f_j \in C^\infty(\overline\Omega)$ such that, setting $\mu_j \doteq  g_j \di x$ 
and recalling $p=q'$,
	\[
	\begin{array}{c}
	\|\mu_j\|_{\cM(\Omega)} \le \|\mu\|_{\cM(\Omega)}, \qquad \|f_j\|_{L^{p}(\Omega)} \le \|f\|_{L^{p}(\Omega)} \\[0.2cm]
	\text{$\mu_j \rightharpoonup \mu$ weakly in $\cM(\Omega)$, $\qquad f_j \to f$ strongly in $L^{p}(\Omega)$ (hence, in $\cX(\Omega)^*$).}
	\end{array}
	\]
Define $\rho_j \doteq \mu_j + f_j$. When $\Omega = \Rm$, the construction via convolution also guarantees, for each $\e>0$, the existence of $R_\e>0$ such that \eqref{tight} holds for $\{\mu_j\}$. Moreover, up to replacing $\rho,f$ by $\rho \mathbb{1}_{B_j}$ and $f \mathbb{1}_{B_j}$ and using a diagonal argument, we can assume that $g_j, f_j \in C^\infty_c(\R^m)$.

	Fix $\phi \in \mathcal{S}(\partial \Omega)$ if $\Omega$ is bounded, and denote the minimizer of $I_{\rho_j}$ by $u_j$. 
Because of Theorem \ref{teo_bartniksimon_2} or \cite[Theorem 1.5 and Remark 3.4]{bpd}, respectively 
if $\Omega$ is bounded or if $\Omega = \R^m$, $u_j$ is a smooth solution to \eqref{borninfeld} 
with Lorentzian mean curvature $H_j \doteq - ( g_j + f_j ) $ (thus, $u_j$ minimizes $I_{\rho_j}$ with $\rho_j = -H_j \di x$). 
Write $w_j \doteq (1-|Du_j|^2)^{-1/2}$. Proposition \ref{1001} yields $u_j \to u_\rho$ strongly in $\cX(\Omega)$ and $\langle \rho_j, u_j \rangle \to \langle \rho,u_\rho \rangle$.
%
%in $W^{1,q} (\Omega) \cap C( \ov{\Omega} )$, 
%where $q \in [1,\infty)$ when $\Omega$ is bounded, and $q \in [2^\ast,\infty)$ when $\Omega = \R^m$, and %moreover $\langle \rho_j, u_j \rangle \to \langle \rho,u_\rho \rangle$. 
%
Therefore, using Proposition \ref{prop_localglobal}, to show that $u_\rho$ weakly solves \eqref{borninfeld} it is enough to prove that
	\begin{equation}\label{limit-eq}
		\lim_{j \to \infty} 
		\int_\Omega w_j Du_j \cdot D\eta \, \di x = \int_\Omega w_\rho Du_\rho \cdot D\eta \, \di x 
		\qquad \forall \, \eta \in \Lip_c(\Omega).
	\end{equation}
Since $\| Du_j \|_\infty \leq 1$ and we may assume $Du_j \to Du_\rho$ a.e. on $\Omega$, identity \eqref{limit-eq} follows from Vitali's convergence theorem (see \cite[Theorem 3.1.9]{Willem-3}) provided that $\{w_j\}$ is locally uniformly integrable in the following sense.
\begin{definition} 
Let $\Omega \subset \R^m$ be an open subset. We say that a subset $\mathcal{W} \subset L^1_\loc(\Omega)$ is 
\emph{locally uniformly integrable on $\Omega$} if, for each $\Omega' \Subset \Omega$ and $\eps > 0$, there exists $\delta = \delta(\eps, \Omega')$ such that 
	\[
	A \subset \Omega' \ \text{measurable}, \ |A| < \delta \qquad \Longrightarrow \qquad \int_{A} |w|\di x < \eps \quad \forall \, w \in \mathcal{W}.
	\]
\end{definition}

By de la Vall\'ee-Poussin's Theorem (see, for instance, \cite[Theorem 3.1.10]{Willem-3}), 
$\mathcal{W}$ is locally uniformly integrable if and only if there exists a compact  exhaustion $\{\Omega_k\}_{k=1}^\infty$ of $\Omega$, that is, $\Omega_k \Subset \Omega$, $\Omega_k \uparrow \Omega$, and increasing convex functions $f_k :\R^+_0 \to \R^+_0$ such that 
	\[
	\lim_{t \to \infty} \frac{f_k(t)}{t} = +\infty, \qquad \sup_{w \in \mathcal{W}} \int_{\Omega_k} f_k(|w|) \di x < \infty \quad \forall \, k.
	\] 
The purpose of the next subsections is to obtain a local uniform integrability for $\{w_j\}$. 
We begin by studying the behavior of $u_\rho$ in regions where $\rho$ is singular.

\subsection{Removable and unremovable singularities}

To our knowledge, 
the only removable singularity theorem for the prescribed Lorentzian mean curvature equation is the one in \cite{Mi92_sing}. 
The theorem considers maximal graphs $u$ that are smooth and strictly spacelike in a domain $\Omega' \backslash E$, where $E \Subset \Omega'$ is compact. Under the assumption that the $p$-capacity of $E$ is zero for some $p \in (1,m]$, and that 
\begin{equation}\label{eq_integr}
\int_{\Omega'\backslash E} w^{\frac{p}{p-1}} \di x < \infty, 
\end{equation}
then $u$ can be smoothly extended to a spacelike maximal solution on $\Omega'$. 
In particular, by the known relation between Hausdorff measure and capacity (cf. \cite{evansgariepy}), 
compact subsets $E$ with $\haus^{m-p}_\delta(E) = 0$ are removable for maximal graphs satisfying \eqref{eq_integr}. 
However, the proof seems not easy to extend to more general measures $\rho \neq 0$, 
and currently we are unable to prove an a-priori estimate yielding \eqref{eq_integr}. 
Therefore, we take a different approach. 
Our contribution is the following result, which applies to any measure and only needs a local uniform integrability 
for the sequence of energy densities $\{w_j\}$.

\begin{theorem}[\textbf{Removable singularity}]\label{teo_removable}
Assume $\Omega \subset \R^m$ is either a bounded domain with $m \geq 2$ or $\R^m$ with $m \geq 3$. Let
	\[
	\rho \in \cM(\Omega) + L^p(\Omega), \qquad p \in (1,2_\ast],  
	\] 
and, if $\Omega$ is bounded, let $\phi \in \mathcal{S}(\partial \Omega)$. Choose $\{p_1,\rho_j,u_j,w_j\}$ as in Subsection \ref{subsec_strategy}. 
Suppose that $E \Subset \Omega$ is a compact set with $\haus^1_\delta(E) = 0$. Then, for every open subset $\Omega' \subset \Omega$,
	\[
	\begin{array}{c}
	\text{$\{w_j\}$ is locally uniformly} \\
	\text{integrable on $\Omega'\backslash E$}
	\end{array}
	\quad \Longrightarrow \quad \begin{array}{c}
	\text{$\{w_j\}$ is locally uniformly integrable on $\Omega'$, and} \\[0.2cm]
	\disp \int_{\Omega'} \frac{Du_\rho \cdot D\eta}{\sqrt{1-|Du_\rho|^2}} = \langle \rho, \eta \rangle \quad \forall \, \eta \in \lip_c(\Omega').	
	\end{array}
	\] 
In particular, if $\{w_j\}$ is locally uniformly integrable on $\Omega \backslash E$, then $u_\rho$ weakly solves \eqref{borninfeld}.
%	Furthermore, if there exists an open neighborhood $U$ of $E$, and $q >1$, $\mathcal{I}>0$ such that 
%$\|\rho\|_{L^q(U)} \le \mathcal{I}$, then \eqref{assu_impomeasure} can be weakened to $\haus^1(E) = 0$.
\end{theorem}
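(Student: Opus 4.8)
The plan is to run the approximation scheme of Subsection~\ref{subsec_strategy} and use the thinness of $E$ to push both the local uniform integrability of $\{w_j\}$ and the weak Euler--Lagrange identity across $E$. Recall that each $u_j$ is a \emph{smooth} solution of \eqref{borninfeld} on the whole domain with source $\rho_j$, so that $\int_\Omega w_j\,Du_j\cdot D\eta\,\di x=\langle\rho_j,\eta\rangle$ for every $\eta\in\lip_c(\Omega)$, that $Du_j\to Du_\rho$ a.e. on $\Omega$ with $\|Du_j\|_\infty\le1$ (Proposition~\ref{1001}), and that $w_\rho\in L^1_\loc(\Omega)$ (Proposition~\ref{lem_basicL2}). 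Since the displayed integral identity with $\Omega'=\Omega$, together with $w_\rho\in L^1_\loc(\Omega)$, is precisely the assertion that $u_\rho$ weakly solves \eqref{borninfeld}, the last statement is a special case of the first; so it suffices to treat a general open $\Omega'\subset\Omega$, assuming $\{w_j\}$ is locally uniformly integrable on $\Omega'\setminus E$.

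\emph{Step 1: upgrading the uniform integrability.} The key input is the uniform linear energy bound
\[
\int_{B_r(x)}w_j\,\di x\ \le\ \mathcal C\, r\qquad\text{for all }B_r(x)\Subset\Omega,
\]
with $\mathcal C$ independent of $j$ (it is the estimate $(\PP 0_2)$ announced in the Introduction, proved for the minimizers $u_j$, e.g.\ by a monotonicity argument, using only the uniform bound on $\|\rho_j\|_{\cM(\Omega)+L^p(\Omega)}$). Granting it, fix $\Omega''\Subset\Omega'$ and $\varepsilon>0$. Since $\haus^1_\delta(E)=0$ and $E\Subset\Omega$, for each $\tau>0$ we cover $E$ by finitely many balls $B_{r_i}(x_i)$ with $\sum_i r_i<\tau$ and $\bigcup_i B_{2r_i}(x_i)\Subset\Omega$; writing $V_\tau:=\bigcup_i B_{2r_i}(x_i)$, the energy bound gives $\sup_j\int_{V_\tau}w_j\,\di x\le\sum_i 2\mathcal C r_i<2\mathcal C\tau$. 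Choosing $\tau$ with $2\mathcal C\tau<\varepsilon/2$, $V_\tau$ is an open neighbourhood of $E$ with $\sup_j\int_{V_\tau}w_j<\varepsilon/2$. For a measurable $A\subset\Omega''$ we split $\int_A w_j=\int_{A\cap V_\tau}w_j+\int_{A\setminus V_\tau}w_j$: the first term is $<\varepsilon/2$; the set $A\setminus V_\tau$ lies in the fixed compact set $\overline{\Omega''}\setminus V_\tau\Subset\Omega'\setminus E$, so by the hypothesis there is $\delta>0$ (depending only on $\varepsilon$ and $\Omega''$) such that $|A\setminus V_\tau|<\delta$ forces $\int_{A\setminus V_\tau}w_j<\varepsilon/2$ uniformly in $j$. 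Hence $|A|<\delta\Rightarrow\sup_j\int_A w_j<\varepsilon$, i.e.\ $\{w_j\}$ is locally uniformly integrable on $\Omega'$.

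\emph{Step 2: passing to the limit.} Since $w_jDu_j\to w_\rho Du_\rho$ a.e.\ on $\Omega'$, with $|w_jDu_j|\le w_j$ and $\{w_j\}$ locally uniformly integrable on $\Omega'$, Vitali's convergence theorem yields $w_jDu_j\to w_\rho Du_\rho$ in $L^1_\loc(\Omega')$. Therefore, for every $\eta\in\lip_c(\Omega')\subset\lip_c(\Omega)$,
\[
\int_{\Omega'}\frac{Du_\rho\cdot D\eta}{\sqrt{1-|Du_\rho|^2}}\,\di x
=\lim_{j\to\infty}\int_{\Omega}w_j Du_j\cdot D\eta\,\di x
=\lim_{j\to\infty}\langle\rho_j,\eta\rangle=\langle\rho,\eta\rangle,
\]
the last step because $\mu_j\rightharpoonup\mu$ in $\cM(\Omega)$ and $f_j\to f$ in $L^p(\Omega)$. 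Taking $\Omega'=\Omega$ gives the final assertion.

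The genuinely delicate point is the uniform \emph{linear} energy estimate $\int_{B_r(x)}w_j\,\di x\le\mathcal C r$, exactly in the region near $E$: there $\rho_j$ carries nontrivial mass, is only uniformly bounded in total variation, and a plain Caccioppoli inequality (testing with $(u_j-u_j(x))\,\xi_r$) degrades to $\int_{B_r}w_j\lesssim r^{\alpha}$ with some $\alpha<1$, which is \emph{not} enough — the covering step above works only because $\haus^1_\delta(E)=0$ makes $\sum_i r_i$, rather than $\sum_i r_i^{\alpha}$, small. Recovering the sharp exponent requires exploiting that $u_j$ is a minimizer, for instance through a monotonicity formula for the weighted volume $r\mapsto\int_{B_r}w_j\,\di x$; that linearity is both correct and sharp is already visible in the point-charge solution $\rho=q\delta_0$, where $w_\rho\sim|q|\,\omega_{m-1}^{-1}|x|^{1-m}$ and $\int_{B_r(0)}w_\rho\,\di x\sim|q|\,r$.
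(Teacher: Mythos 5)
Your proposal follows essentially the same route as the paper: cover $E$ by finitely many balls of total radius $\le\tau$ (possible precisely because $\haus^1_\delta(E)=0$), control the energy of $\{w_j\}$ on that covering by a uniform \emph{linear} bound $\int_{B_s(y)}w_j\,\di x\le\mathcal C s$, combine with the assumed uniform integrability away from $E$, and conclude by Vitali's theorem together with $\rho_j\rightharpoonup\rho$. Steps 1 and 2, including the splitting $A=(A\cap V_\tau)\cup(A\setminus V_\tau)$ with $\overline{\Omega''}\setminus V_\tau\Subset\Omega'\setminus E$, reproduce the paper's covering and limit argument almost verbatim, and your reduction of the final assertion to the case $\Omega'=\Omega$ (using $w_\rho\in L^1_\loc$ from Proposition \ref{lem_basicL2}) is fine.

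The one substantive reservation is that the decisive ingredient — the linear growth estimate — is cited rather than proved. In the paper this is exactly Lemma \ref{lem_growthinte} (the estimate announced as $(\PP 0_2)$), stated immediately before Theorem \ref{teo_removable} and constituting the technical heart of its proof: it is obtained not by a generic ``monotonicity formula'' but by testing $\Delta_M u_j=H_jw_j$ against $u_j\varphi$ and the identity for $\Delta_M\ell_o^2$ against $\varphi$, which yields the differential inequality $-\frac{\di}{\di s}\bigl(J_y(s)/s\bigr)\le s^{-2}\int_0^s\sigma f_y(\sigma)\,\di\sigma$ and hence $J_y(s)\le s\bigl[J_y(t)/t+|\rho_j|(B_t(y))\bigr]$. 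Since the lemma is a separately stated result of the paper, invoking it is legitimate and your argument is then complete; but as a self-contained proof the linear bound is a genuine gap, and you correctly diagnose (via the $r^\alpha$ remark) that a plain Caccioppoli estimate would not suffice. Two minor inaccuracies: the constant cannot be uniform ``for all $B_r(x)\Subset\Omega$'' nor depend only on $\|\rho_j\|_{\cM(\Omega)+L^p(\Omega)}$ — it involves $J_y(t)/t$ at a fixed larger scale $t<\di_\delta(y,\partial\Omega)$, controlled via Proposition \ref{lem_basicL2} (hence through $\phi$ and the distance of $E$ to $\partial\Omega$), together with the uniform total variation bound $\sup_j|\rho_j|(B_{10R_0}(E))<\infty$ coming from weak convergence; this is why the paper fixes $R_0\le\di_\delta(E,\partial\Omega)/20$ and works only with balls near $E$. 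Since your covering balls can be taken in such a fixed compact neighbourhood of $E$ and of radius $<R_0$, this imprecision is harmless to the structure of the argument.
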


\begin{remark}\label{rem_sharp_remov}
The above requirements on $E$ cannot be weakened to $\haus_\delta^1(E) < \infty$. Indeed, consider the example in Corollary \ref{prop_nosolve}, and set $E = \overline{xy}$. Since $u=u_\rho$ has no light segments in $\Omega \backslash \overline{xy}$, the energies $\{w_j\}$ are locally uniformly integrable there. This can be shown by combining Lemma \ref{lem_simplecompact} with \cite[Lemma 2.1]{bartniksimon}, proceeding as in \cite[Proof of Theorem 4.1]{bartniksimon}. However, $u_\rho$ does not solve \eqref{borninfeld}, so $E$ is not removable. 
As a related example, one can see the nice \cite[Example 2]{klyachin_mixed}. 
\end{remark}

The result is a consequence of the next lemma, which estimates the growth of $w$ on balls centered at a given point.

\begin{lemma}\label{lem_growthinte}
Let $\Omega \subset \R^m$ be an open set, $H \in C^\infty(\Omega)$ and $u \in C^\infty(\Omega)$ be a 
strictly spacelike solution of
	\[
	-\diver \left( \frac{Du}{\sqrt{1-|Du|^2}} \right) = \rho \doteq -H \di x \qquad \text{on } \, \Omega.
	\]
For any given $y \in \Omega$, define
	\[
	J_y(s) \doteq \int_{B_s(y)} \frac{\di x}{\sqrt{1-|Du|^2}}, \qquad 0 < s < \di_\delta(y, \partial \Omega). 
	\]
Then, for each $0 < s  < t < \di_\delta(y, \partial \Omega)$, it holds
	\begin{equation}\label{ineq_I_simple_2}
	J_y(s) \le s \left[ \frac{J_y(t)}{t} + |\rho|(B_t(y)) \right].
	\end{equation}
%and for any $q>1$ there exists a constant $C=C(q,m) > 0$ such that 
%	\begin{equation}\label{ineq_I_simple_3}
%	\frac{J_y(s)}{s} \le \frac{J_y(t)}{t} + C \| H \|_{L^q(B_t(y))} t^{\frac{m(q-1)}{q}}.
%	\end{equation}	
	\end{lemma}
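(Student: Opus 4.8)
The plan is to turn \eqref{ineq_I_simple_2} into a first–order differential inequality for $s\mapsto J_y(s)$ and then integrate it, taking care that no logarithmic factor $\log(t/s)$ creeps in. Throughout, $u$ is smooth and strictly spacelike ($|Du|<1$), so the graph calculus of Section~\ref{sec_prelimgeo} applies.

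\textbf{Step 1 (a flux identity).} The starting point is $\di x_g = w^{-1}\di x$, which gives $\sqrt{\det g}=w^{-1}$, hence for any smooth $\Phi$ one has $w^{-1}\Delta_M\Phi = \diver\big(w^{-1}g^{ij}\partial_j\Phi\,\partial_i\big)$, the divergence on the right being the Euclidean one on $\R^m$. I would apply this with $\Phi=\tfrac12|x-y|^2$: using \eqref{eq_hessianLapla} together with the algebraic identity $w^2|Du|^2=w^2-1$ one gets $w^{-1}\Delta_M\Phi = (m-1)w^{-1}+w+H\,(x-y)\cdot Du$, so that the Euclidean vector field $W\doteq w^{-1}(x-y)+w\,(Du\cdot(x-y))\,Du$ satisfies $\diver W = (m-1)w^{-1}+w+H\,(x-y)\cdot Du$. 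Integrating $\diver W$ over $B_s(y)\Subset\Omega$ and estimating the boundary integrand by
\[
W\cdot\frac{x-y}{|x-y|} = s\,w^{-1}+\frac1s\,w\,\big(Du\cdot(x-y)\big)^2 \ \le\ s\,w^{-1}\big(1+w^2|Du|^2\big)=s\,w \qquad \text{on }\partial B_s(y),
\]
one obtains, for every $0<s<\di_\delta(y,\partial\Omega)$,
\[
\int_{B_s(y)} w\,\di x + (m-1)\int_{B_s(y)}\frac{\di x}{w}+\int_{B_s(y)}H\,(x-y)\cdot Du\,\di x \ \le\ s\int_{\partial B_s(y)} w\,\di\haus^{m-1}_\delta .
\]

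\textbf{Step 2 (the differential inequality).} Set $J(s)=J_y(s)$ and $D(s)\doteq\int_{B_s(y)}|H|\,|x-y|\,\di x$. By the coarea formula both are $C^1$ on $\big(0,\di_\delta(y,\partial\Omega)\big)$, with $J'(s)=\int_{\partial B_s(y)}w\,\di\haus^{m-1}_\delta$, $D'(s)=s\int_{\partial B_s(y)}|H|\,\di\haus^{m-1}_\delta$, and $\tfrac{\di}{\di s}\,|\rho|(B_s(y))=\int_{\partial B_s(y)}|H|\,\di\haus^{m-1}_\delta = D'(s)/s$. Dropping the nonnegative term $(m-1)\int_{B_s(y)}w^{-1}$ and bounding $|(x-y)\cdot Du|\le|x-y|$ in the displayed inequality gives $J(s)\le sJ'(s)+D(s)$, equivalently $\tfrac{\di}{\di s}\big(J(s)/s\big)\ge -D(s)/s^2$. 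For fixed $0<s<t<\di_\delta(y,\partial\Omega)$ I would then integrate this on $[s,t]$ and compute the right–hand side by parts, using $\int_s^t D'(\tau)\tau^{-1}\di\tau = |\rho|(B_t(y))-|\rho|(B_s(y))$:
\[
\frac{J(s)}{s}\ \le\ \frac{J(t)}{t}+\int_s^t\frac{D(\tau)}{\tau^2}\,\di\tau
= \frac{J(t)}{t}+|\rho|(B_t(y)) - \Big(|\rho|(B_s(y))-\frac{D(s)}{s}\Big) - \frac{D(t)}{t}.
\]
Since $D(t)/t\ge0$ and $|\rho|(B_s(y))-\tfrac{D(s)}{s}=\tfrac1s\int_{B_s(y)}|H|\,(s-|x-y|)\,\di x\ge0$, discarding these two nonnegative quantities yields exactly \eqref{ineq_I_simple_2}.

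The routine parts are the coarea/differentiation bookkeeping and the pointwise computation of $\Delta_M(\tfrac12|x-y|^2)$. The two points that really need care — and that make the estimate as sharp as stated — are: the conformal identity $w^{-1}\Delta_M\Phi=\diver(w^{-1}g^{ij}\partial_j\Phi\,\partial_i)$ combined with $1+w^2|Du|^2=w^2$, which is what turns the boundary flux of $W$ into the clean bound $s\,w$; and the exact evaluation of $\int_s^t D(\tau)\tau^{-2}\,\di\tau$ in the last step, since a cruder estimate there only gives $\tfrac{J(s)}{s}\le\tfrac{J(t)}{t}+|\rho|(B_t(y))\log(t/s)$ — so the cancellation $|\rho|(B_s(y))-D(s)/s\ge0$ coming from $|x-y|\le s$ on $B_s(y)$ is essential.
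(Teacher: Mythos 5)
Your argument is correct, and it reaches the paper's key intermediate inequality by a genuinely different (and somewhat leaner) route. The paper obtains $\int_{B_s(y)} w\,\di x \le s\int_{\partial B_s(y)} w\,\di\haus^{m-1}_\delta + \int_{B_s(y)}|H|\,|x-y|\,\di x$ intrinsically: it tests $\Delta_M u = Hw$ against $u\varphi$ and the identity \eqref{eq_elle2} for $\Delta_M\ell_o^2$ (the \emph{Lorentzian} distance, after normalizing $u(y)=0$) against $\varphi$, combines the two so that the mixed terms recombine into $r\nabla r+(m-1)u\nabla u$, bounds them using $|u|\le r$, and then passes to the limit in a Lipschitz radial cutoff. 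You instead encode the same cancellation in a single exact Euclidean identity: writing $\Delta_M$ in divergence form with $\sqrt{\det g}=w^{-1}$ and applying it to $\tfrac12|x-y|^2$ gives $\diver W=(m-1)w^{-1}+w+H\,(x-y)\cdot Du$ with $W=w^{-1}(x-y)+w\,(Du\cdot(x-y))\,Du$, and the divergence theorem on $B_s(y)$ plus the algebra $1+w^2|Du|^2=w^2$ yields the flux bound $s\,w$ on $\partial B_s(y)$ directly, with no translation of $u$, no Lorentzian distance, and no cutoff limit (your computation of $\diver W$ is easily checked to agree with \eqref{eq_hessianLapla}). From that point on the two proofs coincide in substance: both pass to the differential inequality for $J_y(s)/s$ and both must evaluate $\int_s^t D(\tau)\tau^{-2}\,\di\tau$ exactly to avoid a spurious $\log(t/s)$ — the paper does this by Tonelli on the double integral, you by integration by parts together with the observation $|\rho|(B_s(y))-D(s)/s\ge 0$, which are equivalent bookkeepings of the same cancellation. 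Your version buys a shorter, purely extrinsic derivation of this lemma; the paper's intrinsic setup with $\ell_o$ is heavier here but is the same machinery it later exploits crucially (e.g. in Theorem \ref{teo_higherint}), which explains its choice.
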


\begin{proof}
Let $\varphi \in \lip_c(\Omega)$. Up to a translation, we may assume $u(y) = 0$. Let $M$ be the graph of $u$. Recalling \eqref{eq:1}, we first test $\Delta_M u = H w$ against $u \varphi$ and integrate by parts to deduce
	\[
	\int \varphi \|\nabla u\|^2 \, \rd x_g = - \int u \varphi Hw \, \rd x_g  
	- \int \la u\nabla u, \nabla \varphi \ra \rd x_g.
	\] 
We set $o = y$ in \eqref{def_ello} and write $\ell (x) = \ell_y(x)$. Multiplying the equation $\Delta_M \ell^2 = 2m +  H \bar{D} l^2 \cdot \mathbf{n}$ in \eqref{eq_elle2} by $\varphi$ and integrating by parts we get 
	\[
	2m \int \varphi \, \rd x_g = 
	- 2 \int \ell \la \nabla \ell, \nabla \varphi \ra \rd x_g  - \int \varphi H \bar D l^2 \cdot {\bf n} \, \rd x_g.
	\]
Noting that $\ell^2(x) = r^2(x) - u^2(x)$ and $u(y) = 0$, and using the identities
	\[
		\ell \nabla \ell = r \nabla r - u \nabla u, \quad 
		w^2 = 1 + \| \nabla u\|^2, \quad 
		\bar{D} l^2 \cdot \mathbf{n} = 2w \left[ r \left( Du, Dr \right) - u \right],
	\]
we infer 
	\begin{equation}\label{eq_w_iter}
	\begin{array}{l}
\disp	m \int \varphi w^2 \rd x_g = m \int \varphi \, \rd x_g + m \int \varphi \| \nabla u \|^2 \rd x_g 
			\\[0.4cm]
\disp 	\qquad = - \int \ell \la \nabla \ell, \nabla \varphi \ra \rd x_g - \int \varphi H w \left[ r (Du, Dr) - u \right] \rd x_g
			\\[0.4cm]
\disp 	\qquad -m \int u \varphi H w \, \rd x_g - m \int \la u \nabla u , \nabla \varphi \ra \rd x_g
			\\[0.4cm]
\disp	\qquad = - \int \la r \nabla r + (m-1) u \nabla u , \nabla \varphi \ra \rd x_g 
			- \int \varphi H w \left[ r (Du,Dr) + (m-1) u \right] \rd x_g.
	\end{array}
	\end{equation}
First, since $\| \nabla \varphi \| \leq w |D\varphi|$, $|(Du,Dr)| \leq 1$ and $|u| \le r$ due to $\| Du \|_\infty \leq 1$, 
we get
	\[
	\begin{aligned}
	\langle r \nabla r + (m-1)u \nabla u, \nabla \varphi \rangle & \le  \|r \nabla r + (m-1)u \nabla u\|\|\nabla \varphi\| \\
	& \le  \disp m r \max\{ \|\nabla r\|, \|\nabla u\| \}\|\nabla \varphi\| \leq  m r |D\varphi| w^2. 
	\end{aligned}
	\]
Setting
	\[
		T_\rho(\varphi) \doteq - \frac{1}{m} \int \varphi H w \big[ r(Du,Dr) + (m-1)u \big] \rd x_g,
	\]
we deduce from \eqref{eq_w_iter} the following inequality:
	\begin{equation}\label{eq_fundamental_1}	
		\int \varphi w^2 \, \rd x_g \le \int |D\varphi| r w^2  \, \rd x_g + T_\rho(\varphi) .
	\end{equation}

	Let $ 0 < s < t < \di_\delta (y, \partial \Omega)  $ and consider, for $\eps>0$ small enough, 
	\[
	\varphi(x) \doteq \left( \min \left\{ 1, \frac{s + \eps - r(x)}{\eps} \right\} \right)_+ \in \Lip_c ( B_t(y) ) 
	\subset \Lip_c (\Omega).
	\]
From $|u| \le r$, $|(Du, Dr)| \le 1$ on the support of $\varphi$, $|\varphi| \le 1$ and $\di x_g = w^{-1}\di x$, 
and using the coarea formula, we get  
	\[
	|T_\rho(\varphi)| \le \int_{B_{s+\eps}(y)}r|H| w \di x_g 
%	= \int_{B_{s + \e} (y) } r |H| \rd x = 
	= \int_0^{s+\eps} \sigma \left[ \int_{\partial B_\sigma(y)} |H| \di \haus^{m-1}_\delta \right] \di \sigma.
	\]
Letting $\e \to 0$ and observing that 
	\[
		\int |D \varphi | r w^2 \rd x_g 
		= \int |D \varphi | r w \, \rd x \to s \int_{ \partial B_s (y) }  w \, \rd \haus^{m-1}_\delta  
	\]
for a.e. $s$, from \eqref{eq_fundamental_1}, we obtain 
\[
	\int_{B_s(y)} w \, \rd x \le s \int_{\partial B_s(y)} w \, \rd \haus^{m-1}_\delta 
	+ \int_0^{s} \left[ \sigma \int_{\partial B_\sigma(y)} |H| \di \haus^{m-1}_\delta \right] \di \sigma \qquad \text{for a.e.} \, s \in [0,t].
\]
%	\begin{equation}\label{eq_fundamental_iter}	
%	\int_{B_s(y)} w \, \rd y \le s \int_{\partial B_s(y)} w \, \rd \haus^{m-1}_\delta + s |\rho|(B_s(y)). \qquad \text{for a.e.} \, s \in [0,t].
%	\end{equation}
By the coarea formula, the above inequality can also be rewritten as 
	\[
		- \frac{\rd}{\rd s}  \frac{J_y(s)}{s} \leq \frac{1}{s^2} \int_0^s \sigma f_y(\sigma) \di \sigma \qquad \text{for a.e. $s \in (0,t]$},
	\]
where 
	\[
	f_y(\sigma) = \int_{\partial B_\sigma(y)} |H| \di \haus^{m-1}_\delta.
	\]
Integrating on $[s,t]$ and using Tonelli's Theorem, we deduce
	\[
	\begin{array}{lcl}
	\disp - \frac{J_y(t)}{t} + \frac{J_y(s)}{s} & \le & \disp \int_s^t \frac{1}{\tau^2} \left\{ \int_0^\tau \sigma f_y(\sigma) \di \sigma \right\} \di \tau \\[0.4cm] 
	& = & \disp \int_0^t \sigma f_y(\sigma) \left\{ \int_{\max\{s,\sigma\}}^t \frac{\di \tau}{\tau^2} \right\} \di \sigma \\[0.4cm]
	& \le & \disp \int_0^t \sigma f_y(\sigma) \left[ - \frac{1}{\tau}\right]_\sigma^t \di \sigma \le \disp \int_0^t \sigma f_y(\sigma) \frac{1}{\sigma} \di \sigma \\[0.4cm]
	& = & \disp \int_0^t f_y(\sigma)\di \sigma = \int_{B_t(y)} |H|  \di x = |\rho|\big(B_t(y)\big),
	\end{array}
	\]
which proves \eqref{ineq_I_simple_2}. 
%
%The second easily follows from $B_s(y) \subset B_t(y)$. On the other hand, H\"older's inequality yields 
%	\[
%	|\rho|(B_s(y)) = \| H \|_{L^1(B_s(y))} \le \| H \|_{L^q(B_t(y))}|B_s(y)|^{\frac{q-1}{q}}. 
%	\]
%Substituting this estimate in \eqref{ineq_I_simple_2} and integrating, we get \eqref{ineq_I_simple_3}. 	
\end{proof}

	Using Lemma \ref{lem_growthinte} and a covering argument, we shall prove Theorem \ref{teo_removable}:

\begin{proof}[Proof of Theorem \ref{teo_removable}]
Write $\rho = \mu + f$ with $\mu \in \cM(\Omega)$ and $f \in L^p(\Omega)$. Referring to Subsection \ref{subsec_strategy}, for $m=2$ the term $f$ does not appear, and our choice of $p_1$ imply that $\rho \in \cX(\Omega)^*$. Let $\mu_j,f_j$ be as therein, thus $\mu_j \rightharpoonup \mu$ weakly in $\cM(\Omega)$ and $f_j \to f$ strongly in $L^p(\Omega)$. Choose $0 < R_0 \le \di_\delta(E, \partial \Omega)/20$. The relative compactness of $B_{10R_0}(E)$ implies that $\rho_j = \mu_j + f_j \di x \rightharpoonup \rho$ weakly in $\cM(B_{10R_0}(E))$, so in particular there exists a constant $C_{\cM}$ such that  
	\begin{equation}\label{bdd-tv-rhoj}
		\left\| \rho_j \right\|_{\cM (B_{10R_0}(E)) }   \leq C_{\cM} \quad \text{for each $j \geq 1$}.
	\end{equation}
Write $\rho_j = -H_j \di x$. By Proposition \ref{lem_basicL2}, there exists a constant $\mathcal{C}(R_0)$, depending on $\phi$, $R_0$, $\|\rho\|_{\cX^*}$ such that
	\begin{equation}\label{eq_uniboundw}
	\int_{B_{4R_0}(E)} w_j \, \di x \le \mathcal{C}(R_0). 
	\end{equation}
For $x \in B_{R_0}(E)$ and $s \in (0,R_0]$, set
	\[
	J_{x,j}(s) \doteq \int_{B_s(x)} w_j \di x.
	\]
Note that \eqref{eq_uniboundw} implies $J_{x,j}(R_0) \le \mathcal{C}(R_0)$ for all $j \geq 1$ and $x \in B_{R_0} (E)$, 
%Given $\tau>0$, fix $s_0 = s_0(R_0,\mathcal{C}(R_0), C_{\cM} , \tau) > 0$ small enough that
%	\begin{equation}\label{eq_deltapic}
%	\frac{\mathcal{C}(R_0)}{R_0} + C_\cM \log \left( \frac{R_0}{s}\right) 
%	\le \left( C_\cM + \tau \right) |\log s| \qquad \forall \, s \in (0, s_0].
%	\end{equation}		
hence Lemma \ref{lem_growthinte} and \eqref{bdd-tv-rhoj}, \eqref{eq_uniboundw} ensure that for all $x \in B_{R_0} (E)$, $j \geq 1$ and $s \in (0,R_0)$,
	\begin{equation*}
	%\label{eq_Jxj}
	J_{x,j}(s) \le s \left[ \frac{\mathcal{C}(R_0)}{R_0} + |\rho_j|(B_{R_0}(x)) \right] \le \mathcal{C}_1 s,  
%	\\
%	& \le  s \left[ \frac{\mathcal{C}(R_0)}{R_0} + C_\cM \log \left( \frac{R_0}{s}\right)\right] \le  \left[ %C_\cM + \tau \right] s \left| \log s \right| .
	\end{equation*}
for some $\mathcal{C}_1(R_0,\mathcal{C}(R_0), \mathcal{C}_\cM)$. By our assumption $\haus^1_\delta(E) = 0$ and since $E$ is compact, for given $\tau>0$ we can cover $E$ with finitely many balls $\{B_k\}_{k=1}^N$, $B_k = B_{r_k}(x_k)$ satisfying $r_k < R_0$ and $\sum_{k} r_k \le \tau$. We can also assume that $x_k \in B_{R_0}(E)$ for each $k$. 
%Due to \eqref{eq_Jxj}, for all $j$ and $k$, 
%	\[
%	J_{x_k,j}(r_k) \le \left[ C_\cM + \tau \right] r_k \left| \log r_k \right|.
%	\]
Therefore, for each fixed $\e > 0$ we can take $\tau > 0$ small enough to satisfy
	\begin{equation*}
	%\label{eq_measureuc}
	\int_{\bigcup_{k =1}^N B_k} w_j \di x 
	\le 
	\sum_{k=1}^N J_{x_k,j}(r_k) \le \mathcal{C}_1 \sum_{k=1}^N r_k \le \mathcal{C}_1\tau < \frac{\eps}{2}.
	\end{equation*}
Let $\Omega'' \Subset \Omega'$ be a relatively compact subset. By defining $U \doteq \bigcup_{k=1}^N B_k$, our assumption yields that $\{w_j\}$ is uniformly integrable on $\Omega'' \backslash U$. 
Thus, there exists $\delta>0$ such that $A \subset \Omega'' \backslash U$ and $|A|< \delta$ imply $\int_A w_j \di x < \e/2$. 
Then, for each subset $A \subset \Omega''$ with $|A| < \delta$, 
	\[
	\int_A w_j \di x \le \int_{A \cap U} w_j \di x + \int_{A \backslash U} w_j \di x < \frac{\eps}{2} + \frac{\eps}{2} = \eps, 
	\]
which means that $\{w_j\}$ is uniformly integrable on $\Omega''$. In particular, \eqref{limit-eq} holds for every fixed $\eta \in \lip_c(\Omega')$ by Vitali's Theorem. 
%
%
%
%	Under the assumption $\|\rho\|_{L^q(U)} \le \mathcal{I}$ on $U$ for some $q > 1$, 
%first observe that approximation by convolution and Young's inequality yield $\|H_j\|_{L^q(U)} \le \mathcal{I}$ for $j$ large enough. We can then choose $R_0$ small enough to satisfy $B_{10R_0}(E) \Subset U$. 
%Because of \eqref{ineq_I_simple_3}, for every $s \in (0,R_0]$ and $x \in B_{R_0}(E)$ inequality \eqref{eq_Jxj} becomes  
%	\[
%		J_{x,j}(s) 
%		\le 
%		s \left[ \frac{\mathcal{C}(R_0)}{R_0} +  \int_s^{R_0} \frac{|\rho_j|(B_\tau(x))}{\tau} \di \tau \right] 
%		 \le s \left[ \frac{\mathcal{C}(R_0)}{R_0} + C(R_0,q,m, \mathcal{I})\right],
%	\]
%	\begin{equation}\label{eq_Jxj_2}
%	\begin{aligned}
%		J_{x,j}(s) 
%		\le 
%		s \left[ \frac{\mathcal{C}(R_0)}{R_0} +  \int_s^{R_0} \frac{|\rho_j|(B_\tau(x))}{\tau} \di \tau \right] 
%		& \le s \left[ \frac{\mathcal{C}(R_0)}{R_0} + C(R_0,q,m, \mathcal{I})\right],
%	\end{aligned}
%	\end{equation}
%for a suitable constant $C(R_0,q,m, \mathcal{I})>0$. Following the above argument with the assumption $%\haus^1(E) = 0$ we infer \eqref{eq_measureuc}, and the rest follows verbatim. 
\end{proof}

We next consider singularities which cannot be removed. 
While the examples in Section \ref{sec_counterexamples} show that solutions to \eqref{borninfeld} may possess light segments when $\rho \in L^q(\Omega)$ and $q < m-1$, 
we shall now prove that such solutions exhibit, in a sense, a ``borderline'' behavior.

\begin{theorem}\label{teo_nolight}
Let $\Omega \subset \Rm$ be either a bounded domain with $m \geq 2$ and $\phi \in \mathcal{S}(\partial \Omega)$, or $\Omega = \R^m$ with $m \geq 3$. 
Let $\rho \in \cX(\Omega)^*$, and assume that the minimizer $u_\rho$ has a light segment $\overline{xy} \subset\Omega$ with $u_\rho(y) - u_\rho(x) = |y-x|$. Then, for each $\alpha > 0$, $u_\rho$ also minimizes the functional $I_{\rho_\alpha}$ with 
	\[
	\rho_\alpha = \rho + \alpha( \delta_y - \delta_x),
	\]
but it does not solve \eqref{borninfeld} weakly for $\rho_\alpha$.
\end{theorem}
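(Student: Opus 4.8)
The plan is to prove the two assertions in turn: the first by a direct ``sliding'' comparison, the second by playing two variational characterizations of $u_\rho$ against each other.

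\emph{Minimality for $I_{\rho_\alpha}$.} First I would note that $\delta_y-\delta_x\in\cM(\Omega)\hookrightarrow\cX(\Omega)^*$ (Remark~\ref{rem_duals}), so $\rho_\alpha\in\cX(\Omega)^*$ and $I_{\rho_\alpha}$ has a unique minimizer. For any admissible competitor $\psi$ (that is, $\psi\in\cX_\phi(\Omega)$ if $\Omega$ is bounded, $\psi\in\cX_0(\R^m)$ if $\Omega=\R^m$), weak spacelikeness makes $\psi$ $1$-Lipschitz along the segment $\overline{xy}\subset\Omega$, hence $\psi(y)-\psi(x)\le|y-x|$. Since $\langle\delta_y-\delta_x,\psi\rangle=\psi(y)-\psi(x)$ and $\alpha>0$, I obtain
\[
I_{\rho_\alpha}(\psi)=I_\rho(\psi)-\alpha\bigl(\psi(y)-\psi(x)\bigr)\ \ge\ I_\rho(u_\rho)-\alpha|y-x|\ =\ I_\rho(u_\rho)-\alpha\bigl(u_\rho(y)-u_\rho(x)\bigr)\ =\ I_{\rho_\alpha}(u_\rho),
\]
using the minimality of $u_\rho$ for $I_\rho$ and the hypothesis $u_\rho(y)-u_\rho(x)=|y-x|$. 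Thus $u_\rho$ minimizes $I_{\rho_\alpha}$, and by uniqueness $u_{\rho_\alpha}=u_\rho$.

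\emph{Nonsolvability.} Next I would argue by contradiction, assuming $u_\rho$ weakly solves \eqref{borninfeld} for $\rho_\alpha$. Being a global, hence local, minimizer of $I_{\rho_\alpha}$, it falls under Proposition~\ref{prop_localglobal} with source $\rho_\alpha$, whose characterization (iv) gives
\[
\int_\Omega\frac{Du_\rho\cdot(Du_\rho-D\psi)}{\sqrt{1-|Du_\rho|^2}}\,\di x=\langle\rho_\alpha,u_\rho-\psi\rangle\qquad\forall\,\psi\in\cX_\phi(\Omega).
\]
On the other hand $u_\rho$ is the minimizer of $I_\rho$, so the variational inequality \eqref{eq:2} of Proposition~\ref{lem_basicL2} reads
\[
\int_\Omega\frac{Du_\rho\cdot(Du_\rho-D\psi)}{\sqrt{1-|Du_\rho|^2}}\,\di x\le\langle\rho,u_\rho-\psi\rangle\qquad\forall\,\psi\in\cX_\phi(\Omega).
\]
Subtracting and recalling $\rho_\alpha-\rho=\alpha(\delta_y-\delta_x)$ yields $\alpha\bigl[(u_\rho-\psi)(y)-(u_\rho-\psi)(x)\bigr]\le0$, i.e.\ $\psi(y)-\psi(x)\ge u_\rho(y)-u_\rho(x)=|y-x|$ for \emph{every} admissible $\psi$. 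This is impossible: when $\Omega$ is bounded I take $\psi$ to be a spacelike extension of $\phi$, which exists since $\phi\in\mathcal{S}(\partial\Omega)$ by Theorem~\ref{teo_bartniksimon_2}, so that $\psi(y)-\psi(x)\le|\psi(y)-\psi(x)|<|y-x|$ because $\overline{xy}\subset\Omega$ and $x\ne y$; when $\Omega=\R^m$ I simply take $\psi\equiv0\in\cX_0(\R^m)$, for which $\psi(y)-\psi(x)=0<|y-x|$. The contradiction proves the claim.

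\emph{Where the difficulty lies.} The proof is short, and its only delicate point is bookkeeping: one must invoke the \emph{equality} characterization (iv) of Proposition~\ref{prop_localglobal} for the perturbed source $\rho_\alpha$, but only the \emph{inequality} \eqref{eq:2} for the original $\rho$, and one needs a genuinely spacelike competitor $\psi$ — which is exactly what $\phi\in\mathcal{S}(\partial\Omega)$ (resp.\ the availability of $\psi\equiv0$ when $\Omega=\R^m$) supplies. Conceptually the mechanism is transparent: adding $\alpha(\delta_y-\delta_x)$ rewards making $\psi(y)-\psi(x)$ large, and $u_\rho$ already saturates the bound $|y-x|$ along its light segment, so it stays the minimizer; but then the Euler--Lagrange equation cannot hold, for it would force the graph to tilt past the light cone, which a weakly spacelike function is forbidden to do.
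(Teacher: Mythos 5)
Your proof is correct and uses essentially the same ingredients as the paper's: the saturation of the $1$-Lipschitz bound along the light segment for the minimality claim, and the combination of the variational inequality \eqref{eq:2} for $\rho$ with the equality characterization in Proposition \ref{prop_localglobal} tested against a spacelike competitor (from Theorem \ref{teo_bartniksimon_2}, or $\psi\equiv 0$ on $\R^m$) for the nonsolvability. The only differences are organizational — you prove minimality by a direct inequality chain where the paper argues by contradiction, and you phrase nonsolvability as a contradiction where the paper exhibits the strict inequality directly — so the content is the same.
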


\begin{proof}
For simplicity, in this proof we remove the subscript $\rho$ and denote by $I \doteq I_\rho$ and $u \doteq u_\rho$. 
We also write $I_\alpha \doteq I_{\rho_\alpha}$ and denote its minimizer by $u_\alpha$. 
We argue by contradiction and assume that $u_\alpha \neq u$ for some $\alpha > 0$. 
By uniqueness of the minimizer, we infer
	\[
		I(u)  
		=  I_\alpha(u) + \alpha\big[u(y) - u(x)\big] 
		>  \disp I_\alpha(u_\alpha) + \alpha \big[u(y) - u(x)\big],
	\]
	which implies
	\[
		u(y) - u(x) < \frac{I(u)-I_\alpha(u_\alpha)}{\alpha}.
	\]
Similarly, 	
	\[
		I_\alpha(u_\alpha)  =  I(u_\alpha) - \alpha \big[u_\alpha(y) - u_\alpha(x)\big] 
		 >  \disp I(u) - \alpha \big[u_\alpha(y) - u_\alpha(x)\big],	
	\]
thus, 
	\[
	u_\alpha(y) - u_\alpha(x) > \frac{I(u)- I_\alpha(u_\alpha)}{\alpha}.
	\]
Therefore, $u_\alpha(y) - u_\alpha(x) > u(y)-u(x) = |y-x|$, contradicting the fact that $u_\alpha \in \cX_\phi(\Omega)$.

	We have therefore proved that $u= u_\alpha$ for each $\alpha>0$. 
By Theorem \ref{teo_bartniksimon_2}, pick a strictly spacelike extension $\bar \phi$ of $\phi$, so that, in particular, $|y-x| - \bar\phi(y) + \bar\phi(x) > 0$. Since $u$ minimizes $I$, we see from Proposition \ref{lem_basicL2} that 
	\[
	\begin{aligned}
	\int_\Omega \frac{Du \cdot (Du-D\bar \phi)}{\sqrt{1-|Du|^2}}\di x 
	\le \la \rho, u- \bar \phi \ra 
	&= \la \rho_\alpha, u- \bar \phi \ra - \alpha \la (\delta_y-\delta_x), u-\bar \phi \ra 
	\\
	& = \la \rho_\alpha, u- \bar \phi \ra - \alpha \left[ |y-x| - \bar\phi(y) + \bar\phi(x) \right] \\
	& < \la \rho_\alpha, u- \bar \phi \ra.
	\end{aligned}
	\]
Therefore, due to Proposition \ref{prop_localglobal}, $u$ does not solve \eqref{borninfeld} for $\rho_\alpha$.
\end{proof}

\begin{remark}\label{rem_impo_obse}
When $\Omega$ is bounded, the condition $\phi \in \Spa(\partial \Omega)$ only serves to guarantee that $u_\rho$ does not solve \eqref{borninfeld} with charge $\rho_\alpha$. In other words, the fact that $u_\rho$ minimizes $I_{\rho_\alpha}$ only requires $\cX_\phi(\Omega) \neq \emptyset$. 
\end{remark}

%%%%%%%%%%%%%%%%%%%%%%%%%%%%%%%%%%%%%%%%%%%%%%%%%%%%%%%%%%%%%%%%%%
%%%%%%%%%%%%%%%%%%%%%%%%%%%%%%%%%%%%%%%%%%%%%%%%%%%%%%%%%%%%%%%%%%
%%%%%%%%%%%%%%%%%%%%%%%%%%%%%%%%%%%%%%%%%%%%%%%%%%%%%%%%%%%%%%%%%%

%%%%%%%%%%%%%%%%%%%%%%%%%%%%%%%%%%%%%%%%%%%%%%%%%%%%%%%%%%%%%%%%%%
%%%%%%%%%%%%%%%%%%%%%%%%%%%%%%%%%%%%%%%%%%%%%%%%%%%%%%%%%%%%%%%%%%
%%%%%%%%%%%%%%%%%%%%%%%%%%%%%%%%%%%%%%%%%%%%%%%%%%%%%%%%%%%%%%%%%%
\subsection{Local second fundamental form estimate} \label{subsec_locSF}

We first observe that $W^{2,q}_\loc$ estimates, for $q \ge 1$, are not to be expected for general $\rho$. An easy counterexample can be produced building on the explicit expression of $u_\rho$ for $\rho = \omega_{m-1} \delta_0$ in the unit ball with zero boundary values:
	\begin{equation}\label{e:rad-example}
		u_{\rho}(x) = \int_{|x|}^1 \frac{\di t}{\sqrt{t^{2m-2} + 1}} \qquad \text{on } \, B_1(0) \subset \R^m.
	\end{equation}
%
%
%
%
%expression of $u_\rho$ when $\rho = -H + b \omega_{m-1} \delta_0$, that we now recall.
%
%\begin{example}\label{ex_radial}
%Given $H \in \R$, $T>0$ and $b \in \R^+$, the function
%	\[
%		u_{b}(x) = \eta_b(|x|) = \int_{|x|}^T \frac{b - m^{-1}H t^m}{\sqrt{t^{2m-2} + (b - m^{-1}H t^m)^{2}}} \di t \qquad \text{on } \, B_T(0) \subset \R^m
%	\]
%	\begin{equation}\label{leradial}
%	u_{b}(x) = \eta_b(|x|) = \int_{|x|}^T \frac{b - m^{-1}H t^m}{\sqrt{t^{2m-2} + (b - m^{-1}H t^m)^{2}}} \di t \qquad \text{on } \, B_T(0) \subset \R^m
%	\end{equation}
%solves
%	\[
%		\left\{ \begin{aligned}
%		- \diver \left( \frac{Du_b}{\sqrt{1-|Du_b|^2}} \right) &= 
%		-H + b \omega_{m-1} \delta_0 & & \text{on } \, B_T(0), \\
%		u_b&=0 & & \text{on } \, \partial B_T(0).
%		\end{aligned}\right.
%	\]
%\begin{equation}\label{borninfeld_leradial}
%\left\{ \begin{aligned}
%- \diver \left( \frac{Du_b}{\sqrt{1-|Du_b|^2}} \right) &= 
%-H + b \omega_{m-1} \delta_0 & & \text{on } \, B_T(0), \\
%u_b&=0 & & \text{on } \, \partial B_T(0).
%\end{aligned}\right.
%\end{equation}
%\end{example}	
%
%Note that $u_b$ in Example \ref{ex_radial} is strictly spacelike outside of the origin. 
%Take $u$ with the choices $b=T=1$ and $H=0$. 
%
%
%
Fix $R \in (0,1)$ and let $s \in (0, \|u\|_\infty)$, be the constant value of $u$ on $\partial B_R(0)$. 
Then, the function $u_s \doteq \min\{u,s\}$ solves 
\[
\left\{ \begin{aligned}
\diver \left( \frac{Du_s}{\sqrt{1-|Du_s|^2}} \right) &= 
-R^{1-m} \haus^{m-1}_{\delta} \measrest \partial B_R(0) & &\text{on } \, B_1(0), \\
u_s&=0 & & \text{on } \, \partial B_1(0).
\end{aligned}\right.
\]
Clearly, $u_s \not \in W_{\loc}^{2,q}$ for any $q \ge 1.$ Note however that, by explicit computation, $u \in W^{2,q}(B_1(0))$ for each $q \in [1,m)$.

	It is reasonable to guess that $u_\rho \in W^{2,2}_\loc(\Omega)$ provided that $\rho \in L^2(\Omega)$. 
Indeed, a stronger estimate holds. First, observe that integrating \eqref{norm_second} on a domain $\Omega'$ we get
	\begin{equation}\label{inte_secondfund}
			\int_{M'} \|\SF\|^2 \di x_g = 
			\int_{\Omega'} w 
			\left\{ | D^2u|^2 
			+ 2 w^2 \left| D^2 u \left( Du, \cdot \right) \right|^2   
			+ w^4 \left[D^2u(Du,Du)\right]^2\right\} \di x,
	\end{equation}
where $M'$ denotes the graph of $u=u_\rho$ over $\Omega'$. In this subsection, we prove local second fundamental form estimates for the graph of $u_\rho$ in regions $\Omega'$ where $\rho \in L^2$. 
We first treat the case of smooth $\rho = - H \rd x$ with $H \in C^\infty(\overline{\Omega})$, and then argue by approximation. 
Let $u$ be the smooth solution to \eqref{borninfeld}, with subscript $\rho$ suppressed. The search for estimates on $\|\SF\|^2$ is natural in view of the Jacobi equation
\begin{equation}\label{eq_jacobi_standard}
		\Delta_M w = -\la \nabla H, \partial_0^\parallel \ra + \|\SF\|^2 w,
\end{equation}
see \cite[p.519]{amr}, which will be our starting point. First, we need to decompose $\|\SF\|^2$ into convenient pieces.
%Denote by $M'$ the graph of $u$ over an open subset $\Omega' \Subset \Omega$. First, observe that 
%	\[
%	\begin{aligned}
%		D w 
%		&= w^3 D^2u (Du, \cdot), \\[0.3cm]
%		\|\nabla w\|^2 
%		&= g^{ij}w_iw_j = w^6 |D^2 u(Du, \cdot)|^2 
%		+ w^8 \big[D^2u(Du,Du)\big]^2 
%		\le w^{2} \|\SF\|^2
%		,
%	\end{aligned}
%	\]
%	\[
%	\begin{aligned}
%		D w 
%		&= w^3 D^2u (Du, \cdot), \qquad |Dw|^2 = w^6 |D^2 u(Du, \cdot)|^2 
%		\\[0.3cm]
%		\|\nabla w\|^2 
%		&= g^{ij}w_iw_j 
%		=|Dw|^2 + w^2 (Dw,Du)^2 
%		\\
%		&= w^6 |D^2 u(Du, \cdot)|^2 
%		+ w^8 \big[D^2u(Du,Du)\big]^2 
%		\le w^{2} \|\SF\|^2
%		,
%	\end{aligned}
%	\]
%hence, 
%	\[
%		\|\nabla \log w\|^2 \le \| \SF\|^2.
%	\]
%\begin{equation}\label{importante_Dlogp}
%\|\nabla \log w\|^2 \le \| \SF\|^2.
%\end{equation}
Recall that, by \eqref{eq:1}, $\|\SF\|^2 = w^{-2} \|\nabla^2 u\|^2$. We rewrite $\| \nabla^2 u \|^2$ as follows:

	\begin{lemma}
		Assume $\rd u(x) \neq 0$ at $x \in M$ and set $\nu \doteq \nabla u / \| \nabla u \|$ in a neighborhood of $x$. 
		Denote by $A$ the traceless second fundamental form of the level set $\{u =u(x)\}$ in the direction $-\nu$ 
		and write $u_{\nu \nu} \doteq \nabla^2u (\nu,\nu)$. Then 
		\begin{equation}\label{norm-u-hessian}
			\begin{aligned}
				\|\nabla^2 u\|^2 = & 
				\|\nabla u\|^2 \|A\|^2 + \frac{1}{m-1} \left( H^2 w^2 - 2H w \, u_{\nu\nu}\right) \\
				&  + \frac{m}{m-1} \big\| \nabla \|\nabla u\|\big\|^2 + \frac{m-2}{m-1} \big\| \nabla^\top  \|\nabla u\|\big\|^2,
			\end{aligned}
		\end{equation}
	where $\nabla^\top$ stands for the component of $\nabla$ tangent to the level set $\{u= u(x)\}$. 
	\end{lemma}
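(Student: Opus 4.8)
The formula \eqref{norm-u-hessian} is an identity of the form ``Bochner-type decomposition of the Hessian into tangential and normal parts relative to a level set of $u$''. The plan is to work at the fixed point $x$ where $\rd u(x)\neq 0$, choose a good orthonormal frame $\{e_1,\dots,e_{m-1},e_m=\nu\}$ for $(M,g)$ adapted to the level set $N\doteq\{u=u(x)\}$ (so that $e_1,\dots,e_{m-1}$ span $T_xN$ and $\nu=\nabla u/\|\nabla u\|$), and expand $\|\nabla^2u\|^2=\sum_{a,b}(\nabla^2u(e_a,e_b))^2$ into the four blocks: the pure tangential block $\sum_{i,j\le m-1}(\nabla^2u(e_i,e_j))^2$, the mixed block $2\sum_{i\le m-1}(\nabla^2u(e_i,\nu))^2$, and the pure normal term $u_{\nu\nu}^2$.

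First I would compute the tangential block. Since $u$ is constant on $N$, for tangent fields $X,Y$ one has $\nabla^2u(X,Y)=X(Yu)-(\nabla_XY)u=-(\nabla_XY)u$, and $(\nabla_XY)u=\langle\nabla_XY,\nabla u\rangle=-\|\nabla u\|\,\II_N^{-\nu}(X,Y)$ where $\II_N^{-\nu}$ is the second fundamental form of $N$ in direction $-\nu$. Hence the tangential Hessian of $u$ equals $\|\nabla u\|$ times the full second fundamental form of $N$; splitting the latter into traceless part $A$ plus trace part, and recording that the mean curvature of $N$ appears when we take the trace, the tangential block becomes $\|\nabla u\|^2\|A\|^2+\frac{1}{m-1}\|\nabla u\|^2 H_N^2$ where $H_N$ is the (unnormalized) mean curvature of $N$ in direction $-\nu$, $H_N=-\operatorname{div}_M\nu$. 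The next step is to relate $H_N$, the mixed term $\nabla^2u(e_i,\nu)$, and $u_{\nu\nu}$ to the prescribed mean curvature $H$ of the graph. Contracting $\nabla^2u$ fully gives $\Delta_Mu=\sum_a\nabla^2u(e_a,e_a)$, and by \eqref{eq:1} we have $\Delta_Mu=Hw$ on $M$. Splitting this trace into its tangential part ($\sum_{i\le m-1}\nabla^2u(e_i,e_i)=-\|\nabla u\|H_N$ from the computation above, with the normalization that $\operatorname{tr}\II_N^{-\nu}=H_N$) and the normal part $u_{\nu\nu}$ yields the key scalar relation $-\|\nabla u\|H_N+u_{\nu\nu}=Hw$, i.e. $\|\nabla u\|H_N=u_{\nu\nu}-Hw$. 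For the mixed term, I would differentiate $\|\nabla u\|^2$ or use $\nabla^2u(\cdot,\nu)=\frac{1}{\|\nabla u\|}\nabla^2u(\cdot,\nabla u)=\frac{1}{\|\nabla u\|}\cdot\tfrac12\nabla\|\nabla u\|^2=\nabla\|\nabla u\|$, so that $\sum_{i\le m-1}(\nabla^2u(e_i,\nu))^2=\|\nabla^\top\|\nabla u\|\|^2$ and $\nabla^2u(\nu,\nu)=\langle\nabla\|\nabla u\|,\nu\rangle$, hence $\|\nabla\|\nabla u\|\|^2=\|\nabla^\top\|\nabla u\|\|^2+u_{\nu\nu}^2$.

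Now I assemble. The tangential block contributes $\|\nabla u\|^2\|A\|^2+\frac{1}{m-1}\|\nabla u\|^2H_N^2$; using $\|\nabla u\|^2H_N^2=(u_{\nu\nu}-Hw)^2=u_{\nu\nu}^2-2Hw\,u_{\nu\nu}+H^2w^2$, this is $\|\nabla u\|^2\|A\|^2+\frac{1}{m-1}(H^2w^2-2Hw\,u_{\nu\nu}+u_{\nu\nu}^2)$. Adding the mixed block $2\|\nabla^\top\|\nabla u\|\|^2$ and the normal term $u_{\nu\nu}^2$ gives
\[
\|\nabla^2u\|^2=\|\nabla u\|^2\|A\|^2+\tfrac{1}{m-1}(H^2w^2-2Hw\,u_{\nu\nu})+\tfrac{1}{m-1}u_{\nu\nu}^2+u_{\nu\nu}^2+2\|\nabla^\top\|\nabla u\|\|^2.
\]
It remains to package $\frac{1}{m-1}u_{\nu\nu}^2+u_{\nu\nu}^2+2\|\nabla^\top\|\nabla u\|\|^2$ as $\frac{m}{m-1}\|\nabla\|\nabla u\|\|^2+\frac{m-2}{m-1}\|\nabla^\top\|\nabla u\|\|^2$: indeed $\frac{m}{m-1}\|\nabla\|\nabla u\|\|^2=\frac{m}{m-1}(u_{\nu\nu}^2+\|\nabla^\top\|\nabla u\|\|^2)$, so $\frac{m}{m-1}\|\nabla\|\nabla u\|\|^2+\frac{m-2}{m-1}\|\nabla^\top\|\nabla u\|\|^2=\frac{m}{m-1}u_{\nu\nu}^2+\frac{2m-2}{m-1}\|\nabla^\top\|\nabla u\|\|^2=\frac{m}{m-1}u_{\nu\nu}^2+2\|\nabla^\top\|\nabla u\|\|^2$, and $\frac{m}{m-1}u_{\nu\nu}^2=\frac{1}{m-1}u_{\nu\nu}^2+u_{\nu\nu}^2$, matching exactly. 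This gives \eqref{norm-u-hessian}.

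The main obstacle, and the place to be careful, is bookkeeping the sign and normalization conventions: which direction ($\nu$ or $-\nu$) is used for $A$ and for the mean curvature of $N$, and the exact relation $\Delta_Mu=Hw$ versus $H=\operatorname{div}(Du/\sqrt{1-|Du|^2})$ from \eqref{eq:1} and the preceding computations. A secondary subtlety is that the frame $\{e_i\}$ is only defined near $x$, so all the ``tangential divergence'' identities must be read as pointwise identities at $x$ after choosing the frame geodesic for the induced connection on $N$ at $x$; since \eqref{norm-u-hessian} is a pointwise identity this causes no real difficulty, but one should state it. Everything else is a direct, if slightly lengthy, expansion.
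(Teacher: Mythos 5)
Your proof is correct and follows essentially the same route as the paper: an orthonormal frame $\{e_1,\dots,e_{m-1},\nu\}$ adapted to the level set, the block decomposition of $\|\nabla^2 u\|^2$, the identities $\nabla^2u(\cdot,\nu)=\nabla\|\nabla u\|$ and $\Delta_M u = Hw$, and the trace/traceless split of the tangential Hessian (which the paper encodes directly via $\|\nabla u\|A_{\alpha\beta}=u_{\alpha\beta}-\tfrac{1}{m-1}\bigl(\sum_\gamma u_{\gamma\gamma}\bigr)\delta_{\alpha\beta}$). The only blemish is a sign inconsistency in your intermediate relation between the tangential trace and $H_N$ (your own computation gives $\sum_i\nabla^2u(e_i,e_i)=+\|\nabla u\|\,\mathrm{tr}\,\mathrm{II}_N^{-\nu}$), but since $H_N$ enters only through its square this does not affect the final identity.
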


	\begin{proof}
Consider an orthonormal frame $\{\nu, e_\alpha\}$, $2 \le \alpha \le m$ on $M$. 
We denote by $u_{ij}$ the components of $\nabla^2 u$ in the above frame. Then, 
	\[
		\langle \nabla \|\nabla u\|, e_\alpha \rangle 
		= u_{\alpha \nu}, \qquad \langle \nabla \|\nabla u\|, \nu \rangle = u_{\nu\nu},
	\]
%	\begin{equation}\label{normal-u}
%		\langle \nabla \|\nabla u\|, e_\alpha \rangle 
%		= u_{\alpha \nu}, \qquad \langle \nabla \|\nabla u\|, \nu \rangle = u_{\nu\nu},
%	\end{equation}
thus
\begin{equation}\label{eq_kato}
\|\nabla^2 u\|^2 = \sum_{\alpha,\beta=2}^m u_{\alpha\beta}^2 + 2\|\nabla^\top  \|\nabla u\| \|^2 + u_{\nu\nu}^2.
\end{equation}

	Next, it follows from the definition of $A$ that
	\[
		\|\nabla u\| A_{\alpha\beta} = u_{\alpha \beta} - \frac{\sum_{\gamma=2}^m u_{\gamma\gamma}}{m-1} \delta_{\alpha\beta}.
	\]
Splitting the norm of the matrix $[u_{\alpha\beta}]$ into its trace and traceless parts, and recalling \eqref{eq:1}, we get 
	\[
		\begin{aligned}
			\sum_{\alpha,\beta=2}^m u_{\alpha\beta}^2 
			& = 
			\|\nabla u\|^2\|A\|^2 + \frac{1}{m-1} \left(\sum_{\alpha=2}^m u_{\alpha\alpha}\right)^2 
			= \|\nabla u\|^2\|A\|^2 + \frac{(\Delta_M u - u_{\nu\nu})^2}{m-1} \\
			& = \|\nabla u\|^2\|A\|^2 + \frac{1}{m-1} \left( H^2 w^2 -2 Hw u_{\nu\nu} + u_{\nu\nu}^2 \right).
		\end{aligned}
	\]
Inserting this into \eqref{eq_kato} and noting that $\| \nabla \|\nabla u\|\|^2 = \| \nabla^\top  \|\nabla u\|\|^2 + u_{\nu\nu}^2$, 
we obtain \eqref{norm-u-hessian}. 
	\end{proof}

\begin{remark}
When $H=0$, we obtain the classical refined Kato inequality for harmonic functions
	\[
	\|\nabla^2 u\|^2 \ge \frac{m}{m-1} \big\| \nabla \|\nabla u\| \big \|^2.
	\]
\end{remark}

	It is convenient to rewrite the equations in terms of the hyperbolic angle
	\[
		\beta \doteq \mathrm{arcch} \, w = \log \left( w + \sqrt{w^2 - 1}\right).
	\]
Note that $w \mapsto \beta$ is a diffeomorphism on $\{\di u \neq 0\}$. The identities
	\[
		w = \ch \beta, \qquad \|\nabla u\| = \sqrt{w^2-1} = \sh \beta, \qquad u_{\nu\nu} = \langle \nabla \|\nabla u\|, 
		\nu \rangle = \ch \beta \langle \nabla \beta, \nu \rangle,
	\]
\eqref{norm-u-hessian} and the fact that $\SF = w^{-1} \nabla^2 u = 0$ a.e. on the set $\{\di u = 0\}$ due to Stampacchia's theorem 
allow us to rewrite $\|\SF\|^2 =w^{-2} \| \nabla^2 u \|^2$ as
		\begin{equation}\label{ide_SF_cong}
		\begin{aligned}
			\|\SF\|^2 
			& = 
			\left[ \frac{\sh^2 \beta}{\ch^2 \beta} \|A\|^2 + \frac{H^2}{m-1} - \frac{2H \langle \nabla \beta,\nu \rangle}{m-1}  
			 + \frac{m\|\nabla \beta\|^2}{m-1} + \frac{m-2}{m-1} \|\nabla^\top  \beta\|^2 \right] 
			\cdot \mathbb{1}_{\{\di u \neq 0\}}
		\end{aligned}
		\end{equation}
a.e. on $\Omega$. We therefore deduce that, for some constant $C = C(m)>0$, 
	\begin{equation}\label{equival_point}
	\|\SF\|^2 \le C(m) \left[\frac{\sh^2 \beta}{\ch^2 \beta} \|A\|^2 + \|\nabla \beta\|^2 + H^2\right] \cdot \mathbb{1}_{\{\di u \neq 0\}}
	\end{equation}
and that, for every $M' \Subset M$,
	\[
		\int_{M'} \|\SF\|^2 \rd x_g \le \mathcal{C} 
		\quad \Longleftrightarrow \quad 
		\int_{M' \cap \{\di u \neq 0\}} \left[\frac{\sh^2 \beta}{\ch^2 \beta} \|A\|^2 + \|\nabla \beta\|^2 + H^2\right] \rd x_g 
		\le \mathcal{C}',
	\]
%		\begin{equation}\label{equival}
%		\int_{M'} \|\SF\|^2 \rd x_g \le \mathcal{C} 
%		\quad \Longleftrightarrow \quad 
%		\int_{M' \cap \{\di u \neq 0\}} \left[\frac{\sh^2 \beta}{\ch^2 \beta} \|A\|^2 + \|\nabla \beta\|^2 + H^2\right] \rd x_g 
%		\le \mathcal{C}',
%		\end{equation}
where $\mathcal{C}$ and $\mathcal{C}'$ might be different, but with the same qualitative dependence on the data of our problem \eqref{borninfeld}. 
We point out that in \eqref{ide_SF_cong} the coefficient of $\| \nabla \beta \|^2$ is strictly larger than $1$ and 
this plays a crucial role in our analysis. 
Indeed, it guarantees that the coefficient of $\| \nabla \beta \|^2$ in inequality \eqref{est-Y-bel1} below is positive for sufficiently small $\e>0$, allowing us to bound the integral of $\| \nabla \beta \|^2$ from above.

We next rewrite the Jacobi equation in a way that is more suited to our purposes. We begin with the following 

\begin{lemma}\label{lem_jacobi}
Define
\begin{equation}\label{def_Y}
Y \doteq \frac{\nabla w - H \nabla u}{w} \qquad \text{on } \, M.
\end{equation}
Then,
\begin{equation}\label{jacobibetter}
\diver_M Y = \disp \|\SF\|^2 -H^2 - \la Y, \frac{\nabla w}{w} \ra.
\end{equation}
\end{lemma}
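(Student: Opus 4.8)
The plan is to compute the divergence $\diver_M Y$ directly from the definition \eqref{def_Y} and then substitute known identities from the excerpt. Writing $Y = \nabla(\log w) - w^{-1} H \nabla u$, I would split $\diver_M Y = \diver_M(\nabla \log w) - \diver_M(w^{-1}H\nabla u) = \Delta_M(\log w) - \diver_M(w^{-1}H\nabla u)$. For the second term, since $u$ solves \eqref{borninfeld} and using \eqref{eq:1}, one has $\diver_M(\nabla u) = \Delta_M u = Hw$; thus by the product rule
\[
\diver_M\!\left( \frac{H}{w}\nabla u\right) = \frac{H}{w}\,\Delta_M u + \left\la \nabla\!\left(\frac{H}{w}\right), \nabla u\right\ra = H^2 + \left\la \nabla\!\left(\frac{H}{w}\right), \nabla u\right\ra .
\]

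Next I would expand $\Delta_M \log w = \diver_M\!\left(\frac{\nabla w}{w}\right) = \frac{\Delta_M w}{w} - \frac{\|\nabla w\|^2}{w^2}$. The key input is the Jacobi-type equation for $w$ on the graph: differentiating the mean curvature equation (equivalently, the standard Simons-type identity for the Lorentzian graph) yields an expression for $\Delta_M w$ in terms of $\|\SF\|^2$, $w$, and the derivatives of $H$. Concretely, the identity I expect to need is $\Delta_M w = w\|\SF\|^2 + (\text{terms involving } H, \nabla H)$, and the combination with $-\diver_M(w^{-1}H\nabla u)$ should make the $H$-derivative terms cancel, leaving exactly $\|\SF\|^2 - H^2 - \la Y, \nabla w/w\ra$. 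The cleanest route is probably to avoid invoking Simons' identity explicitly and instead verify \eqref{jacobibetter} by a Bochner-type computation: test the identity $\Delta_M u = Hw$ appropriately, or use that $Y$ is constructed so that $\diver_M Y$ telescopes. Since $\la Y, \nabla w / w\ra = \|\nabla w\|^2/w^2 - (H/w^2)\la \nabla u, \nabla w\ra$, the target right-hand side is $\|\SF\|^2 - H^2 - \|\nabla w\|^2/w^2 + (H/w^2)\la\nabla u,\nabla w\ra$, so matching this against $\frac{\Delta_M w}{w} - \frac{\|\nabla w\|^2}{w^2} - H^2 - \la\nabla(H/w),\nabla u\ra$ reduces everything to verifying the single scalar identity
\[
\frac{\Delta_M w}{w} = \|\SF\|^2 + \frac{H}{w^2}\la \nabla u, \nabla w\ra + \left\la \nabla\!\left(\frac{H}{w}\right), \nabla u\right\ra = \|\SF\|^2 + \la \nabla H, \nabla u\ra / w .
\]

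The main obstacle is establishing this last identity, i.e. the precise formula for $\Delta_M w$. This is the genuine computational heart of the lemma and amounts to a Simons-type / Jacobi-operator calculation for the energy density $w = -\mathbf{n}\cdot\partial_0$ of a spacelike graph of prescribed mean curvature. The standard way is to observe that $w$ satisfies $\mathcal{J}w = \la \nabla H, \partial_0\ra^{\parallel}$-type equation, where $\mathcal{J} = \Delta_M + \|\SF\|^2 + \overline{\mathrm{Ric}}(\mathbf{n},\mathbf{n})$ is the Jacobi operator; since the ambient space is flat Minkowski, $\overline{\mathrm{Ric}} = 0$, and since $\partial_0$ is a parallel timelike vector field, the function $w = -\mathbf{n}\cdot\partial_0$ is (up to the prescribed-curvature correction) a Jacobi field, giving $\Delta_M w + \|\SF\|^2 w = -\la \bar D H, \partial_0^\parallel\ra = \la \nabla H, \nabla u\ra$ after using \eqref{partial0parallel}. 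Dividing by $w$ and assembling the pieces above then yields \eqref{jacobibetter}. I would carry out the steps in this order: (1) rewrite $\diver_M Y$ via product rule into $\Delta_M\log w - \diver_M(w^{-1}H\nabla u)$; (2) evaluate $\diver_M(w^{-1}H\nabla u) = H^2 + \la\nabla(H/w),\nabla u\ra$ using $\Delta_M u = Hw$; (3) establish the Jacobi identity $\Delta_M w = w\|\SF\|^2 + \la\nabla H,\nabla u\ra$; (4) combine, simplify using $\la Y,\nabla w/w\ra = \|\nabla w\|^2/w^2 - (H/w^2)\la\nabla u,\nabla w\ra$, and check that all $H$-gradient terms cancel to land on \eqref{jacobibetter}.
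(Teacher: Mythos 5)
Your proposal is correct and follows essentially the paper's own route: everything reduces to the Jacobi-type identity $\Delta_M w = \|\SF\|^2 w + \la \nabla H, \nabla u \ra$, obtained exactly as you indicate from the Jacobi equation for $w = -\mathbf{n}\cdot \partial_0$ in the flat ambient space together with \eqref{partial0parallel}, after which the product-rule manipulations with $\Delta_M \log w$, $\diver_M\!\left(H\nabla u/w\right)$ and $\Delta_M u = Hw$ coincide with the paper's, merely organized in a slightly different order. Only fix the sign slip in your intermediate sentence claiming $\Delta_M w + \|\SF\|^2 w = \la \nabla H, \nabla u \ra$ (a Riemannian-convention Jacobi operator): for a spacelike graph the correct relation is $\Delta_M w - \|\SF\|^2 w = \la \nabla H, \nabla u \ra$, which is the version you in fact state and use in step (3).
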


\begin{proof}
We first claim that
\begin{equation}\label{jacobi}			
\Delta_M w = \Big(\| \SF \|^2 - H^2\Big) w +\diver_M \big( H \nabla u \big) 	\qquad {\rm on} \ M.
\end{equation}
Indeed, the identity follows from \eqref{eq_jacobi_standard} once we recall that $\partial_0^\parallel = -\nabla u$ and that 
\[
\la \nabla H, \nabla u \ra = \diver_M(H \nabla u) - H \Delta_M u = \diver_M(H \nabla u) - H^2 w.
\]
From \eqref{jacobi} we therefore obtain 
	\[
		\Delta_M \log w = \|\SF\|^2 - H^2 - \frac{\|\nabla w\|^2}{w^2} + \diver_M \left(\frac{H\nabla u}{w}\right) 
		+ H \la \frac{\nabla u}{w}, \frac{\nabla w}{w}\ra,
	\]
which is \eqref{jacobibetter} up to rearranging terms.
\end{proof}

	By \eqref{ide_SF_cong}, $\nabla u = \sh \beta \nu$ and $\nabla w / w = \sh \beta \nabla \beta / \ch \beta$, 
 we rewrite the vector field $Y$ as
		\begin{equation}\label{Y}
		Y = \frac{\sh \beta}{\ch \beta}\big( \nabla \beta - H \nu\big)
		\end{equation}
and $\diver_M Y$ as 
	\[
				\begin{aligned}
					\diver_M Y  
					= & 
					\left[ \frac{\sh^2 \beta}{\ch^{2} \beta} \|A\|^2 - \frac{m-2}{m-1}H^2 
					- \frac{2}{m-1} H \langle \nabla \beta, \nu \rangle \right. \\
					& \quad  \left. + \frac{m}{m-1} \|\nabla \beta\|^2 
					+ \frac{m-2}{m-1} \|\nabla^\top  \beta\|^2 - \frac{\sh \beta}{\ch \beta} \la Y, \nabla \beta  \ra \right] 
					\cdot \mathbb{1}_{\{\di u \neq 0\}}
				\end{aligned}
	\]
%		\begin{equation}\label{jacobibetter_ref_cong}	
%		\begin{aligned}
%			\diver_M Y  
%			= & 
%			\left[ \frac{\sh^2 \beta}{\ch^{2} \beta} \|A\|^2 - \frac{m-2}{m-1}H^2 
%			- \frac{2}{m-1} H \langle \nabla \beta, \nu \rangle \right. \\
%			& \quad  \left. + \frac{m}{m-1} \|\nabla \beta\|^2 
%			+ \frac{m-2}{m-1} \|\nabla^\top  \beta\|^2 - \frac{\sh \beta}{\ch \beta} \la Y, \nabla \beta  \ra \right] 
%			\cdot \mathbb{1}_{\{\di u \neq 0\}}
%		\end{aligned}
%		\end{equation}
a.e. on $\Omega$. By \eqref{Y} with $0 \leq \sh \beta / \ch \beta \leq 1$ and Cauchy-Schwarz's and Young's inequalities, we have 
	\[
	\begin{aligned}
		\left| \frac{\sh \beta}{\ch \beta} \la Y, \nabla \beta \ra \right| 
		& \le \left\| \nabla \beta - H \nu \right\| \|\nabla \beta \| 
		\le \| \nabla \beta \|^2 + |H| \|\nabla \beta\| \le (1+ \eps) \|\nabla \beta\|^2 + \frac{4}{\e}H^2,
		\\
		\left| H \la \nabla \beta, \nu \ra  \right| & \le |H| \| \nabla \beta \| \le \frac{1}{2\e} |H|^2 + \frac{\e}{2} \| \nabla \beta \|^2.
	\end{aligned}
	\]
Thus there exist constants $C_m, C_{m,\eps} $ such that 
	\begin{equation}\label{est-Y-bel1}
		\begin{aligned}
			\diver_M Y 
			\geq 
			\left[ 
			\frac{\sh^2 \beta}{\ch^2 \beta} \|A \|^2 - C_{m,\e} H^2 
			+ \left\{ \frac{1}{m-1} - \frac{C_m \e}{2} \right\} \| \nabla \beta \|^2 
			\right] \cdot \mathbb{1}_{\{\di u \neq 0\}}
		\end{aligned}
	\end{equation}
a.e. on $\Omega$. We notice from the smoothness of $Y$, $H$ and from estimate \eqref{est-Y-bel1} that the function $\| \nabla \beta \|^2 \mathbb{1}_{ \{ \di u \neq 0 \} } $ is integrable on the graph of $u$.

	\begin{proposition}\label{prop_inte_estimate}	
	There exists a constant $C = C_m>0$ such that, for every $\varphi \in \lip_c(\Omega)$, 
	\begin{equation}\label{inte_deltag_0}
		\int_M \varphi^2 \|\SF\|^2 \di x_g \le C_m \left( \int_M \|\nabla \varphi\|^2 \di x_g + \int_M \varphi^2 H^2 \di x_g \right).
	\end{equation}	
	\end{proposition}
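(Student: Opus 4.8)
The plan is to integrate the divergence identity \eqref{jacobibetter} against $\varphi^2$, move the divergence onto the cutoff, and absorb the resulting cross terms using Young's inequality together with the pointwise relations between $\|\SF\|^2$, $\|\nabla\log w\|^2$ and $\|\nabla\beta\|^2$ established above. Concretely, since $Y$ and $H$ are smooth on $M$ and, by \eqref{est-Y-bel1}, the quantity $\|\nabla\beta\|^2\mathbb{1}_{\{\di u\neq 0\}}$ is integrable on $M'\Subset M$, we may test the identity $\diver_M Y = \|\SF\|^2 - H^2 - \la Y, \nabla w/w\ra$ against $\varphi^2 \in \lip_c(\Omega)$ and integrate by parts to obtain
\[
\int_M \varphi^2 \|\SF\|^2 \di x_g = \int_M \varphi^2 H^2 \di x_g + \int_M \varphi^2 \la Y, \tfrac{\nabla w}{w}\ra \di x_g - 2\int_M \varphi \la Y, \nabla \varphi\ra \di x_g.
\]

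Next I would bound the last two integrals. Using \eqref{Y}, i.e. $Y = \tfrac{\sh\beta}{\ch\beta}(\nabla\beta - H\nu)$, together with $0\le \sh\beta/\ch\beta\le 1$ and $\nabla w/w = \tfrac{\sh\beta}{\ch\beta}\nabla\beta$, one gets $\|Y\|\le \|\nabla\beta\| + |H|$ and $\la Y,\nabla w/w\ra \le \|\nabla\beta\|^2 + |H|\|\nabla\beta\|$, so $\varphi^2\la Y,\nabla w/w\ra \le \tfrac32\varphi^2\|\nabla\beta\|^2 + \tfrac12\varphi^2 H^2$. Similarly $2\varphi|\la Y,\nabla\varphi\ra| \le 2\varphi(\|\nabla\beta\|+|H|)\|\nabla\varphi\| \le \eps\varphi^2\|\nabla\beta\|^2 + \eps\varphi^2 H^2 + C_\eps\|\nabla\varphi\|^2$. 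Collecting terms, the right-hand side is controlled by $\big(\tfrac32 + \eps\big)\int_M \varphi^2\|\nabla\beta\|^2\di x_g + C\int_M\varphi^2 H^2\di x_g + C_\eps\int_M\|\nabla\varphi\|^2\di x_g$. This leaves an integral of $\varphi^2\|\nabla\beta\|^2$ on the right that must still be reabsorbed, so a naive Young's inequality is not enough on its own — one needs to exploit the favorable constant in \eqref{est-Y-bel1}.

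The key trick is to repeat the integration by parts with the \emph{lower bound} \eqref{est-Y-bel1}: testing $\diver_M Y \ge \big(\tfrac{1}{m-1}-\tfrac{C_m\eps}{2}\big)\|\nabla\beta\|^2\mathbb{1}_{\{\di u\neq 0\}} - C_{m,\eps}H^2$ against $\varphi^2$ and integrating by parts yields, after moving $\diver_M Y$ onto $\varphi^2$ and using the same Cauchy--Schwarz/Young bounds on $\int \varphi\la Y,\nabla\varphi\ra$,
\[
\Big(\tfrac{1}{m-1}-\tfrac{C_m\eps}{2}-\eps'\Big)\int_M \varphi^2\|\nabla\beta\|^2\di x_g \le C_{m,\eps,\eps'}\Big(\int_M\|\nabla\varphi\|^2\di x_g + \int_M\varphi^2 H^2\di x_g\Big),
\]
so for $\eps,\eps'$ small enough, $\int_M\varphi^2\|\nabla\beta\|^2\di x_g$ is itself bounded by the right-hand side of \eqref{inte_deltag_0}. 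Feeding this back into the first displayed identity, and using \eqref{equival_point} to pass from $\tfrac{\sh^2\beta}{\ch^2\beta}\|A\|^2 + \|\nabla\beta\|^2 + H^2$ back to $\|\SF\|^2$ (up to a dimensional constant), gives \eqref{inte_deltag_0}.

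The main obstacle I anticipate is making the integration-by-parts steps rigorous: $Y$ is smooth on $M$ (since $u$ is a smooth solution there and $w\ge 1$), and $\varphi^2\in\lip_c(\Omega)$, so $\int_M \diver_M(\varphi^2 Y)\di x_g = 0$ is legitimate by the divergence theorem on the Riemannian manifold $(M,g)$ with compactly supported integrand — the delicate point is the presence of $\mathbb{1}_{\{\di u\neq 0\}}$ and whether all integrals converge, which is guaranteed precisely by the observation recorded just before the Proposition that $\|\nabla\beta\|^2\mathbb{1}_{\{\di u\neq 0\}}$ is locally integrable. One must also verify that the contribution of the set $\{\di u = 0\}$ is harmless: there $\SF = w^{-1}\nabla^2 u = 0$ a.e. by Stampacchia, $\nabla w = 0$ a.e., hence $Y = 0$ a.e. there, so all the identities hold a.e. on $\Omega$ without further care. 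Tracking the dimensional constants through the two Young's inequality steps so that the absorbing constant stays strictly positive is the only genuinely quantitative bookkeeping required.
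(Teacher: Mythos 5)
Your proposal is correct, and its decisive step — testing the lower bound \eqref{est-Y-bel1} for $\diver_M Y$ against $\varphi^2$, integrating by parts, and absorbing the Young errors thanks to the strictly positive coefficient $\tfrac{1}{m-1}-C_m\e$ — is exactly the paper's proof. Your first pass through the exact identity \eqref{jacobibetter} is redundant: the second pass already controls $\int_M \varphi^2\bigl[\tfrac{\sh^2\beta}{\ch^2\beta}\|A\|^2+\|\nabla\beta\|^2\bigr]\di x_g$ (keep the $\|A\|^2$ term instead of dropping it), and \eqref{equival_point} then yields \eqref{inte_deltag_0} directly.
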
	
	
\begin{proof}
We test \eqref{est-Y-bel1} with the function $\varphi^2$ to obtain 
	\begin{equation}\label{int-est-1}
		\begin{aligned}
			& \int_{ \{\di u \neq 0\} } \left[ \frac{\sh^2 \beta}{\ch^2 \beta} \| A \|^2 + 
			\left\{ \frac{1}{m-1} - \frac{C_m \e}{2}  \right\} \| \nabla \beta \|^2 \right] 
			\varphi^2
			\rd x_g
			\\
			\leq \ & \int \varphi^2  \diver_M Y \, \rd x_g 
			+  C_{m,\e} \int H^2 \varphi^2 \, \rd x_g 
			\\
			= \ & -2 \int  \varphi \la \nabla \varphi , Y \ra \rd x_g + C_{m, \e} \int H^2 \varphi^2 \, \rd x_g.
		\end{aligned}
	\end{equation}
Since, from its very definition, $Y = 0$ on $\{\di u = 0\}$, and since $0 \leq \sh \beta / \ch \beta \leq 1$, 
using Cauchy-Schwarz's and Young's inequalities we see from \eqref{Y} that
	\[
		\begin{aligned}
			\left| \varphi \la \nabla \varphi , Y \ra \right| 
			&\leq 
			\left\{ \left| \varphi \la \nabla \varphi , \nabla \beta \ra \right| 
			+ \left| \varphi H \la \nabla \varphi , \nu \ra \right| \right\} \mathbb{1}_{ \{ \rd u \neq 0 \}  }
			\\
			& \leq \frac{1}{2\e} \| \nabla \varphi \|^2 + \frac{\e}{2} \varphi^2 \| \nabla \beta \|^2 \mathbb{1}_{ \{ \rd u \neq 0 \} }
			+ \frac{1}{2} \varphi^2 H^2 + \frac{1}{2} \| \nabla \varphi \|^2.
		\end{aligned}
	\]
Recalling that $\| \nabla \beta \|^2 \mathbb{1}_{ \{ \di u \neq 0\} }$ is integrable, 
it follows from \eqref{int-est-1} that 
	\[
		\begin{aligned}
			&\int_{ \{ \di u \neq 0 \} } 
			\left[ \frac{\sh^2 \beta}{\ch^2 \beta} \| A \|^2 + 
			\left\{  \frac{1}{m-1} - \frac{C_m\e}{2} - \e \right\} \| \nabla \beta \|^2 \right] \varphi^2
			\rd x_g 
			\\
			\leq \ & C_{m,\e} \int H^2 \varphi^2 \, \rd x_g + C_\e \int \| \nabla \varphi \|^2 \, \rd x_g.
		\end{aligned}
	\]
Choosing a small $\e>0$ and taking \eqref{equival_point} into account, we readily deduce \eqref{inte_deltag_0} 
and complete the proof. 
\end{proof}

Using \eqref{inte_secondfund}, \eqref{inte_deltag_0} and the approximation in Subsection \ref{subsec_strategy}, 
we prove the following result. We recall that, for $m=2$, the space $L^p(\Omega)$ below is meant to be empty. 
%
%Here, as usual, $p_1$ is the parameter fixed in Definition \ref{def-calX}, $2_* = 2m/(m+2)$ and $p_1$ is %restricted to satisfy $p_1 > p'$ if $p \in (1,2_*]$, by the discussion at the beginning of Subsection %\ref{subsec_strategy}.

\begin{corollary}\label{cor_secondfund}
Let $\Omega \subset \Rm$ be a domain. Assume that either 
	\begin{itemize}
	\item[-] $m\ge 2$, $\Omega$ is bounded, $\FF \subset \Spa(\partial \Omega)$ is a compact subset, and $\phi \in \FF$; 
	\item[-] $m \ge 3$, $\Omega = \Rm$. 
	\end{itemize}
Fix $\mathcal{I}_1, \mathcal{I}_2 \in \R^+$, $\Omega' \Subset \Omega$ and, for $\e > 0$, define $\Omega'_\e \doteq \left\{ x \in \Omega' : \di_\delta(x, \partial \Omega') > \e \right\}$. Let $p \in (1,2_*]$. Then, there exists a constant
	\begin{equation}\label{eq_C_secf}
	\mathcal{C} = \left\{ \begin{array}{ll}
	\mathcal{C}(\Omega,\FF,m,\diam_\delta (\Omega), p,\mathcal{I}_1, \mathcal{I}_2, \e, \di_\delta(\Omega', \partial \Omega) ) & \quad \text{if $\Omega$ is bounded,} \\[0.2cm]
	 \mathcal{C} (m,p,\mathcal{I}_1, \mathcal{I}_2, \e, |\Omega'|_\delta) & \quad \text{if $\Omega = \R^m$}
	 \end{array}\right.
	 \end{equation}
such that for each $\rho \in \cM(\Omega) + L^p(\Omega)$ satisfying 
	\[
	\|\rho\|_{\cM(\Omega)+L^p(\Omega)} \le \mathcal{I}_1, \qquad \| \rho \|_{L^2(\Omega')} \leq \mathcal{I}_2,
	\]
it holds
		\begin{equation}\label{bdd:u_rho}
			\begin{aligned}
				\int_{\Omega'_\e} 
				&\left\{ w_{\rho} \left| D^2 u_\rho \right|^2 
				+ w_\rho^3 \left| D^2u_\rho \left( Du_\rho, \cdot  \right) \right|^2 
				+ w_\rho^5 \left[ D^2 u_\rho \left( Du_\rho, Du_\rho \right) \right]^2 
				\right\} \rd x 
				\leq \mathcal{C}.
			\end{aligned}
		\end{equation}
In particular, 
		\begin{equation}\label{bound_Dlogw}
		\begin{aligned}
			\int_{\Omega'_\e} 
			\frac{1}{w_\rho} \left\{\left|D \log w_\rho \right|^2 + \left| Dw_\rho \cdot Du_\rho \right|^2 \right\} \di x \leq \mathcal{C}, 
			\\
			\int_{\Omega'_\e} \left\{\left|D \log w_\rho \right| + \left| Dw_\rho \cdot Du_\rho \right| \right\} \di x \leq \mathcal{C}.
		\end{aligned}
		\end{equation}
	\end{corollary}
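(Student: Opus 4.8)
The plan is to combine the pointwise identity \eqref{inte_secondfund} with the integral estimate \eqref{inte_deltag_0}, applied along the approximating sequence $\{u_j\}$ constructed in Subsection \ref{subsec_strategy}, and then pass to the limit. Recall that each $u_j$ is a smooth, strictly spacelike solution of \eqref{borninfeld} with $H_j = -(g_j+f_j)$, and that by Proposition \ref{1001} one has $u_j \to u_\rho$ strongly in $\cX(\Omega)$, hence (up to a subsequence) $Du_j \to Du_\rho$ a.e., and $w_j \to w_\rho$ a.e. on $\Omega$. The first step is to fix a cut-off $\varphi \in \lip_c(\Omega')$ with $\varphi \equiv 1$ on $\Omega'_\e$, $0 \le \varphi \le 1$, and $\|D\varphi\|_\infty \le C/\e$, and to apply Proposition \ref{prop_inte_estimate} to $u_j$ on its graph $M_j$. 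Using \eqref{inte_secondfund} on $\Omega'_\e$, the left-hand side of \eqref{inte_deltag_0} controls exactly the integrand in \eqref{bdd:u_rho} for $u_j$ (via $\di x_g = w_j^{-1}\di x$ and the expansion of $\|\SF_j\|^2$ from \eqref{norm_second}), so it remains to bound $\int_{M_j} \|\nabla\varphi\|^2 \di x_{g_j}$ and $\int_{M_j} \varphi^2 H_j^2 \di x_{g_j}$.

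For the gradient term, note $\|\nabla\varphi\|_{g_j}^2 \le w_j^2 |D\varphi|^2$ and $\di x_{g_j} = w_j^{-1}\di x$, so $\int_{M_j}\|\nabla\varphi\|^2\di x_{g_j} \le (C/\e^2)\int_{\Omega'} w_j \di x$, which is bounded by $\mathcal{C}$ thanks to \eqref{basic_L2bound} in Proposition \ref{lem_basicL2} (with $\|\rho_j\|_{\cX^*}\le \|\rho\|_{\cX^*}\le C\mathcal{I}_1$, using that the mollified $\rho_j$ have uniformly bounded norms by Subsection \ref{subsec_strategy}, and $\di_\delta(\Omega',\partial\Omega)$ or $|\Omega'|_\delta$ appearing in the constant). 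For the curvature term, $\int_{M_j}\varphi^2 H_j^2\di x_{g_j} = \int_{\Omega'}\varphi^2 H_j^2 w_j^{-1}\di x \le \int_{\Omega'} H_j^2 \di x = \|\rho_j\|_{L^2(\Omega')}^2$ since $w_j^{-1}\le 1$; here one must first arrange that the mollification in Subsection \ref{subsec_strategy} is done so that $\|\rho_j\|_{L^2(\Omega'')}$ stays controlled by $\|\rho\|_{L^2(\Omega')}$ on a slightly larger $\Omega''\Subset\Omega$ containing $\supp\varphi$ (which is possible by Young's inequality, shrinking the mollification radius). Together these give $\int_{\Omega'_\e}\{w_j|D^2u_j|^2 + w_j^3|D^2u_j(Du_j,\cdot)|^2 + w_j^5[D^2u_j(Du_j,Du_j)]^2\}\di x \le \mathcal{C}$ uniformly in $j$, with $\mathcal{C}$ of the claimed form \eqref{eq_C_secf}.

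The final step is to pass to the limit $j\to\infty$. Since $D^2u_j$ need not converge a.e., I would instead argue via lower semicontinuity: the integrand in \eqref{bdd:u_rho} equals $w_j^{-1}\|\nabla^2 u_j\|_{g_j}^2 \cdot w_j^2$-type quantities, but more efficiently one writes the left side of \eqref{bdd:u_rho} as $\int_{M'_j}\|\SF_j\|^2\di x_{g_j}$ via \eqref{inte_secondfund}, and it is cleaner to transfer to the fixed measure $\di x$ and observe that, by \eqref{equival_point} and the representation \eqref{ide_SF_cong}, the bound is equivalent to a uniform $L^2$-bound on $\sqrt{w_j}D^2u_j$ (and its $Du_j$-contractions) over $\Omega'_\e$; a uniformly $L^2$-bounded sequence has a weakly convergent subsequence, its weak limit is forced to be the corresponding expression in $u_\rho$ because $u_j\to u_\rho$ in $W^{1,q}_\loc$ implies $D^2u_j \rightharpoonup D^2u_\rho$ distributionally while $\sqrt{w_j}\to\sqrt{w_\rho}$ in measure on $\Omega'_\e$ (using $w_\rho<\infty$ a.e., Proposition \ref{lem_basicL2}), and weak lower semicontinuity of the $L^2$-norm then yields \eqref{bdd:u_rho} for $u_\rho$ with the same constant. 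Finally, \eqref{bound_Dlogw} follows from \eqref{bdd:u_rho} by the pointwise identities recorded just before Proposition \ref{prop_inte_estimate}: $|Dw_\rho|^2 = w_\rho^6|D^2u_\rho(Du_\rho,\cdot)|^2$, $\|\nabla\log w_\rho\|_{g}^2 \le \|\SF\|^2$, and $w_\rho^{-1}\le 1$, together with Cauchy--Schwarz over $\Omega'_\e$ (whose finite measure is controlled by the data) to pass from the $L^2$ bounds to the $L^1$ bounds. The main obstacle is the limit passage: $D^2u_j$ does not converge pointwise, and one must be careful that the weak limit of $\sqrt{w_j}\,D^2 u_j$ is genuinely $\sqrt{w_\rho}\,D^2u_\rho$ and not something smaller, which is where the a.e.\ convergence $w_j\to w_\rho$ (equivalently $Du_j\to Du_\rho$ a.e.) from Proposition \ref{1001} and the finiteness $w_\rho<\infty$ a.e.\ from Proposition \ref{lem_basicL2} are essential; a clean way to finish is to test the weak-$L^2$ limits against arbitrary $\psi\in C^\infty_c(\Omega'_\e)$ and use dominated convergence on the coefficient $\sqrt{w_j}\psi$ once it is shown to be equi-integrable, which in turn follows from the uniform integrability of $\{w_j\}$ guaranteed on $\Omega'_\e$ by the growth Lemma \ref{lem_growthinte} together with \eqref{basic_L2bound}.
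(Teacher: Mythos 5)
Your derivation of the uniform bound along the approximating sequence is essentially the paper's own argument: the same cut-off, Proposition \ref{prop_inte_estimate} applied to the smooth solutions $u_j$, the gradient term controlled via $\|\nabla\varphi\|^2\le w_j^2|D\varphi|^2$ and \eqref{basic_L2bound}, the curvature term via $w_j^{-1}\le 1$ and the $L^2$ bound on the mollified sources; the deduction of \eqref{bound_Dlogw} from \eqref{bdd:u_rho} by the pointwise identities and Cauchy--Schwarz also matches. The gap is in the limit passage $j\to\infty$. To identify the weak $L^2$ limit of $\sqrt{w_j}\,D^2u_j$ (and of the contracted quantities) as the corresponding expression in $u_\rho$, you pair against $\psi\in C^\infty_c(\Omega'_\e)$ and need $\sqrt{w_j}\,\psi\to\sqrt{w_\rho}\,\psi$ strongly in $L^2$, which you propose to get from equi-integrability of $\{w_j\}$ on $\Omega'_\e$, claimed to follow from Lemma \ref{lem_growthinte} together with \eqref{basic_L2bound}. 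This implication is false: those two facts only give $\int_{B_s(y)}w_j\,\di x\le \mathcal{C}s$ uniformly, a one-dimensional density bound, and a family satisfying such a bound can still concentrate on an $(m-1)$-dimensional set (e.g. $w_j= j\,\mathbb{1}_{\{|x_1|<1/j\}}$ satisfies both the ball bound and a uniform $L^1$ bound but is not uniformly integrable). Indeed, uniform integrability of $\{w_j\}$ in a region where $\rho\in L^2$ is exactly what the paper cannot prove for $m\ge 3$ (it is stated as an open problem for $m=3$, and for $m\ge 4$ it is tied to the absence of light segments, which can fail under the hypotheses of this corollary), so no argument from these hypotheses can supply it.

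The paper circumvents this without any equi-integrability, by truncation: set $w_{N,j}\doteq\min\{w_j,N\}$ and $w_N\doteq\min\{w_\rho,N\}$. The uniform estimate survives the truncation; since $w_{N,j}\le N$, $|Du_j|\le 1$ and $w_j\to w_\rho$, $Du_j\to Du_\rho$ a.e., the coefficients $w_{N,j}^{1/2}$, $w_{N,j}^{3/2}(u_j)_{i}$, $w_{N,j}^{5/2}(u_j)_{i}(u_j)_{k}$ converge strongly in every $L^q$, while $u_j\rightharpoonup u_\rho$ weakly in $W^{2,2}(\Omega'_\e)$ (here $w_j\ge1$ is used), so the products converge weakly in $L^2$ to the truncated expressions in $u_\rho$; weak lower semicontinuity of the norm gives the bound with $w_N$, and monotone convergence in $N$ yields \eqref{bdd:u_rho}. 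Your proposal is repairable by inserting exactly this device, but as written the identification of the weak limit rests on an unjustified (and in this generality unprovable) equi-integrability claim.
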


	\begin{proof}
We choose $p_1$ as in \eqref{def_p_1} to guarantee that $\rho \in \cX(\Omega)^*$, and referring to Subsection \ref{subsec_strategy}, we approximate $\rho$ through convolution obtaining $\{ \rho_j \}$ with $\rho_j = - H_j \rd x$ and $H_j \in C^\infty(\overline{\Omega})$ (resp. $H_j \in C^\infty_c(\R^m)$). 
Let $u_j$ be the smooth solution to \eqref{borninfeld} with source $\rho_j$, and write $w_j \doteq (1 - |Du_j|^2)^{-1/2}$. For the ease of notation, in the next arguments we write $u,w$ instead of $u_\rho,w_\rho$. Proposition \ref{1001} yields $u_j \to u$ strongly in $W^{1,q}(\Omega)$, for each $q \in [1,\infty)$ if $\Omega$ is bounded 
and each $q \in [2^\ast , \infty)$ if $\Omega = \Rm$. We fix $\varphi \in C^1_c( \Omega' )$ so that $\varphi \equiv 1$ on $\Omega'_\e$ and 
$ | D \varphi (x) | \leq 2/\e$ for each $x \in \Omega$. 	
From 
	\[
		\| \nabla \varphi \|^2 = |D \varphi|^2 + w_j^2 \left( Du_j \cdot D\varphi \right)^2 
		\leq \left( 1 + w_j^2|Du_j|^2 \right) \left| D \varphi \right|^2 = w_j^2 \left| D \varphi \right|^2, 
	\]
\eqref{inte_secondfund} and Proposition \ref{prop_inte_estimate} with $u_j$, it follows that 
	\[
		\begin{aligned}
			\int_{\Omega} \varphi^2 
			&w_j \left\{ 
			\left| D^2u_j \right|^2 + 2 w_j^2 \left| D^2u_j \left( Du_j, \cdot \right) \right|^2 
			+ w_j^4 \left[ D^2u_j \left( Du_j, Du_j \right) \right]^2  
			 \right\} \rd x 
			 \\
			 &\leq \mathcal{C}_m \int_{\Omega} 
			 \left\{ w_j \left| D \varphi \right|^2 + \varphi^2 \rho_j^2 w_j^{-1}   \right\} \rd x.
		\end{aligned}
	\]
Combining this estimate with $w_j \geq 1$, the properties of $\varphi$ and Proposition \ref{lem_basicL2}, 
we find a constant $\mathcal{C}$ as in \eqref{eq_C_secf} such that 
	\begin{equation}
	\label{bdd:u_j}
		\sup_{j \geq 1}\int_{\Omega'_\e} 
		w_j \left\{ 
		\left| D^2u_j \right|^2 + 2 w_j^2 \left| D^2u \left( Du_j, \cdot \right) \right|^2 
		+ w_j^4 \left[ D^2u_j \left( Du_j, Du_j \right) \right]^2  
		\right\} \rd x \leq \mathcal{C}. 
	\end{equation}
In particular, $\{u_j\}$ is bounded in $W^{2,2}(\Omega'_\e)$ and we may suppose that $u_j \rightharpoonup u$ weakly in $W^{2,2}(\Omega'_\e)$. From the $W^{1,q}$ convergence we may also suppose that 
$u_j(x) \to u(x)$, $Du_j (x) \to Du(x)$ and $w_j(x) \to w(x)$ for a.e. $x \in \Omega'_\e$.

	Fix $N>1$ and set 
	\[
		w_{N,j} (x) \doteq \min \{ w_j(x) , N \}, \quad w_{N} (x) \doteq \min \{ w(x) , N \}. 
	\]
By \eqref{bdd:u_j}, we have 
	\begin{equation}\label{bdd:Nu_j}
		\sup_{j \geq 1, N > 1}
		\int_{\Omega'_\e} 
		w_{N,j} \left\{ 
		\left| D^2u_j \right|^2 + 2 w_{N,j}^2 \left| D^2u_j \left( Du_j, \cdot \right) \right|^2 
		+ w_{N,j}^4 \left[ D^2u_j \left( Du_j, Du_j \right) \right]^2  
		\right\} \rd x \leq \mathcal{C}. 
	\end{equation}
From $w_j \to w$, $Du_j \to Du$ a.e. on $\Omega$, $w_{N,j} \leq N$ and $|Du_j| \leq 1$, 
it follows that for every $1 \leq i_1,i_2 \leq m$ and $q \in [1,\infty)$,
	\begin{multline*}
		\left\| w_{N,j} - w_{N} \right\|_{L^q(\Omega'_\e)}
		+ \left\| w_{N,j}^{3/2} (u_j)_{i_1} - w_{N}^{3/2} u_{i_1} \right\|_{L^q(\Omega'_\e)} 
		\\
		+ \left\| w_{N,j}^{5/2} (u_j)_{i_1} (u_j)_{i_2} - w_{N}^{5/2} u_{i_1} u_{i_2} 
		\right\|_{L^q(\Omega'_\e)} \to 0.
	\end{multline*}
Since $u_j \rightharpoonup u$ weakly in $W^{2,2} (\Omega_\e')$, for any $\psi \in L^\infty(\Omega'_\e)$, 
we see
	\[
		\begin{aligned}
			\int_{\Omega_\e'} 
			w_{N,j}^{1/2} (u_j)_{i_1,i_2} \psi \, \rd x 
			&\to 
			\int_{\Omega_\e'} w_{N}^{1/2} u_{i_1,i_2} \psi \, \rd x, 
			\\ 
			\int_{\Omega_\e'} 
			w_{N,j}^{3/2} (u_{j})_{i_1,i_2} (u_j)_{i_{3}} \psi \, \rd x 
			&\to 
			\int_{\Omega_\e'} 
			w_{N}^{3/2} u_{i_1,i_2} u_{i_{3}} \psi \, \rd x,
			\\
			\int_{\Omega_\e'}
			w_{N,j}^{5/2} (u_j)_{i_1,i_2} (u_j)_{i_3} (u_j)_{i_4} 
			\psi \, \rd x 
			&\to
			\int_{\Omega_\e'}
			w_{N}^{5/2} u_{i_1,i_2} u_{i_3} u_{i_4} 
			\psi \, \rd x .
		\end{aligned}	
	\]
Thus, the density of $L^\infty(\Omega_\e')$ in $L^2(\Omega_\e')$ yields 
	\[
		\begin{aligned}
			&w_{N,j}^{1/2} D^2u_j \rightharpoonup w_{N}^{1/2} D^2u, \quad  
			w_{N,j}^{3/2} D^2u_j \left( Du_j, \cdot \right) \rightharpoonup 
			w_{N}^{3/2} D^2 u \left( Du, \cdot \right), 
			\\
			&w_{N,j}^{5/2} D^2 u_j \left( Du_j, Du_j \right) \rightharpoonup 
			w_{N}^{5/2} D^2 u \left( Du, Du \right)
		\end{aligned}
	\]
weakly in $L^2(\Omega_\e')$. Hence, by \eqref{bdd:Nu_j} and the lower semicontinuity of the norm, we obtain 
	\[
		\sup_{N > 1} \int_{\Omega'_\e} 
		w_{N} \left\{ 
		\left| D^2u \right|^2 + 2 w_{N}^2 \left| D^2u \left( Du, \cdot \right) \right|^2 
		+ w_{N}^4 \left[ D^2u \left( Du, Du \right) \right]^2  
		\right\} \rd x \leq \mathcal{C}. 
	\]
By letting $N \to \infty$ and using the monotone convergence theorem, \eqref{bdd:u_rho} holds.

	The first in \eqref{bound_Dlogw} readily follows from 
	\[
	|D \log w|^2 = w^{4}\left| D^2u(Du, \cdot) \right|^2, \qquad D w \cdot Du = w^3 D^2 u( Du, Du)
	\]
a.e. on $\Omega$. On the other hand, the second in \eqref{bound_Dlogw} is derived from H\"older's inequality and Proposition \ref{lem_basicL2}:
	\[
	\begin{aligned}
		&\int_{\Omega'_\e} \left\{\left|D \log w \right| + \left| Dw \cdot Du \right| \right\} \di x \\
		\leq \ & 
		\left( \int_{\Omega'_\e} w \rd x \right)^{1/2} 
		\left( \int_{\Omega'_\e} \frac{1}{w} \left\{\left|D \log w \right|^2 + \left| Dw \cdot Du \right|^2 \right\} 
		\di x \right)^{1/2} \le \mathcal{C}.
	\end{aligned}
	\]
This concludes the proof. 
\end{proof}

%%%%%%%%%%%%%%%%%%%%%%%%%%%%%%%%%%%%%%%%%%%%%%%%%%%%%%%%%%%%%%%%%%
%%%%%%%%%%%%%%%%%%%%%%%%%%%%%%%%%%%%%%%%%%%%%%%%%%%%%%%%%%%%%%%%%%
%%%%%%%%%%%%%%%%%%%%%%%%%%%%%%%%%%%%%%%%%%%%%%%%%%%%%%%%%%%%%%%%%%

%%%%%%%%%%%%%%%%%%%%%%%%%%%%%%%%%%%%%%%%%%%%%%%%%%%%%%%%%%%%%%%%%%
%%%%%%%%%%%%%%%%%%%%%%%%%%%%%%%%%%%%%%%%%%%%%%%%%%%%%%%%%%%%%%%%%%
%%%%%%%%%%%%%%%%%%%%%%%%%%%%%%%%%%%%%%%%%%%%%%%%%%%%%%%%%%%%%%%%%%

\subsection{Higher regularity (log-improvements)}\label{sec_higher_regularity}

The main purpose of this subsection is to obtain a local higher integrability for the energy density of $u_\rho$ of the type
\[
\int_{\Omega''} w_\rho \log(1+w_\rho) \, \di x \le \mathcal{C},
\]
where $\Omega'' \Subset \Omega' \Subset \Omega$ and we assume $\rho \in L^2(\Omega')$. We shall split the discussion according to whether $m=2$ or not. Indeed, in dimension $2$ we can rely on the trace theorem and Corollary \ref{cor_secondfund} to ensure higher integrability with no additional assumption. As we shall see, the situation is more intricate in dimension $m \ge 3$. For $m=2$, we prove:

\begin{theorem}\label{teo_higher_m2}
Let $\Omega \subset \R^2$ be a bounded domain, let $\FF \subset \Spa(\partial \Omega)$ be compact and $\phi \in \FF$. Fix  $\Omega' \Subset \Omega$ and for $\e>0$, 
define $\Omega'_\e \doteq \{x \in \Omega' : \di_\delta(x, \partial \Omega') > \e\}$. Let $\rho \in \cM(\Omega)$ satisfy
	\[
	\|\rho\|_{\cM(\Omega)} \le \mathcal{I}_1, \qquad \|\rho\|_{L^2(\Omega')} \le \mathcal{I}_2
	\]
for some constants $\mathcal{I}_1,\mathcal{I}_2$. 
Then, there exists $\mathcal{C} = \mathcal{C}(\Omega,\FF, \diam_\delta(\Omega),\mathcal{I}_1,\mathcal{I}_2,\e,\di_\delta(\Omega',\partial \Omega)) $
such that the energy density $w_\rho = (1-|Du_\rho|^2)^{-1/2}$ satisfies 
	\begin{equation}\label{high-int-wrho}
	\int_{\Omega'_\e} w_\rho \log \left(1+w_\rho \right)\di x \le \mathcal{C}.
	\end{equation}
In particular, $u_\rho$ weakly solves \eqref{borninfeld} on $\Omega'$.
\end{theorem}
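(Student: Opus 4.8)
The plan is to exploit the two-dimensional structure of the problem: in $m=2$, the estimate $(\mathcal{P}0_1)$ (the second fundamental form bound, i.e. Corollary \ref{cor_secondfund}) together with the energy growth bound $(\mathcal{P}0_2)$ (Lemma \ref{lem_growthinte}) should upgrade to the logarithmic integrability \eqref{high-int-wrho}. First I would carry out the approximation of Subsection \ref{subsec_strategy}: pick $p_1 = 3$, smooth $\rho$ by convolution to get $\rho_j = -H_j\,\rd x$ with $H_j \in C^\infty(\overline\Omega)$ and $\|H_j\|_{\cM(\Omega)} \le \|\rho\|_{\cM(\Omega)} \le \mathcal{I}_1$, $\|H_j\|_{L^2(\Omega')} \le \mathcal{I}_2$, and let $u_j$ be the smooth solution to \eqref{borninfeld} with source $\rho_j$, $w_j = (1-|Du_j|^2)^{-1/2}$. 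By Proposition \ref{1001}, $u_j \to u_\rho$ in $\cX(\Omega)$ and a.e., so once I prove a uniform bound $\int_{\Omega'_\e} w_j \log(1+w_j)\,\rd x \le \mathcal{C}$ for the approximants, \eqref{high-int-wrho} follows by Fatou, and moreover $\{w_j\}$ is locally uniformly integrable on $\Omega'$ (via de la Vall\'ee-Poussin, since $t\mapsto t\log(1+t)$ is superlinear), whence $(\mathcal{P}0_2)$-type covering arguments and Vitali give \eqref{limit-eq}; combined with Proposition \ref{prop_localglobal} this yields that $u_\rho$ weakly solves \eqref{borninfeld} on $\Omega'$. So the whole statement reduces to the uniform log-estimate for $w_j$.

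For the log-estimate itself, the natural route is to test the Jacobi-type inequality for $w_j$. Working on the graph $M_j$ of $u_j$ with its metric $g_j$, recall $\di x_{g_j} = w_j^{-1}\rd x$ and, from Lemma \ref{lem_jacobi}, $\Delta_{M_j} \log w_j = \|\SF_j\|^2 - H_j^2 - \|\nabla w_j\|^2/w_j^2 + \diver_{M_j}(H_j\nabla u_j/w_j) + H_j\langle \nabla u_j/w_j, \nabla w_j/w_j\rangle$. The key is that in dimension $m=2$ the term $\|\SF_j\|^2$ carries the full information: by \eqref{eq:1} one has $\|\SF_j\|^2 = w_j^{-2}\|\nabla^2 u_j\|^2$ and, crucially, the trace of $\SF_j$ is $H_j$, so $\|\SF_j\|^2 \ge \frac12 H_j^2$ pointwise and the traceless part controls $\|\nabla\log w_j\|^2$ up to the bounded term $H_j^2$ — precisely inequality \eqref{equival_point} with $m=2$. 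Testing $\Delta_{M_j}\log w_j$ (or rather the divergence form \eqref{jacobibetter} of $Y_j = (\nabla w_j - H_j\nabla u_j)/w_j$) against a cutoff of the form $\varphi^2 f(\log w_j)$ with $f$ a suitable concave increasing function (e.g. $f(s) = \log(1+s)$ or a truncation thereof) should, after integration by parts and Cauchy--Schwarz/Young to absorb the gradient terms using the already-available $L^2(\Omega'_{\e/2})$ bound on $\|\nabla\log w_j\|$ in the graph metric (second line of \eqref{bound_Dlogw} applied to $u_j$, which is uniform in $j$ by Corollary \ref{cor_secondfund}), produce $\int_M \varphi^2 f(\log w_j)\,\di x_{g_j} \lesssim \mathcal{C}$, i.e. $\int_{\Omega'_\e} w_j^{-1}\log(1+\log w_j)\cdots$ — this is not yet quite the target. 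The actual mechanism producing one power of $\log w_j$ in front of $w_j$ (rather than a loss) must come from combining the second fundamental form estimate with the $J_{y,j}(s) \le \mathcal{C}s$ growth of Lemma \ref{lem_growthinte}: on a ball where $\int_{B_s} w_j \le \mathcal{C}s$, the graph has bounded area growth, and the $L^1$ bound on $\|\SF_j\|$ forces, through a Simon-type or Michael--Simon--Sobolev argument on the minimal-like graph, that $w_j$ cannot be too large on too big a set; iterating this gives the extra log. I would set this up as a Moser-type iteration on the super-level sets $\{w_j > \lambda\}$ in the graph metric, using the Sobolev inequality on $M_j$ (available in $m=2$ with constants depending only on the area bound from $(\mathcal{P}0_2)$) and the $L^1$ gradient bound \eqref{bound_Dlogw}.

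The hard part will be step: making the log-gain rigorous and uniform in $j$. The delicate points are (a) obtaining a Sobolev/isoperimetric inequality on the graphs $M_j$ with $j$-independent constant over the fixed region $\Omega'_{\e}$ — here the area bound $\int_{B_r}w_j \le \mathcal{C}r$ from Lemma \ref{lem_growthinte} and the mean-curvature bound $\|H_j\|_{L^2}\le \mathcal{I}_2$ feed into a Michael--Simon type inequality, but one must track that all constants depend only on the data; (b) correctly choosing the test function $f(\log w_j)$ and the cutoff so that the borderline term involving $\|\SF_j\|^2$ on the left dominates the error terms $H_j^2$, $|H_j|\|\nabla\log w_j\|$, and the cutoff-gradient term $w_j^2|D\varphi|^2$ on the right after converting to the graph measure; (c) localizing: since the estimate shrinks from $\Omega'$ to $\Omega'_\e$, I need a chain of nested cutoffs and the Lorentzian-ball/Euclidean-ball comparison to keep $L^u_R(\Omega'_\e)\Subset\Omega'$. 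I expect (a) to be the true obstacle, as the other two are technical once the geometric inequality is in hand. Once the uniform bound $\sup_j \int_{\Omega'_\e} w_j\log(1+w_j)\,\rd x \le \mathcal{C}$ is established, Fatou's lemma gives \eqref{high-int-wrho}, uniform integrability of $\{w_j\}$ on $\Omega'$ follows, and Theorem \ref{teo_removable} (or directly Vitali's theorem plus Proposition \ref{prop_localglobal}) yields that $u_\rho$ weakly solves \eqref{borninfeld} on $\Omega'$.
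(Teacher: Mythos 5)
Your overall scaffolding matches the paper: approximate $\rho$ by $\rho_j=-H_j\,\di x$ as in Subsection \ref{subsec_strategy}, reduce everything to a uniform bound $\sup_j\int_{\Omega'_\e}w_j\log(1+w_j)\,\di x\le\mathcal{C}$, then conclude \eqref{high-int-wrho} by Fatou and the weak solvability on $\Omega'$ by uniform integrability, Vitali and Proposition \ref{prop_localglobal}. You also correctly identify the two uniform ingredients the paper actually uses: the linear growth $J_{x,j}(s)\le \mathcal{C}s$ of the energy measure from Lemma \ref{lem_growthinte}, and the uniform $L^1$ bound on $|D\log w_j|$ from Corollary \ref{cor_secondfund} (second line of \eqref{bound_Dlogw}). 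However, the step that produces the logarithmic gain — the whole content of the theorem — is missing. You concede that the intrinsic test-function computation on the graph delivers integrals against $\di x_{g_j}=w_j^{-1}\di x$, i.e.\ a loss rather than a gain, and the substitute you sketch (a Michael--Simon type Sobolev inequality on the graphs $M_j$ plus a Moser-type iteration on super-level sets of $w_j$) is neither carried out nor clearly viable: Michael--Simon concerns submanifolds of Euclidean space, while here the relevant metric is the induced Lorentzian graph metric, which degenerates relative to the Euclidean one precisely where $w_j$ is large, and any intrinsic inequality will keep reproducing the $w_j^{-1}$ weight you already flagged. As written, the key estimate is an acknowledged gap.

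The paper's mechanism is much simpler and entirely Euclidean: it applies the Maz'ya trace inequality \eqref{eq_trace}, which bounds $\int\varphi\,\di\mu$ by $C\,\bigl[\sup_{x,r}\mu(B_r(x))/r^{m-1}\bigr]\int|D\varphi|\,\di x$ for $\varphi\in C^\infty_c(\R^m)$. Taking $\mu= w_j\,\di x\measrest\Omega'_\e$, Lemma \ref{lem_growthinte} combined with the uniform bound $\int_{\Omega'}w_j\,\di x\le\mathcal{C}$ of Proposition \ref{lem_basicL2} gives $\mu(B_r(x))\le\mathcal{C}r$ for every $x$ and $r$, and in dimension $m=2$ this linear growth exactly matches the exponent $m-1=1$; this is the only place where two-dimensionality enters. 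Choosing $\varphi=\psi\log(1+w_j)$ with $\psi$ a cutoff equal to $1$ on $\Omega'_{3\e/4}$ and supported in $\Omega'_{\e/2}$, the right-hand side is controlled by $\int\log(1+w_j)|D\psi|\,\di x\le\mathcal{C}\int_{\Omega'}w_j\,\di x$ and by $\int\psi|D\log w_j|\,\di x$, both uniform in $j$ by Proposition \ref{lem_basicL2} and \eqref{bound_Dlogw}. No Sobolev inequality on the graph, no iteration, and no Lorentzian-ball localization $L^u_R(\Omega'_\e)\Subset\Omega'$ is needed here (that device only enters in Theorem \ref{teo_higherint}). To rescue your route you would essentially have to prove a weighted ($w_j\,\di x$ versus $\di x$) trace inequality from the linear ball growth — which is exactly what \eqref{eq_trace} already provides.
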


\begin{proof}
We fix $p_1$ as in \eqref{def_p_1} and, as in the proof of Corollary \ref{cor_secondfund}, we find $\rho_j \doteq - H_j \rd x$ satisfying 
$H_j \in C^\infty(\overline{\Omega})$ and  
	\[
		\sup_{j \geq 1} \|\rho_j\|_{\cM(\Omega)} \le \mathcal{I}_1, \qquad 
		\sup_{j \geq 1}\|\rho_j\|_{L^2(\Omega')} \le \mathcal{I}_2.
	\]
Denote by $u_j$ the minimizer of $I_{\rho_j}$ and by $w_j = (1-|Du_j|^2)^{-1/2}$. 
We recall that, for each Radon measure $\mu$ on $\R^m$, the following trace inequality holds for some constant $C = C(m)$, see  \cite[Corollary 1.1.2]{mazyashap}:
	\begin{equation}\label{eq_trace}
	\int \varphi \, \di \mu 
	\le 
	C \left[ \sup_{x \in \R^m, r>0} \frac{\mu(B_r(x))}{r^{m-1}}\right] \int |D\varphi| \, \di x 
	\qquad \forall \, \varphi \in C^\infty_c(\R^m).
	\end{equation}
By Proposition \ref{lem_basicL2}, 
		\[
			\int_{\Omega'} w_j   \rd x \le \mathcal{C}_1\big(\Omega,\FF, \diam_\delta(\Omega),\mathcal{I}_1,\di_\delta(\Omega',\partial \Omega)\big) ,
		\]
while, by Corollary \ref{cor_secondfund},
		\[
			\int_{\Omega'_{\e/2}} \left| D\log w_j \right| \di x 
			\le 
			\mathcal{C}_2\big(\Omega,\FF, \diam_\delta(\Omega),\mathcal{I}_1,\mathcal{I}_2,\e,\di_\delta(\Omega',\partial \Omega)\big) .
		\]	
Hereafter, $\mathcal{C}_j$ will denote a constant depending on the same data as $\mathcal{C}_2$. We consider the measure $\mu \doteq w_j \di x \measrest \Omega'_\e$ and set  
$\varphi \doteq \psi \log(1+w_j)$ for a  cut-off function $\psi$ satisfying $\psi \equiv 1$ on $\Omega'_{3\e/4}$ 
and ${\rm supp} \, \psi \subset \Omega'_{\e/2}$. By \eqref{ineq_I_simple_2}, 
for each $x \in \Omega'_{\e/4}$ and $r< \e/8$,
		\[
		\mu(B_r(x)) = \int_{B_r(x)\cap \Omega_\e'} w_j \di x \le r \left[ \frac{8}{\e} \int_{B_{\e/8}(x)} w\, \rd x 
		+ C(\mathcal{I}_1) \right] \le \mathcal{C}_3 r.
		\]
On the other hand, if $x \in \Omega'_{\e/4}$ and $r \ge \e/8$, then 
		\[
		\mu(B_r(x)) \le \int_{\Omega'} w_j \rd x \le \mathcal{C}_1 \le \mathcal{C}_4r.
		\]
When $x \not \in \Omega'_{\e /4}$ and $r < \e /8$, we clearly have $\mu(B_r(x)) = 0$. 
Hence, $\mu(B_r(x)) \le \mathcal{C}_5 r$ for each $x \in \R^2$, $r>0$. 
Our dimensional restriction, \eqref{eq_trace} and \eqref{bound_Dlogw} imply
		\[
		\begin{split}
			\int_{\Omega'_\e} w_j \log \left(1+w_j \right) \di x 
			& \le 
			\mathcal{C}_6 \int_{\R^2} \left| D \left( \psi \log \left(1+w_j \right) \right) \right| \di x 
			\\
			& \le \mathcal{C}_6 \int_{\Omega'_{\e/2}} \left[ \log \left(1+w_j\right) |D\psi| 
			+ \psi \left|D\log w_j \right| \right] \di x 
			\le \mathcal{C}_7.
		\end{split}
		\]
Now \eqref{high-int-wrho} follows by letting $j \to \infty$ and using Fatou's lemma. Finally, the fact that $u_\rho$ weakly solves \eqref{borninfeld} on $\Omega'$ follows from \eqref{high-int-wrho} and 
the discussion in Subsection \ref{subsec_strategy}. 
%
%	For \eqref{eq-O'}, since $u_j$ is a smooth solution to \eqref{borninfeld}, 
%	\[
%		\int_{\Omega'} w_j Du_j \cdot D \eta \, \rd x = \la \rho_j , \eta \ra \qquad \forall \, \eta \in \Lip_c(\Omega').
%	\]
%By Proposition \ref{1001}, we may suppose $Du_j(x) \to Du_\rho(x)$ and $w_j(x) \to w_\rho(x)$ a.e. $\Omega$. 
%From \eqref{high-int-wrho}, \eqref{eq-O'} holds. 
\end{proof}

	We remark that Theorem \ref{teo_higher_m2} cannot be extended to dimension $m \ge 4$. 
Otherwise, the entire proof of Theorem \ref{teo_BI_local_s2} in Subsection \ref{subsec_prf_2dim} would work for dimension $m \ge 4$, which contradicts the example in Remark \ref{rem_nosolve} (cf. Theorem \ref{teo_nolight}). 
In dimension $m=3$, proving that $\{w_j\}$ is locally uniformly integrable on a subdomain where $\rho$ is of class $L^2$ 
is an open problem, which seems challenging.

Nevertheless, when $\rho \in L^2(\Omega')$ we can prove a higher integrability for $w_\rho$ on $\Omega'_\eps$ which holds in any dimension, provided we further assume that the Lorentzian ball $L_R^\rho(\Omega'_\eps)$ defined in \eqref{def_LRo} satisfies
	\begin{equation}\label{eq_lorball}
	L_R^\rho(\Omega'_\eps) \Subset \Omega' \qquad \text{for some } \, R>0.
	\end{equation}
The requirement assures that Lorentzian balls of radius $R$ centered at every point $o \in \Omega'_\eps$ are relatively compact in $\Omega'$, and so it enables to localize by means of cut-off functions depending on the Lorentzian distance $\ell_o$ from $o$. Such cut-off functions are better behaved than those based on the Euclidean distance $r_o$, because $\Delta_M \ell_o$ can be controlled more efficiently than $\Delta_M r_o$.  More precisely, if $u \in \cX_\phi(\Omega)$ and $\phi \in \Spa(\partial \Omega)$, then from \eqref{eq_elle2} we get
	\begin{equation}\label{eq_deltal2}
	\|\nabla \ell_o^2\|^2 \le 4\ell^2_o + 16w^2 \left| x- o \right|, \qquad 
	|\Delta_M \ell_o^2| \le 2 m + 4wH \left| x - o \right|.
	\end{equation}
On the other hand, computing the gradient and Laplacian of $r_o$ and using \eqref{eq_hessianLapla}, we get 
	\[
		|\Delta_M r_o^2| \le C( 1 + w^2 + |H|w).
	\]
As we will discuss in Remark \ref{rem_point} below, the advantage of using $\ell_o$ instead of $r_o$ is exactly the absence of the addendum $w^2$ in the upper bound \eqref{eq_deltal2} for $|\Delta_M \ell_o^2|$. 
The necessity to differentiate the cut-off function to second order will be apparent in the proof of the next theorem, 
the second main result of this subsection. Under assumption \eqref{eq_lorball}, inequality \eqref{ine_thebest} below provides both a log-improvement for the energy density and allows to improve, by a factor of $\log^q w$, the conclusion \eqref{bdd:u_rho} in Corollary \ref{cor_secondfund}.

	To introduce the statement, we first recall that, by Proposition \ref{lem_basicL2}, 
given $\Omega' \Subset \Omega$ and $\mathcal{I}_1$ such that $\rho = -H \rd x$ and $\|\rho\|_{\cM(\Omega)} \le \mathcal{I}_1$, 
the identity $\di x_g = w^{-1}\di x$ implies
\[
\int_M |H| w \, \di x_g \le \mathcal{I}_1, \qquad \int_{M'} w^2 \, \di x_g \le \mathcal{C}, 
\]
where $M'$ is the graph over $\Omega'$ and $\mathcal{C}$ is a constant depending on the data of our problem.

\begin{theorem}\label{teo_higherint}
Let $\Omega \subset \R^m$ be either
\begin{itemize}
\item[-] a bounded domain, $m \geq 2$, $\FF \subset \Spa(\partial \Omega)$ is compact and $\phi \in \FF$, or 
\item[-] $\Omega = \R^m$ and $m \geq 3$. 
\end{itemize}
Let 
	\[
	H \in C^\infty(\overline\Omega) \ \ \text{ if $\Omega$ is bounded,} \qquad H \in C^\infty_c(\R^m) \ \ \text{ if } \, \Omega = \R^m,
	\] 
define the measure $\rho = - H \di x$, and let $u \in \cX_\phi(\Omega)$ be the minimizer of $I_\rho$. Assume that
\begin{equation}\label{assu_int_Linfty}
\|u\|_{L^\infty(\Omega)} \le \mathcal{I}_0, \qquad \|\rho\|_{\cM(\Omega)+L^p(\Omega)} \le \mathcal{I}_1,
\end{equation}
for some constants $\mathcal{I}_0,\mathcal{I}_1 > 0$ and $p \in (1,2_*]$. Suppose that there exist two open subsets $\Omega'' \Subset \Omega' \Subset \Omega $ such that
\begin{equation}\label{assu_higherint}
 \qquad \int_{\Omega'} H^2 \frac{(1 + \log w)^{q_0+2}}{w} \di x \le \mathcal{I}_{2,q_0},
\end{equation}
for some $q_0 \in \mathbb{N} \cup \{0\}$ and $\mathcal{I}_{2,q_0} \in \R^+$, and that for some $R>0$ it holds
	\[
		L^\rho_R(\Omega'') \Subset \Omega'.
	\]
%\begin{equation*}%\label{ball-cond}
%	L^\rho_R(\Omega'') \Subset \Omega'.
%\end{equation*}
Then, there exists a constant 
	\begin{equation}\label{eq_behaconst}
		\mathcal{C} = \left\{ \begin{aligned} 
			&\mathcal{C}(\Omega,\FF,m,\diam_\delta(\Omega),\mathcal{I}_0, \mathcal{I}_1,q_0,\mathcal{I}_{2,q_0}, \di_\delta(\Omega',\partial \Omega), R)
			& &\text{if $\Omega$ is bounded}, \\
			&\mathcal{C}(m, p, \mathcal{I}_0, \mathcal{I}_1,q_0,\mathcal{I}_{2,q_0}, |\Omega'|_\delta, R)
			& &\text{if $\Omega = \Rm$}
		\end{aligned}\right.
	\end{equation}
such that	
\begin{equation}\label{ine_thebest}
		\int_{\Omega''} \frac{(1+ \log w)^{q_0}}{w} \left\{ \|\SF\|^2 + w^2 \log w \right\} \di x \le \mathcal{C}.
	\end{equation}
\end{theorem}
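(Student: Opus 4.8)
The plan is to run a weighted integral estimate on the graph $M$ of $u$, testing the divergence identity for the vector field $Y$ from Lemma~\ref{lem_jacobi} against a carefully chosen cut-off built from the Lorentzian distance $\ell_o$ rather than the Euclidean distance $r_o$. First I would fix $o\in\Omega''$ and use the hypothesis $L^\rho_R(\Omega'')\Subset\Omega'$ to guarantee that, for every such $o$, the Lorentzian ball $L^\rho_R(o)$ is contained in $\Omega'$; this lets all localized computations take place where \eqref{assu_higherint} holds. For a fixed weight parameter $q_0$, I would test the estimate \eqref{est-Y-bel1} against $\varphi^2 (1+\log w)^{q_0}$ where $\varphi=\varphi(\ell_o)$ is a Lipschitz cut-off supported in $L^\rho_R(o)$ and equal to $1$ on a smaller Lorentzian ball. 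The key point, as emphasized after \eqref{eq_deltal2}, is that $|\Delta_M \ell_o^2| \le 2m + 4wH|x-o|$ has no $w^2$ term, while $\|\nabla \ell_o^2\|^2 \le 4\ell_o^2 + 16 w^2|x-o|$ does carry a $w^2$; this asymmetry is exactly what makes the Lorentzian cut-off integrable against $w^2$-type quantities after one uses the $L^1$-bound $\int_M w^2\,dx_g\le\mathcal C$ from Proposition~\ref{lem_basicL2}.

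The key steps, in order: (1) record the Bochner/Jacobi-type lower bound \eqref{est-Y-bel1} for $\diver_M Y$ in the form $\diver_M Y \ge c_m\|\SF\|^2 - C_{m}H^2$ up to a manageable $\|\nabla\beta\|^2$ term, and absorb the cross terms $\langle \nabla\varphi, Y\rangle$ and the $\log$-weight derivative $\langle\nabla(1+\log w)^{q_0}, \cdot\rangle$ using Young's inequality — here one picks up a term $\sim \int \varphi^2 (1+\log w)^{q_0-1}\|\nabla\beta\|^2/w^{?}$ which is handled by induction on $q_0$. (2) Carry the induction: the case $q_0=0$ is essentially Proposition~\ref{prop_inte_estimate}/Corollary~\ref{cor_secondfund} localized on $L^\rho_R(o)$, giving $\int \varphi^2\|\SF\|^2 w^{-1}\,dx \le \mathcal C$ once one notes $\|\SF\|^2 w^{-1}\,dx = \|\SF\|^2\,dx_g$ and the right-hand side involves $\int \|\nabla\varphi\|^2\,dx_g + \int \varphi^2 H^2\,dx_g$, both controlled by \eqref{basic_L2bound}, \eqref{assu_higherint}, and \eqref{eq_deltal2}. (3) For the inductive step from $q_0-1$ to $q_0$, the weight $(1+\log w)^{q_0}$ produces, upon integrating by parts, terms containing $(1+\log w)^{q_0-1}\|\nabla\log w\|^2 \le (1+\log w)^{q_0-1}\|\SF\|^2$, which is bounded by the inductive hypothesis after a further Young splitting; the ``bad'' contribution $\int \varphi^2 H^2 (1+\log w)^{q_0+2}w^{-1}$ is exactly what \eqref{assu_higherint} bounds (the exponent $q_0+2$ rather than $q_0$ is the price of two applications of Young). (4) Finally, to get the $w^2\log w$ term in \eqref{ine_thebest}, I would feed the $\|\SF\|^2$ bound into a trace/capacity inequality localized on the Lorentzian ball, mimicking the $2$-dimensional argument in Theorem~\ref{teo_higher_m2} but now with $w^{-1}(1+\log w)^{q_0}$ weights and using the no-$w^2$ property of $\Delta_M\ell_o^2$ to control $\int_{L_R(o)} w^2\log w\,dx_g$ in terms of $\int\|\SF\|^2 w^{-1}(1+\log w)^{q_0}$ already estimated; a Vitali-type covering of $\Omega''$ by Lorentzian balls then gives the global bound with constant $\mathcal C$ of the advertised form \eqref{eq_behaconst}.

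The main obstacle I expect is step (4) combined with the bookkeeping of which power of $(1+\log w)$ appears where: one must arrange the Young inequalities so that each application of integration by parts lowers the $\log$-power by exactly one and the only uncontrolled remainder is the $H^2$-term, whose $\log$-exponent is pinned at $q_0+2$ by \eqref{assu_higherint} — any sloppiness produces a higher power of $\log w$ multiplying $H^2$ that the hypothesis does not bound. A secondary technical point is justifying the integrations by parts and Fatou/monotone-convergence passages on the (a priori only) smooth approximations $u_j$ with mean curvature $H_j$, as in Subsection~\ref{subsec_strategy} and the proof of Corollary~\ref{cor_secondfund}, and then checking that the constant $R$ for which $L^{\rho_j}_R(\Omega'')\Subset\Omega'$ can be taken uniform in $j$ — this follows from Lemma~\ref{lem_simplecompact} applied to $\mathcal G=\{u_j\}\cup\{u\}$ together with $u_j\to u$ locally uniformly and the fact that $u$ has no light segments inside $L^\rho_R(\Omega'')$, which is implicit in the hypothesis $L^\rho_R(\Omega'')\Subset\Omega'$.
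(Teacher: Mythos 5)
Your steps (1)--(3) and the overall instinct (test a Jacobi-type divergence identity against a weight $(1+\log w)^{q_0}$, localize with cut-offs built from $\ell_o$ precisely because $|\Delta_M\ell_o^2|$ carries no $w^2$ term) match the skeleton of the paper's argument, including the nested Lorentzian balls, the induction in the log-power, and the covering of $\Omega''$ by balls $B_{\bar R/2}(o_i)\subset L_{\bar R/2}(o_i)$. But your step (4) is a genuine gap, and it is not a detail: the term $\int_{\Omega''} w(1+\log w)^{q_0+1}$ (the $w^2\log w$ part on the graph) is \emph{not} obtained in the paper by any trace/capacity inequality, and the route you propose would fail in dimension $m\ge 3$. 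The trace inequality \eqref{eq_trace} requires the bound $\mu(B_r)\lesssim r^{m-1}$ for $\mu=w\,\di x$, while Lemma \ref{lem_growthinte} only gives linear growth $\mu(B_r)\lesssim r$; this matches $r^{m-1}$ only when $m=2$, which is exactly why the paper states after Theorem \ref{teo_higher_m2} that the two-dimensional argument does not extend to higher dimensions. Moreover the structure of your induction is off: in the paper the weighted \emph{energy} bound at level $q$ (hypothesis \eqref{Aq}) is needed as an input to run the second-fundamental-form estimate at level $q$ (it controls the terms $\int\varphi^2(1+\beta_t)^q\sh^2\beta$ and $\int\varphi^2(1+\beta_t)^{q+1}|H|\ch\beta$ produced by Young's inequality), and the output \eqref{Bq} contains simultaneously the weighted $\|\SF\|^2$ bound and the energy bound with one extra log-power, which then feeds $(\mathscr{A}_{q+1})$. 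So the energy improvement cannot be deferred until after the $\|\SF\|^2$ estimate, as your plan does.

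The missing idea is the augmented vector field: the paper does not test $\diver_M Y$ alone but $\diver_M\bigl(Y+\beta_t\nabla e^{\bar u}\bigr)$ with $\bar u\doteq u-\|u\|_\infty\le 0$, whose divergence contains the \emph{positive} term $\beta_t e^{\bar u}\sh^2\beta$; after testing with $\varphi^2(1+\beta_t)^q$ this is precisely what produces the weighted higher integrability of $w$ on the left-hand side, with $e^{\bar u}\ge e^{-2\mathcal{I}_0}$ bounded below thanks to \eqref{assu_int_Linfty}. The remaining problematic term $-\int(1+\beta_t)^q\langle\nabla\varphi^2,\beta_t\nabla e^{\bar u}\rangle$ is handled by a second integration by parts, and here two further devices you did not anticipate are essential: the translation making $1-e^{\bar u}\ge 0$, so that only an \emph{upper} bound for $-\Delta_M\varphi^2$ is needed, and the specific choice $\varphi=(\bar R^2-\ell_o^2)_+$, for which $-\Delta_M\varphi^2\le 2(\bar R^2-\ell_o^2)\Delta_M\ell_o^2\le C(1+|H|\ch\beta)$ by \eqref{eq_deltal2}--\eqref{eq_deltal4}; any generic cut-off (or keeping the $\|\nabla\ell_o^2\|^2$ contribution) reintroduces a term of the type $\int\varphi^2(1+\beta_t)^q\beta_t\ch^2\beta$ that cannot be absorbed, as explained in Remark \ref{rem_point}. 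Finally, your concern about approximating by $u_j$ and uniformity of $R$ in $j$ is unnecessary within this theorem: here $H$ is smooth, so the minimizer $u$ is itself smooth and strictly spacelike and all computations are performed directly on it; the approximation and the uniformity of $R$ enter only in the applications (e.g.\ Corollary \ref{cor_secondfund} and the proofs in Section \ref{prf-thms}).
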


\begin{proof}
By Theorem \ref{teo_bartniksimon_2} or \cite[Theorem 1.5 and Remark 3.4]{bpd}, we know that $u$ is smooth and strictly spacelike. In particular, $L^\rho_s(\Omega'') \Subset L^\rho_t(\Omega'')$ if $0 \leq s < t$. 
Define $p_1$ as in \eqref{def_p_1}. We proceed by induction on $q \in \{ 0, \ldots, q_0\}$. Set for convenience 	
	\[
	\bar R \doteq \frac{R}{q_0+1},
	\]
and define the sequence 
	\[
		\Omega'' \doteq \Omega_{q_0+1} \Subset \Omega_{q_0} \Subset \ldots \Subset \Omega_1 \Subset \Omega_0 
		\Subset \Omega', \qquad \Omega_q \doteq L^\rho_{ (q_0+1 - q ) \bar R }(\Omega'') \ \text{for } \, q \ge 0.
	\]
%\begin{equation}\label{proprie_omegaq}
%		\Omega'' \doteq \Omega_{q_0+1} \Subset \Omega_{q_0} \Subset \ldots \Subset \Omega_1 \Subset \Omega_0 
%		\Subset \Omega', \qquad \Omega_q \doteq L^\rho_{ (q_0+1 - q ) \bar R }(\Omega'') \ \text{for } \, q \ge 0.
%	\end{equation}
Let $M_q$ be the graph of $u$ over $\Omega_q$. By rephrasing \eqref{assu_higherint} in terms of the graph metric and the hyperbolic angle $\beta$, 
there exists a constant $\bar{\mathcal{I}}_{2,q_0}$ only depending on $\mathcal{I}_{2,q_0}$ such that
	\[
		\int_{M_0} H^2 (1+\beta)^{q_0+2}  \le \bar{\mathcal{I}}_{2,q_0},
	\]
%	\begin{equation}\label{est-barI-q0}
%		\int_{M_0} H^2 (1+\beta)^{q_0+2}  \le \bar{\mathcal{I}}_{2,q_0},
%	\end{equation}
where, hereafter in the proof, integration on subsets of the graph of $u$ will always be performed with respect to the graph measure $\di x_g$, that will be omitted as far as no confusion arises. Hence, 
	\begin{equation}\label{eq_buondH2}
		\int_{M_0} H^2 (1+\beta)^{q+2} 
		\le \bar{\mathcal{I}}_{2,q_0} \qquad \text{for each } \, q \in \{0, 1, \ldots, q_0\}.
	\end{equation}

	As a starting point, observe that Proposition \ref{lem_basicL2} and \eqref{assu_int_Linfty} imply 
the existence of 
	\[
		\bar{\mathcal{I}}_{1,0} 
		= \left\{\begin{aligned}
			& \bar{\mathcal{I}}_{1,0} \big(\Omega,\FF, m,\diam_\delta(\Omega), p, \mathcal{I}_0, \mathcal{I}_1, \di_\delta(\Omega',\partial\Omega)\big)
			& &\text{if $\Omega$ is bounded},
			\\
			& \bar{\mathcal{I}}_{1,0} \big( m, p, \mathcal{I}_0, \mathcal{I}_1, |\Omega'|_\delta  \big)
			& &\text{if $\Omega = \Rm$},
		\end{aligned}\right.
	\]
such that
		\begin{equation}\label{Ho}\tag{$\mathscr{A}_0$}
		\int_{M_0} |H| \ch \beta  + \int_{M_0} \ch^2 \beta  \le \bar{\mathcal{I}}_{1,0}.
		\end{equation}
We shall prove the following inductive step:
	\begin{align}
		& \text{if there exists }
		\nonumber
		\\
		&\quad 
		\cJ_{1,q} = \left\{\begin{aligned}
			& \cJ_1 \big(\Omega,\FF, m, \diam_\delta(\Omega), p, \mathcal{I}_0, \mathcal{I}_1, \di_\delta(\Omega',\partial \Omega), 
			q_0,q, R \big) 
			& &\text{if $\Omega$ is bounded},\\
			& \cJ_1 \big(  m, p, \mathcal{I}_0, \mathcal{I}_1, |\Omega'|_\delta, q_0,q,  R \big) 
			& &\text{if $\Omega=\Rm$},
		\end{aligned}\right.
		\nonumber
		\\
		&
		\text{such that} & \nonumber \\
		&\quad \int_{M_q} |H|(1+\beta)^q\ch \beta 
		+ \int_{M_q} (1+\beta)^q \ch^2 \beta  
		\le \cJ_{1,q}, 
		\tag{$\mathscr{A}_q$}
		\label{Aq}
		\\		
		& \text{then there exists} \nonumber \\
		&\quad 
			\cJ_{2,q} = \left\{\begin{aligned}
			& \cJ_2 \big(\Omega,\FF, m, \diam_\delta(\Omega), p, \mathcal{I}_0, \mathcal{I}_1, \di_\delta(\Omega',\partial\Omega), 
			q_0,q, \cJ_{1,q}, R \big) 
			& &\text{if $\Omega$ is bounded},\\
			& \cJ_2 \big(  m, p, \mathcal{I}_0, \mathcal{I}_1, |\Omega'|_\delta, q_0,q, \cJ_{1,q}, R \big) 
			& &\text{if $\Omega=\Rm$},
			\end{aligned}\right.
		\nonumber \\
		&\text{such that} \nonumber \\
		&\quad \int_{M_{q+1}} (1+\beta)^q \|\SF\|^2 
		+ \int_{M_{q+1}} (1+\beta)^{q+1} \ch^2 \beta 
		\le \cJ_{2,q}. \tag{$\mathscr{B}_q$}
		\label{Bq}
	\end{align}
In view of \eqref{equival_point} and \eqref{eq_buondH2}, to obtain \eqref{Bq} from \eqref{Aq} it is enough to show that
		\[
		\int_{M_{q+1}\cap \{\di u \neq 0\}} (1+\beta)^q\left[\frac{\sh^2 \beta}{\ch^{2} \beta} \|A\|^2 + \|\nabla \beta\|^2 + \beta \sh^2 \beta\right]  \le \cJ_{2,q},
		\]
with $\cJ_{2,q}$ possibly different, but depending on the same data. 
We first show that $\text{\eqref{Bq}} \Rightarrow (\mathscr{A}_{q+1})$ for each $0 \le q \le q_0-1$: 
by \eqref{eq_buondH2} and Young's inequality,
	\[
		\begin{aligned}
			\int_{M_{q+1}} |H|(1+\beta)^{q+1}\ch \beta 
			&\le 
			\int_{M_{q+1}} H^2 (1+\beta)^{q+2} 
			+ \int_{M_{q+1}} (1+\beta)^{q}\ch^2 \beta  
			\\
			&\le \bar{\mathcal{I}}_{2,q_0} + \cJ_{2,q},
		\end{aligned}
	\]
hence $(\mathscr{A}_{q+1})$ holds with $\cJ_{1,q+1} \doteq \bar{\mathcal{I}}_{2,q_0} + 2 \cJ_{2,q}$.

Since we verified \eqref{Ho}, 
if the implication $\text{\eqref{Aq}} \Rightarrow \text{\eqref{Bq}}$ is proved, 
then the induction hypothesis implies $(\mathscr{B}_{q_0})$, which is equivalent to \eqref{ine_thebest}. \\[0.2cm]

	With the above preparation, it suffices to prove that $\text{\eqref{Aq}} \Rightarrow \text{\eqref{Bq}}$. 
For small $t > 0$, we consider a smooth approximation $\beta_t \in C^\infty(\Omega)$ of $\beta$ defined by
		\[
		\ch \beta_t \doteq \sqrt{w^2 + t} \quad 
		\Leftrightarrow \quad \beta_t = \log \left( \sqrt{w^2 +t} + \sqrt{w^2+t-1} \right).
		\]
Note that
	\begin{equation}\label{boundgradient}
	\begin{aligned}
		&\beta \le \beta_t \le \beta + 1  \ \ \text{for small enough $t$}, 
		& & \nabla \beta_t = 0 \quad \text{a.e. on $\{ \rd u = 0 \}$},
		\\
		&\beta_t \downarrow \beta, \ \ \  \|\nabla \beta_t\| \uparrow \|\nabla \beta\|\cdot \mathbb{1}_{\{\di u \neq 0\}} \ \ \text{as} \, t \downarrow 0, 
		& &\la \nabla \beta_t , \nabla \beta \ra \mathbb{1}_{\{\rd u \neq 0\}} \ge 0.
	\end{aligned} 
	\end{equation}	
Define also  		
	\begin{equation}\label{def-baru}
		\bar{u} \doteq u - \| u \|_\infty \leq 0.
	\end{equation}
We consider the smooth vector field $Y + \beta_t \nabla e^{\bar u}$, where $Y$ is defined in \eqref{def_Y}, 
and compute its divergence. 
For $\eps \in (0,1)$ to be specified later, we use \eqref{est-Y-bel1} to deduce that 
for some positive constants $C_m$ and $C_{m,\eps}$ depending, respectively, on $m$ and on $(m,\eps)$,
\begin{equation}\label{eq_starting_higher}
		\begin{aligned}
			\diver_M \left( Y + \beta_t \nabla e^{\bar u} \right) 
			\geq &  
			\left[ 
			\frac{\sh^2 \beta}{\ch^2 \beta} \|A \|^2 - C_{m,\e} H^2 
			+ \left\{ \frac{1}{m-1} - C_m  \e \right\} \| \nabla \beta \|^2 
			\right] \cdot \mathbb{1}_{\{\di u \neq 0\}} \\
			& + e^{\bar u} \la \nabla \beta_t, \nabla u \ra + \beta_t e^{\bar u} H \ch \beta + \beta_t e^{\bar u} \sh^2 \beta.
		\end{aligned}
\end{equation}
The reason for adding the field $\beta_t \nabla e^{\bar u}$ is the appearance of the last term in the above inequality, $\beta_t e^{\bar u} \sh^2 \beta$, which is of the order of $w^2 \log w$ for large $w$. Its control will give us the log-improvement for the energy density we search for. 
Hereafter, $C_m, C_{m,\eps}$ as well as the constants $C_q,C_{q,\eps}$, may vary from line to line.

We integrate \eqref{eq_starting_higher} against the test function
	\begin{equation}\label{phi1}
		\psi = \varphi^2 (1+\beta_t)^q, 
		\quad \varphi \in \Lip_c (\Omega_q), \quad \varphi^2 \in W^{2,\infty}(\Omega_q). 
	\end{equation}
By 
		\[
		\nabla \psi = (1+\beta_t)^q \nabla \varphi^2 + q \varphi^2 (1+\beta_t)^{q-1} \nabla \beta_t,
		\]
we see that 
	\[
			\begin{aligned}
				&\int_{\{\di u \neq 0\}} \varphi^2 (1+\beta_t)^q \left[ 
				\frac{\sh^2 \beta}{\ch^2 \beta} \|A \|^2 - C_{m,\e} H^2 
				+ \left\{ \frac{1}{m-1} - C_m  \e \right\} \| \nabla \beta \|^2 
				\right] 
				\\
				& \quad +  \int_M \varphi^2 (1+\beta_t)^q e^{\bar u} \la \nabla \beta_t, \nabla u \ra 
				+  \int_M \varphi^2 (1+\beta_t)^{q}\beta_t e^{\bar u} H\ch \beta 
				\\
				& \quad + \int_M \varphi^2 (1+\beta_t)^q e^{\bar u} \beta_t \sh^2 \beta  
				\\
				\le \ & 
				- \int_M  (1+\beta_t)^q \la \nabla \varphi^2, Y +  \beta_t \nabla e^{\bar u} \ra 
				- q \int_M \varphi^2 (1+\beta_t)^{q-1} \la \nabla \beta_t, Y +  \beta_t \nabla e^{\bar u} \ra.
			\end{aligned}
	\]
%	\begin{equation}\label{start}
%		\begin{aligned}
%			&\int_{\{\di u \neq 0\}} \varphi^2 (1+\beta_t)^q \left[ 
%			\frac{\sh^2 \beta}{\ch^2 \beta} \|A \|^2 - C_{m,\e} H^2 
%			+ \left\{ \frac{1}{m-1} - C_m  \e \right\} \| \nabla \beta \|^2 
%			\right] 
%			\\
%			& \quad +  \int_M \varphi^2 (1+\beta_t)^q e^{\bar u} \la \nabla \beta_t, \nabla u \ra 
%			+  \int_M \varphi^2 (1+\beta_t)^{q}\beta_t e^{\bar u} H\ch \beta 
%			\\
%			& \quad + \int_M \varphi^2 (1+\beta_t)^q e^{\bar u} \beta_t \sh^2 \beta  
%			\\
%			\le \ & 
%			- \int_M  (1+\beta_t)^q \la \nabla \varphi^2, Y +  \beta_t \nabla e^{\bar u} \ra 
%			- q \int_M \varphi^2 (1+\beta_t)^{q-1} \la \nabla \beta_t, Y +  \beta_t \nabla e^{\bar u} \ra.
%		\end{aligned}
%	\end{equation}
Rearranging the terms and using Cauchy-Schwarz's inequality together with \eqref{boundgradient}, we obtain
		\[
		\begin{aligned}
			&\int_{\{\di u \neq 0\}} \varphi^2 (1+\beta_t)^q \left[ 
			\frac{\sh^2 \beta}{\ch^2 \beta} \|A \|^2 + \left\{ \frac{1}{m-1} - C_m  \e \right\} \| \nabla \beta \|^2 
			\right] 
			\\
			&\quad + \int_M \varphi^2 (1+\beta_t)^q e^{\bar u} \beta_t \sh^2 \beta  
			\\
			\le \ & 
			- \int_M  (1+\beta_t)^q \la \nabla \varphi^2, Y +  \beta_t \nabla e^{\bar u} \ra 
			- q \int_M \varphi^2 (1+\beta_t)^{q-1} \la \nabla \beta_t, Y +  \beta_t \nabla e^{\bar u} \ra 
			\\
			& \quad +  \int_{\{\di u \neq 0\}} \varphi^2 (1+\beta_t)^q e^{\bar u} \|\nabla \beta\| \sh \beta 
			+ \int_M \varphi^2 (1+\beta_t)^{q+1}e^{\bar u}|H|\ch \beta 
			\\
			& \quad + C_{m,\eps}  \int_M \varphi^2 (1+\beta_t)^q H^2.
		\end{aligned}
		\]
From $\bar{u} \leq 0$ (see \eqref{def-baru}) and 
	\[
	\varphi^2 (1+\beta_t)^q e^{\bar u} \|\nabla \beta\| \sh \beta \le \eps \varphi^2 (1+\beta_t)^q \|\nabla \beta\|^2 
	+ \eps^{-1} \varphi^2 (1+\beta_t)^q \sh^2 \beta, 
	\]		
we infer
		\begin{equation}\label{start_1}
		\begin{aligned}
			&\int_{\{\di u \neq 0\}} \varphi^2 (1+\beta_t)^q \left[ 
			\frac{\sh^2 \beta}{\ch^2 \beta} \|A \|^2 + \left\{ \frac{1}{m-1} - C_m  \e \right\} \| \nabla \beta \|^2 
			\right] 
			\\
			& \quad + \int_M \varphi^2 (1+\beta_t)^q e^{\bar u} \beta_t \sh^2 \beta
			\\
			\le \ & 
			- \int_M  (1+\beta_t)^q \la \nabla \varphi^2, Y +  \beta_t \nabla e^{\bar u} \ra 
			- q \int_M \varphi^2 (1+\beta_t)^{q-1} \la \nabla \beta_t, Y +  \beta_t \nabla e^{\bar u} \ra 
			\\
			& \quad + \eps^{-1} \int_M \varphi^2 (1+\beta_t)^q \sh^2\beta + \int_M \varphi^2 (1+\beta_t)^{q+1}|H|\ch \beta 
			\\
			& \quad + C_{m,\eps}  \int_M \varphi^2 (1+\beta_t)^q H^2.
		\end{aligned}
		\end{equation}

	Because of \eqref{Aq}, \eqref{eq_buondH2} and the first in \eqref{boundgradient},
		\begin{equation}\label{eq_gHchg}
		\begin{aligned}
		\int_M \varphi^2 (1+\beta_t)^q \sh^2 \beta & \le C_q \|\varphi\|_\infty^2 \cJ_{1,q}, 
		\\		
		\int_M \varphi^2 (1+\beta_t)^{q+1} |H|\ch \beta
		& \le
		\frac{\|\varphi\|_\infty^2}{2} \left\{\int_{M_q} (1+\beta_t)^{q+2}H^2
		+ \int_{M_q} (1+\beta_t)^q \ch^2 \beta \right\} \\
		 & \le 
		 C_q \|\varphi\|^2_\infty \left[ \bar{\mathcal{I}}_{2,q_0} + \cJ_{1,q} \right].
		\end{aligned}
		\end{equation}
Notice that due to \eqref{Y},
	\[
		\| \nabla \varphi \|^2 \leq w^2 |D\varphi|^2= \ch^2\beta |D\varphi|^2, \qquad 
		\|Y\|^2 \cdot \mathbb{1}_{\{\rd u \neq 0\} } \le 2 \left[ \|\nabla \beta\|^2 + H^2 \right] \cdot \mathbb{1}_{\{\rd u \neq 0\}}.
	\]
Using $Y = 0$ a.e. on $\{ \rd u = 0\}$, Young's inequality and assumption \eqref{Aq}, we infer
		\[
			\begin{aligned}
			& - \int_M (1+\beta_t)^q \langle \nabla \varphi^2, Y \rangle  
			\\
			 \le \ & 
			\eps \int_{\{ \rd u \neq 0 \}} \varphi^2 (1+\beta_t)^q \left[ \|\nabla \beta\|^2 + H^2 \right]  
			+ \frac{4}{\eps} \int_{ \{ \rd u \neq 0\} } (1+\beta_t)^q \|\nabla \varphi\|^2  
			\\
			\le \ & \eps \int_{\{ \rd u \neq 0 \}} \varphi^2 (1+\beta_t)^q \left[ \|\nabla \beta\|^2 + H^2 \right]  
			+ 4 \e^{-1} \| D \varphi \|_{\infty}^2 \int_{M_q} (1+\beta_t)^q\ch^2 \beta  
			\\
			\le \  & \eps \int_{ \{ \rd u \neq 0\} } \varphi^2 (1+\beta_t)^q \left[ \|\nabla \beta\|^2 + H^2 \right] 
			+ C_{q,\eps} \| D \varphi \|_{\infty}^2  \cJ_{1,q}.
		\end{aligned}
		\]
Moreover, from \eqref{Y}, $\bar u \le 0$, \eqref{boundgradient} and $Y + \beta_t \nabla e^{\bar{u}} = 0$ a.e. on $\{ \rd u = 0 \}$ 
it follows that 
		\[
		\begin{aligned}
		&- q \int_M \varphi^2  (1+\beta_t)^{q-1} \la \nabla \beta_t, Y +  \beta_t \nabla e^{\bar u} \ra 
		\\
		\le \ & 
		- q \int_{\{ \rd u \neq 0 \}} \varphi^2  (1+\beta_t)^{q-1} 
		\la \nabla \beta_t, - \frac{\sh \beta}{\ch \beta} H \nu +  \beta_t \nabla e^{\bar u} \ra 
		\\
		\le \ & 
		q \int_{ \{ \rd u \neq 0\} } \varphi^2 (1+\beta_t)^{q-1}\|\nabla \beta\| |H|  
		+ q \int_{\{ \rd u \neq 0 \}} \varphi^2 (1+\beta_t)^q \ch \beta \|\nabla \beta\| 
		\\
		\le \ & 
		2 \eps \int_{ \{ \rd u \neq 0\} } \varphi^2 (1+\beta_t)^q \|\nabla \beta\|^2 
		+ \frac{q^2}{\eps} \int_M \varphi^2 (1+\beta_t)^{q-2}H^2 
		+ \frac{q^2}{\eps} \int_M \varphi^2 (1+\beta_t)^q \ch^2 \beta  
		\\
		\le \ & 
		2 \eps \int_{ \{ \rd u \neq 0 \}} \varphi^2 (1+\beta_t)^q \|\nabla \beta\|^2 
		+ \e^{-1}  C_{q} \| \varphi\|^2_\infty
		\left[ \bar{\mathcal{I}}_{2,q_0} + \cJ_{1,q}\right].
		\end{aligned}
		\]
Plugging these inequalities into \eqref{start_1}, we get
		\begin{equation}\label{start_3}
		\begin{aligned}
			&\int_{\{\di u \neq 0\}} \varphi^2 (1+\beta_t)^q 
			\left[ 
			\frac{\sh^2 \beta}{\ch^2 \beta} \|A \|^2 + \left\{ \frac{1}{m-1} - C_m  \e \right\} \| \nabla \beta \|^2 
			\right]
			\\
			& \quad + \int_M \varphi^2 (1+\beta_t)^q e^{\bar u} \beta_t \sh^2 \beta  
			\\
			\le \ & 
			- \int_M  (1+\beta_t)^q \la \nabla \varphi^2, \beta_t \nabla e^{\bar u} \ra
			+ C_{m,q,\eps} \| \varphi \|_{W^{1,\infty}}^2
			\left[ \bar{\mathcal{I}}_{2,q_0} + \mathcal{J}_{1,q}\right].
		\end{aligned}
		\end{equation}

	We next examine the term
	\[
	K \doteq - \int_M (1+\beta_t)^q \la \nabla \varphi^2, \beta_t \nabla e^{\bar u} \ra.
	\]
%		\begin{equation}\label{def-integaral}
%			K \doteq - \int_M (1+\beta_t)^q \la \nabla \varphi^2, \beta_t \nabla e^{\bar u} \ra.
%		\end{equation}
For $U \Subset \Omega_q$, we choose $\varphi$ satisfying \eqref{phi1} and 
	\begin{equation}\label{phi2}
		\varphi = 0 \quad  \text{on $\partial U$}.
	\end{equation}
Hereafter, we will denote by $\mathcal{C}_j$ a constant depending on the same quantities as \eqref{eq_behaconst}.
%	\[
%		\mathcal{C}_j = \mathcal{C}_j ( m,\mathcal{I}_0,\mathcal{I}_1, \bar{\cI}_{2,q_0}, q_0, \Omega', R ), \quad j\in \N. 
%	\]
Since $\nabla \beta_t = 0$ a.e. on $\{ \rd u = 0\}$, we compute
	\begin{equation}\label{equ_A}
		\begin{aligned}
			K &= - \int_M (1+\beta_t)^q \beta_t \la \nabla \varphi^2 , \nabla (e^{\bar u}-1) \ra 
			\\
			&= - \int_{M} \la \nabla \varphi^2, \nabla \left[(1+\beta_t)^q \beta_t (e^{\bar u}-1) \right]\ra 
			+ \int_{ \{ \rd u \neq 0\} } (e^{\bar u}-1) \la \nabla\varphi^2, \nabla \left[ (1+\beta_t)^q \beta_t\right] \ra.
		\end{aligned}
\end{equation}
The last integral can be easily estimated by using \eqref{assu_int_Linfty}, \eqref{boundgradient} and the definition of $\bar u$:
\begin{equation}\label{eq_intefacile}
\begin{aligned}
	&\left| \int_{\{ \rd u \neq 0\}} 
	(e^{\bar u}-1) \la \nabla\varphi^2, \nabla \left[ (1+\beta_t)^q \beta_t\right] \ra \right| 
	\\
	\le \ & \eps \int_{ \{ \rd u \neq 0\} } \varphi^2 (1+\beta_t)^q\|\nabla \beta\|^2 
	+ 4 \e^{-1} (1+q)^2\|e^{\bar u}-1\|^2_{L^\infty(\Omega_q)} \int_M (1+\beta_t)^q \|\nabla \varphi\|^2 
	\\
	\le \ & \eps \int_{ \{ \rd u \neq 0\} } \varphi^2 (1+\beta_t)^q\|\nabla \beta\|^2 
	+ \e^{-1} \mathcal{C}_1 \| D \varphi \|_\infty^2 \cJ_{1,q}.
	\end{aligned}
\end{equation}
On the other hand, since $\varphi^2 \in W^{2,\infty}(\Omega_q)$ with $\mathrm{supp}\, \varphi \Subset \Omega_q$, we get
	\begin{equation}\label{equ_B'}
		\begin{aligned}
			- \int_{M} \la \nabla \varphi^2, \nabla \left[ (1+\beta_t)^q\beta_t (e^{\bar u}-1) \right] \ra 
			&= \int_{M} (1+\beta_t)^q \beta_t (e^{\bar u}-1) \Delta_M \varphi^2
			\\
			&= \int_{M} (1+\beta_t)^q \beta_t \left( 1 - e^{\bar{u}} \right) \left( - \Delta_M \varphi^2 \right).
		\end{aligned}
	\end{equation}
We set $U = L_{\bar R}(o)$ where $o \in \Omega_{q+1}$. 
Then $U \Subset \Omega_q$ and 
since $u$ is smooth with $\| Du \|_\infty < 1$, 
$\partial L_{\bar R}(o)$ is smooth. We also set 
	\[
		\varphi (x) \doteq \left( \bar R^2 - \ell_o^2(x) \right)_+.
	\]
It is easily seen that \eqref{phi1} and \eqref{phi2} are satisfied. Moreover, 
by \eqref{eq_elle2} and 
	\begin{equation}\label{eq_deltal4}
		\begin{aligned}
			-\Delta_M \ell^4_o 
			= - 2 \| \nabla \ell_o^2 \|^2 - 2\ell^2_o \Delta_M \ell_o^2 \le - 2\ell^2_o \Delta_M \ell_o^2,
		\end{aligned}
	\end{equation}
it follows that on $U$, 
	\begin{equation}\label{Lap-phi^2}
		\begin{aligned}
			-\Delta_M \varphi^2 & = -\Delta_M \left( \bar R^4 - 2 \bar R^2 \ell_o^2 + \ell_o^4 \right) 
			\leq 2 \left( \bar R^2 - \ell_o^2 \right) \Delta_M \ell^2_o \\
			&\leq 4 \bar R^2 \left( m + 2 \left| H \right| \ch \beta \left| x- o \right| \right)
			\\
			& \leq \mathcal{C}_2 \left( 1 + |H| \ch \beta \right).
		\end{aligned}
	\end{equation}
Remark also that 
	\[
		\| \varphi \|_{W^{1,\infty}} \leq \mathcal{C}_3. 
	\]
From \eqref{Aq}, \eqref{equ_B'}, \eqref{Lap-phi^2},  $0 \leq 1 - e^{\bar{u}} \leq 1$, $\beta \le \ch^2 \beta$, \eqref{eq_intefacile} and \eqref{eq_gHchg}, we deduce
	\begin{equation}\label{equ_quasifinal}
	\begin{aligned}
	K &\le 
	\mathcal{C}_2 \int_{M_q} \left(1+\beta_t\right)^q \beta_t \left( 1+ |H| \ch \beta \right)
	\\
	& \qquad + \mathcal{C}_1 \e^{-1} \| D\varphi \|_\infty^2 \cJ_{1,q}  
	+ \eps \int_{\{ \rd u \neq 0 \}} \varphi^2 (1+\beta_t)^q \|\nabla \beta\|^2
	\\
	&\le \mathcal{C}_3 \e^{-1} \left[\bar{\mathcal{I}}_{2,q_0} +  \cJ_{1,q}\right]
	+ \eps \int_{\{ \rd u \neq 0\}} \varphi^2 (1+\beta_t)^q \|\nabla \beta\|^2.
	\end{aligned}
	\end{equation}

	Since $\varphi \geq \bar R^2/2$ on $L_{\bar R/2}(o)$, it follows from \eqref{start_3} and \eqref{equ_quasifinal} that 
	\[
			\begin{aligned}
			\int_{L_{\bar R/2}(o)} 
			(1+\beta_t)^q 
			&\left[ \frac{\sh^2 \beta}{\ch^{2} \beta} \|A\|^2
			+ \left\{ \frac{1}{m-1}- C_m \eps \right\} \| \nabla \beta \|^2 \right] \cdot
			\mathbb{1}_{\{\rd u \neq 0\}}
			\\
			& \qquad  + \int_{L_{\bar R/2}(o)} e^{\bar u} (1+\beta_t)^q\beta_t \sh^2 \beta 
			\le \mathcal{C}_4 C_{m,q,\e} \left[ \cJ_{1,q} + \bar{\cI}_{2,q_0}\right].
			\end{aligned}
	\]
%		\begin{equation}\label{start_4}
%		\begin{aligned}
%		\int_{L_{\bar R/2}(o)} 
%		(1+\beta_t)^q 
%		&\left[ \frac{\sh^2 \beta}{\ch^{2} \beta} \|A\|^2
%		+ \left\{ \frac{1}{m-1}- C_m \eps \right\} \| \nabla \beta \|^2 \right] \cdot
%		\mathbb{1}_{\{\rd u \neq 0\}}
%		\\
%		& \qquad  + \int_{L_{\bar R/2}(o)} e^{\bar u} (1+\beta_t)^q\beta_t \sh^2 \beta 
%		\le \mathcal{C}_4 C_{m,q,\e} \left[ \cJ_{1,q} + \bar{\cI}_{2,q_0}\right].
%		\end{aligned}
%		\end{equation}
Choosing $\eps = \big[2C_m(m-1)\big]^{-1}$, noting that $e^{\bar{u}} \geq e^{-2\mathcal{I}_0}$ and letting $t \to 0$, we deduce
		\begin{equation}\label{eq_finish}
		\int_{L_{\bar R/2}(o)} (1+\beta)^q \left[\frac{\sh^2 \beta}{\ch^{2} \beta} \|A\|^2 + \|\nabla \beta\|^2 + \beta \sh^2 \beta\right] \cdot \mathbb{1}_{\{\di u \neq 0\}}
		\le \mathcal{C}_5.
		\end{equation}
Consider a maximal set of disjoint Euclidean balls $\{B_{\bar R/4}(o_1), \ldots, B_{\bar R/4}(o_s)\}$ with $o_i \in \Omega_{q+1}$. Since $B_{\bar R/4}(o_i) \subset L_{\bar R/4} (o_i) \Subset \Omega_q \Subset \Omega'$, we get
	\[
	s \le \left\lceil \frac{|\Omega'|_\delta}{\omega_m (\bar R/4)^m}\right\rceil 
	\doteq \tau(m,R, q_0, |\Omega'|_\delta).
	\]
Using that $\{B_{\bar R/2}(o_j)\}$ covers $\Omega_{q+1}$ and $B_{\bar R/2}(o_j) \subset L_{\bar R/2}(o_j) \Subset \Omega_q$, summing up \eqref{eq_finish} we conclude
		\[
		\int_{M_{q+1}} (1+\beta)^q\left[\frac{\sh^2 \beta}{\ch^{2} \beta} \|A\|^2 + \|\nabla \beta\|^2 + \beta \sh^2 \beta\right] \cdot \mathbb{1}_{\{\di u \neq 0\}}
		\le \mathcal{C}_5\tau,
		\]
which proves $(\mathscr{B}_{q})$. 
\end{proof}

\begin{remark}\label{rem_point}
We comment on the choice of $\varphi$ in the above proof. For a general cut-off function $\varphi$, in view of \eqref{eq_hessianLapla}, one could just obtain the bound
	\[
		\left| \Delta_M \varphi^2 \right| 
		\le 
		m \| D^2 \varphi^2 \|_\infty (1+ \ch^2 \beta) + \| D \varphi^2 \|_\infty |H| \ch \beta,
	\]
which inserted into \eqref{equ_B'} would make necessary to estimate a term of the type
	\begin{equation}\label{eq_thebadone}
	\int_U  (1+\beta_t)^q \beta_t \ch^2 \beta.
	\end{equation}
Such a term cannot be absorbed into the last addendum on the left-hand side of \eqref{start_3}. This is the main reason why we use the extrinsic Lorentzian distance. Furthermore, the translation performed in the first line of \eqref{equ_A} and the choice of $\bar{u}$ in \eqref{def-baru}  
are crucial to make sure that 
the coefficient which multiplies $-\Delta_M \varphi^2$ in \eqref{equ_B'} is non-negative. 
Hence, an upper estimate for $-\Delta_M \varphi^2$ is sufficient and we can get rid of the term 
$\|\nabla \ell_o\|$ in \eqref{eq_deltal4}, that would have lead, again, to the appearance of an integral of the type \eqref{eq_thebadone}.
\end{remark}

%%%%%%%%%%%%%%%%%%%%%%%%%%%%%%%%%%%%%%%%%%%%%%%%%%%%%%%%%%%%%%%%%%
%%%%%%%%%%%%%%%%%%%%%%%%%%%%%%%%%%%%%%%%%%%%%%%%%%%%%%%%%%%%%%%%%%
%%%%%%%%%%%%%%%%%%%%%%%%%%%%%%%%%%%%%%%%%%%%%%%%%%%%%%%%%%%%%%%%%%

%%%%%%%%%%%%%%%%%%%%%%%%%%%%%%%%%%%%%%%%%%%%%%%%%%%%%%%%%%%%%%%%%%
%%%%%%%%%%%%%%%%%%%%%%%%%%%%%%%%%%%%%%%%%%%%%%%%%%%%%%%%%%%%%%%%%%
%%%%%%%%%%%%%%%%%%%%%%%%%%%%%%%%%%%%%%%%%%%%%%%%%%%%%%%%%%%%%%%%%%
\section{Proofs of the main theorems}
\label{prf-thms}

%We need the following simple lemma:
%
%\begin{lemma}\label{oldlem_simplecompact}
%Let $\Omega' \subset \Rm$ be a regular domain, and let $\{u_j\} \subset W^{1,\infty}(\Omega')$ satisfy $\|Du_j\|_\infty \le 1$ and $u_j \to u$ in $C( \ov{\Omega'} )$. Fix $\Omega'' \Subset \Omega'$.  Then, the following are equivalent: 
%\begin{itemize}
%\item[{\rm (a)}] 
%$u$ does not have a light segment $\overline{xy} \subset \overline{\Omega'} \backslash \Omega''$ 
%with $x \in \partial \Omega''$, $y \in \partial \Omega'$.
%\item[{\rm (b)}] 
%There exists $R>0$ such that $L_R^j(\Omega'') \Subset \Omega'$ for each $j$, 
%where $L_R^j$ is the Lorentzian ball of radius $R$ associated to the graph of $u_j$.
%\end{itemize}
%\end{lemma}
% 
%\begin{proof}
%(b) $\Rightarrow$ (a) is obvious. 
%As for (a) $\Rightarrow$ (b), 
%assume by contradiction the existence of points $z_j \in \Omega''$ and $p_j \in \partial \Omega'$ such that 
%	\begin{equation}\label{eq_bonita_noli}
%		\left| z_j - p_j \right| - \left| u_j(z_j) - u_j(p_j) \right| \le \frac{1}{j}. 
%	\end{equation}
%Up to subsequences, $z_j \to z \in \overline{\Omega''}$, $p_j \to p \in \partial \Omega'$. 
%Passing to the limit in \eqref{eq_bonita_noli}, $u$ has a light segment $\overline{zp}$. 
%The desired segment is obtained by choosing $x,y \in \overline{zp}$, $x \in \partial \Omega''$, $y \in \partial \Omega'$ in such a way that the interior of $\overline{xy}$ lies in $\Omega' \setminus \Omega''$.
%\end{proof}

\subsection{Proof of Theorem \ref{teo_bocofo}}

Item $(i)$ directly follows from Proposition \ref{prop_nocross}. 
To prove $(ii)$ we consider a maximally extended light segment $\overline{xy}$, 
which we know to be bounded because $u_\rho \to 0$ at infinity by Proposition \ref{prop-cX-emb}. 
Due to maximality and to the anti-peeling theorem \cite[Theorem 3.2]{bartniksimon} applied to $\Omega \Subset \Rm \setminus \{x_1,\dots, x_k\}$, $\ov{xy}$ must start and end at points in $\{ x_1,\dots, x_k \}$, say $\ov{xy} = \ov{x_ix_j}$. It remains to prove $a_ia_j < 0$. 
To this end, we argue by contradiction and suppose $a_i a_j > 0$. 
By changing the indices $i$ and $j$ and considering $-u_\rho$ if necessary, 
we may suppose $a_j > 0$ and $u_\rho (x_i) - u_\rho (x_j) = |x_i - x_j|$. 
According to Theorem \ref{teo_nolight} and Remark \ref{rem_impo_obse}, for any $\alpha > 0$ the function $u_\rho$ is also a minimizer for $I_{\rho_\alpha}$ with $\rho_\alpha \doteq \rho + \alpha ( \delta_{x_i} - \delta_{x_j} ) $. 
Now we choose $\alpha = a_j$ and deduce that 
$u_\rho$ minimizes $I_{\rho_{a_j} }$. Since $\rho_{a_j}$ is of type \eqref{pointcharges_intro} and has no charge at $x_j$, the anti-peeling theorem implies that the light segment $\ov{x_ix_j}$ can be extended beyond $x_j$, contradicting the maximality of $\ov{x_ix_j}$.\par
We prove $(iii)$. Consider the approximation $\{\rho_j,H_j,u_j,w_j\}$ in Subsection \ref{subsec_strategy} and fix $\Omega' \Subset \R^m \backslash \{x_1,\ldots, x_k\}$ with smooth boundary. Then 
	\begin{equation}\label{bdd-Hj}
		\sup_{j \geq 1} \| H_j \|_{L^\infty(\Omega')} < \infty.
	\end{equation}
By Proposition \ref{1001}, $u_j \to u_\rho$ in $L^\infty(\Rm)$ and $\mathscr{G} \doteq \{u_\rho\} \cup \{ u_j : j \in \N \}$ is 
compact in $C(\Rm)$. Thus, for given $\Omega'' \Subset \Omega'$, 
by Lemma \ref{lem_simplecompact} and the assumption that $u_\rho$ has no light-segments, there exists $R>0$ independent of $j$ such that 
the Lorentzian ball $L^{\rho_j}_R(\Omega'') \Subset \Omega'$ for all $j \geq1$. 
By \eqref{bdd-Hj}, we can apply Theorem \ref{teo_higherint} to deduce 
	\[
		\sup_{j \geq 1} \left\| w_j \log \left( 1 + w_j \right) \right\|_{L^1(\Omega'')} < \infty.
	\]
Thus, the sequence $\{w_j\}$ is locally uniformly integrable on $\Omega'$. By the arbitrariness of $\Omega'$, 
$\{w_j\}$ is locally uniformly integrable on $\Omega \backslash \{x_1,\ldots,x_k\}$; 
hence, Theorem \ref{teo_removable} with $E= \{ x_i \}_{i=1}^k$ implies	
	\begin{equation}\label{eq_BI_bcf}
	\int_{\R^m} w_\rho Du_\rho \cdot D\eta \, \di x = \la \rho, \eta \ra = \sum_{i=1}^k a_i \eta(x_i)  \qquad \forall \, \eta \in \lip_c(\R^m).
	\end{equation}
Therefore, $u_\rho$ weakly solves \eqref{borninfeld}. 
Since $u_\rho$ does not possess light segments, for each ball $\widetilde{B} \Subset \R^m \backslash \{x_1,\ldots,x_k\}$ we have 
	\[
		|u_\rho(x) - u_\rho(y) | < |x-y| = d_{\widetilde{B}}(x,y) \quad 
		\text{$\forall \, x,y \in \partial \widetilde{B}$ with $x \neq y$}.
	\]
Moreover, $u_\rho$ minimizes $I_0$ on $\widetilde{B}$ with respect to its own boundary values, thus $u_\rho$ coincides with the solution given in Bartnik-Simon's \cite{bartniksimon}, which is smooth and strictly spacelike. Hence, $u_\rho$ is smooth and strictly spacelike away from $x_1,\ldots, x_k$. We next prove that $u_\rho$ has an isolated singularity at each $x_i$, in the sense of Ecker \cite{Ecker}. Fix $B \doteq B_r(x_i)$ with $x_j \not \in \overline{B}$ for $j \neq i$, and choose $\eta \in \Lip_c(B)$ with 
$\eta = - a_i$ in a neighborhood of $x_i$. Suppose by contradiction that $u_\rho$ minimizes $I_0$ in $B$, that is, 
	\begin{equation}\label{min-I0}
		I_0(u_\rho) = \inf \left\{ I_0(v)  \ :\ v \in \cX_{u_\rho} ( B ) \right\}, \quad 
		I_0(v) \doteq \int_B \Big(1 - \sqrt{1 - |Dv|^2}\Big) \, \rd x. 
	\end{equation}
Since $D\eta = 0$ around $x_i$ and $u_\rho$ is strictly spacelike away from $x_1,\ldots, x_k$, for $t>0$ small enough it holds $u_\rho + t\eta \in \cX_{u_\rho}(B)$. 
Using Proposition \ref{lem_basicL2} and comparing to \eqref{eq_BI_bcf}, we get 
	\[
		0 \ge \int_B w_\rho Du_\rho \cdot \left( Du_\rho - D(u_\rho+t\eta) \right)\, \di x = - t \int_B w_\rho Du_\rho \cdot D \eta \, \di x = t|a_i|^2 > 0,  
	\]
which is a contradiction.

	To conclude, \cite[Theorem 1.5]{Ecker} ensures that $u_\rho$ is asymptotic to a light cone $C$ near $x_i$, 
and we can therefore apply the argument in \cite[Theorem 3.5]{bcf} to deduce 
that $C$ is upward or downward pointing respectively when $a_i<0$ or $a_i>0$. 
\qed
%\end{proof}

\subsection{Proof of Theorem \ref{teo_BI_local_s2}}
\label{subsec_prf_2dim}

%	\begin{proof}[Proof of Theorem \ref{teo_BI_local_s2}]
Let $\Sigma \Subset \Omega$ and $\rho \in \cM(\Omega)$ satisfy the assumptions in Theorem \ref{teo_BI_local_s2}. 
Fix $\FF,\mathcal{I}_1, \mathcal{I}_2,\Omega'$ and $\e$ as in (ii):
	\begin{equation}\label{eq_assu_mainthm2}
	\phi \in \FF, 
	\qquad \|\rho\|_{\cM(\Omega)} \le \mathcal{I}_1, \quad \|\rho\|_{L^2(\Omega')} \le \mathcal{I}_2.
	\end{equation}
We also choose $p_1 =3$ for $\cX (\Omega)$ (any $p_1>2$ works). 
We split the proof into several steps. \\[0.2cm]
\noindent \textbf{Step 1:} \textsl{for each $\phi,\rho$ satisfying \eqref{eq_assu_mainthm2}, and for each $\e>0$, there exists 
	\[
	\mathcal{C}_1\big(\Omega,\FF,\diam_\delta(\Omega), \mathcal{I}_1, \mathcal{I}_2,\e, \di_\delta(\Omega', \partial \Omega)\big) 
	\]
such that
	\[
		\disp \int_{\Omega'_{\e}} w_\rho \log \left(1+w_\rho\right) \di x \le \mathcal{C}_1, \quad 
		\Omega'_\e \doteq \left\{ x \in \Omega' \ : \ \di_{\delta} (x,\partial \Omega') > \e \right\}.
	\]
%	\begin{equation}\label{eq_high}
%	\disp \int_{\Omega'_{\e}} w_\rho \log \left(1+w_\rho\right) \di x \le \mathcal{C}_1, \quad 
%	\Omega'_\e \doteq \left\{ x \in \Omega' \ : \ {\rm dist}_{\delta} (x,\partial \Omega') > \e \right\}.
%	\end{equation}
}

	\begin{proof}[Proof of Step 1] 
This directly follows from Theorem \ref{teo_higher_m2} and \eqref{eq_assu_mainthm2}. 
	\end{proof}
The higher integrability allows to prove the next no-light-segment property. \\[0.2cm]
\noindent \textbf{Step 2:} 
\textsl{The minimizer $u_\rho$ does not have light segments in $\Omega'$. 
}
%For each $\e>0$, there exists a constant
%	\[
%	R = R(\Omega,\FF, \diam_\delta(\Omega),\mathcal{I}_1, \mathcal{I}_2,\e,\Omega'\big) > 0
%	\]
%such that for each $\Omega_1 \Subset \Omega_2 \subset \Omega'$ with $\di_\delta(\Omega_1,\partial \Omega_2) \ge \e$, $L^\rho_R(\Omega') \Subset \Omega \backslash \Sigma$ holds for each $\rho$ and $\phi$ satisfying \eqref{eq_assu_mainthm2}. Moreover,
%
	\begin{proof}[Proof of Step 2]
%We argue by contradiction and assume, for $\Omega_1 \Subset \Omega_2 \subset \Omega'$, 
%the existence of a sequence of boundary data 
%$\phi_i \in \FF$ and $\{\rho_i\} \subset \cM(\Omega) $ satisfying 
%	\begin{equation}\label{proprie_rhoi}
%	\|\rho_i\|_{\cM(\Omega)} \le \mathcal{I}_1, \qquad \|\rho_i\|_{L^2(\Omega')} \le \mathcal{I}_2,
%	\end{equation}
%such that the minimizer $u_i \in \cX_{\phi_i}(\Omega)$ of $I_{\rho_i}$ has the following property: 
%there exist $x_i \in \partial \Omega_1$ and $y_i \in \partial \Omega_2$ satisfying 
%	\[
%		\ell^{\rho_i} (x_i,y_i) \doteq \sqrt{ |x_i-y_i|^2 - \left( u_i(x_i) - u_i(y_i) \right)^2 } < i^{-1}.
%	\]
%Up to subsequences, $\rho_i \rightharpoonup \rho$ weakly in $\cM(\Omega)$ and weakly in $L^2(\Omega'$). 
%Furthermore, $\phi_i \to \phi$ strongly in $C(\partial \Omega)$ 
%due to the compactness of $\FF$. 
%By Proposition \ref{1001} it holds $u_i \to u_{\rho}$ in $C(\overline\Omega)$.
% 
Assume by contradiction that $\ov{xy} \subset \Omega'$ is a light segment for $u_\rho$. Up to renaming, $u_{\rho}(y) - u_{\rho}(x) = |y-x|$. 
Define
	\[
	\wt{\rho} \doteq \rho + \delta_y - \delta_x.
	\]
By Theorem \ref{teo_nolight}, $u_{\rho}$ also minimizes $I_{\wt{\rho}}$: $u_{\rho} = u_{\wt{\rho}}$. To reach our desired contradiction, we tweak the argument in Theorem \ref{teo_nolight} 
used to show that $u_{\rho}$ does not solve \eqref{borninfeld}. Let $\{\varphi_j\}$ be a mollifier and define $\rho_j = \varphi_j * \rho$ and $\wt{\rho}_{j} = \varphi_j * \wt{\rho}$. 
Call $u_j, \wt{u}_j \in \cX_\phi(\Omega)$, respectively, the minimizers of $I_{\rho_j}$ and $I_{\wt{\rho}_{j}}$, 
and denote by $w_j$ and $\wt{w}_j$, respectively, their energy densities. In view of Proposition \ref{1001} and $u_{\rho} = u_{\wt{\rho}}$, as $j \to \infty$, 
we have $u_j \to u_{\rho}$ and $\wt{u}_j \to u_{\rho}$ in $C(\overline\Omega)$. 
Notice that, by the properties of convolutions (see \cite[Proof of Proposition 2.7]{Ponce-mono}), 
	\[
	\|\rho_j\|_{\cM(\Omega)} \le \|\rho\|_{\cM(\Omega)} \le \mathcal{I}_1, 
	\qquad \| \wt{\rho}_{j}\|_{\cM(\Omega)} \le \|\wt{\rho}\|_{\cM(\Omega)} \le \mathcal{I}_1 + 2
	\]	
and for each $\Omega'' \Subset \Omega'\backslash \{x,y\}$, $j$ large enough and $\e$ small enough, 
	\[
	\| \rho_j\|_{L^2(\Omega_{\e/4}'')} + \|\wt{\rho}_{j}\|_{L^2(\Omega_{\e/4}'')} \le \|\rho\|_{L^2(\Omega'')} + \|\wt{\rho}\|_{L^2(\Omega'')} \le 2 \mathcal{I}_2 + 2.
	\]
Hence, we can apply Theorem \ref{teo_higher_m2} on $\Omega'' \Subset \Omega' \setminus \set{x,y}$ 
to both $u_j$ and to $\wt{u}_j$ to deduce that 
$\{w_j\}$ and $\{\wt{w}_{j}\}$ are locally uniformly integrable on $\Omega' \backslash \{x,y\}$. 
Then, Theorem \ref{teo_removable} with $E=\{x,y\}$ guarantees that 
	\[
		\int w_{\rho} Du_{\rho} \cdot D\eta \, \rd x 
		= \la \rho, \eta \ra, \qquad \int w_{\rho} Du_{\rho} \cdot D\eta \, \rd x = \la \wt{\rho}, \eta \ra 
		\quad \text{$\forall \, \eta \in \Lip_c (\Omega')$}.
	\] 
However, choosing $\eta$ such that $\eta(y)\neq \eta(x)$, we deduce 
	\[
		\la \wt{\rho}, \eta \ra = \la \rho, \eta \ra + \eta(y) - \eta(x) \neq \la \rho, \eta \ra, 
	\]
giving the desired contradiction. \qed
%
%
%	Next, we show that $u_\rho$ has no light segments in $\Omega'$. 
%Assume by contradiction that $\ov{xy} \subset \Omega'$ is a light segment for $u_\rho$, say of length $s>0$. Then, for $z \doteq (x+y)/2 \in \Omega'$ and some $r_0 \in (0,s/4)$, we can choose $\Omega_1 \doteq B_{r_0} (z) \Subset B_{2r_0} (z) \doteq \Omega_2 \Subset \Omega'$. 
%Then, $L^\rho_{r} (\Omega_1) \setminus \Omega_2 \neq \emptyset$ for every $r>0$ 
%due to $r_0< s/4$, which contradicts the assertion we have just proved. 
%\qed
%\end{proof}

\bigskip

Hereafter, we denote with $\{\rho_j,u_j,w_j\}$ the approximation described in Subsection \ref{subsec_strategy}. With the aid of Step 2 and $\rho \in L^2(\Omega')$, 
an application of Lemma \ref{lem_simplecompact}, Corollary \ref{cor_secondfund} and Theorem  \ref{teo_higherint} gives the next improved higher integrability and second fundamental form estimates for $u_\rho$, which conclude the proof of Theorem \ref{teo_BI_local_s2} (ii). \\[0.2cm]

\noindent \textbf{Step 3:} 
\textsl{Higher integrability, Theorem \ref{teo_BI_local_s2} (ii): 
for each $\e>0$, $q_0 >0$, there exists a constant 
	\[
		\mathcal{C} = \mathcal{C}(\Omega,\FF,\diam_\delta(\Omega), \mathcal{I}_1, \mathcal{I}_2, \e, \Omega',q_0) > 0
	\] 
such that for each $\rho$ and $\rho$ satisfying \eqref{eq_assu_mainthm2},
	\[
	\begin{aligned}
		&\int_{\Omega'_{\e}} (1+ \log w_\rho)^{q_0} \left\{ w_\rho |D^2u_\rho|^2 + w_\rho^3 \left| D^2 u_\rho 
		\left( D u_\rho, \cdot \right) \right|^2 
		+ w_\rho^5 \left[D^2u_\rho(Du_\rho,Du_\rho)\right]^2\right\}\di x 
		\\
		& \qquad + \int_{\Omega'_\e} w_\rho (1+ \log w_\rho)^{q_0+1}\di x \le \mathcal{C}.
	\end{aligned}
	\]
}
\begin{proof}[Proof of Step 3]
Let $\mathscr{G} \subset \cX(\Omega)$ be the set of minimizers $u_\rho$ whose boundary value $\phi$ and 
source $\rho$ satisfy \eqref{eq_assu_mainthm2}. 
Because of the compactness of $\scrF$ and of Propositions \ref{teo_compaembe} and \ref{1001}, 
taking into account the lower semicontinuity of $\|\cdot\|_{L^2(\Omega')}$ and $\| \cdot \|_{\cM(\Omega)}$ under weak convergence, 
we deduce that $\mathscr{G}$ is compact in $C(\overline\Omega)$. 
Applying the second part of Lemma \ref{lem_simplecompact}, for $\e>0$ we infer the existence of 
	\[
	R = R \big(\Omega,\FF,\diam_\delta(\Omega), \mathcal{I}_1, \mathcal{I}_2,\e,\Omega'\big).
	\]
such that $L^{\rho_j}_R(\Omega'_\e) \Subset L_R^{\rho_j}(\Omega')$ for each $u \in \mathscr{G}$. Theorem \ref{teo_higherint} with $\Omega'' = \Omega'_\e$ ensures that \eqref{ine_thebest} holds for $u_j$ uniformly in $j$. The corresponding inequality for the pointwise limit $u_\rho$, which is a rewriting of our desired estimate, then follows by the same method as that in Corollary \ref{cor_secondfund}. 
	\end{proof}

\noindent \textbf{Step 4:} 
\textsl{Weak solvability and no light segments, Theorem \ref{teo_BI_local_s2} (i).}

	\begin{proof}[Proof of Step 4]
Applying Step 1 to the mollified sources $\rho_j$, we deduce that $\{w_j\}$ are locally uniformly integrable in $\Omega \setminus \Sigma$. Using $\haus^1_\delta(\Sigma) = 0$, Theorem \ref{teo_removable} implies that the limit $u_\rho$ is a weak solution to \eqref{borninfeld} on $\Omega$. On the other hand, by Step 2, $u_\rho$ does not have light segments in any set $\Omega'' \Subset \Omega \backslash \Sigma$, hence in $\Omega \backslash \Sigma$. 
Since $\haus^1_\delta(\Sigma) = 0$, there are no light segments on the entire $\Omega$.
\end{proof}

\noindent \textbf{Step 5:} 
\textsl{Regularity for $\rho \in L^\infty$, Theorem \ref{teo_BI_local_s2} (iii).}

\begin{proof}[Proof of Step 5]
Let $\rho \in L^\infty(\Omega')$, and fix a domain $\Omega'' \Subset \Omega'$. Due to Step 2, every point $x \in \Omega''$ has positive Lorentzian distance from $\partial \Omega'$, with a uniform bound depending on the data of our problem. We can therefore use the local gradient estimate in \cite[Lemma 2.1]{bartniksimon} as in \cite[Proof of Theorem 4.1]{bartniksimon} to deduce an $L^\infty$-estimate for $w_\rho$ and a $W^{2,2}$-estimate for $u_\rho$ in $\Omega''$. From Theorem \ref{teo_BI_local_s2} (i) and (ii), $u_\rho \in W^{2,2}_\loc(\Omega')$ is a strong solution to 
	\[
		- \sum_{i=1}^m \partial_i \left( a_i(Du_\rho) \right) = \rho \quad \text{in $\Omega''$, where }  
		\ a_i(p) \doteq \left( 1 - |p|^2 \right)^{-1/2} p_i : B_1(0) \to \R.
	\]
By differentiating formally the equation in $x_k$, we see that $(u_\rho)_k \in W^{1,2}(\Omega'')$ is a weak solution to 
	\[
		 - \sum_{i=1}^m \partial_i \sum_{n=1}^m \frac{\partial a_i}{\partial p_n} (Du_\rho) (u_\rho)_{nk} 
		 = \sum_{i=1}^m \partial_i \left( \rho \delta_{ki} \right) \quad \text{in $\Omega''$}.
	\]
Since $(\partial a_i/ \partial p_n)$ is bounded and uniformly elliptic on $\Omega''$ 
due to the $L^\infty$-bound of $w_\rho$, 
applying \cite[Theorem 8.22 or Corollary 8.24]{GiTr01}, we see that $(u_\rho)_k \in C^\alpha_\loc (\Omega'')$ for some $\alpha$, 
hence, $u_\rho \in C^{1,\alpha}_\loc(\Omega'')$. By bootstrapping, $u_\rho \in C^\infty(\Omega')$ whenever $\rho \in C^\infty(\Omega')$. 
\end{proof}

By Steps 1--5, we complete the proof of Theorem \ref{teo_BI_local_s2}. 
\end{proof}

\begin{remark}
Referring to the approximations $\{u_j\}$ of $u_\rho$ in Subsection \ref{subsec_strategy}, 
because of Theorem \ref{teo_higherint}, Lemma \ref{lem_simplecompact} and the argument in Step 2 above, 
we deduce that the uniform integrability of $\{w_j \log w_j\}$ on a subdomain $\Omega'$ where $\rho \in L^2$ is \emph{equivalent} to the nonexistence of light segments for $u_\rho$ on $\Omega'$. 
\end{remark}

\subsection{Proof of Theorem \ref{teo_BI_local_sm}}

The proof is similar to the one of Theorem \ref{teo_BI_local_s2}. 
%	\begin{proof}[Proof of Theorem \ref{teo_BI_local_sm}]
We consider the approximation $\{\rho_j, H_j, u_j, w_j\}$ in Subsection \ref{subsec_strategy}. 
Fix $\Omega' \Subset\Omega \setminus (\Sigma \cup K^\rho_\phi)$ and a small $\e>0$. Then, 
	\[
	\|\rho_j\|_{L^2(\Omega'_\e)} \le \|\rho\|_{L^2(\Omega')} \qquad \text{for $j$ large enough.}
	\]
Let $\Omega'' \Subset \Omega'_\e$. From the definition of $K_\phi^\rho$ and Proposition \ref{1001}, the first part of Lemma \ref{lem_simplecompact} applied to $\mathscr{G} \doteq \{u_j\}_j \cup \{u\}$ guarantees the existence of $R$ such that $L_R^{\rho_j}(\Omega'') \Subset \Omega'$ for each $j$, and therefore, by Theorem \ref{teo_higherint} we deduce that, for each $q_0 \in \R^+$, 
	\[
			\sup_j \int_{\Omega''} \left\{ w_j \left( 1 + \log w_j \right) + \|\SF_j\|^2 w_j^{-1}\right\} \left( 1 + \log w_j \right)^{q_0}  \di x < \infty.
	\]
%	\begin{equation}\label{eq_higher_j}
%	\sup_j \int_{\Omega''} \left\{ w_j \left( 1 + \log w_j \right) + \|\SF_j\|^2 w_j^{-1}\right\} \left( 1 + \log w_j \right)^{q_0}  \di x < \infty.
%	\end{equation}
Hence, Theorem \ref{teo_BI_local_sm} (ii) holds by the same argument as the one in Corollary \ref{cor_secondfund}. In the case $\rho \in L^\infty(\Omega')$, from $L_R^{\rho_j}(\Omega'') \Subset \Omega'$ and $\|\rho_j\|_{L^\infty(\Omega'')} \le \|\rho\|_{L^\infty(\Omega')}$ for large enough $j$ we can proceed as in the proof of Step 5 in Theorem \ref{teo_BI_local_s2} to get $w_\rho \in L^\infty(\Omega'')$ and then $u_\rho \in C^{1,\alpha}_\loc(\Omega')$, which proves Theorem \ref{teo_BI_local_sm} (iii).

	Summarizing, in our assumptions $\{w_j\}$ is locally uniformly integrable on $\Omega \setminus (\Sigma \cup K^\rho_\phi)$. 
Theorem \ref{teo_removable} ensures that $u_\rho$ satisfies \eqref{borninfeld} on $\Omega \backslash K^\rho_\phi$. 
Moreover, if $K^\rho_\phi \cap (\partial \Omega \cup \Sigma) = \emptyset$, 
then we can  choose open sets $\Omega'', \Omega'$ 
such that $K^\rho_\phi \subset \Omega'' \Subset \Omega' \Subset \Omega \backslash \Sigma$. 
By the definition of $K^\rho_\phi$ and applying Lemma \ref{lem_simplecompact}, 
we get the existence of $R$ such that $L^{\rho_j}_R(\Omega'') \Subset \Omega'$ for each $j$, 
and therefore a uniform integrability of $\{w_j\}$ on $\Omega''$ by Theorem \ref{teo_higherint}. 
Hence, $\{w_j\}$ is locally uniformly integrable on the entire $\Omega \backslash \Sigma$, 
and $u_\rho$ solves \eqref{borninfeld} on $\Omega$ by Theorem \ref{teo_removable}. 
Thus, Theorem \ref{teo_BI_local_sm} (i) holds and this completes the proof. 
%\end{proof}	
\qed

\subsection{Proof of Theorems \ref{teo_BI_global} and \ref{teo_BI_global_consing}}

We begin with the following proposition:

\begin{proposition}\label{prop_extrinsicglobal}
Let $m \ge 3$ and $\mathcal{I}>0$ be given. Then there exists  a constant $\mathcal{J} = \mathcal{J} (m,\mathcal{I},p_1) > 0$ such that 
for any $\rho \in \cX(\Rm)^\ast$ with $\| \rho \|_{\cX^\ast} \leq \mathcal{I}$, the minimizer $u_\rho$ satisfies  
	\begin{equation}\label{uniform-bdd-urho}
		\| u_\rho \|_{\infty} \leq \mathcal{J}.
	\end{equation}	
Moreover, $L_\e^\rho(\Omega'') \Subset \Omega'$ holds provided $\e > 0$ and 
$\Omega'' \subset \Omega' \subset \Rm$ satisfy
	\begin{equation}\label{assu_dist}
		\di_\delta (\Omega'', \R^m\backslash \Omega') \ge 2 \mathcal{J} + \eps.
	\end{equation}
\end{proposition}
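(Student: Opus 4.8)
The plan is to prove the two assertions in turn. For \eqref{uniform-bdd-urho}, I would use the coercivity estimate \eqref{coercive} already established for the functional setting on $\R^m$. Since $u_\rho$ minimizes $I_\rho$ over $\cX_0(\R^m)$ and $0 \in \cX_0(\R^m)$, we have $I_\rho(u_\rho) \le I_\rho(0) = 0$. Plugging $v = u_\rho$ into \eqref{coercive} gives
	\[
	\|u_\rho\|_\cX^2 \le 2\bigl[1 + b_1^{-1}\bigl(I_\rho(u_\rho) + \|\rho\|_{\cX^*}\|u_\rho\|_\cX\bigr)\bigr] \le 2 + 2 b_1^{-1}\mathcal{I}\,\|u_\rho\|_\cX,
	\]
and solving this quadratic inequality in $\|u_\rho\|_\cX$ yields a bound $\|u_\rho\|_\cX \le C(m,\mathcal{I})$. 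Then, by Proposition \ref{prop-cX-emb}, $\cX(\R^m) \hookrightarrow C_0(\R^m)$ with a continuous embedding whose constant depends only on $m$ and $p_1$, so $\|u_\rho\|_\infty \le C'(m,p_1)\|u_\rho\|_\cX \le \mathcal{J}(m,\mathcal{I},p_1)$. This establishes \eqref{uniform-bdd-urho}.

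For the second assertion, the key is the elementary lower bound on the Lorentzian distance $\ell_o$ in terms of the Euclidean distance $r_o$ coming from its very definition in \eqref{def_ello}. If $x \in \Omega''$ and $x \notin \Omega'$, then $r_o(x) = |x - o| \ge \di_\delta(\Omega'', \R^m \setminus \Omega')$ for $o \in \Omega''$; more precisely, if $x \in L_\e^\rho(o)$ with $o \in \Omega''$, then $\ell_o(x) < \e$, i.e.
	\[
	-\bigl(u_\rho(x) - u_\rho(o)\bigr)^2 + |x - o|^2 < \e^2,
	\]
which forces $|x - o|^2 < \e^2 + \bigl(u_\rho(x) - u_\rho(o)\bigr)^2 \le \e^2 + (2\mathcal{J})^2 \le (\e + 2\mathcal{J})^2$, using \eqref{uniform-bdd-urho} to bound $|u_\rho(x) - u_\rho(o)| \le 2\|u_\rho\|_\infty \le 2\mathcal{J}$. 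Hence $L_\e^\rho(\Omega'') \subset \{x : \di_\delta(x, \Omega'') < \e + 2\mathcal{J}\}$. Under the hypothesis \eqref{assu_dist}, namely $\di_\delta(\Omega'', \R^m \setminus \Omega') \ge 2\mathcal{J} + \e$, this set is contained in $\Omega'$; one should note it has compact closure in $\Omega'$ as long as $\Omega''$ is bounded (or one localizes, since $\rho$ has compactly supported approximations and $u_\rho$ decays at infinity), giving $L_\e^\rho(\Omega'') \Subset \Omega'$.

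I do not expect a serious obstacle here: both parts are short consequences of results already in the excerpt. The only mild care needed is in the second part, to ensure compactness of $\overline{L_\e^\rho(\Omega'')}$ inside $\Omega'$ rather than mere inclusion — this is automatic when $\Omega''$ is bounded with $\overline{\Omega''} \subset \Omega'$ and the Euclidean-neighborhood estimate above is strict in the limit, or can be arranged by shrinking $\e$ slightly; in the applications (Theorems \ref{teo_BI_global} and \ref{teo_BI_global_consing}) the sets $\Omega'', \Omega'$ are chosen bounded so this is not an issue. The whole proposition is really a packaging lemma that lets one apply Theorem \ref{teo_higherint} with the hypothesis $L_R^\rho(\Omega'') \Subset \Omega'$ verified automatically once the source has controlled $\cX^*$-norm and the two open sets are taken far enough apart.
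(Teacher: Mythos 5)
Your proposal is correct and follows essentially the same route as the paper: the $L^\infty$ bound comes from $I_\rho(u_\rho)\le I_\rho(0)=0$ combined with the coercivity estimate \eqref{coercive} and the embedding $\cX(\R^m)\hookrightarrow C_0(\R^m)$ of Proposition \ref{prop-cX-emb}, and the second assertion is the same elementary computation $\ell_o^2=r_o^2-|u_\rho(x)-u_\rho(o)|^2\ge r_o^2-4\mathcal{J}^2$, which you state as a neighborhood inclusion and the paper states as a lower bound on $\ell_o$ outside $\Omega'$. Your remark on compactness of $\overline{L_\e^\rho(\Omega'')}$ (automatic when $\Omega''$ is bounded, as in the applications) is a fair point that the paper leaves implicit, and does not affect correctness.
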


\begin{proof}
Remark that the minimizer $u_\rho$ satisfies $I_\rho(u_\rho) \leq I_\rho(0) = 0$. 
Recalling \eqref{coercive} and noting that $b_1 = 1/2$ in \eqref{1}, we see that 
for each $\rho \in \cX(\Rm)^\ast$ with $\| \rho \|_{\cX^\ast} \leq \mathcal{I}$, 
	\[
		\| u_\rho \|_\cX^2 \leq 4 \left[ 1 + 2 \| \rho \|_{\cX^\ast} \| u_\rho \|_{\cX} \right] 
		\leq 4 + 8 \mathcal{I} \| u_\rho \|_{\cX} .
	\]
Hence, minimizers are uniformly bounded in $\cX(\Rm)$ when $\|\rho\|_{\cX^*} \le \mathcal{I}$ and by virtue of Proposition \ref{prop-cX-emb}, 
\eqref{uniform-bdd-urho} holds.

	Let $\Omega'' \subset \Omega'$ satisfy \eqref{assu_dist}. 
Notice that \eqref{uniform-bdd-urho} implies that 
for each $x,o \in \Rm$ and each $\rho \in \cX(\Rm)^\ast$ with $\| \rho \|_{\cX^\ast} \leq \mathcal{I}$, 
	\[
		\left( \ell^\rho_o \right)^2 (x) = r_o^2(x) - \left| u_\rho(x) - u_\rho(o) \right|^2 \geq r_o^2(x) - 4 \mathcal{J}^2.
	\]
Hence, for any $x \in \Rm \setminus \Omega'$ and $o \in \Omega''$, 
	\[
		\left( \ell^\rho_o(x) \right)^2 \geq 4 \mathcal{J} \e + \e^2,
	\]
which implies $L_\e^\rho(\Omega'') \Subset \Omega'$. 
\end{proof}

\begin{proof}[Proof of Theorem \ref{teo_BI_global}]
Define $p_1$ as in \eqref{def_p_1} for $m\ge 3$, and choose $\{\rho_j,u_j,w_j\}$ as in Subsection \ref{subsec_strategy}. Under the assumptions of Theorem \ref{teo_BI_global}, 
in view of Proposition \ref{prop_extrinsicglobal}, 
there exists $\mathcal{J}= \mathcal{J} (m,\mathcal{I},p) $ such that $\|u_j\|_\infty \le \mathcal{J}$ and 
$L^{\rho_j}_{\e} (\Omega'') \Subset \Omega'$ 
for any $\e>0$ with $\di_\delta (\Omega'', \Rm \setminus \Omega' ) \geq 2 \mathcal{J}+ \e$. 
Then the local uniform higher integrability of $\{w_j\}$ and the fact that $u_\rho$ solves \eqref{borninfeld} 
directly follow from Theorems \ref{teo_removable} and \ref{teo_higherint}.
\end{proof} 

\begin{proof}[Proof of Theorem \ref{teo_BI_global_consing}]
The proof follow verbatim that of Theorem \ref{teo_BI_local_sm}, 
with the help of the $L^\infty$ estimates in Proposition \ref{prop_extrinsicglobal}, and is left to the reader. 
\end{proof}
%%%%%%%%%%%%%%%%%%%%%%%%%%%%%%%%%%%%%%%%%%%%%%%%%%%%%%%%%%%%%%%%%%
%%%%%%%%%%%%%%%%%%%%%%%%%%%%%%%%%%%%%%%%%%%%%%%%%%%%%%%%%%%%%%%%%%
%%%%%%%%%%%%%%%%%%%%%%%%%%%%%%%%%%%%%%%%%%%%%%%%%%%%%%%%%%%%%%%%%%

%%%%%%%%%%%%%%%%%%%%%%%%%%%%%%%%%%%%%%%%%%%%%%%%%%%%%%%%%%%%%%%%%%
%%%%%%%%%%%%%%%%%%%%%%%%%%%%%%%%%%%%%%%%%%%%%%%%%%%%%%%%%%%%%%%%%%
%%%%%%%%%%%%%%%%%%%%%%%%%%%%%%%%%%%%%%%%%%%%%%%%%%%%%%%%%%%%%%%%%%
\section{Weak solutions with light segments}\label{sec_counterexamples}

In this section we construct the example in Theorem \ref{ex-light-seg_intro}. First, for $\ell \in \{1, \ldots, m-2\}$ we write $x \in \R^m$ as 
	\[
	x = (y,z,x_m), \qquad \text{with} \quad y \in \R^{m-\ell}, \ \ z \in \R^{\ell-1}.
	\]
If $\ell = 1$, then the variable $z$ can be omitted, which allows for some computational simplifications. The idea is to consider the function 
	\begin{equation}\label{def-U}
	U(x) = \big( 1 - \eps^{2\kappa}|y|^{2\kappa}\big) x_m  
	\end{equation}
for $\kappa \ge 1$. Notice that the set $\{|y| = 0\}$ is an $\ell$-dimensional subspace made up of light segments, but $U$ does not satisfy a spacelike boundary condition in any bounded smooth domain $\Omega$ containing the origin. For this reason, for $\e>0$ we fix cut-off functions $\vartheta_\e$, $\zeta_\e$ and  $A_\e$ as follows: 
\begin{itemize} 
\item $\vartheta_\e \in C^\infty_c(\R)$ is defined by $\vartheta_\e(t) \doteq \vartheta_1(\e t)$ and $\vartheta_1(t)$ satisfies 
	\begin{equation}\label{prop-eta}
		\begin{aligned}
			&\vartheta_1 (t) \in C^\infty_c(\R), \quad \vartheta_1'(t) \leq 0 \quad \text{for $t \geq 0$}, \quad 
			\mathrm{supp}\, \vartheta_1 \subset [-2,2], \\
			&
			\vartheta_1 (t) \equiv 1 \ \text{for $0 \leq t \leq 1$}, \quad 
			\vartheta_1 (t) = 1 - \frac{e^2}{2} \exp \left( - \frac{1}{t-1} \right) \ \text{for $1 < t \leq \frac{3}{2}$}.
		\end{aligned}
	\end{equation}
\item $\zeta_\e \in C^\infty_c(\R)$ satisfies 
	\begin{equation}\label{eq:34.4}
		\zeta_\e \equiv 1 \quad \text{on} \ \left[ - \frac{1}{2\e} , \frac{1}{2\e} \right], 
		\quad \zeta_\e \equiv 0 \quad \text{on} \ \R \setminus \left( -\frac{1}{\e}, \frac{1}{\e}\right), 
		\quad \| \zeta_\e' \|_{L^\infty(\R)} \leq 4 \e. 
	\end{equation}
\item Having chosen a function $a_\e \in C^\infty_c(\R)$ with 
	\begin{equation}\label{eq:34.5}
		\begin{aligned}
			&a_\e (-t) = a_\e (t), \quad a_\e (t) = \left\{\begin{aligned}
				&1 & &\text{if}\ t \in [0,\e],\\
				&0 & &\text{if} \ t \in [2\e, \infty),
			\end{aligned}\right.
		\\
			&
			a_\e'(t) < 0 \quad \text{if} \ t \in (\e,2\e), 
			\quad a_\e(t) = 1 - d_\e \exp \left( - \frac{1}{t-\e} \right) \quad \text{if $t \in  \left( \e , \frac{3\e}{2} \right]$},
		\end{aligned}
	\end{equation}
where $d_\e > 0$ is chosen so that $a_\e ( 3\e/2 ) = 1/2$, $A_\e$ is defined by  
	\begin{equation}\label{eq:Ae}
		A_\e (t) \doteq \int_0^t a_\e (s) \, \rd s \in C^\infty(\R).
	\end{equation}
\end{itemize}
For $\kappa \geq 1$, we then define $U_\e(y,z,x_m)$ by 
\[
		U_\e(y,z,x_m) \doteq \left( 1 - \e^{2\kappa} |y|^{2\kappa} \right) \zeta_\e(|y|) \vartheta_\e(|z|) \zeta_\e (x_m) A_\e(x_m), 
\]
If $\ell = 1$, $\vartheta_\e(|z|)$ is replaced by $1$. 
Notice that $U_\e \in C^2_c(\R^m)$ and $U_\e \in C^\infty_c(\R^m)$ if $\kappa \in \N$. 
Remark that 
	\[
		U_\e(0,z,x_m) = x_m \quad \text{if $|z| \leq \frac{1}{\e}$ and $|x_m| \leq \e$},
	\]
and a direct computation shows that $|DU_\eps|< 1$ on the complement of the above set, see below. Hence, the union of the light segments of $U_\e$ is the $\ell$-dimensional compact cylinder 
	\[
	(0,z,x_m) \in \{0\} \times \overline{B}^{\ell-1}_{1/\eps} \times [-\eps,\eps].
	\] 
%For computational reasons, we write
%	\[
%	U_\e(y,z,x_m) = u_\e(y,x_m) \vartheta_\e(|z|),
%	\]
%where we set 
%	\[
%	u_\eps(y,x_m) = \left( 1 - \e^{2\kappa} |y|^{2\kappa} \right) \zeta_\e(|y|)\zeta_\e (x_m) A_\e(x_m).
%	\]
%
We hereafter denote with $W_\e$, $\rho_{U_\e}$ and $\SF_{U_\e}$ the energy density, the mean curvature and the second fundamental form of the graph of $U_\eps$. 
Theorem \ref{ex-light-seg_intro} follows from the next one applied with $\kappa = 1$: 
	\begin{theorem}\label{ex-light-subsp}
		Assume $m \ge 3$, $1 \leq \ell \leq m-2$ and $\kappa \in [1, m-\ell )$. Then 
		\begin{equation}\label{Ue-mc-sf}
			W_\e \in L^q_\loc (\R^m) \quad \text{and} \quad 
			\rho_{U_\e}, \ \left\| \SF_{U_\e} \right\| \in L^q(\R^m) \quad \text{for all $q < \frac{m-\ell}{\kappa}$},
		\end{equation}
	and $U_\e$ satisfies 
		\[
			\int_{\R^m} \frac{ DU_\e \cdot D \eta }{\sqrt{1 - |DU_\e|^2 }}  \rd x 
			= \int_{\R^m} \rho_{U_\e} \eta \, \rd x 
			\quad \text{for each $\eta \in C^\infty_c (\R^m)$}. 
		\]
	\end{theorem}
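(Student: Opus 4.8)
The strategy is to verify, by explicit but careful estimates on the cut-off function $U_\e$, three things: (1) that $|DU_\e| < 1$ away from the light cylinder, so that $W_\e$ is finite there and the graph is strictly spacelike; (2) that $W_\e$, $\rho_{U_\e}$ and $\|\SF_{U_\e}\|$ have the claimed integrability; and (3) that the divergence identity holds for all test functions, not just those vanishing on the light cylinder. The delicate interplay is between the prefactor $(1-\e^{2\kappa}|y|^{2\kappa})$, which forces the slope to $1$ as $|y|\to 0$, and the factors $\zeta_\e(x_m)A_\e(x_m)$, which multiply $x_m$ and whose derivative $a_\e$ is engineered to vanish to infinite order at $|x_m|=\e$; the infinite-order vanishing is exactly what will keep $|DU_\e|$ strictly below $1$ except on $\{y=0\}\cap\{|x_m|\le\e\}$.

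\textbf{Step 1: pointwise slope and energy density.} First I would compute $DU_\e$ directly. Writing $U_\e = P(y)\,\vartheta_\e(|z|)\,Q(x_m)$ with $P(y)=(1-\e^{2\kappa}|y|^{2\kappa})\zeta_\e(|y|)$ and $Q(x_m)=\zeta_\e(x_m)A_\e(x_m)$, the gradient has the three blocks $\partial_y U_\e$, $\partial_z U_\e$, $\partial_{x_m}U_\e$. On the region where $U_\e$ is not identically equal to $x_m$ — i.e. where either $|y|>0$, or $|z|>1/\e$, or $|x_m|>\e$ — one shows $|DU_\e|^2<1$ by a case analysis: when $|y|$ is small the dominant competition is between $\partial_{x_m}U_\e \approx Q'(x_m) \approx a_\e(x_m)$, which is $<1$ strictly once $|x_m|>\e$ because of the infinite-order decay in \eqref{eq:34.5}, and the $y$-derivative term $\partial_{|y|}U_\e \approx -2\kappa\e^{2\kappa}|y|^{2\kappa-1}x_m$, which is $O(|y|^{2\kappa-1})$ and hence small; when $|y|$ is bounded away from $0$ the prefactor $1-\e^{2\kappa}|y|^{2\kappa}$ is bounded away from $1$ and the estimate is easy. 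This gives $|DU_\e|<1$ on $\R^m\setminus\bigl(\{0\}\times\overline B^{\ell-1}_{1/\e}\times[-\e,\e]\bigr)$, so $W_\e=(1-|DU_\e|^2)^{-1/2}$ is locally bounded there. The only blow-up is as $|y|\to 0$ with $|x_m|\le\e$: there $1-|DU_\e|^2 \sim c\,|y|^{2\kappa}$ for a positive constant depending on $x_m$ (coming from the $y$-derivative of the prefactor), hence $W_\e\sim |y|^{-\kappa}$. Since the singular set has codimension $m-\ell$ (it is $\{y=0\}$ in $\R^{m-\ell}_y$), the function $|y|^{-\kappa}$ lies in $L^q_\loc$ iff $q\kappa < m-\ell$, which is precisely the stated range; this handles $W_\e\in L^q_\loc$.

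\textbf{Step 2: mean curvature and second fundamental form.} Here I would use the formulas from Section~\ref{sec_prelimgeo}: $\rho_{U_\e} = -H = \diver\bigl(DU_\e/\sqrt{1-|DU_\e|^2}\bigr)$ and $\|\SF_{U_\e}\|$ as in \eqref{norm_second}, both of which are rational expressions in $DU_\e$, $D^2U_\e$ and $W_\e$. Away from the light cylinder these are smooth. Near $\{y=0,\ |x_m|\le\e\}$, on the set where $U_\e(y,z,x_m)=(1-\e^{2\kappa}|y|^{2\kappa})x_m$ exactly (i.e. inside the inner plateaus of all the cut-offs), one computes directly: $D^2U_\e$ contributes terms of size $|y|^{2\kappa-2}$ (second $y$-derivative of the prefactor), $W_\e\sim|y|^{-\kappa}$, $|D^2U_\e(DU_\e,DU_\e)|\sim|y|^{2\kappa-1}$, and so each of the four building blocks $w|D^2u|^2$, $w^3|D^2u(Du,\cdot)|^2$, $w^5[D^2u(Du,Du)]^2$, and the mean curvature $H = w\Delta u + w^3 D^2u(Du,Du)$, after simplification, behaves like $|y|^{-2\kappa}$ (or better). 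Thus $\|\SF_{U_\e}\|\sim|y|^{-\kappa}$ and $\rho_{U_\e}\sim|y|^{-\kappa}$ in the worst region, so both lie in $L^q$ for $q\kappa<m-\ell$; globally the integrand is compactly supported and smooth off the cylinder, so local integrability suffices. The bookkeeping here is the bulk of the work: one must track the exact powers of $|y|$ in the numerator of each expression and check that the apparent worse singularities cancel — in particular that the leading $|y|^{-3\kappa}$ and $|y|^{-2\kappa}$ terms combine to give only $|y|^{-2\kappa}$ or $|y|^{-\kappa}$. A clean way to organize this is to change variables, or to expand $1-|DU_\e|^2 = |y|^{2\kappa}(c(x_m) + O(|y|))$ and substitute into the rational expressions.

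\textbf{Step 3: the integration-by-parts identity across the singular set.} Finally, for the divergence identity I would argue by removing an $\e'$-neighborhood $N_{\e'}$ of the light cylinder $\{y=0\}\cap\{|x_m|\le\e+1\}$. On $\R^m\setminus N_{\e'}$ the graph is strictly spacelike and smooth, so $\diver\bigl(DU_\e/\sqrt{1-|DU_\e|^2}\bigr)=\rho_{U_\e}$ classically and integration by parts against $\eta\in C^\infty_c(\R^m)$ produces a boundary term on $\partial N_{\e'}$. That boundary term is bounded by $\int_{\partial N_{\e'}} W_\e\,|DU_\e|\,|\eta|\,d\haus^{m-1}$, and since $W_\e\sim(\e')^{-\kappa}$ on $\partial N_{\e'}$ while $\haus^{m-1}(\partial N_{\e'})\sim (\e')^{m-\ell-1}$, the boundary term is $O\bigl((\e')^{m-\ell-1-\kappa}\bigr)$, which tends to $0$ as $\e'\to0$ precisely because $\kappa<m-\ell$ forces $m-\ell-1-\kappa > -1$; combined with the fact that $\rho_{U_\e}\in L^1_\loc$ so that $\int_{N_{\e'}}\rho_{U_\e}\eta\to0$, one passes to the limit and gets the identity for all $\eta\in C^\infty_c(\R^m)$. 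The main obstacle is Step~2: one genuinely has to carry out the expansion and verify the cancellations in $\|\SF\|$ and $H$ uniformly in $x_m$ near the cylinder's endpoints $|x_m|=\e$, where the cut-off $a_\e$ turns on and extra derivative terms appear — one uses there that $a_\e-1$ and its derivatives vanish to infinite order, so those extra terms are negligible compared to the $|y|$-powers and do not spoil the integrability exponent.
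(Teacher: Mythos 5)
Your overall architecture (explicit slope/curvature estimates plus integration by parts around the singular set) matches the paper's, but two of the steps you treat as routine are exactly where the proof is delicate, and as written they contain genuine gaps.

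First, in Step 2 the claimed ``cancellation'' is not what happens, and the asymptotic $\|\SF_{U_\e}\|\sim|y|^{-\kappa}$ is false near the transition zone of $a_\e$. The term $u_{mm}u_m^2$ in $D^2U_\e(DU_\e,DU_\e)$ produces the contribution $W_\e^3\,|a_\e'(x_m)|$ (see \eqref{bdd-u-sf}), which for $\e<|x_m|<2\e$ and $|y|$ small is of order $|y|^{-3\kappa}|a_\e'(x_m)|$ if one only uses $W_\e\lesssim|y|^{-\kappa}$; that is far outside $L^q$ for $q$ up to $(m-\ell)/\kappa$. Nothing cancels it. Its integrability comes instead from the refined bound $W_\e\le C_\e\bigl(1-a_\e(x_m)+|y|^{2\kappa}\bigr)^{-1/2}$ of \eqref{eq:34.12}, valid because $1-a_\e>0$ caps the energy density where $a_\e'\neq0$, combined with the \emph{quantitative} relation between $|a_\e'|$ and $1-a_\e$ forced by the explicit choice $a_\e(t)=1-d_\e\exp(-1/(t-\e))$ in \eqref{eq:34.5}; the resulting one-dimensional integral (cf.\ \eqref{we-ae}) converges precisely when $q<(m-\ell)/\kappa$, so this term — not the $|y|^{-\kappa}$ singularity — is what pins the sharp exponent. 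Your appeal to ``infinite-order vanishing of $a_\e-1$'' does not substitute for this: $a_\e'$ is not small relative to powers of $|y|$ in the region $\{\e<|x_m|<2\e,\ |y|\text{ small}\}$. For $\ell\ge2$ the same issue recurs in the transition zone of $\vartheta_\e$ (region $\Omega_4$ of the paper), where moreover strict spacelikeness itself is not ``easy'': with $|y|$ tiny and $\e|z|$ slightly above $1$ one must verify $4\e^4(\vartheta_1')^2+\vartheta_1^2<1$, which again uses the explicit exponential form of $\vartheta_1$; your case analysis omits this region entirely.

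Second, your Step 3 boundary estimate does not close. Bounding the flux by $W_\e|DU_\e|$ with $|DU_\e|\le1$ gives $O\bigl((\e')^{m-\ell-1-\kappa}\bigr)$, which does \emph{not} tend to zero on the whole admissible range: already for $m=3,\ \ell=1,\ \kappa=1$ (and for $\ell=m-2,\ \kappa=1$) the exponent is $0$, and for $\kappa\in[m-\ell-1,m-\ell)$ it is negative; the condition ``$m-\ell-1-\kappa>-1$'' is not a reason for the limit to vanish, and an averaging/subsequence argument based on $W_\e\in L^1_{\loc}$ does not rescue it either. The paper's flux estimate uses that only the radial component enters: on $\{|y|=\tau\}$ one has $|DU_\e\cdot y/|y||=|(U_\e)_r|\le C\tau^{2\kappa-1}$, so the boundary term is $O\bigl(\tau^{-\kappa}\tau^{2\kappa-1}\tau^{m-\ell-1}\bigr)=O\bigl(\tau^{\kappa+m-\ell-2}\bigr)\to0$ since $\kappa\ge1$ and $\ell\le m-2$ (cf.\ \eqref{eq:34.3}). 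Without this refinement, your passage to the limit in the weak formulation is not justified.
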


%
%	\begin{remark}
%For $U$ in \eqref{def-U} and sufficiently small $\e>0$, a direct computation shows that 
%$\|\SF_{U}\| \in L^q(B_1)$ for $q < (m-1)/ \kappa$, 
%while $\rho_U$ turns out to be considerably more regular, precisely $\rho_U\in L^q(B_1)$ 
%for $q < (m-1) / (2-\kappa)$ if $1 \leq \kappa < 2$, and $\rho_U \in L^\infty(B_1)$ if $2 \leq \kappa < m-1$. 
%The behavior is in a sharp contrast with that of $U_\e$ in Theorem \ref{ex-light-subsp}: indeed, it can be checked that $\rho_{U_\e} \not \in L^q(B_1)$ when $q = (m-\ell)/\kappa$. This suggests that, to a certain degree, the more singular behavior of $\rho_{U_\e}$ is produced at the tips of the light segment, an that the spacelike condition plays a subtle role in interior regularity issues. 
%	\end{remark}

	\begin{proof}%[Proof of Theorem \ref{ex-light-subsp}]
For \eqref{Ue-mc-sf}, since $|\rho_{U_\e}| \leq C \|\SF_{U_\e}\|$ it is enough to estimate $W_\e$ and $\| \SF_{U_\e} \|$. 

For computational reasons, with a slight abuse of notation we write $U_\eps$ as a function of the triple $(r,s,x_m)$, with $r = |y|$, and $s = |z|$: 
	\[
	U_\e(r,s,x_m) = u_\e(r,x_m) \vartheta_\e(s),
	\]
where we set 
	\[
	u_\eps(r,x_m) = \left( 1 - \e^{2\kappa} r^{2\kappa} \right) \zeta_\e(r)\zeta_\e (x_m) A_\e(x_m).
	\]
It is readily checked that for a function $u(r,s,x_m)$ it holds  
	\begin{equation}\label{eq:Du}
		Du = u_r \frac{y}{|y|} + u_s \frac{z}{|z|} + u_m e_m 
	\end{equation}
and 
	\begin{equation}\label{eq:D^2u}
		D^2u = \begin{pmatrix}
			u_{rr} \frac{y}{|y|} \otimes \frac{y}{|y|} + \frac{u_r}{r} \left( I_{m-\ell} - \frac{y}{|y|} \otimes \frac{y}{|y|} \right) 
			& u_{rs} \frac{y}{|y|} \otimes \frac{z}{|z|} 
			& u_{rm} \frac{y}{|y|}
			\\
			u_{rs} \frac{z}{|z|} \otimes \frac{y}{|y|} & u_{ss} \frac{z}{|z|} \otimes \frac{z}{|z|} 
			+ \frac{u_s}{s} \left( I_{\ell-1} - \frac{z}{|z|} \otimes \frac{z}{|z|} \right) 
			& u_{sm} \frac{z}{|z|} 
			\\
			u_{rm} \frac{y^T}{|y|} & u_{sm} \frac{z^T}{|z|} & u_{mm}
		\end{pmatrix},
	\end{equation}
where $I_k$ is the identity matrix of size $k$. Since the matrix
	\[
		u_{rr} \frac{y}{|y|} \otimes \frac{y}{|y|} + \frac{u_r}{r} \left( I_{m-\ell} - \frac{y}{|y|} \otimes \frac{y}{|y|} \right)
	\]
has eigenvalues $u_{rr}$ and $u_r/r$ with  multiplicities $1$ and $m-\ell-1$ respectively, we see that 
	\begin{equation}\label{eq:normD^2u}
		\begin{aligned}
			\left| D^2 u \right|^2 
			&= u_{rr}^2 + (m-\ell-1) \frac{u_r^2}{r^2} + u_{ss}^2 + \left( \ell - 1 \right) \frac{u_s^2}{s^2} 
			+ u_{mm}^2 + 2 u_{rs}^2 + 2 u_{rm}^2 + 2 u_{sm}^2.
		\end{aligned}
	\end{equation}
%and 
%	\begin{equation}\label{eq:Lap}
%		\Delta u = u_{rr} + \frac{m-\ell-1}{r} u_r + u_{ss} + \frac{\ell-2}{s} u_s + u_{mm}.
%	\end{equation}
Also, from \eqref{eq:Du} and \eqref{eq:D^2u} it follows that
	\begin{equation}\label{eq:D^2uDu}
		\begin{aligned}
			D^2u \left( Du , \cdot \right) 
			=& \left[ u_{rr} u_r  + u_{rs} u_s  + u_{rm} u_m \right] \frac{y}{|y|}
			+ \left[ u_{rs} u_r + u_{ss} u_s + u_{sm} u_m \right] \frac{z}{|z|} 
			\\
			&
			+ \left[ u_{rm} u_r + u_{sm} u_s + u_{mm} u_m \right] e_m
		\end{aligned}
	\end{equation}
and that
	\begin{equation}\label{eq:D^2DuDu}
		\begin{aligned}
			D^2u \left( Du, Du \right)
			&=
			u_{rr} u_r^2 + 2 u_{rs} u_r u_s + 2 u_{rm} u_r u_m + u_{ss} u_s^2 + 2 u_{sm} u_s u_m + u_{mm} u_m^2.
		\end{aligned}
	\end{equation}
For $u(|y|,x_m)$, \eqref{eq:Du}--\eqref{eq:D^2DuDu} also hold with $\ell =1$ 
and $u_s, u_{rs}, u_{ss}, u_{ms} = 0$.\par

Computing the gradient of $U_\e$, we obtain 
	\begin{equation}\label{DUe}
		\left| DU_\e \right|^2 = \left[ (u_\e)_r^2 + (u_\e)_m^2 \right] \vartheta_\e^2 + u_\e^2 (\vartheta_\e')^2 
		= \left| Du_\e \right|^2 \vartheta_\e^2 + u_\e^2 (\vartheta_\e')^2,
	\end{equation}
and moreover
	\begin{equation}\label{eq:34.7}
		\begin{aligned}
			(u_\e)_r &= \left[ \zeta_\e'(r) \left( 1 -\e^{2\kappa} r^{2\kappa} \right) 
			- \zeta_\e (r) 2\kappa \e^{2\kappa} r^{2\kappa -1} \right] \zeta_\e(x_m) A_\e(x_m), 
			\\
			(u_\e)_m &= \zeta_\e (r) (1-\e^{2\kappa} r^{2\kappa} ) 
			\left[ \zeta_\e'(x_m) A_\e(x_m) + \zeta_\e (x_m) a_\e(x_m) \right].
		\end{aligned}
	\end{equation}
Hereafter, $C$ and $C_\e$ will denote constants whose value may change from line to line, 
with $C_\e$ possibly depending on $\e$. From \eqref{eq:34.5}, we see that 
	\begin{equation}\label{eq:34.6}
		\left| A_\e (x_m) \right| \leq 2\e \quad \text{for all $x_m \in \R$}.
	\end{equation}
Hence, using also \eqref{prop-eta}, notice that 
	\begin{equation}\label{eq_uevarte}
		\left| u_\e(r,x_m) \right| \leq 2\e, \quad 0 \leq \vartheta_\e (s) \leq 1, \quad \left| \vartheta_\e'(s) \right| \leq C \e.
	\end{equation}

	We first consider the region 
	\[
		\Omega_0 \doteq \left\{|x_m| \geq \frac{3\e}{2} \right\} \subset \R^m.
	\]
Since $a_\e(x_m) \leq 1/2$ and $|\xi_\e|\leq 1$ due to \eqref{eq:34.4}, if $\e>0$ is small, then \eqref{eq:34.6}, \eqref{eq:34.4} and \eqref{eq_uevarte} give 	
	\begin{equation}\label{eq_lower}
	\begin{aligned}
	W_\e^{-2} & = 1 - |DU_\e|^2 \ge 1 - \left| Du_\e \right|^2 \vartheta_\e^2 - C \e^2 \\
	& \geq 1 - C\e^2  - \left( a_\e(x_m) \right)^2 \geq 1/2.
	\end{aligned}
	\end{equation}
Since $U_\e \in C^2_c(\R^m)$, we get $W_\e \leq \sqrt{2}$ and $\left\| \SF_{U_\e}\right\| \leq C$ on $\Omega_0$. Similarly, we study the region 
	\[
	\Omega_1 \doteq \left\{|x_m| \leq \frac{3\e}{2}, \ |y| \ge \frac{1}{2\e} \right\}.
	\]
For $\delta_\kappa \doteq 2^{-2\kappa} > 0$ and $|y| \geq 1/ (2\e)$, 
	\[
		0 \leq \zeta_\e (r) \left( 1 - \e^{2\kappa} |y|^{2\kappa } \right) \leq 1 - \delta_\kappa. 
	\]
Thus, by \eqref{eq:34.4}, \eqref{eq:34.6}, \eqref{eq:34.7} and $0 \leq a(x_m) \leq 1$, if $\e$ is small enough, then for some constant $\gamma_\kappa > 0$, 
	\[
	\begin{aligned}
	W_\e^{-2} & \ge 1 - \left| Du_\e \right|^2 \vartheta_\e^2 - C \e^2 \geq 1 - \left| Du_\e \right|^2 - C\e^2 \\
	& \geq 1- C \e^2 - \left( 1 - \delta_\kappa \right)^2 \left[ C \e^2 + 1 \right] \geq \gamma_\kappa^2 > 0.
	\end{aligned}	
	\]
Therefore, $W_\e$ and thus $\left\| \SF_{U_\e}\right\|$ are bounded on $\Omega_1$, too. Summarizing, 
	\begin{equation}\label{eq:34.8}
		W_\e \leq C, \quad \left\| \SF_{U_\e}\right\| \leq C  
		\quad \text{on } \, \left\{|x_m| \geq \frac{3\e}{2}\right\} \cup \left\{|y| \ge \frac{1}{2\e} \right\}. 
	\end{equation}

	Next, we shall check the integrability of $W_\eps$ and $\SF_{U_\e}$ on 
	\[
		\begin{aligned}
			\Omega_2 &\doteq \left\{|y| < \frac{1}{2\e}\right\} \cup \left\{|z| < \frac{1}{\e}, \ |x_m| \leq \e\right\},
			\\
			\Omega_3 &\doteq \left\{|y| < \frac{1}{2\e}\right\} \cup \left\{|z| < \frac{1}{\e}, \ \e \leq |x_m| \leq \frac{3\e}{2} \right\}.
		\end{aligned}
	\]
By \eqref{prop-eta}, we have $U_\e(r,s,x_m) = u_\e(r,x_m) = \big(1-\e^{2\kappa}r^{2\kappa}\big)A_\e(x_m)$ in a neighborhood of $\Omega_2 \cup \Omega_3$. In particular,
	\begin{equation}\label{eq:34.11}
		\begin{aligned}
			(u_\e)_r &= -2\kappa \e^{2\kappa} r^{2\kappa-1} A_\e(x_m), 
			&
			(u_\e)_m &= \left( 1 - \e^{2\kappa} r^{2\kappa} \right) a_\e(x_m),
			\\
			(u_\e)_{rr} &= - 2\kappa (2\kappa -1) \e^{2\kappa} r^{2\kappa -2} A_\e(x_m), 
			&
			(u_\e)_{rm} &= -2\kappa \e^{2\kappa} r^{2\kappa -1} a_\e(x_m), \quad 
			\\
			(u_\e)_{mm} &= \left( 1 - \e^{2\kappa} r^{2\kappa} \right) a_\e'(x_m).
		\end{aligned}
	\end{equation}
The bounds $1/2 \leq a_\e(x_m) \leq 1$ following from \eqref{eq:34.5} lead to 
	\[
		\begin{aligned}
			W_\e^{-2} & = 1 - |Du_\e(x)|^2 = 1 - 4 \kappa^2 \e^{4\kappa} r^{4\kappa -2} A_\e^2(x_m) 
			- \left( 1 - \e^{2\kappa} r^{2\kappa} \right)^2 a_\e^2(x_m)
			\\
			&= \left( 1 - a_\e(x_m) \right) \left( 1 + a_\e(x_m) \right) 
			\\ & \quad  + \e^{2\kappa} r^{2\kappa} \left[ \left( 2 - \e^{2\kappa} r^{2\kappa} \right) a_\e^2(x_m) 
			- 4 \kappa^2 \e^{2\kappa} r^{2\kappa -2} A_\e^2(x_m) \right] 
			\\
			&\geq 1 - a_\e(x_m) + \e^{2\kappa} r^{2\kappa} \left[ \frac{1}{4}  - 16 \kappa^2 \e^{4} \right].
		\end{aligned}
	\]
Thus, for sufficiently small $\e>0$, we get
	\begin{equation}\label{eq:34.12}
		W_\e \leq C_\e \left( 1 - a_\e(x_m) + r^{2\kappa} \right)^{ -\frac{1}{2} } \qquad \text{on } \, \Omega_2 \cup \Omega_3.
	\end{equation}
In particular, using $0 \le a_\e(x_m) \le 1$ we deduce	
	\[
	W_\e \leq C_\e r^{-\kappa} \in L^q(\Omega_2 \cup \Omega_3) \qquad \text{for each $q < \frac{m-\ell}{\kappa}$}.
	\]

	Regarding $\SF_{U_\eps}$, since $\kappa \geq1$ and $U_\eps$ has bounded support, 
it follows from \eqref{eq:normD^2u}, \eqref{eq:D^2uDu}, \eqref{eq:D^2DuDu} and \eqref{eq:34.11} that for $u = U_\eps (= u_\eps)$
	\[
		\begin{aligned}
			\left| D^2 U_\e \right|  \leq 
			C \left\{ \left|u_{rr}\right| + \left| \frac{u_r}{r} \right| + \left| u_{rm} \right| + \left| u_{mm} \right| \right\} 
			&
			\leq C_\e \left( r^{2\kappa -2} + r^{2\kappa -1} + \left| a_\e'(x_m) \right| \right) \\
			&\leq  C_\e \left( r^{2\kappa -2} + \left| a_\e'(x_m) \right| \right), 
			\\
			\left| D^2 U_\e (D U_\e, \cdot) \right|
			\leq \left| u_{rr} u_r + u_{rm} u_m \right| + \left| u_{rm} u_r + u_{mm} u_m \right|
			&\leq C_\e \left( r^{ 4\kappa -3 } +  r^{ 2\kappa -1 } + \left| a_\e'(x_m) \right| \right) \\
			& \leq C_\e \left(  r^{2\kappa -1} + \left| a_\e'(x_m) \right| \right), \\
			\left| D^2 U_\e (D U_\e,D U_\e) \right| 
			\leq \left| u_{rr} u_r^2 + 2 u_{rm} u_r u_m + u_{mm} u_m^2 \right| 
			&\leq C_\e \left( r^{6\kappa -4} + r^{4\kappa -2} + \left| a_\e'(x_m) \right| \right).
		\end{aligned}
	\]
By using \eqref{eq:34.12}, \eqref{norm_second} and $W_\e \geq1$, we deduce 
	\begin{equation}\label{bdd-u-sf}
		\begin{aligned}
			\left\| \SF_{U_\e} \right\| &\leq  W_\e \left| D^2 U_\e \right| + 2 W_\e^2 \left| D^2 U_\e \left( D U_\e,\cdot \right) \right| 
			+ W_\e^3 \left| D^2 U_\e \left( D U_\e, D U_\e \right) \right| \\
			 & \leq C_\e \left[  r^{\kappa -2} + r^{-1} + W_\e^3 \left|a_\e'(x_m) \right| \right] \leq C_\e \left( r^{-1} + W_\e^3\left|a_\e'(x_m) \right|\right).
		\end{aligned}
	\end{equation}
Whence, to prove that $\|\SF_{U_\e}\| \in L^q(\Omega_2 \cup \Omega_3)$ for $q < (m-\ell)/\kappa$, taking into account \eqref{eq:34.12} and that $a_\e' = 0$ on $[0,\e]$ it suffices to show 
	\begin{equation}\label{eq:34.15}
	 \left( 1 - a_\e(x_m) + |y|^{2\kappa} \right)^{ -\frac{3}{2} }\left| a_\e'(x_m) \right| 
	 \in L^q \left( \left\{ |y| < \frac{1}{2\e}, \ |z| < \frac{1}{\e} , \  \e \le |x_m| \le \frac{3\e}{2} \right\} \right) 
	\end{equation}
for each $q < \frac{m-\ell}{\kappa}$. 
Notice that it is enough to check it for $\frac{m- \ell}{3\kappa} < q < \frac{m- \ell}{\kappa}$ and for $\e \le x_m \le 3\e/2$, since $a_\e$ is even. Due to \eqref{eq:34.12} and since we can reduce to integrate in the variables $(y, x_m)$, using polar coordinates we get 
	\begin{equation}\label{we-ae}
		\begin{aligned}
			&\int_\e^{ \frac{3}{2} \e } \rd x_m \int_{|y| \le 1/(2\e)} \left( 1 - a_\e(x_m) + |y|^{2\kappa} \right)^{-\frac{3q}{2}} \left| a_\e'(x_m) \right|^q \rd y
			\\
			\leq \ & C_\e \int_{ \e }^{ \frac{3\e}{2} } \rd x_m 
			\int_{ 0 }^{ \frac{1}{2\e} } 
			\left| a_\e'(x_m) \right|^q \left( 1 - a_\e(x_m) + r^{2\kappa} \right)^{ - \frac{3q}{2} } r^{m-\ell - 1} \, \rd r
			\\
			\leq \ & 
			C_\e \int_{ \e }^{ \frac{3\e}{2} } \rd x_m 
			\int_{0}^{ (1 - a_\e(x_m))^{1/(2\kappa)} } 
			\left| a_\e'(x_m) \right|^q \left( 1 - a_\e(x_m) \right)^{ - \frac{3q}{2} } r^{m-\ell-1} \, \rd r 
			\\
			& \quad 
			+ C_\e \int_{ \e }^{ \frac{3\e}{2} } \rd x_m 
			\int_{ (1 - a_\e(x_m))^{1/(2\kappa)}}^{ \frac{1}{2\e} } 
			\left| a_\e'(x_m) \right|^q r^{ -3q \kappa + m- \ell -1 } \, \rd r
			\\
			\leq \ &
			C_\e \int_{ \e }^{ \frac{3\e}{2} } 
			\left| a_\e'(x_m) \right|^q \left( 1 - a_\e(x_m) \right)^{ - \frac{3q}{2} + \frac{m- \ell}{2\kappa } } \rd x_m.
		\end{aligned}
	\end{equation}
Recalling $a_\e(x_m) = 1 - d_\e \exp \left( - \frac{1}{x_m-\e} \right) $ in \eqref{eq:34.5}, we have 
	\[
		\left| a_\e'(x_m) \right|^q \left( 1 - a_\e(x_m) \right)^{ \frac{m- \ell -3q \kappa }{2\kappa} } 
		\leq C_\eps \left( x_m-\e \right)^{-2q} \exp \left( \frac{\kappa q - (m-\ell) }{2 \kappa (x_m - \e)} \right).
	\]
Hence, if $ \frac{m-\ell}{3\kappa} <  q < \frac{m-\ell}{\kappa} $, then 
	\[
		\int_\e^{\frac{3\e}{2}} \left| a_\e'(x_m) \right|^q 
		\left( 1 - a_\e(x_m) \right)^{ - \frac{3q}{2} + \frac{m-\ell}{2\kappa } } \rd x_m < \infty.
	\]
Thus, $\|\SF_{U_\e}\| \in L^q(\Omega_2 \cup \Omega_3)$ holds for each $q < (m-\ell)/\kappa$, as required.

	If $\ell = 1$, that is, if the variable $z$ is missing, we have therefore concluded the desired integrability properties of $W_\e$ and $\|\SF_{U_\e}\|$, since so far we only used that $0 \le \vartheta_\e \le 1$. The reader may therefore skip to the end of the proof, where we check that $U_\e$ is a weak solution. To conclude for $\ell \ge 2$, we shall  check the integrability of $\| \SF_{U_\e} \|$ on $\Omega_4 \cup \Omega_5$, where  
	\[
		\begin{aligned}
			\Omega_4 &\doteq \left\{|y| \leq \frac{1}{2\e}, \ \frac{1}{\e} < |z| \leq \frac{3}{2\e}, \ 
			|x_m| \leq \frac{3\e}{2} \right\}
			,\\
			\Omega_5 &\doteq \left\{|y| \leq \frac{1}{2\e}, \ \frac{3}{2\e} \leq |z| \leq \frac{2}{\e}, \ 
			|x_m| \leq \frac{3\e}{2} \right\}.
		\end{aligned}
	\]
This is achieved by similar estimates, though computationally more demanding.\par 
We first prove that $|DU_\e|<1$ on $\Omega_4 \cup \Omega_5$. 
Since $U_\e(r,s,x_m) = (1-\e^{2\kappa} r^{2\kappa} ) \vartheta_\e (s) A_\e(x_m)$ on $\Omega_4 \cup \Omega_5$, 
	\begin{equation}\label{DUe-D^2Ue}
		\begin{aligned}
			(U_\e)_r &= 
			- 2\kappa \e^{2\kappa} r^{2\kappa -1} \vartheta_\e A_\e, \quad 
			(U_\e)_s = \left( 1 - \e^{2\kappa} r^{2\kappa} \right) \vartheta_\e' A_\e, \quad 
			(U_\e)_m = \left( 1 - \e^{2\kappa} r^{2\kappa} \right) \vartheta_\e a_\e,
			\\
			(U_\e)_{rr} &= -2\kappa (2\kappa -1) \e^{2\kappa} r^{2\kappa -2} \vartheta_\e A_\e, \quad 
			(U_\e)_{rs} = -2\kappa \e^{2\kappa} r^{2\kappa -1} \vartheta_\e' A_\e, \\
			(U_\e)_{rm} &= -2\kappa \e^{2\kappa} r^{2\kappa -1} \vartheta_\e a_\e, \quad 
			(U_\e)_{ss} = \left(1-\e^{2\kappa} r^{2\kappa} \right) \vartheta_\e'' A_\e, \\
			(U_\e)_{sm} &= \left( 1 - \e^{2\kappa} r^{2\kappa} \right) \vartheta_\e' a_\e, \quad 
			(U_\e)_{mm} = \left( 1 - \e^{2\kappa} r^{2\kappa} \right) \vartheta_\e a_\e'.
		\end{aligned}
	\end{equation}
Thus, 
	\[
		\begin{aligned}
			W_\e^{-2} = \ & 1 - \left| DU_\e\right|^2 
			\\
			= \ & 1 - 4\kappa^2 \e^{4\kappa} r^{4\kappa -2} \vartheta_\e^2 A_\e^2 
			- \left( 1 - 2\e^{2\kappa} r^{2\kappa} + \e^{4\kappa} r^{4\kappa} \right) 
			\left[ (\vartheta_\e')^2 A_\e^2 + \vartheta_\e^2 a_\e^2 \right]
			\\
			= \ & 
			1 - (\vartheta_\e')^2 A_\e^2 - \vartheta_\e^2 a_\e^2 
			+ \e^{2\kappa} r^{2\kappa} 
			\left[ \left( 2 - \e^{2\kappa} r^{2\kappa} \right) 
			\left\{ (\vartheta_\e')^2 A_\e^2 + \vartheta_\e^2 a_\e^2 \right\} 
			- 4\kappa^2 \e^{2\kappa}r^{2\kappa -2} \vartheta_\e^2 A_\e^2
			\right].
		\end{aligned}
	\]
By 
	\[
		\left| A_\e(x_m) \right| \leq 2\e, \quad \frac{1}{2} \leq a_\e(x_m) \leq 1, \quad 
		\e r = \left| \e y \right| \leq \frac{1}{2} \quad 
		\text{for each $(y,z,x_m) \in \Omega_4 \cup \Omega_5$},
	\]
if $\e>0$ is sufficiently small, then 
	\[
		\left( 2 - \e^{2\kappa} r^{2\kappa} \right) \vartheta_\e^2 a_\e^2 
		- 4 \kappa^2 \e^{2\kappa} r^{2\kappa -2} \vartheta_\e^2 A_\e^2 
		\geq \frac{1}{8} \vartheta_\e^2.
	\]
Therefore, for every $(y,z,x_m) \in \Omega_4 \cup \Omega_5$, 
	\begin{equation}\label{1-DUe^2}
		W_\e^{-2} \geq 1 - (\vartheta_\e'(|z|))^2 A_\e^2(x_m) - \vartheta_\e^2(|z|) a_\e^2(x_m) 
		+ \frac{1}{8} \e^{2\kappa} |y|^{2\kappa} \vartheta_\e^2(|z|).
	\end{equation}
When $(y,z,x_m) \in \Omega_5$, by $3/2 \leq \e |z| \leq 2$ and \eqref{prop-eta}, we see that 
	\[
		\left( \vartheta_\e'(|z|) \right)^2 \leq C \e^2, \quad 
		\vartheta_\e^2 (|z|) \leq \vartheta_\e^2 \left( \frac{3}{2\e} \right) 
		= \frac{1}{4},
	\]
which implies that if $\e$ is sufficiently small, then for all $(y,z,x_m) \in \Omega_5$, 
	\[
		W_\e^{-2} = 1 - \left| DU_\e\right|^2 
		\geq 1 - C \e^4 - \frac{1}{4} \geq \frac{1}{2}.
	\]
Hence,  
	\begin{equation}\label{bdd-O5}
		W_\e, \ \left\| \SF_{U_\e} \right\| \in L^\infty ( \Omega_5 ).
	\end{equation}

	On the other hand, when $(y,z,x_m) \in \Omega_4$, 
we have $\vartheta_\e(|z|) \geq 1/2$, and \eqref{1-DUe^2} yields 
	\[
		W_\e^{-2} \geq 1 - 4\e^2 (\vartheta_\e'(|z|))^2 - \vartheta_\e^2(|z|) a_\e^2(x_m) 
		+ \frac{\e^{2\kappa} |y|^{2\kappa} }{32}.
	\]
Thus, to show $|DU_\e| < 1$, it suffices to prove 
	\begin{equation}\label{eta1-in}
		4 \e^2 \left( \vartheta_\e'(s) \right)^2 + \vartheta_\e^2 (s) 
		= 
		4\e^4 \left( \vartheta_1'( \e s ) \right)^2 + \vartheta_1^2 (\e s) < 1
		\quad \text{for each $\frac{1}{\e} < s \leq \frac{3}{2\e}$}. 
	\end{equation}
To this end, from \eqref{prop-eta} and 
	\[
		\vartheta_1'(t) = - \frac{e^2}{2} (t-1)^{-2} \exp \left( - (t-1)^{-1} \right),
	\]
it follows that for $1 < t \leq \frac{3}{2}$
	\[
		\begin{aligned}
			&4\e^4 \left( \vartheta_1'( t ) \right)^2 + \vartheta_1^2 (t)
			\\
			= \ &
			\e^4 e^4 (t-1)^{-4} \exp \left( -2 \left( t- 1 \right)^{-1} \right) + 
			\left[ 1 - \frac{e^2}{2} \exp \left( - (t-1)^{-1} \right) \right]^2
			\\
			= \ & 1 - e^2 
			\left[ 1 - \frac{e^2}{4} \exp \left( - (t-1)^{-1} \right) - \e^4 e^2 (t-1)^{-4} \exp \left( -(t-1)^{-1} \right)  \right] 
			\exp \left( - (t-1)^{-1} \right).
		\end{aligned}
	\]
Since 
	\[
		1 - \frac{e^2}{4} \exp \left( - (t-1)^{-1} \right) \geq 1 - \frac{e^2}{4} e^{ -2 } = \frac{3}{4}
		\quad \text{for every $1 < t \leq \frac{3}{2}$},
	\]
for sufficiently small $\e>0$, 
	\begin{equation}\label{eta1-in2}
		4 \e^4 \left( \vartheta_1'(t) \right)^2 + \vartheta_1^2(t) \leq 1 - \frac{e^2}{2} \exp \left( - (t-1)^{-1} \right) < 1.
	\end{equation}
Hence, $|DU_\e| < 1$ on $\Omega_4$. In addition, by $1 - 4\e^2 (\vartheta_\e'(|z|))^2 - \vartheta_\e^2(|z|) a_\e^2(x_m) \geq 0$, 
we have 
	\begin{equation}\label{bdd-We-O5}
		\begin{aligned}
		W_\e(y,z,x_m)
		&\leq 
		C_\e \left[ 1 - 4\e^2 \left( \vartheta_\e'(|z|) \right)^2 - \vartheta_\e^2 (|z|) a_\e^2(x_m) + |y|^{2\kappa} \right]^{-1/2 }
		\\
		&\leq C_\e |y|^{-\kappa}
		\end{aligned}
		\quad \text{on } \, \Omega_4.
	\end{equation}
Thus, $W_\e \in L^q (\Omega_4)$ for $q < \frac{m-\ell}{\kappa}$. To show $\| \SF_{U_\e} \| \in L^q(\Omega_4)$, by $\kappa \geq 1$, \eqref{eq:normD^2u}, \eqref{eq:D^2uDu}, \eqref{eq:D^2DuDu} 
and \eqref{DUe-D^2Ue} we deduce that, for $(y,z,x_m) \in \Omega_4$, 
	\begin{equation}\label{D2U-D2DUDU}
		\begin{aligned}
			\left| D^2 U_\e \right| & \leq C_\e \left\{ |y|^{2\kappa -2} + \left| \vartheta_\e''(|z|) \right| 
			+ \left| \vartheta_\e'(|z|) \right| + \left| a_\e'(x_m) \right| \right\},
			\\
			\left| D^2 U_\e (D U_\e, \cdot) \right| 
			& \leq 
			C_\e \left\{ |y|^{2\kappa -1} + \left| \vartheta_\e'(|z|) \right| + \left| a_\e'(x_m) \right| \right\},
			\\
			\left| D^2 U_\e (D U_\e,D U_\e) \right| 
			& \leq 
			C_\e \left\{ |y|^{4\kappa -2} + \left( \vartheta_\e'(|z|) \right)^2 + \left| a_\e'(x_m) \right|  \right\}.
		\end{aligned}
	\end{equation}
Due to \eqref{bdd-We-O5}, we verify that for all $q < (m-\ell) /\kappa$
	\begin{equation}\label{O5-r}
		W_\e (y,z,x_m) |y|^{2\kappa -2} + W_\e^2 (y,z,x_m)  |y|^{2\kappa -1} 
		+ W_\e^3 (y,z,x_m) |y|^{4\kappa -2} 
		\leq C_\e |y|^{-1} \in L^q(\Omega_4).
	\end{equation}

	On the other hand, by \eqref{eta1-in} and \eqref{eta1-in2}, we notice that 
	\[
		1 - 4\e^2 \left( \vartheta_\e'(|z|) \right)^2 - \vartheta_\e^2(|z|) 
		\geq \frac{e^2}{2} \exp \left( - \left( \e |z| -1 \right)^{-1} \right),
	\]
which yields 
	\[
		W_\e (y,z,x_m) \leq C_\e \exp \left( \frac{1}{2} \left( \e |z| - 1 \right)^{-1} \right) 
		\quad \text{on } \, \Omega_4.
	\]
From \eqref{prop-eta}, 
	\[
		\left| \vartheta_\e''(|z|) \right| + \left| \vartheta_\e'(|z|) \right| \leq C_\e \left( \left| \e z \right| - 1 \right)^{-4} 
		\exp \left( - \left( \e |z| - 1 \right)^{-1} \right).
	\]
Hence, 
	\begin{equation}\label{O5-eta-1}
		\begin{aligned}
			& W_\e (y,z,x_m) \left\{  \left| \vartheta_\e''(|z|) \right| + \left| \vartheta_\e'(|z|) \right| \right\} 
			+ W_\e^3 (y,z,x_m) \left( \vartheta_\e'(|z|) \right)^2 
			\\
			\leq \ & C_\e \left( \e |z| - 1 \right)^{-4} \exp \left( - \frac{1}{2} \left( \e |z| -1 \right)^{-1} \right) 
			\in L^\infty(\Omega_4).
		\end{aligned}
	\end{equation}
Moreover, 
	\begin{equation}\label{O5-eta-2}
		\begin{aligned}
			& W_\e^2 (y,z,x_m) \left| \vartheta_\e'(|z|) \right| 
			\\
			= \ & W_\e^{2- \kappa^{-1} } (y,z,x_m) W_\e^{  \kappa^{-1} } (y,z,x_m) \left| \vartheta_\e'(|z|) \right| 
			\\
			\leq \ & C_\e \exp \left( \frac{2-\kappa^{-1}}{2} \left( \e |z| -1 \right)^{-1} \right) 
			\left( C_\e |y|^{-\kappa} \right)^{ \kappa^{-1} } 
			\left( \e|z| -1 \right)^{-2} \exp \left( - \left( \e |z| - 1 \right)^{-1} \right)  
			\\
			= \ & C_\e \left( \e |z| - 1 \right)^{-2} \exp \left( - \frac{1}{2\kappa} \left( \e|z| - 1 \right)^{-1} \right) |y|^{-1} 
			\in L^q(\Omega_4) \quad \text{if $q < \frac{m-\ell}{\kappa}$}. 
		\end{aligned}
	\end{equation}
By \eqref{D2U-D2DUDU}, \eqref{O5-r}, \eqref{O5-eta-1}, \eqref{O5-eta-2} and $W_\e \geq 1$, to show 
$\| \SF_{U_\e} \| \in L^q(\Omega_4)$ for $q < (m-\ell)/\kappa$, it remains to prove 
	\begin{equation}\label{int-W3-a'}
		W_\e^3 (x,y,z_m) \left| a_\e'(x_m) \right| \in L^q(\Omega_4) \quad \text{for each $q < \frac{m-\ell}{\kappa}$}. 
	\end{equation}
Since $a_\e'(x_m) = 0$ for $|x_m| \leq \frac{\e}{2}$ and $a_\e$ is even, 
we may suppose $\frac{\e}{2} < x_m \leq \frac{3\e}{2}$. In this case, from \eqref{eq:34.5} and \eqref{prop-eta}, 
notice that 
		\begin{align*}
			&1 - 4\e^2 \left( \vartheta_\e'(|z|) \right)^2 - \vartheta_\e^2(|z|) a_\e^2(x_m)  
			\\
			= \ & 
			\left[ 1 + \vartheta_\e (|z|)  a_\e (x_m) \right] \left[ 1 - \vartheta_\e (|z|)  a_\e (x_m) \right] 
			- 4\e^4 \left( \vartheta_1'(\e|z| ) \right)^2
			\\
			\geq \ & 
			1 - \vartheta_\e (|z|) a_\e (x_m) - 4\e^4 \left( \vartheta_1'(\e|z|) \right)^2
			\\
			\geq\ & 
			1 - \left[ 1 - \frac{e^2}{2} \exp \left( - \left( \e |z| -1 \right)^{-1} \right) \right] 
			\left[ 1 - d_\e \exp \left( - \left(x_m - \e \right)^{-1} \right) \right] 
			- 4\e^4 \left( \vartheta_1'(\e |z|) \right)^2
			\\
			\geq \ & c_0 \left\{ \exp \left( - \left( \e |z| -1 \right)^{-1} \right) + \exp \left( - \left( x_m - \e  \right)^{-1} \right) \right\}
			\doteq c_0 R^2 (|z|,x_m).
		\end{align*}
Thus, by \eqref{bdd-We-O5}, 
	\[
		W_\e(y,z,x_m) \leq C_\e \left\{ R^2(|z|,x_m)  + |y|^{2\kappa}
		 \right\}^{ - \frac{1}{2} }.
	\]
Then we proceed as in \eqref{we-ae} and for $ \frac{m-\ell}{3\kappa} < q < \frac{m-\ell}{\kappa}$, we obtain 
		\begin{align*}
			& \int_{\e}^{\frac{3\e}{2}} \rd x_m \int_{ \frac{1}{\e} < |z| < \frac{3}{2\e} } \rd z 
			\int_{|y| \leq \frac{1}{2\e}} \left( W_\e^3 (y,z,x_m) \left| a_\e'(x_m) \right| \right)^q \rd y
			\\
			\leq \ & 
			C_\e \int_{\e}^{\frac{3\e}{2}} \rd x_m \int_{ \frac{1}{\e} < |z| < \frac{3}{2\e} } \rd z 
			\int_{ |y| \leq R^{1/\kappa}(|z|,x_m) } R^{-3q} (|z|,x_m) \left| a_\e'(x_m) \right|^q \rd y 
			\\
			& \quad 
			+ C_\e \int_{\e}^{\frac{3\e}{2}} \rd x_m \int_{ \frac{1}{\e} < |z| < \frac{3}{2\e} } \rd z 
			\int_{ R^{1/\kappa}(|z|,x_m) \leq |y| \leq \frac{1}{2\e} } 
			|y|^{-3\kappa q} \left| a_\e'(x_m) \right|^q \rd y
			\\
			\leq \ & 
			C_\e \int_{\e}^{\frac{3\e}{2}} \rd x_m \int_{ \frac{1}{\e} < |z| < \frac{3}{2\e} } 
			R^{-3q + \frac{m-\ell}{\kappa} } (|z|,x_m)   \left| a_\e'(x_m) \right|^q \rd z
			\\
			\leq \ &
			C_\e \int_{0}^{ \frac{\e}{2} } \rd t
			\int_{1 }^{\frac{3}{2}} 
			\left\{ \exp \left( - \frac{1}{s-1} \right) + \exp \left( - \frac{1}{t} \right) 
			\right\}^{ \frac{m-\ell - 3\kappa q}{2\kappa} } t^{-2q} \exp \left( - \frac{q}{t} \right)  \rd s
			\\
			= \ & 
			C_\e \int_{0}^{ \frac{\e}{2} } \rd t
			\int_{0 }^{\frac{1}{2}} 
			\left\{ \exp \left( - \frac{1}{s} \right) + \exp \left( - \frac{1}{t} \right) 
			\right\}^{ \frac{m-\ell - 3\kappa q}{2\kappa} } t^{-2q} \exp \left( - \frac{q}{t} \right)  \rd s
			\\
			\leq \ & 
			C_\e \int_{0}^{ \frac{\e}{2} } \rd t 
			\int_0^{t} \exp \left( \frac{3\kappa q - m + \ell }{2\kappa t} \right) t^{-2q} \exp \left( - \frac{q}{t} \right) \rd s
			\\
			& \quad 
			+ C_\e \int_{0}^{ \frac{\e}{2} } \rd t \int_{t}^{\frac{1}{2}} 
			\exp \left( \frac{3\kappa q - m + \ell }{2\kappa s} \right) t^{-2q} \exp \left( - \frac{q}{t} \right) \rd s
			\\
			\leq \ & 
			C_\e \int_{0}^{ \frac{\e}{2} } 
			t^{-2q} \exp \left( \frac{\kappa q - m + \ell }{2\kappa t} \right)
			\rd t < \infty.
		\end{align*}
Hence, \eqref{int-W3-a'} holds and \eqref{Ue-mc-sf} follows. 

	Finally, we prove that $u$ is a weak solution. Let $\eta \in \lip_c(\Omega)$. First, observe that our estimates guarantee that
	\[
		W_\e(y,z,x_m) \leq C_\e |y|^{-\kappa} \quad \text{for each $(y,z,x_m) \in \R^m$}.
	\]
Hence, $W_\e \in L^1(\R^m)$. From $\rho_{U_\e} \in L^q(\R^m)$ and the dominated convergence theorem, 
it follows that 
	\begin{equation}\label{eq:34.2}
		\begin{aligned}
			\int_{\R^m} \rho_{U_\e}  \eta \, \rd x 
			= \lim_{\tau \to 0} \int_{\{|y| > \tau\} } \rho_{U_\e} \eta  \, \rd x 
			= - \lim_{\tau \to 0} \int_{\{ |y| > \tau \} } \diver \left( W_\e DU_\e \right) \eta  \, \rd x. 
		\end{aligned}
	\end{equation}
Integration by parts gives  
	\begin{equation}\label{eq:34.3}
		\begin{aligned}
			- \int_{\{ |y| > \tau \} } \diver \left( W_\e D U_\e \right) \eta \, \rd x 
			&= \int_{\{ |y| = \tau \} } W_\e \eta D U_\e \cdot \frac{y}{|y|} \, \rd \haus^{m-1}_\delta 
			+ \int_{\{ |y| > \tau \}} W_\e D U_\e \cdot D \eta \, \rd x.
		\end{aligned}
	\end{equation}
By \eqref{eq:34.7} and 
	\[
	\left| D U_\e \cdot \frac{y}{|y|} \right| = |(U_\e)_r| = |(u_\e)_r\vartheta_\e| \le C\tau^{2\kappa -1} \quad 
		\text{if $|y|=\tau$}, 
	\]
it follows from the estimate for $W_\e$ and the assumption $\ell \le m-2$ that 
	\[
		\limsup_{\tau \to 0} \int_{\{ |y| = \tau \} } \left| W_\e \eta D U_\e \cdot \frac{y}{|y|} \right| \rd \haus^{m-1}_\delta
		\leq \lim_{\tau \to 0} C \tau^{-\kappa} \tau^{2\kappa -1} \tau^{m-\ell -1} = 0.
	\]
Finally, since $W_\e \in L^1$, it follows from \eqref{eq:34.2} and \eqref{eq:34.3} that 
	\[
		\int_{\R^m} \rho_{U_\e} \eta \, \rd x = \int_{\R^m} W_\e D U_\e \cdot D \eta \, \rd x,
	\]
and we complete the proof. 
\end{proof}

\appendix 

\section{The Born-Infeld model}

We here recall the Born-Infeld model of electromagnetism \cite{borninfeld_1,borninfeld_2}. Concise but informative introductions  can be found in \cite{bpd,bpdr}, see also \cite{Yang, Kiessling, Kiessling_legacy, bialin} for a thorough account of the physical literature. We remark that the Born-Infeld model also proved to be relevant in the theory of superstrings and membranes, see \cite{gibbons,Yang} and the references therein.

As outlined in the Introduction, one of the main concerns of the theory was to overcome the failure of the principle of finite energy occurring in Maxwell's model, which we now describe. In a spacetime $(N^4,g)$ with metric $g = g_{ab}\di y^a \otimes \di y^b$ of signature $(-,+,+,+)$ ($g_{00}<0$), 
the electromagnetic field is described as a closed $2$-form $F = \frac{1}{2} F_{ab} \di y^a \wedge \di y^b$ which, 
according to the Lagrangian formulation of Maxwell's theory due to Schwarzschild (cf. \cite{Schw} and \cite[p.88]{Pauli}), 
in the absence of charges and currents, is required to be stationary for the action
	\[	
	\mathscr{L}_{\M} \doteq \int_{N^4} \mathsf{L}_{\M} \sqrt{-|g|} \di y   \qquad \text{with} \quad \mathsf{L}_\M \doteq -\frac{F^{ab}F_{ab}}{4}, 
	\]
where $|g|$ is the determinant of $g$ and 
$F^{ab} \doteq g^{ac}g^{bd} F_{cd}$. 
The presence of a vector field $J$ describing charges and currents is taken into account by adding the action
	\[
	\mathscr{L}_{J} \doteq \int_{N^4} \mathsf{L}_J \sqrt{-|g|} \di y, \qquad \mathsf{L}_J = J^a\Phi_a,
	\]
where we assumed that $F$ is globally exact and we set $F = \di \Phi$. 
By its very definition, the energy-impulse tensor $T$ associated to $\mathscr{L}_\M + \mathscr{L}_J$ has components
	\[
	T_{ab} 
	= \frac{-2}{\sqrt{-|g|}} \frac{\partial ((\mathsf{L}_\M+ \mathsf{L}_J) \sqrt{-|g|})}{\partial g^{ab}} 
	=  F_{ac}F_{bp}g^{cp} - \frac{1}{4}F^{cp}F_{cp}g_{ab}  + J^c \Phi_c g_{ab}
	\]
and in particular $T_{00}$ describes the energy density. 
In Minkowski space $\mathbb{L}^4$, by writing  in Cartesian coordinates $\{x^a\}$ 
the electromagnetic tensor in terms of the electric and magnetic fields ${\bf E} = E_j \di x^j$ and ${\bf B} = B_j \di x^j$ as   
	\[
	F = \sum_{j =1}^3 E_j \di x^j \wedge \di x^0 + B_1 \di x^2 \wedge \di x^3 + B_2 \di x^3 \wedge \di x^1 + B_3 \di x^1 \wedge \di x^2,  
	\]
the vector potential as $\Phi = -\varphi \di x^0 + {\bf A} = -\varphi \di x^0 + A_j \di x^j$ and 
$J = \rho \partial_{x^0} + {\bf J} = \rho \partial_{x^0} + J^j \partial_{x^j}$, the Maxwell Lagrangian and energy densities become
	\[
	\mathsf{L}_\M + \mathsf{L}_J 
	= \frac{1}{2}\big(|{\bf E}|^2 - |{\bf B}|^2 \big) - \rho \varphi + {\bf A}({\bf J}), 
	\qquad T_{00} = \frac{1}{2} \big(|{\bf E}|^2 + |{\bf B}|^2 \big) + \rho \varphi - {\bf A}({\bf J}).
	\]
Restricting to the electrostatic case with no current density (${\bf B} = 0$, ${\bf E}$ independent of $x^0$, ${\bf J} = 0$), 
from ${\bf E} = -\di \varphi$ the potential $\varphi$ turns out to be stationary for the reduced action
	\[
	J_\rho(v) \doteq \frac{1}{2} \int_{\R^3} |Dv|^2 \di x - \langle \rho, v \rangle,
	\]
where $\langle \rho, v \rangle$ is the duality pairing given, for smooth $\rho$, by integration. 
However, for $\rho = \delta_{x_0}$ the Dirac delta centered at a point $x_0$, 
the Newtonian potential $\bar u_\rho = \mathrm{const} \cdot |x-x_0|^{2-m}$ solving the Euler-Lagrange equation $-\Delta \bar u_\rho = \rho$ for $J_\rho$ has infinite energy on punctured balls centered at $x_0$:
	\[
	\int_{B_R\backslash B_\eps} T_{00} \di x 
	= \frac{1}{2} \int_{B_R\backslash B_\eps} |D \bar u_\rho|^2 \di x \to \infty \qquad \text{as } \, \eps \to 0,
	\]	
a fact of serious physical concern (cf. \cite{borninfeld_2}). The problem also persists for certain sources $\rho \in L^1(\R^m)$, see \cite{fop,bpd}. 
To avoid it, Born and Infeld in \cite{borninfeld_1, borninfeld_2} proposed an alternative Lagrangian density $\mathsf{L}_{\BI}$, defined by
	\[
%	\begin{array}{l}
	\disp \mathsf{L}_{\BI}\sqrt{-|g|} \doteq \sqrt{ - |g|} - \sqrt{ - |g + F|}
% 	\disp \mathsf{L}'_{\BI}\sqrt{-|g|} \doteq \sqrt{ - |g|} - %\sqrt{ - |g + F| + |F|}.
% 	\end{array}
	\]
(we follow the convention in \cite{Yang}, which changes signs in $\mathsf{L}_\BI$ with respect to \cite{borninfeld_2}). Indeed, the authors also indicated a further Lagrangian, already suggested in \cite{born}, see \cite[(2.27) and (2.28)]{borninfeld_2}. However, later works in \cite{boillat,plebanski} pointed out a distinctive feature of $\mathsf{L}_\BI$, that of generating an electrodynamics free from the phenomenon of birefringence (cf. \cite{Kiessling_legacy, bialin}). 
%
%Even though no preferred choice was spaculated in \cite{borninfeld_2} (quoting from \cite[p.432]{borninfeld_2}, ``Which of these action principles is the right one can only be decided by their consequences"), subsequent works (cf. \cite{boillat,born_atomic}) pointed out some distinctive features of $\mathsf{L}_\BI$.
%	
%	
%
Computing the determinants in the expression of $\mathsf{L}_\BI$, Born and Infeld obtained
	\[
%	\begin{array}{l}
	\disp \mathsf{L}_{\BI} = 1- \sqrt{1 + \mathbb{F} - \mathbb{G}^2}, \qquad \text{where } \ \ \mathbb{F} \doteq \frac{F^{ab}F_{ab}}{2}, \ \ \mathbb{G} \doteq \frac{F_{ab}(\star F)^{ab}}{4}
%	  
%	
%\qquad \mathsf{L}'_{\BI} = 1- \sqrt{1 + \frac{1}{2}F^{ab}%F_{ab}},
% 	\end{array}
	\]
and $\star$ is the Hodge dual, so $\mathsf{L}_\BI$ is asymptotic to $\mathsf{L}_\M$ for small $F$ and flat $g$.
In Minkowski space and Cartesian coordinates $\{x^a\}$, $\mathsf{L}_\BI$ becomes
	\begin{equation}\label{eq_BI_Minko}
	\mathsf{L}_{\BI} = 1- \sqrt{ 1 - |{\bf E}|^2 + |{\bf B}|^2 - ({\bf B}\cdot {\bf E})^2}.
%\qquad \mathsf{L}_{\BI}' = 1- \sqrt{ 1 - |{\bf E}|^2 + |{\bf %B}|^2}.
	\end{equation}
Here, we set the maximal field strength to be $1$ for convenience. 
The energy-impulse tensor associated to $\mathscr{L}_\BI + \mathscr{L}_J$, and its component $T_{00}$ in Cartesian coordinates, 
are thus
	\[
	\begin{array}{lcl}
	T_{ab} & = & \disp \mathsf{L}_{\BI} g_{ab} + \frac{F_{ac}F_{bp}g^{cp} - \mathbb{G}^2g_{ab}}{\sqrt{1 + \mathbb{F} - \mathbb{G}^2}} + J^c \Phi_c g_{ab}, \\[0.5cm]
	T_{00} 
	& = & \disp \frac{1 + |{\bf B}|^2}{\sqrt{ 1 - |{\bf E}|^2 + |{\bf B}|^2 - ({\bf B}\cdot {\bf E})^2}} -1 + \rho \varphi - {\bf A}({\bf J}).
	\end{array}
	\]
In the electrostatic case, the potential $u_\rho$ generated by $\rho$ is therefore required to minimize the action $I_\rho$ in \eqref{def_Irho_intro} on $\Omega = \R^3$ among weakly spacelike functions with a suitable decay at infinity, and its energy density is given by  
	\begin{equation*}
	%\label{eq_T00}
	T_{00} = \disp \frac{1}{\sqrt{ 1 - |Du_\rho|^2}} -1 + \rho u_\rho = w_\rho -1 + \rho u_\rho,
	\end{equation*}
making apparent the link between $w_\rho$ and $T_{00}$ mentioned in the Introduction. Whence, the integrability \eqref{eq_ham_Rm} proved in \cite[Proposition 2.7]{bpd} can be rephrased as the remarkable property 
	\begin{equation}\label{desirable}
	T_{00}- \rho u_\rho \in L^1(\R^3)
	\end{equation}  
for $\rho$ in a large class of distributions including any finite measure on $\R^3$. The case $\rho = \delta_{x_0}$ was previously considered by Born and Infeld in \cite{borninfeld_2} to support the consistency of their theory. For solutions in bounded domains, Proposition \ref{lem_basicL2} guarantees the same desirable property:  $T_{00}- \rho u_\rho \in L^1_\loc(\Omega)$, provided that the boundary datum $\phi$ is not too degenerate.

%%%%%%%%%%%%%%%%%%%%%%%%%%%%%%%%%%%%%%%%%%%%%%%%%%%%%%%%%%%%%%%%%%
%%%%%%%%%%%%%%%%%%%%%%%%%%%%%%%%%%%%%%%%%%%%%%%%%%%%%%%%%%%%%%%%%%
%%%%%%%%%%%%%%%%%%%%%%%%%%%%%%%%%%%%%%%%%%%%%%%%%%%%%%%%%%%%%%%%%%

%%%%%%%%%%%%%%%%%%%%%%%%%%%%%%%%%%%%%%%%%%%%%%%%%%%%%%%%%%%%%%%%%%
%%%%%%%%%%%%%%%%%%%%%%%%%%%%%%%%%%%%%%%%%%%%%%%%%%%%%%%%%%%%%%%%%%
%%%%%%%%%%%%%%%%%%%%%%%%%%%%%%%%%%%%%%%%%%%%%%%%%%%%%%%%%%%%%%%%%%
\section*{Acknowledgements}
The authors would like to express their gratitude to the anonymous referees 
for their comments, especially to the one who suggested the example in Remark \ref{rem:smooth-mc-ls} and the application of Theorem \ref{teo_nolight} in the proof of Theorem \ref{teo_bocofo} $(ii)$.
They wish to thank D. Bonheure and A. Iacopetti for detailed discussions about the results in \cite{bcf,bpd,bi_ARMA,bi_new}, S. Terracini for sharing her insights on the problem, and L. Maniscalco, who carefully read the manuscript and spotted a mistake in Proposition \ref{1001} in an earlier version of the paper. 
This work initiated when J.B. and N.I. visited Scuola Normale Superiore in 2017. 
They would like to express their gratitude for their hospitality and support during their visit to Scuola Normale Superiore. 
J.B. was supported by the National Research Foundation of Korea (NRF) grant funded by the Korea government (MSIT) (No. NRF-2019R1A5A1028324)
N.I. was supported by JSPS KAKENHI Grant Numbers JP 16K17623, 17H02851, 19H01797 and 19K03590. 
A.M. has been supported by the project {\em Geometric problems with loss of compactness} 
from the Scuola Normale Superiore and by GNAMPA as part of INdAM.

\vspace{0.4cm}

\noindent \textbf{Conflict of Interest.} The authors have no conflict of interest.

\medskip

\noindent
\textbf{Data Availability Statement.} Data sharing is not applicable to this article as no datasets were generated or analyzed during the current study.

%%%%%%%%%%%%%%%%%%%%%%%%%%%%%%%%%%%%%%%%%%%%%%%%%%%%%%%%%%%%%%%%%%
%%%%%%%%%%%%%%%%%%%%%%%%%%%%%%%%%%%%%%%%%%%%%%%%%%%%%%%%%%%%%%%%%%
%%%%%%%%%%%%%%%%%%%%%%%%%%%%%%%%%%%%%%%%%%%%%%%%%%%%%%%%%%%%%%%%%%

%%%%%%%%%%%%%%%%%%%%%%%%%%%%%%%%%%%%%%%%%%%%%%%%%%%%%%%%%%%%%%%%%%
%%%%%%%%%%%%%%%%%%%%%%%%%%%%%%%%%%%%%%%%%%%%%%%%%%%%%%%%%%%%%%%%%%
%%%%%%%%%%%%%%%%%%%%%%%%%%%%%%%%%%%%%%%%%%%%%%%%%%%%%%%%%%%%%%%%%%

\end{document}